\newcommand{\lat}{\mathfrak{L}}
\definecolor{darkgreen}{rgb}{0.0, 0.7, 0.0}
\colorlet{Changes@Color}{darkgreen}
\theoremstyle{definition}
\newtheorem{theorem}{Theorem}[section]
\newtheorem{proposition}[theorem]{Proposition}
\newtheorem{definition}[theorem]{Definition}
\newtheorem{lemma}[theorem]{Lemma}
\newtheorem{example}[theorem]{Example}
\newtheorem{remark}[theorem]{Remark}
\newcommand{\comp}{\vartriangleleft}
\newcommand{\compflip}{\vartriangleright}
\begin{document} 

\title{A fundamental non-classical logic}

\author{Wesley H. Holliday \\ University of California, Berkeley}

\date{{\normalsize Published in \textit{Logics}, Vol.~1, No.~1, 36-79, 2023.}}

\maketitle

\begin{abstract} We give a proof-theoretic as well as a semantic characterization of a logic in the signature with conjunction, disjunction, negation, and the universal and existential quantifiers that we suggest has a certain fundamental status. We present a Fitch-style natural deduction system for the logic that contains only the introduction and elimination rules for the logical constants. From this starting point, if one adds the rule that Fitch called Reiteration, one obtains a proof system for intuitionistic logic in the given signature; if instead of adding Reiteration, one adds the rule of Reductio ad Absurdum, one obtains a proof system for orthologic; by adding both Reiteration and Reductio, one obtains a proof system for classical logic. Arguably neither Reiteration nor Reductio is as intimately related to the meaning of the connectives as the introduction and elimination rules are, so the base logic we identify serves as a more fundamental starting point and common ground between proponents of intuitionistic logic, orthologic, and classical logic. The algebraic semantics for the logic we motivate proof-theoretically is based on bounded lattices equipped with what has been called a weak pseudocomplementation. We show that such lattice expansions are representable using a set together with a reflexive binary relation satisfying a simple first-order condition, which yields an elegant relational semantics for the logic. This builds on our previous study of representations of lattices with negations, which we extend and specialize for several types of negation in addition to weak pseudocomplementation. Finally, we discuss ways of extending these representations to lattices with a conditional or implication operation.\end{abstract}

\hspace{.14in}\textbf{Keywords:} natural deduction, introduction and elimination rules, lattices with negation, 

\hspace{.886in} lattices with implication, representation of lattices, intuitionistic logic, orthologic

\hspace{.14in}\textbf{MSC:} 03B20, 03G10, 06B15, 06B23, 06C15, 06D15, 06D20

\section{Introduction}\label{Intro}

According to an influential strand of proof theory and philosophy of language, the meaning of the logical connectives is given by their introduction and elimination rules (or just by the introduction rules, from which the elimination rules are thought to follow; see, e.g., \citealt[\S~5.13]{Gentzen1935}, \citealt[\S~4]{Prawitz1973},  \citealt[Chs.~11-13]{Dummett1991}, \citealt{Schroeder-Heister2018}). Prior \citeyearpar{Prior1960} explains a version of the view as follows:
\begin{quote}
[I]f we are asked what is the meaning of the word `and', at least in the purely conjunctive sense (as opposed to, e.g., its colloquial use to mean `and then'), the answer is said to be \textit{completely} given by saying that (i) from any pair of statements P and Q, we can infer the statement formed by joining P to Q with `and' (which statement we hereafter describe as `the statement P-and-Q'), that (ii) for any conjunctive statement P-and-Q we can infer P, and (iii) from P-and-Q we can always infer Q. Anyone who has learnt to perform these inferences knows the meaning of `and', for there is simply nothing more \textit{to} knowing the meaning of `and' than being able to perform these inferences. (p.~38)
\end{quote}
Without going nearly so far as to claim that the ability to follow the introduction and elimination rules is all there is to grasping the meaning of `and', one can still appreciate that the validity of the introduction and elimination rules is a central semantic fact about `and'.

Logicians motivated by proof-theoretic accounts of the meaning of the connectives have tended to favor intuitionistic logic over classical logic on the grounds that the classical rule of Reductio ad Absurdum (if the assumption of $\neg\varphi$ leads to a contradiction, conclude $\varphi$) allegedly cannot be justified on the basis of the meaning of negation in the way that the introduction and elimination rules for negation can be (see \citealt[\S~5.3]{Gentzen1935}, \citealt[pp.~291-300]{Dummett1991}, \citealt[\S~1.2]{Dummett2000}). In fact, one can go further and argue that even intuitionistic logic goes beyond what can be justified on the basis of the meaning of the connectives. For example, in recent work in the formal semantics of natural language (\citealt{Mandelkern:2018a}, \citealt{HM2022}), it has been argued that the distributive law of classical and intuitionsitic logic, according to which $\varphi\wedge (\psi\vee \chi)$ entails $(\varphi\wedge \psi)\vee (\varphi\wedge \chi)$, is invalid for fragments of language that include the epistemic modals `might' ($\Diamond$) and `must' ($\Box$).  First, there is extensive evidence that sentences of the form 
\begin{itemize}
\item[(1)] It's raining but it might not be raining ($p\wedge\Diamond\neg p$) 
\end{itemize}
are contradictory  (see, e.g., \citealt{GSV:1996}, \citealt{Aloni:2000}, \citealt{Yalcin2007}, \citealt{Mandelkern:2018a}, \citealt{HM2022}), not merely pragmatically infelicitous to assert.\footnote{This is in contrast to `It's raining but I don't know it', which is infelicitous to assert but does not embed like a contradiction; e.g., it is fine in the antecedent of a conditional such as `If it's raining but I don't know it, I'll be surprised when I get wet'. For  a review of evidence that the badness of (1) is not merely pragmatic, see \citealt[\S~2.1]{HM2022}.} As discussed in \citealt{HM2022}, if we accepted the distributive law, then from the banal expression of ignorance that
\begin{itemize}
\item[(2)] either it's raining or it's not, and it might be raining and it might not be raining ($(p\vee\neg p)\wedge \Diamond p\wedge\Diamond\neg p$)
\end{itemize}
we could draw the absurd conclusion that
 \begin{itemize}
 \item[(3)] it's raining and it might not be, or it's not raining and it might be ($(p\wedge \Diamond\neg p)\vee (\neg p\wedge\Diamond p)$),
 \end{itemize}
 which is a disjunction of two contradictions and therefore a contradiction. 
 
 One might think that the distributive law can be justified using the introduction and elimination rules for conjunction and disjunction, but this depends on the precise formulation of those rules. In particular, one must be careful to distinguish between what could be called Proof by Cases, the principle that
 \begin{itemize}
 \item if $\varphi\vdash\chi$ and $\psi\vdash\chi$, then $\varphi\vee\psi\vdash \chi$,
 \end{itemize}
  and what could be called Proof by Cases with Side Assumptions, the principle that 
  \begin{itemize}
  \item if $\alpha\wedge\varphi\vdash\chi$ and $\alpha\wedge\psi\vdash\chi$, then $\alpha\wedge (\varphi\vee\psi)\vdash\chi$, or
  \item if $\alpha,\varphi\vdash\chi$ and $\alpha,\psi\vdash\chi$, then $\alpha, (\varphi\vee\psi)\vdash\chi$.
  \end{itemize} 
 If one takes the elimination rule for disjunction to be Proof by Cases with Side Assumptions, then the distributive law is derivable using the introduction and elimination rules for the connectives. But if one takes the elimination rule for disjunction to be Proof by Cases, it is not.\footnote{On the importance of this distinction concerning side assumptions in relation to the idea that the introduction and elimination rules for $\vee$ should be in ``harmony'' with each other, see \citealt[p.~229]{Rumfitt2017}.}
 
 The point can be made in an illuminating way in a Fitch-style natural deduction system (\citealt{Fitch1952,Fitch1966}).  Figure~\ref{FirstFigure} shows a Fitch-style natural deduction of the absurd (3) above from the banal (2). The ``mistake'' in the proof lies in the Reiteration steps on lines 7 and 11: we should not be allowed to reiterate the assumption that \textit{might~$\neg p$} into a subproof where we have just assumed $p$ or reiterate the assumption that \textit{might $p$} into a subproof where we have just assumed $\neg p$! From this perspective, the problematic principle of a Fitch-style natural deduction system when the language contains `might' is the rule of Reiteration, not the rule of $\vee$ elimination. Reiteration also leads to the \textit{pseudocomplementation} principle that if $\varphi\wedge\psi\vdash\bot$, then $\psi\vdash\neg\varphi$. But this principle is unacceptable for a language containing `might', since $p\wedge\Diamond \neg p$ is contradictory and yet $\Diamond \neg p$ (`it might not be raining') plainly does not entail $\neg p$ (`it's not raining') (\citealt{Yalcin2007}). For a battery of further arguments against distributivity, pseudocomplementation, and other laws to which Reiteration leads, in the context of a language with epistemic modals, see \citealt{HM2022}. 
 
  \begin{figure}
 \[\begin{nd}
\hypo [1] {1} {(p\vee\neg p)\wedge(\Diamond p\wedge\Diamond\neg p)} 
\have [2] {2} {p\vee\neg p}\ae{1} 
\have [3] {3} {\Diamond p\wedge\Diamond \neg p}\ae{1} 
\have [4] {4} {\Diamond p} \ae{3}
\have [5] {5} {\Diamond \neg p} \ae{3}
\open
\hypo [6] {6} {p}
\have [7] {7} {\Diamond \neg p} \r{5}
\have [8] {8} {p\wedge \Diamond \neg p} \ai{6,7}
\have [9] {9} {(p\wedge \Diamond \neg p)\vee (\neg p \wedge \Diamond p)} \oi{8}
\close
\open
\hypo [10] {10} {\neg p}
\have [11] {11} {\Diamond p} \r{4}
\have [12] {12} {\neg p \wedge \Diamond p} \ai{10,11}
\have [13] {13} {(p\wedge \Diamond \neg p)\vee (\neg p \wedge \Diamond p)} \oi{12}
\close
\have [14] {14} {(p\wedge \Diamond \neg p)\vee (\neg p\wedge\Diamond p)} \oe{2,6-9, 10-13}
\end{nd}\]
\caption{An illustration of the problem with Reiteration in a language with epistemic modals.}\label{FirstFigure}
\end{figure}
 
 For the purposes of the present paper, it is enough for the reader to find the project of going to a weaker logic without distributivity or pseudocomplementation to be an interesting one. Denying these principles is familiar from \textit{quantum logic} (see \citealt{Chiara2002}), but the orthomodularity principle of quantum logic also appears to be invalid for fragments of natural language containing `might' and `must' (\citealt{HM2022}). Thus, we are interested in the weaker system of \textit{orthologic} (\citealt{Goldblatt1974}), though we weaken it even further by following the intuitionists in dropping Reductio ad Absurdum.  In addition to the criticisms of Reductio for enabling nonconstructive proofs (\citealt{Troelstra1988a}), there are arguments to the effect that Reductio and the principle of excluded middle to which it leads should be rejected for a language with \textit{vague} predicates (see, e.g., \citealt{Wright2001}, \citealt{Field2003}, \citealt{Bobzien2020}). In any case, here we drop Reductio not on ideological grounds but rather to find a neutral base logic.

In this paper, we begin in \S~\ref{FitchSection} with a Fitch-style natural deduction system for a propositional logic in the signature with conjunction, disjunction, and negation that contains only the introduction and elimination rules for the connectives. We defer the  addition of the universal and existential quantifiers with their introduction and elimination rules to \S~\ref{QuantSection}. Starting from the system we define,  if one adds Fitch's rule of  Reiteration, one obtains a proof system for intuitionistic logic in the given signature, defined in Appendix~\ref{AppendixA}; if instead of adding Reiteration, one adds the rule of Reductio ad Absurdum, one obtains a proof system for orthologic; by adding both Reiteration and Reductio, one obtains a proof system for classical logic. Arguably neither Reiteration nor Reductio is as intimately related to the meaning of the connectives as the introduction and elimination rules are, so the base logic we identify serves as a more fundamental starting point and common ground between proponents of intuitionistic logic, orthologic, and classical logic. In \S~\ref{AlgSection}, we turn to the algebraic semantics for the logic, which is based on bounded lattices equipped with what has been called a weak pseudocomplementation. In \S~\ref{RelationalSection}, we show that such lattice expansions are representable using a set together with a reflexive binary relation satisfying a simple first-order condition, which yields an elegant relational semantics for the logic. This builds on our previous study of representations of lattices with negations (\citealt{Holliday2022}), which we extend and specialize for several types of negation in addition to weak pseudocomplementation. In \S~\ref{QuantSection}, we use one of our representation theorems to prove completeness with respect to relational semantics of the extension of the logic from \S~\ref{FitchSection} with quantifiers.  In \S~\ref{Conditionals} and Appendix~\ref{AppendixB},  we discuss ways of extending our representational approach to lattices with a conditional or implication operation. Finally, in \S~\ref{ConclusionSection}, we conclude with a brief summary and look ahead. 

Several Jupyter notebooks with code to check proofs and to construct algebras from relational frames and relational representations of algebras are available at \href{https://github.com/wesholliday/fundamental-logic}{github.com/wesholliday/fundamental-logic}.

\begin{remark}Though our argument against distributivity involved modals, we do not include modals in our language in this paper. A  modal version of the fundamental logic defined in \S~\ref{FitchSection} can be studied using ideas from \citealt{HM2022} and \citealt{Holliday2022}, but we will not do so here. As a result, our formal system will not reflect an important point: setting aside issues from quantum mechanics, as far as we can tell from natural language, distributivity is valid for sentences not including modals (or conditionals). However, in this paper, we take atomic sentences $p,q,r,\dots$ to be genuine \textit{propositional variables}, standing in for arbitrary propositions (cf.~\citealt[pp.~147-8]{Burgess2003}); thus, the failure of distributivity for modal propositions implies that we cannot accept $p\wedge (q\vee r)\vdash (p \wedge q)\vee (p\wedge r)$ as a schematically valid principle. By enriching the language, one can define a system in which Reiteration and hence distributivity hold for special non-modal propositions but not for modal propositions (see \citealt{HM2022}). But in this paper, the rules of the fundamental logic are supposed to be schematically valid principles holding for all propositions. 
\end{remark}

\begin{remark} The relational semantics in \S~\ref{RelationalSection} covers logics much weaker than the fundamental logic of \S~\ref{FitchSection}, including paraconsistent logics in the spirit of Battilotti and Sambin's \citeyearpar{Battilotti1999} \textit{basic logic}, which (in a fragment of its language) is a sublogic of fundamental logic without $\varphi\wedge\neg\varphi\vdash\psi$, $\varphi\vdash\neg (\psi\wedge\neg\psi)$, or $\bot\vdash\psi$ (though we do not have a primitive $\bot$ in our language). In fact, we can cover logics as weak as the logic of lattices with an antitone unary operation $\neg$ (Theorem \ref{NegThmAntitone}). Note that ``fundamental'' is not supposed to indicate that the logic of \S~\ref{FitchSection} is as weak as possible but rather that it has a special status based on introduction and elimination rules insofar as the only gap between this logic and intuitionistic logic (resp.~orthologic) in the relevant signature is Reiteration (resp.~Reductio). Of course, Kolmogorov \citeyearpar{Kolmogorov1925} and others have questioned the explosion principle $\varphi\wedge\neg\varphi\vdash\psi$ of intuitionistic logic. However, for inference in natural language, $\psi\vee(\varphi\wedge\neg\varphi)\vdash\psi$ appears acceptable, and this is equivalent to explosion given the rules for $\vee$. In any case, readers interested in weaker logics can focus on our semantics for those logics.\end{remark}

\section{Fitch-style natural deduction}\label{FitchSection}

Given a nonempty set $\mathsf{Prop}$ of propositional variables, our propositional language $\mathcal{L}$ is given by the grammar
\[\varphi::= p\mid \neg\varphi\mid (\varphi\wedge\varphi)\mid (\varphi\vee\varphi)\]
where $p\in\mathsf{Prop}$. As abbreviations, we define $\bot :=(p\wedge\neg p)$ and $\top:=\neg \bot$.

We will define when a formula $\psi$ is provable from a formula $\varphi$, denoted $\varphi\vdash_\mathsf{F}\psi$, using a Fitch-style natural deduction system (\citealt{Fitch1952,Fitch1966}, based on \citealt{Jaskowski1934}). We chose `$\mathsf{F}$'  for \textit{fundamental logic} or rather \textit{fundamental propositional logic}, as we introduce a first-order extension in \S~\ref{QuantSection}. To represent an argument with multiple assumptions, conjoin the assumptions with $\wedge$ into a single formula $\varphi$. We chose Fitch-style natural deduction in part because we agree that it ``corresponds more closely to proofs in ordinary mathematical practice'' (\citealt[p.~134]{Geuvers2004}) and ``is more faithful to the phenomenology of reasoning'' (\citealt[p.~1110]{Hazen2014})  than Gentzen-style natural deduction. Although the idea that the meaning of the connectives is given by introduction and elimination rules is usually formulated in proof theory in terms of Gentzen rules, the view described by Prior in the quotation in \S~\ref{Intro} can certainly be formulated in terms of Fitch rules; indeed, referring to the introduction and elimination rules for negation as in \citealt{Fitch1966}, Hazen and Pelletier \citeyearpar{Hazen2014} write that ``they have as good a claim as any Gentzen-ish pair to specify uniquely the meaning of the connective they govern'' (p.~1114).

We depart from Fitch in dropping his rules of Reiteration and double negation elimination  (\citealt{Fitch1966}).  A proof will be a sequence of formulas and possibly other proofs, defined inductively below. Every proof begins with one formula, considered its assumption (even if this is just $\top$). When diagramming proofs as in Figure \ref{FirstFigure}, we adopt Fitch's convention of drawing a horizontal line under the assumption of a proof. We regard a one formula proof $\langle\varphi\rangle$ as having $\varphi$ as both its assumption and its conclusion, diagrammed as follows:
\[\begin{nd}
\hypo [\,]  {1} {\varphi}
\have [\,] {2} {\varphi}
\end{nd}\]
We allow proofs that do not end with a conclusion formula (which could be called ``partial proofs'') but we define the provability relation $\vdash_\mathsf{F}$ as follows: $\varphi\vdash_\mathsf{F} \psi$ if there exists a proof beginning with $\varphi$ and ending with $\psi$. For those familiar with Fitch-style natural deduction, the rules of our system are shown in Figure~\ref{FitchRules}. 

\begin{figure}[!p]
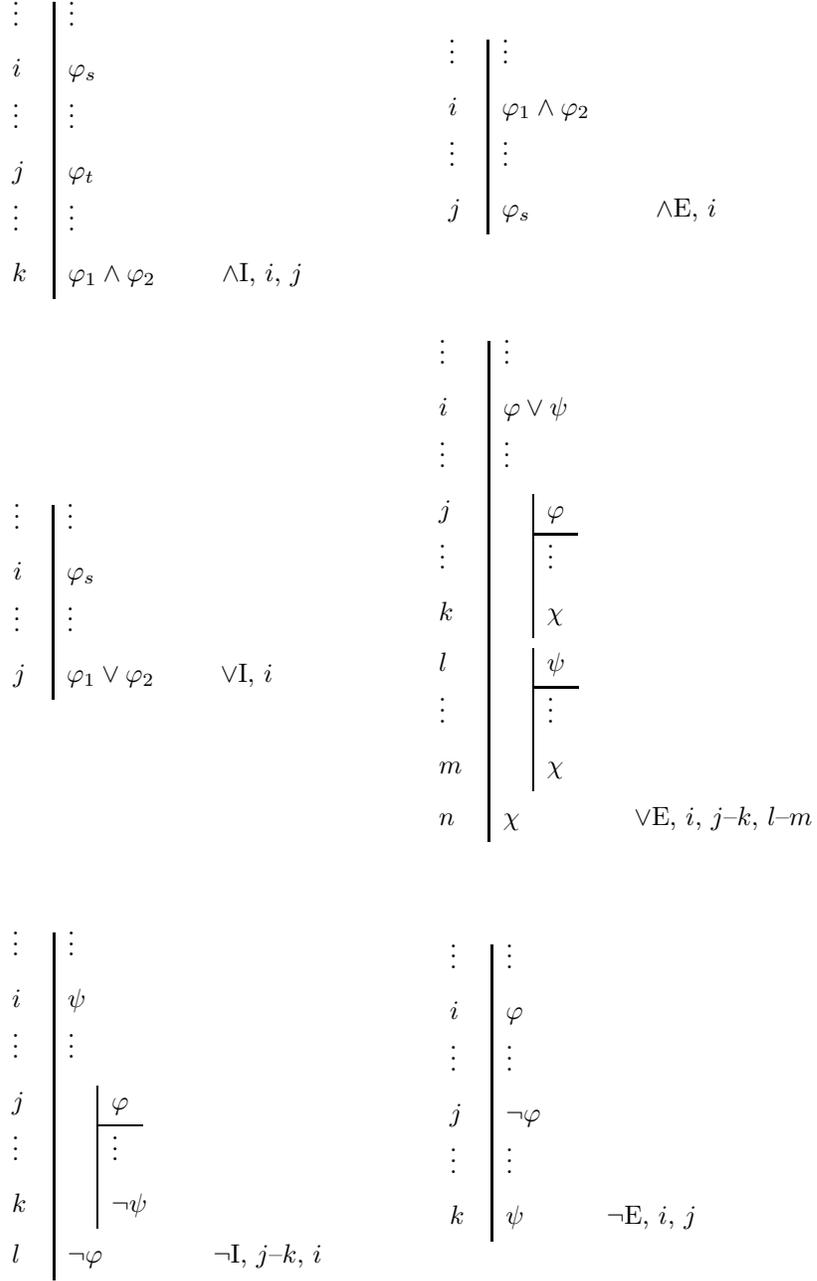

\begin{center}

\begin{minipage}{2in}
\[\begin{nd}
\have [\vdots] {4} {\vdots}
\have [i] {5}  {\varphi_s}
\have [\vdots] {8}   {\vdots}
\have [j] {9} {\varphi_t}
\have [\vdots] {10}   {\vdots}
\have [k] {11}   {\varphi_1\wedge\varphi_2}\ai{5,9}
\end{nd}\]
\end{minipage}\;\;\,\begin{minipage}{2.25in}
\[\begin{nd}
\have [\vdots] {4} {\vdots}
\have [i] {5}  {\varphi_1\wedge\varphi_2}
\have [\vdots] {8}   {\vdots}
\have [j] {9} {\varphi_s} \ae{5}
\end{nd}\]
\end{minipage}
\begin{minipage}{2.45in}
\[\begin{nd}
\have [\vdots] {1} {\vdots} 
\have [i] {2} {\varphi_s} 
\have [\vdots] {3} {\vdots} 
\have [j] {4} {\varphi_1\vee\varphi_2}\oi{2}
\end{nd}\]
\end{minipage}\;\;\,\begin{minipage}{2.4in}

\[\begin{nd}
\have [\vdots] {} {\vdots} 
\have [i] {0} {\varphi\vee\psi}
\have [\vdots] {1} {\vdots} 
\open
\hypo [j] {2} {\varphi}
\have [\vdots] {3} {\vdots}
\have [k] {5} {\chi}
\close
\open
\hypo [l] {7}  {\psi}
\have [\vdots] {8} {\vdots}
\have [m] {9} {\chi}
\close
\have [n] {n} {\chi} \oe{0,2-5,7-9}
\end{nd}\]\vspace{.1in}
\end{minipage}

\begin{minipage}{2in}
\[\begin{nd}
\have [\vdots] {0} {\vdots}
\have [i] {3}   {\psi}
\have [\vdots] {}  {\vdots}
\open
\hypo [j] {1} {\varphi}
\have [\vdots] {4}   {\vdots}
\have [k] {5}   {\neg\psi}
\close
\have [l] {6} {\neg\varphi} \ni{1-5,3}
\end{nd}\]
\end{minipage}\begin{minipage}{2.25in}
\[\begin{nd}
\have [\vdots] {4} {\vdots}
\have [i] {5}  {\varphi}
\have [\vdots] {6} {\vdots}
\have [j] {7}  {\neg\varphi}
\have [\vdots] {8} {\vdots}
\have [k] {9}{\psi} \ne{5, 7}
\end{nd}\]
\end{minipage}
\end{center}
\caption{Rules of a Fitch-style proof system for the logic, where $s,t\in\{1,2\}$.}\label{FitchRules}
\end{figure}

A rigorous inductive definition is as follows.\footnote{To avoid ambiguity, assume formulas are constructed in such a way that no formula is a sequence beginning with a formula.}  The set of proofs is the smallest set containing for each formula $\varphi$ the sequence  $\langle \varphi\rangle$ and satisfying the following closure conditions for $1\leq i,j\leq n$:
\begin{itemize}
\item If $\langle \sigma_1,\dots,\sigma_n\rangle$ is a proof and $\tau$ is a proof, then $\langle \sigma_1,\dots,\sigma_n,\tau\rangle$ is a proof.
\item If $\langle \sigma_1,\dots,\sigma_n\rangle$ is a  proof and $\sigma_i,\sigma_j$  are formulas, then $\langle \sigma_1,\dots,\sigma_n,\sigma_i\wedge\sigma_j\rangle$ is a proof ($\wedge$I).
\item If $\langle \sigma_1,\dots,\sigma_n\rangle$ is a  proof and $\sigma_i$ is a formula of the form $\varphi\wedge\psi$, then $\langle \sigma_1,\dots,\sigma_n,\varphi\rangle$ and $\langle \sigma_1,\dots,\sigma_n,\psi\rangle$ are proofs ($\wedge$E).
\item If $\langle \sigma_1,\dots,\sigma_n\rangle$ is a  proof and  $\sigma_i$ is a formula, then for any formula $\varphi$,  both $\langle \sigma_1,\dots,\sigma_n,\sigma_i\vee\varphi\rangle$ and $\langle \sigma_1,\dots,\sigma_n,\varphi\vee \sigma_i\rangle$ are proofs ($\vee$I).
\item If $\langle \sigma_1,\dots,\sigma_n\rangle$ is a  proof, $\sigma_i$ is a formula of the form $\varphi\vee\psi$, $\sigma_{n-1}$ is a sequence beginning with $\varphi$ and ending with $\chi$, and $\sigma_n$ is a sequence beginning with $\psi$ and ending with $\chi$, then $\langle \sigma_1,\dots,\sigma_n, \chi\rangle$ is a proof ($\vee $E).
\item If $\langle \sigma_1,\dots,\sigma_n\rangle$ is a  proof, $\sigma_i$ is a formula  $\psi$, and $\sigma_n$ is a sequence beginning with $\varphi$ and ending with $\neg\psi$, then $\langle \sigma_1,\dots,\sigma_n,\neg\varphi\rangle$ is a proof ($\neg$I).
\item If $\langle \sigma_1,\dots,\sigma_n\rangle$ is a  proof and $\sigma_i$ and $\sigma_j$ are formulas of the form $\varphi$ and $\neg\varphi$, respectively, then for any formula $\psi$, $\langle \sigma_1,\dots,\sigma_n,\psi\rangle$ is a proof ($\neg$E). 
\end{itemize}
Note that for any proof $\langle \sigma_1,\dots,\sigma_n\rangle$, $\sigma_1$ is a formula and all later $\sigma_i$ are either formulas or proofs. Also note that when diagramming proofs, we follow Fitch and include line numbers that justify a given rule application, but these data are not needed as official parts of a proof, just as they are not needed in Hilbert-style proofs. Whether a sequence is a proof is clearly decidable by an algorithm.

Our introduction and elimination rules for $\wedge$ and $\vee$ and our elimination rule for $\neg$ match those of \citealt{Fitch1966}. However, our introduction rule for $\neg$ is not exactly the same as his. Our $\neg$ introduction rule says that 
\begin{itemize}
\item[] \textit{if from the assumption of $\varphi$, you derive the negation of another formula derived just before the assumption, then conclude $\neg\varphi$}.
\end{itemize}
This formulation of $\neg$ introduction is admissible in Fitch's system, thanks to his Reiteration rule; but Fitch \citeyearpar{Fitch1966} states his $\neg$ introduction rule in a way that requires a pair of contradictory formulas to appear in the subproof that starts with $\varphi$.\footnote{Note that if one does derive a pair of contradictory formulas in a subproof that starts with $\varphi$, then by $\neg$E one can derive the negation of a formula derived just before the assumption of the subproof, so our $\neg$I rule is applicable.} To accomplish what we accomplish with $\neg$I, Fitch would reiterate $\psi$ into the subproof beginning with $\varphi$ to obtain a contradiction between $\psi$ and $\neg\psi$ within the subproof. But we can disassociate Reiteration, which we do not allow (recall the cautionary Figure \ref{FirstFigure}), from $\neg$ introduction. The idea of Reiteration is that if $\psi$ was derived just before a subproof beginning with $\varphi$, then \textit{$\psi$ still holds under the assumption of $\varphi$}. By contrast, when applying our $\neg$I rule, we prove that the \textit{negation} of $\psi$ \textit{holds under the assumption of $\varphi$}, and then since we know that \textit{$\psi$ holds prior to the assumption of $\varphi$}, we deduce $\neg\varphi$.\footnote{Note that our $\neg$I rule produces proofs of the form $\langle\dots, \psi,\dots,\langle \varphi,\dots,\neg\psi\rangle,\neg\varphi \rangle$ but not of the form $\langle \dots,\psi,\dots, \langle \chi,\dots ,\langle \varphi,\dots,\neg\psi\rangle, \neg\varphi\rangle\rangle$ (where $\psi$ is not an element of the subproof beginning with $\chi$). If we were to strengthen $\neg$I so as to allow the intervention of the additional assumption $\chi$ as in the preceding sequence, then we could commit the same mistakes to which Reiteration leads as in \S~\ref{Intro}. Indeed, we could reiterate any negated formula into a subproof: given a formula $\neg\psi$ immediately preceding a subproof $\sigma$ beginning with $\chi$, to reiterate $\neg\psi$ into $\sigma$, create a subproof $\sigma'$ inside $\sigma$ such that $\sigma'$ begins with $\psi$, from which we can prove $\neg\neg\psi$, contradicting the $\neg\psi$ occurring before the assumptions of $\chi$ and $\psi$, in which case the strengthened rule would allow us to conclude $\neg\psi$ after $\sigma'$. Then a restricted version of pseudocomplementation would hold: if $\varphi\wedge\neg\psi\vdash\bot$, then $\neg\psi\vdash\neg\varphi$. But then from the fact that $\Diamond \neg p\wedge\neg\neg p$ (``It might  be that it isn't raining, but it's not the case that it isn't raining'') is contradictory, we could prove using the restricted version of pseudocomplementation and other properties of negation that $\Diamond\neg p\vdash\neg p$.}  

Let us relate our Fitch-style proof system to a \textit{binary logic} in the sense of \citealt{Goldblatt1974}. The following definition differs from Goldblatt's definition of an \textit{orthologic} only in dropping $\neg\neg\varphi\vdash\varphi$ and  adding rules for $\vee$, which for us is not definable in terms of $\wedge$ and $\neg$. Similarly, a sequent calculus presentation can be obtained from Cutland and Gibbins' \citeyearpar[\S~3]{Cutland1982} sequent calculus for orthologic by dropping their rule $\neg\neg\to$.

\begin{definition}\label{BinaryLogic} An \textit{intro-elim logic} is a binary relation $\vdash \,\subseteq\mathcal{L}\times\mathcal{L}$ such that for all $\varphi,\psi,\chi\in\mathcal{L}$:

\begin{center}
\begin{tabular}{ll}
1. $\varphi\vdash\varphi$ & 8. if $\varphi\vdash\psi$ and $\psi\vdash\chi$, then $\varphi\vdash\chi$  \\
2. $\varphi\wedge\psi\vdash\varphi$ & \\
3.  $\varphi\wedge\psi\vdash\psi$ & 9. if $\varphi\vdash \psi$ and $\varphi\vdash\chi$, then $\varphi\vdash \psi\wedge\chi$\\
4. $\varphi\vdash \varphi\vee\psi$ & \\
5. $\varphi\vdash \psi\vee\varphi$ & 10. if $\varphi\vdash\chi$ and $\psi\vdash\chi$, then $\varphi\vee\psi \vdash \chi$ \\
6. $\varphi\vdash \neg\neg\varphi$ &  \\
7. $\varphi\wedge\neg\varphi\vdash\psi$ \qquad\qquad\qquad& 11. if  $\varphi\vdash\psi$, then $\neg\psi\vdash\neg\varphi$. \\
\end{tabular}
\end{center}
\end{definition}

The following is easy to check.

\begin{proposition}\label{BinaryLogicProp} $\vdash_\mathsf{F}$ is an intro-elim logic.
\end{proposition}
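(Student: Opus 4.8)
The plan is to verify each of the eleven conditions of Definition~\ref{BinaryLogic} directly from the inductive definition of proofs, with essentially all of the work concentrated in a single composition lemma needed for the ``transitive'' clauses. First I would record a structural preliminary: every initial segment $\langle \sigma_1,\dots,\sigma_k\rangle$ of a proof $\langle \sigma_1,\dots,\sigma_n\rangle$ is itself a proof. This is immediate from the fact that the set of proofs is the \emph{smallest} set closed under the listed operations, so every proof is generated from a one-formula sequence by successively appending a single new last element (a formula or a subproof); hence membership is preserved under truncation. This preliminary licenses induction on the length of a proof, and in particular it shows that $\sigma_1$, the assumption, is always a formula.

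The core of the argument is the following composition (or continuation) lemma: if $\langle \sigma_1,\dots,\sigma_n\rangle$ is a proof and $\langle \rho_1,\dots,\rho_m\rangle$ is a proof whose first (necessarily formula) entry $\rho_1$ already occurs as some top-level formula $\sigma_i$, then $\langle \sigma_1,\dots,\sigma_n,\rho_2,\dots,\rho_m\rangle$ is a proof. I would prove this by induction on $m$: the element $\rho_m$ is obtained from $\langle \rho_1,\dots,\rho_{m-1}\rangle$ by one closure condition, and I re-apply that same condition to the proof $\langle \sigma_1,\dots,\sigma_n,\rho_2,\dots,\rho_{m-1}\rangle$ furnished by the inductive hypothesis. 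The side conditions survive the embedding: the rules $\wedge$I, $\wedge$E, $\vee$I, and $\neg$E refer to earlier entries only by content, and each referenced formula (including possibly $\rho_1=\sigma_i$) is still present; while the rules that inspect position, namely $\vee$E and $\neg$I, require the relevant \emph{subproof(s)} immediately before the new conclusion, and those subproofs lie in the appended tail $\rho_2,\dots,\rho_{m-1}$, whose adjacency is unaffected by prepending $\sigma_1,\dots,\sigma_n$. The degenerate cases in which an inspected subproof would be $\rho_1$ cannot arise, since $\rho_1$ is a formula, not a subproof. Taking $\rho_1=\sigma_n$ yields ordinary composition of proofs, and taking $\rho_1=\sigma_1$ lets one re-derive a second conclusion from the same assumption.

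With these tools the eleven conditions become routine. Clauses 1--5, 7, and 10 are single constructions: $\langle\varphi\rangle$ for~1; one application of $\wedge$E, $\vee$I, or (after extracting both conjuncts of $\varphi\wedge\neg\varphi$) $\neg$E for 2, 3, 4, 5, 7; and, for~10, the proof $\langle \varphi\vee\psi,\,P_1,\,P_2,\,\chi\rangle$ closed off by $\vee$E, where $P_1,P_2$ are the given proofs of $\chi$ from $\varphi$ and from $\psi$. Clause~6 is the proof $\langle \varphi,\,\langle\neg\varphi\rangle,\,\neg\neg\varphi\rangle$, in which the one-line subproof $\langle\neg\varphi\rangle$ ends in $\neg\varphi$ and $\neg$I, with the earlier formula $\varphi$ in the role of $\psi$, yields $\neg\neg\varphi$. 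Clause~8 is the composition lemma with $\rho_1=\sigma_n$, and clause~9 follows by continuing a proof of $\psi$ from $\varphi$ with a re-derivation of $\chi$ from the same $\varphi$ (the lemma with $\rho_1=\sigma_1$) and then applying $\wedge$I to the now co-present $\psi$ and $\chi$. Finally, for clause~11, from $\varphi\vdash_\mathsf{F}\psi$ together with clauses 6 and 8 I obtain $\varphi\vdash_\mathsf{F}\neg\neg\psi$, i.e.\ a proof $\langle\varphi,\dots,\neg\neg\psi\rangle$; inserting it as a subproof after the assumption $\neg\psi$ and applying $\neg$I, with $\neg\psi$ in the role of the earlier formula so that the subproof is required to end in $\neg(\neg\psi)=\neg\neg\psi$, produces $\langle \neg\psi,\,\langle \varphi,\dots,\neg\neg\psi\rangle,\,\neg\varphi\rangle$, witnessing $\neg\psi\vdash_\mathsf{F}\neg\varphi$.

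I expect the only genuine obstacle to be the composition lemma, and within it the bookkeeping for $\vee$E and $\neg$I, since these are precisely the rules whose applicability depends on the positions of subproofs rather than merely on which formulas have previously appeared; verifying that prepending a block preserves the required adjacencies (and that the formula-only cases cannot degenerate) is where care is needed, whereas the remaining clauses are just a matter of exhibiting explicit proofs.
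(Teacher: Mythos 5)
Your proof is correct and takes essentially the same route as the paper's: the paper exhibits the same witnessing sequences for clauses 6 and 8--11 (the one-line subproof for double negation introduction, the wrapped subproof for contraposition, and the glued sequences for 8--10) and dismisses the remaining clauses and the gluing justification as easy to check. Your composition lemma, with the observation that only $\vee$E and $\neg$I are position-sensitive and that they inspect only the appended tail (which cannot degenerate to the formula $\rho_1$), simply makes the paper's ``it is easy to see'' step precise; there is no gap.
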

\noindent In fact, we will see that $\vdash_\mathsf{F}$ is the smallest intro-elim logic (Proposition \ref{SmallestIntelim}), which justifies the name of such logics: they all have at least the power of the introduction and elimination rules for the connectives from $\vdash_\mathsf{F}$. Let us highlight the most important, even if obvious, cases of the proof of Proposition \ref{BinaryLogicProp} for our purposes. First is $\varphi\vdash_\mathsf{F}\neg\neg\varphi$, which is shown as follows:
\[\begin{nd}
\hypo [1] {1} {\varphi}
\open
\hypo [2] {2} {\neg\varphi}
\have [3] {3} {\neg\varphi} 
\close
\have [4] {4} {\neg\neg\varphi} \ni{2-3,1}
\end{nd}\]
Next is the property that if $\varphi\vdash_\mathsf{F}\psi$, then $\neg\psi\vdash_\mathsf{F}\neg\varphi$. Assuming we have a  proof from $\varphi$ to $\psi$, we construct a proof from $\neg\psi$ to $\neg\varphi$  as follows:
\[\begin{nd}
\hypo [1] {1} {\neg\psi}
\open
\hypo [2] {2} {\varphi}
\have [\vdots]  {} {\vdots}
\have [n] {3} {\psi} 
\open
\hypo [n+1] {4} {\neg\psi}
\have [n+2] {5} {\neg\psi} 
\close
\have [n+3] {6} {\neg\neg\psi} \ni{4-5,3}
\close
\have [n+4] {7} {\neg\varphi} \ni{2-6,1}
\end{nd}\]
Proving 8-10 of Definition \ref{BinaryLogic} for $\vdash_\mathsf{F}$ also involves gluing together proofs. For 8, given proofs $\langle \varphi,\sigma_1,\dots,\sigma_n,\psi\rangle$ and $\langle \psi,\tau_1,\dots,\tau_m,\chi\rangle$, it is easy to see that $\langle \varphi,\sigma_1,\dots\sigma_n,\psi,\tau_1,\dots,\tau_m,\chi\rangle$ is also a proof. For 9, given proofs $\langle \varphi,\sigma_1,\dots,\sigma_n,\psi\rangle$ and $\langle \varphi,\tau_1,\dots,\tau_m,\chi\rangle$, the sequence $\langle \varphi,\sigma_1,\dots,\sigma_n,\psi, \tau_1,\dots,\tau_m,\chi,\psi\wedge\chi\rangle$ is  a proof. For 10, given proofs $\sigma =\langle \varphi,\sigma_1,\dots,\sigma_n,\chi\rangle$ and $\tau=\langle \psi,\tau_1,\dots,\tau_m,\chi\rangle$, the sequence $\langle \varphi\vee\psi,\sigma,\tau, \chi\rangle$ is  a proof.

Let us mention the three most salient extensions of our logic. First, adding Reductio ad Absurdum as in Figure~\ref{RAAFig} produces a Fitch-style proof system for orthologic, also laid out in \citealt{HM2022}. Equivalently, let $\vdash_\mathsf{O}$ be the smallest intro-elim logic containing $\neg\neg\varphi\vdash\varphi$ for all $\varphi\in\mathcal{L}$. As in the negative translation of classical logic into intuitionistic logic (\citealt{Godel1933}, \citealt{Gentzen1936}), the translation $g$ given by
 \[\mbox{$g(p)=\neg\neg p$, $g(\neg\varphi)=\neg g(\varphi)$, $g(\varphi\wedge\psi)=(g(\varphi)\wedge g(\psi))$, and $g(\varphi\vee\psi)=g(\neg(\neg\varphi\wedge\neg\psi))$}\]
 is  a full and faithful embedding of orthologic into $\vdash_\mathsf{F}$.\footnote{By contrast, a Glivenko-style theorem (\citealt{Glivenko1929}) stating that $\varphi\vdash_\mathsf{O}\psi$ iff $\varphi\vdash_\mathsf{F}\neg\neg\psi$ does not hold, because ${\neg\neg p\wedge \neg\neg q \vdash_\mathsf{O} p\wedge q}$ but $\neg\neg p\wedge\neg\neg q\nvdash_\mathsf{F}\neg\neg (p\wedge q)$, as we show semantically in \S~\ref{AlgSection}.}
\begin{proposition} For all $\varphi,\psi\in\mathcal{L}$, we have $\varphi\vdash_\mathsf{O}\psi$ iff $g(\varphi)\vdash_\mathsf{F}g(\psi)$.
\end{proposition}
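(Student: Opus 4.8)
The plan is to show that $g$ preserves and reflects provability, with the whole argument resting on a single observation: every formula in the image of $g$ is \emph{stable} in $\vdash_\mathsf{F}$, i.e.\ satisfies $\neg\neg g(\chi)\vdash_\mathsf{F} g(\chi)$, and is \emph{interderivable} with its source in $\vdash_\mathsf{O}$, i.e.\ $g(\chi)\dashv\vdash_\mathsf{O}\chi$. I would prove the two directions separately, the $(\Rightarrow)$ direction by the minimality of $\vdash_\mathsf{O}$ and the $(\Leftarrow)$ direction by the inclusion $\vdash_\mathsf{F}\,\subseteq\,\vdash_\mathsf{O}$.

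First I would establish the stability lemma: for every $\chi\in\mathcal{L}$, $\neg\neg g(\chi)\vdash_\mathsf{F} g(\chi)$. The key sub-fact is that \emph{any} formula of the form $\neg\alpha$ is stable: from $\alpha\vdash_\mathsf{F}\neg\neg\alpha$ (clause 6 of Definition~\ref{BinaryLogic}) and antitonicity (clause 11) we get $\neg\neg\neg\alpha\vdash_\mathsf{F}\neg\alpha$, which is exactly $\neg\neg(\neg\alpha)\vdash_\mathsf{F}(\neg\alpha)$. Now the induction on $\chi$ is almost immediate, since three of the four clauses defining $g$ return a negation: $g(p)=\neg\neg p=\neg(\neg p)$, $g(\neg\varphi)=\neg g(\varphi)$, and $g(\varphi\vee\psi)=\neg g(\neg\varphi\wedge\neg\psi)$ are all negations and hence stable outright. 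The only remaining case is $g(\varphi\wedge\psi)=g(\varphi)\wedge g(\psi)$, where I would use the standard argument that a conjunction of stable formulas is stable: from $g(\varphi)\wedge g(\psi)\vdash_\mathsf{F} g(\varphi)$ (clause 2), antitonicity twice yields $\neg\neg(g(\varphi)\wedge g(\psi))\vdash_\mathsf{F}\neg\neg g(\varphi)\vdash_\mathsf{F} g(\varphi)$ (the last step by the inductive hypothesis), symmetrically for $g(\psi)$, and then clause 9 glues these into $\neg\neg(g(\varphi)\wedge g(\psi))\vdash_\mathsf{F} g(\varphi)\wedge g(\psi)$.

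For $(\Rightarrow)$, I would define $R\subseteq\mathcal{L}\times\mathcal{L}$ by $\varphi\mathrel{R}\psi$ iff $g(\varphi)\vdash_\mathsf{F} g(\psi)$ and verify that $R$ is an intro-elim logic containing $\neg\neg\chi\mathrel{R}\chi$ for all $\chi$; since $\vdash_\mathsf{O}$ is by definition the \emph{smallest} such logic, this gives $\vdash_\mathsf{O}\,\subseteq\,R$, which is the claim. Clauses 1--3 and 6--9 and 11 of Definition~\ref{BinaryLogic} for $R$ translate directly into the corresponding clauses for $\vdash_\mathsf{F}$ (using that $g$ commutes with $\wedge$ and $\neg$ on the relevant formulas and that $\vdash_\mathsf{F}$ is an intro-elim logic by Proposition~\ref{BinaryLogicProp}). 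Clauses 4--5, e.g.\ $g(\varphi)\vdash_\mathsf{F}\neg(\neg g(\varphi)\wedge\neg g(\psi))$, follow from clause 2 plus antitonicity plus $g(\varphi)\vdash_\mathsf{F}\neg\neg g(\varphi)$. The two places where stability is indispensable are clause 10 and the axiom $\neg\neg\chi\mathrel{R}\chi$: for clause 10, from $g(\varphi)\vdash_\mathsf{F} g(\chi)$ and $g(\psi)\vdash_\mathsf{F} g(\chi)$ I would derive $\neg g(\chi)\vdash_\mathsf{F}\neg g(\varphi)\wedge\neg g(\psi)$ (antitonicity and clause 9), hence $\neg(\neg g(\varphi)\wedge\neg g(\psi))\vdash_\mathsf{F}\neg\neg g(\chi)\vdash_\mathsf{F} g(\chi)$, the last step being stability; and $\neg\neg\chi\mathrel{R}\chi$ is precisely $\neg\neg g(\chi)\vdash_\mathsf{F} g(\chi)$ since $g(\neg\neg\chi)=\neg\neg g(\chi)$.

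For $(\Leftarrow)$, I would first note $\vdash_\mathsf{F}\,\subseteq\,\vdash_\mathsf{O}$, since $\vdash_\mathsf{F}$ is the smallest intro-elim logic (Proposition~\ref{SmallestIntelim}) and $\vdash_\mathsf{O}$ is one. Then by induction on $\chi$ I would prove $g(\chi)\dashv\vdash_\mathsf{O}\chi$: the atomic case is $\neg\neg p\dashv\vdash_\mathsf{O} p$ (clause 6 together with the defining double-negation elimination of $\vdash_\mathsf{O}$), the $\neg$ and $\wedge$ cases follow by congruence, and the $\vee$ case reduces to the De Morgan equivalence $\neg(\neg\alpha\wedge\neg\beta)\dashv\vdash_\mathsf{O}\alpha\vee\beta$, whose left-to-right half uses double-negation elimination and so is genuinely orthologic-specific. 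Finally, given $g(\varphi)\vdash_\mathsf{F} g(\psi)$, the inclusion yields $g(\varphi)\vdash_\mathsf{O} g(\psi)$, and chaining with interderivability gives $\varphi\vdash_\mathsf{O} g(\varphi)\vdash_\mathsf{O} g(\psi)\vdash_\mathsf{O}\psi$, whence $\varphi\vdash_\mathsf{O}\psi$ by transitivity. I expect the main obstacle to be exactly the stability lemma and its use in clause 10: it is the one point where the collapse of the ``double negation'' of a $g$-image down to the image itself cannot be read off the connective rules alone but must be pushed through the inductive structure of $g$, and the parallel De Morgan step in the reflecting direction is where orthologic's double-negation elimination is essential.
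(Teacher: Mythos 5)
Your proposal is correct and follows essentially the same route as the paper's proof: the same stability lemma $\neg\neg g(\chi)\vdash_\mathsf{F}g(\chi)$ proved by induction on $\chi$ (with only the $\wedge$ case genuinely needing the inductive hypothesis), the same appeal to the minimality of $\vdash_\mathsf{O}$ via the auxiliary relation $\varphi\mathrel{R}\psi$ iff $g(\varphi)\vdash_\mathsf{F}g(\psi)$, the identical treatment of clause 10, and the same use of $g(\chi)\dashv\vdash_\mathsf{O}\chi$ together with $\vdash_\mathsf{F}\,\subseteq\,\vdash_\mathsf{O}$ for the converse (which the paper states contrapositively). The only cosmetic difference is that you package three of the four stability cases under the single observation that any negated formula is stable, where the paper checks them one by one.
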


\begin{proof} First, an easy induction shows that for all $\varphi\in\mathcal{L}$, $\varphi\vdash_\mathsf{O}g(\varphi)$ and $g(\varphi)\vdash_\mathsf{O}\varphi$. Hence if $\varphi\nvdash_\mathsf{O}\psi$, then $g(\varphi)\nvdash_\mathsf{O}g(\psi)$ and so $g(\varphi)\nvdash_\mathsf{F}g(\psi)$, using that $\vdash_\mathsf{F}$ is the smallest intro-elim logic. For the other direction, we claim that the relation $\vdash_g$ defined by $\varphi\vdash_g\psi$ iff $g(\varphi)\vdash_\mathsf{F}g(\psi)$ is an intro-elim logic such that $\neg\neg\varphi\vdash_g\varphi$ for all $\varphi\in\mathcal{L}$. Then since $\vdash_\mathsf{O}$ is the smallest such logic, $\varphi\vdash_\mathsf{O}\psi$ implies $g(\varphi)\vdash_\mathsf{F}g(\psi)$. 

First, we prove by induction on $\varphi\in\mathcal{L}$ that $\neg\neg\varphi\vdash_g\varphi$. For the base case of $\neg\neg p\vdash_g p$, we need that $\neg\neg\neg\neg p\vdash_\mathsf{F}\neg\neg p$, which follows from $\neg p\vdash_\mathsf{F}\neg\neg\neg p$. For the $\neg$ case of $\neg\neg \neg\varphi\vdash_g \neg\varphi$, we need $\neg\neg\neg g(\varphi)\vdash_\mathsf{F}\neg g(\varphi)$, which follows from $g(\varphi)\vdash_\mathsf{F}\neg\neg g(\varphi)$. For the $\wedge$ case of $\neg\neg (\varphi\wedge\psi)\vdash_g \varphi\wedge\psi$, we need $\neg\neg (g(\varphi)\wedge g(\psi))\vdash_\mathsf{F} g(\varphi)\wedge g(\psi)$. From $g(\varphi)\wedge g(\psi)\vdash_\mathsf{F}g(\varphi)$, we have $\neg g(\varphi)\vdash_\mathsf{F} \neg (g(\varphi)\wedge g(\psi))$ and hence $\neg \neg (g(\varphi)\wedge g(\psi))\vdash_\mathsf{F}\neg\neg g(\varphi)$, so $\neg \neg (g(\varphi)\wedge g(\psi))\vdash_\mathsf{F} g(\varphi)$ by the inductive hypothesis. Similarly, $\neg \neg (g(\varphi)\wedge g(\psi))\vdash_\mathsf{F} g(\psi)$, so we obtain $\neg \neg (g(\varphi)\wedge g(\psi))\vdash_\mathsf{F} g(\varphi)\wedge g(\psi)$. Finally, for the $\vee$ case of $\neg\neg (\varphi\vee\psi)\vdash_g \varphi\vee\psi$, we need $\neg\neg \neg (\neg g(\varphi)\wedge\neg g(\psi))\vdash_\mathsf{F} \neg (\neg g(\varphi)\wedge\neg g(\psi))$, which follows from $\neg g(\varphi)\wedge\neg g(\psi)\vdash_\mathsf{F}\neg \neg (\neg g(\varphi)\wedge\neg g(\psi))$.

Now it is easy to verify that $\vdash_g$ is an intro-elim logic. For condition 10 of Definition \ref{BinaryLogic}, given $\varphi\vdash_g\chi$ and $\psi\vdash_g\chi$, so $g(\varphi)\vdash_\mathsf{F}g(\chi)$ and $g(\psi)\vdash_\mathsf{F}g(\chi)$, we have $\neg g(\chi)\vdash_\mathsf{F}\neg g(\varphi)\wedge\neg g(\psi)$ and hence $\neg(\neg g(\varphi)\wedge\neg g(\psi))\vdash_\mathsf{F}\neg\neg g(\chi)$. It follows by the previous paragraph that $\neg(\neg g(\varphi)\wedge\neg g(\psi))\vdash_\mathsf{F} g(\chi)$, so $\varphi\vee\psi\vdash_g\chi$.
\end{proof}

\begin{figure}[h]
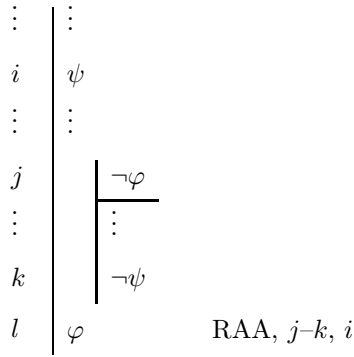

\begin{center}
\begin{minipage}{2.5in}
\[\begin{nd}
\have [\vdots] {0} {\vdots}
\have [i] {3}   {\psi}
\have [\vdots] {4}   {\vdots}
\open
\hypo[ j] {1} {\neg \varphi}
\have [\vdots] {}  {\vdots}
\have [k] {5}   {\neg\psi}
\close
\have [l]{6} {\varphi} \RAA{1-5,3}
\end{nd}
\]\end{minipage}
\end{center}
\caption{The Reductio ad Absurdum rule that turns our proof system into a proof system for orthologic.}\label{RAAFig}
\end{figure}

If instead of Reductio, we add Fitch's rule of Reiteration to $\vdash_\mathsf{F}$, as in Appendix \ref{AppendixA}, then we obtain a Fitch-style proof system for intuitionistic logic in the $\wedge,\vee,\neg$ fragment. Intuitionistic logic in this fragment is the logic of pseudocomplemented distributive lattices (\citealt{Jordi1993}), and using Reiteration we obtain both  pseudocomplementation (see Figure \ref{PseudoFig}) and distributivity (in the style of Figure \ref{FirstFigure}). Finally, adding both Reductio and Reiteration yields a Fitch-style proof system for classical logic (see Appendix~\ref{AppendixA}).

\begin{figure}[h]
\[\begin{nd}
\hypo [1] {1} {\psi}
\open
\hypo [2] {2} {\varphi}
\have [3] {3} {\psi} \r{1}
\have [4] {4} {\varphi\wedge\psi} \ai{2,3} 
\have [\vdots] {}  {\vdots}
\have [n] {5} {\neg\psi} 
\close
\have [n+1] {7} {\neg\varphi} \ni{2-5,1}
\end{nd}\]
\caption{Given a proof from $\varphi\wedge\psi$ to $\bot$, which easily yields a proof from $\varphi\wedge\psi$ to $\neg\psi$, Reiteration would permit the construction of a proof from $\psi$ to $\neg\varphi$.}\label{PseudoFig}
\end{figure}

We briefly note in Figure \ref{GentzenFig} how our points about Reiteration in Fitch-style natural deduction transfer to  Gentzen-style natural deduction (see, e.g., \citealt[\S~3.4]{Chiswell2007}). The introduction and elimination rules for conjunction, the introduction rule for disjunction, and the elimination rule for negation\footnote{We do not have $\bot$ as a primitive in our language, so we formulate $\neg$E as follows: proofs of $\varphi$ and $\neg\varphi$ may be joined with a new root labeled by any formula $\psi$, forming a proof that inherits all the open assumptions of the two proofs.} remain unchanged. We drop RAA from the Gentzen system just as we did from the Fitch system.

\begin{figure}[!ht]
\begin{center}
\begin{minipage}{2in}
\begin{prooftree}
\AxiomC{$\mathcal{D}_0$}
\noLine
\UnaryInfC{$(\varphi\vee\psi)$}

\AxiomC{$[\varphi]$}
\noLine
\UnaryInfC{$\mathcal{D}_1$}
\noLine\
\UnaryInfC{$\chi $}  

\AxiomC{$[\psi]$}
\noLine
\UnaryInfC{$\mathcal{D}_2$}
\noLine
\UnaryInfC{$\chi $}                
\RightLabel{$\vee \mathrm{E}$}
\TrinaryInfC{$\chi$}
\end{prooftree}\end{minipage}\begin{minipage}{2in}\begin{prooftree}
\AxiomC{$\mathcal{D}_0$}
\noLine
\UnaryInfC{$\psi$}       

\AxiomC{$[\varphi]$}  
\noLine
\UnaryInfC{$\mathcal{D}_1$}
\noLine
\UnaryInfC{$\neg\psi$}
\RightLabel{$\neg \mathrm{I}$}
\BinaryInfC{$\neg\varphi$}
\end{prooftree}
\end{minipage}
\end{center}
\caption{To modify Gentzen-style natural deduction rules to match our dropping of Reiteration from Fitch-style natural deduction, for $\vee$E the only open assumptions of $\mathcal{D}_1$ and $\mathcal{D}_2$  may be $\varphi$ and $\psi$, respectively; for $\neg$I the only open assumption of $\mathcal{D}_1$ may be $\varphi$.}\label{GentzenFig}
\end{figure}

In response to a presentation of this paper at the Colloquium Logicum 2022, Aguilera and Byd\u{z}ovsk\'y \citeyearpar{Aguilera2022} observed that a sequent calculus LF for fundamental logic can be obtained from Gentzen's sequent calculus LK for classical logic in the $\{\wedge,\vee,\neg\}$-signature by restricting to sequents $\Gamma\Rightarrow \Delta$ in which $|\Delta|\leq 1$, as for intuitionistic logic, and $|\Gamma| + |\Delta|\leq 2$, as for orthologic (\citealt{Monting1981}). They verified that $\varphi\Rightarrow_\mathrm{LF}\psi$ iff $\varphi\vdash_\mathsf{F}\psi$, that LF admits cut-elimination, and that proof search in the cut-free calculus terminates in polynomial time, following a proof-search strategy of Egly and Tompits \citeyearpar[\S~4.3]{Egly2003} for orthologic.

\begin{theorem}[Aguilera and Byd\u{z}ovsk\'y] It is decidable in polynomial time whether $\varphi\vdash_\mathsf{F}\psi$.
\end{theorem}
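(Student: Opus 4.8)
The plan, following the route indicated by Aguilera and Byd\u{z}ovsk\'y, is to replace the Fitch system by an equivalent sequent calculus that enjoys cut-elimination and the subformula property, and then to decide provability by exhaustive bottom-up proof search over the (polynomially many) relevant sequents. First I would fix the calculus LF: its sequents are those $\Gamma \Rightarrow \Delta$ of Gentzen's LK (in the signature $\{\wedge,\vee,\neg\}$) with $|\Delta| \le 1$ and $|\Gamma|+|\Delta| \le 2$, so the only shapes are $\Rightarrow \psi$, $\varphi \Rightarrow$, $\varphi \Rightarrow \psi$, and $\varphi,\psi \Rightarrow$, and a rule of LK may be applied only when its premises and conclusion all have an admissible shape. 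The intended reading is $\varphi \Rightarrow \psi \,\leftrightarrow\, \varphi \vdash_\mathsf{F}\psi$, $\varphi \Rightarrow \,\leftrightarrow\, \varphi \vdash_\mathsf{F}\bot$, $\Rightarrow\psi \,\leftrightarrow\, \top\vdash_\mathsf{F}\psi$, and $\varphi,\psi\Rightarrow \,\leftrightarrow\, \varphi\wedge\psi\vdash_\mathsf{F}\bot$. The bound $|\Gamma|+|\Delta|\le 2$ is designed to encode the absence of Reiteration: it restricts the carrying of ``side formulas'' through negation inferences, which is exactly the move that would otherwise license pseudocomplementation and distributivity (cf.~Figures~\ref{FirstFigure} and~\ref{PseudoFig}).

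Second I would prove $\varphi\Rightarrow_\mathrm{LF}\psi$ iff $\varphi\vdash_\mathsf{F}\psi$. For $\Rightarrow_\mathrm{LF}\,\subseteq\,\vdash_\mathsf{F}$ I induct on LF-derivations, reading each sequent off as above and translating every rule instance into a manipulation of Fitch proofs of the sort already carried out for Proposition~\ref{BinaryLogicProp} and conditions~8--10 of Definition~\ref{BinaryLogic} (gluing of proofs, $\wedge$/$\vee$ introduction and elimination, and $\neg$I/$\neg$E); here it is essential to check that every admissible rule instance is sound for $\vdash_\mathsf{F}$, in particular that the restriction is arranged so as \emph{not} to validate pseudocomplementation, i.e.~that $\varphi\wedge\psi\vdash_\mathsf{F}\bot$ does not yield $\psi\vdash_\mathsf{F}\neg\varphi$, this being the sequent-calculus counterpart of forbidding Reiteration. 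For the converse, rather than translating Fitch proofs directly, I would show that the relation $\{(\varphi,\psi):\varphi\Rightarrow_\mathrm{LF}\psi\}$ satisfies the eleven conditions of Definition~\ref{BinaryLogic}: reflexivity and the $\wedge,\vee$ axioms are immediate, transitivity (condition~8) is a single use of cut, and $\varphi\vdash\neg\neg\varphi$, explosion, and antitonicity are short derivations. Being an intro-elim logic, it then contains the smallest one, namely $\vdash_\mathsf{F}$, by Proposition~\ref{SmallestIntelim}.

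Third comes cut-elimination for LF, which is where the real work lies. I would run Gentzen's double induction on (complexity of the cut formula, total height of the two subderivations), checking the usual principal and commuting cases. The structural point that must be verified throughout is that the size bound is inherited by premises: a single-conclusion cut whose conclusion satisfies $|\Gamma|+|\Delta|\le 2$ must have the form $\Gamma\Rightarrow\varphi$ together with $\varphi,\Sigma\Rightarrow\Pi$ where $|\Gamma|\le 1$ and $|\Sigma|+|\Pi|\le 1$, so each sequent generated by a reduction step again lies in LF. Thus no reduction ever leaves the fragment, and every LF-provable sequent has a cut-free LF-proof; this is the analogue, for the $|\Gamma|+|\Delta|\le 2$ fragment, of Monting's cut-elimination for orthologic \citep{Monting1981}.

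Finally, the decision procedure. By cut-elimination and the subformula property, a cut-free LF-proof of $\varphi\Rightarrow\psi$ mentions only subformulas of $\varphi$ and $\psi$; writing $n$ for the number of such subformulas, there are only $O(n^2)$ sequents of LF over them. I would therefore compute the set of all LF-provable such sequents by a bottom-up fixpoint (dynamic programming), as in the proof-search strategy of Egly and Tompits \citep[\S~4.3]{Egly2003}: start from the axioms and repeatedly close under one-step rule applications, each having boundedly many premises drawn from the same $O(n^2)$-element pool. The saturation stabilizes after polynomially many rounds with polynomial work per round, and one answers the decision problem by testing whether $\varphi\Rightarrow\psi$ was generated. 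The main obstacle is the third step: one must verify case by case that cut-reduction never produces a sequent violating $|\Gamma|+|\Delta|\le 2$ (equivalently, that the restricted rule set is closed under the reductions), since it is this closure property, rather than the essentially bookkeeping proof search, that makes the cut-free, bounded, subformula calculus available in the first place.
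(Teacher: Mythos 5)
Your route is the one the paper itself only gestures at (it attributes the entire argument to Aguilera and Byd\u{z}ovsk\'y and records just the three ingredients: the shape-restricted calculus LF, its equivalence with $\vdash_\mathsf{F}$ via cut-elimination, and Egly--Tompits-style saturation), so the overall decomposition is right and the complexity analysis at the end is unproblematic. But there is a genuine gap in your step two, and it sits exactly at the point you yourself flag as delicate. You read the sequent $\varphi,\psi\Rightarrow$ as $\varphi\wedge\psi\vdash_\mathsf{F}\bot$. Under that reading, the LK rule $\neg$-right applied with a one-formula left context --- from $\psi,\varphi\Rightarrow$ infer $\psi\Rightarrow\neg\varphi$ --- is a shape-admissible rule instance of LF (both sequents satisfy $|\Gamma|+|\Delta|\le 2$), and it is precisely pseudocomplementation, which fails in $\vdash_\mathsf{F}$ (Figure \ref{PseudoFig}). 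You cannot simply excise this instance either: it is needed to derive contraposition (from $\varphi\Rightarrow\psi$, $\neg$-left gives $\neg\psi,\varphi\Rightarrow$, and then $\neg$-right with context $\neg\psi$ gives $\neg\psi\Rightarrow\neg\varphi$), so without it the calculus would not contain the smallest intro-elim logic and your completeness argument via Proposition \ref{SmallestIntelim} would collapse. So with your stated invariant the soundness induction fails, and with the offending rule removed the completeness direction fails.

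The repair is to change the invariant: read $\varphi,\psi\Rightarrow$ as $\varphi\vdash_\mathsf{F}\neg\psi$, which by Lemma \ref{UsefulLem}.\ref{UsefulLem2} is equivalent to $\psi\vdash_\mathsf{F}\neg\varphi$, so the reading is symmetric in the two formulas as it must be. This is strictly stronger than $\varphi\wedge\psi\vdash_\mathsf{F}\bot$ in $\vdash_\mathsf{F}$ --- the gap between the two readings is exactly the failure of pseudocomplementation --- and under it every shape-admissible rule instance becomes sound: $\neg$-right from $\psi,\varphi\Rightarrow$ to $\psi\Rightarrow\neg\varphi$ is now a tautologous restatement of its premise; $\neg$-left from $\psi\Rightarrow\varphi$ to $\psi,\neg\varphi\Rightarrow$ is transitivity with $\varphi\vdash_\mathsf{F}\neg\neg\varphi$; and passing from $\varphi,\psi\Rightarrow$ to the one-formula sequent $\varphi\wedge\psi\Rightarrow$ via $\wedge$-left and contraction uses only the fact that $a\leq\neg a$ forces $a=0$ for a semicomplementation. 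This is the same device used in the two-formula sequent calculi for orthologic on which LF is modeled, where $\varphi,\psi\Rightarrow$ likewise denotes $\varphi\leq\neg\psi$ rather than $\varphi\wedge\psi\leq 0$. With the invariant corrected, the rest of your outline --- cut-elimination respecting the size bound, then polynomial saturation over the $O(n^2)$ sequents built from subformulas --- is the intended argument, with the cut-elimination case analysis remaining the substantial unchecked work, as you acknowledge.
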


\noindent In fact, Aguilera and Byd\u{z}ovsk\'y obtained cut-elimination and decidability for the first-order version of fundamental logic in \S~\ref{QuantSection}.

\section{Algebras}\label{AlgSection}

We now turn to algebraic semantics for the logic presented in \S~\ref{FitchSection}. The relevant algebraic structures are bounded lattices equipped with an appropriate negation. We denote the lattice operations by $\wedge$ and $\vee$ and the negation operation by $\neg$, trusting that no confusion will arise by using the same symbols as in $\mathcal{L}$. 

We first define the operations corresponding to negation in intuitionistic logic, orthologic, and $\vdash_\mathsf{F}$, namely  pseudocomplementation, orthocomplementation, and weak pseudocomplementation, respectively.

\begin{definition}\label{NegDefs} Let $L$ be a bounded lattice and $a\in L$.  An $x\in L$ is the \textit{pseudocomplement} of $a$ if $x$ is the maximum in $L$ of $\{y\in L\mid a\wedge y=0\}$, a \textit{complement} of $a$ if $a\wedge x=0$ and $a\vee x=1$, and a \textit{semicomplement} of $a$ if $a\wedge x=0$.

A \textit{pseudocomplementation} (resp.~\textit{complementation}, \textit{semicomplementation}) is a unary operation $\neg$ on $L$  such that for all $a\in L$, $\neg a$ is the pseudocomplement (resp.~a complement, semicomplement) of $a$. 

An \textit{orthocomplementation} is a complementation that is antitone ($a\leq b$ implies $\neg b\leq\neg a$) and involutive ($\neg\neg a =a $). An \textit{ortholattice} is a bounded lattice equipped with an orthocomplementation.

 A \textit{weak pseudocomplementation} is an antitone semicomplementation satisfying \textit{double negation introduction}: $a\leq\neg\neg a$ for all $a\in L$. \end{definition}
 
The negation operation in a Heyting algebra, defined by $\neg a=a\to 0$, is the pseudocomplementation. Note that if a lattice admits a pseudocomplementation, then it is unique, in contrast to the other kinds of negations above.  The term `weak pseudocomplementation' is taken from \citealt{Dzik2006,Dzik2006b}, \citealt{Almeida2009}.\footnote{Weak psuedocomplementations are also called `Heyting negations' and `Heyting complementations' in \citealt{Dzik2006,Dzik2006b} and \citealt[p.~91]{Dunn2001}, respectively, but this clashes with the fact that the negation in a Heyting algebra is pseudocomplementation.}

The relational semantics of \S~\ref{RelationalSection} will handle other kinds of negations besides those for intuitionistic logic, orthologic, and $\vdash_\mathsf{F}$, so we define some weaker kinds below. For surveys of the large literature on different types of negation, we refer the reader to \citealt{Horn2020} and \citealt[Ch.~8]{Humberstone2011}.

\begin{definition}  A \textit{precomplementation} on a bounded lattice is an antitone unary operation $\neg$  such that $\neg 1=0$.  A \textit{protocomplementation} is an antitone semicomplementation $\neg$ such that $\neg 0=1$. An \textit{ultraweak pseudocomplementation} is an antitone unary operation $\neg$ satisfying double negation introduction and $\neg 1=0$.
\end{definition}

The term `protocomplementation' is from \citealt{Holliday2022}. An ``ultraweak'' pseudocomplementation drops $a\wedge \neg a=0$ from the definition of weak pseudocomplementation in the spirit of paraconsistent logics (\citealt{Priest2022}).\footnote{\label{Quasi-minimal}Ultraweak pseudocomplementations are equivalent to what Dunn and Zhou \citeyearpar{Dunn2005} call \textit{quasi-minimal negations} with the added assumption that $\neg 1=0$ (see Remark \ref{NCremark}).} An example of an ultraweak but not weak pseudocomplementation is the negation operation on the three-element chain with $\neg 1=0$, $\neg 0=1$, and $\neg \frac{1}{2} = \frac{1}{2}$ used for Kleene's \citeyearpar{Kleene1938} three-valued logic.

Properties of and the logical relations between six types of negation are shown in Figures \ref{NegTable} and \ref{NegFig}. For example, to see that any weak pseudocomplementation is a protocomplementation, we show that $1\leq \neg 0$:  given that $1\leq \neg\neg 1$, it suffices to show $\neg 1 =0$; indeed, $\neg 1=1\wedge \neg 1=0$ for any semicomplementation $\neg$. A number of other types of negation could be added to the diagram in Figure \ref{NegFig} (cf.~the ``kites of negations'' in \citealt{Dunn2005}). Each may appear to be based on a rather arbitrary choice of some properties but not others; but what makes weak pseudocomplementations stand out in our view is the connection with the introduction and elimination rules of $\vdash_\mathsf{F}$ established below.

\begin{figure}[h]
\begin{center}
\begin{tabular}{c|c|c|c|c|c|c|}
 		& pre & proto & ultraweak pseudo & weak pseudo & pseudo & ortho \\
		\hline
$a\leq b\Rightarrow\neg b\leq\neg a$ & \checkmark &  \checkmark & \checkmark & \checkmark& \checkmark& \checkmark\\
 $\neg 1=0$ & \checkmark &  \checkmark& \checkmark & \checkmark& \checkmark& \checkmark\\
  $\neg 0 =1$ &  &  \checkmark& \checkmark& \checkmark& \checkmark& \checkmark\\
 $a\wedge\neg a=0$ & &  \checkmark && \checkmark& \checkmark& \checkmark\\
 $a\leq\neg\neg a$ &&&\checkmark & \checkmark & \checkmark & \checkmark \\
  $a\wedge b=0\Rightarrow b \leq\neg a$ &&&&  & \checkmark &  \\
 $\neg\neg a\leq a$ &&&&&& \checkmark 
\end{tabular}
\end{center}
\caption{Properties of six types of negation.}\label{NegTable}
\end{figure}

\begin{figure}[h]

\begin{center}
\begin{tikzpicture}[->,>=stealth',shorten >=1pt,shorten <=1pt, auto,node
distance=2cm,thick,every loop/.style={<-,shorten <=1pt}]

\node at (-3,0) (p) {pseudocomplementation};

\node at (3,0) (o) {orthocomplementation};

\node at (0,-1.5) (wp) {weak pseudocomplementation};

\node at (-3,-3) (pr) {protocomplementation};

\node at (3,-3) (mwp) {ultraweak pseudocomplementation};

\node at (0,-4.5) (prpr) {precomplementation};

\path (p) edge[->] node {{}} (wp);
\path (o) edge[->] node {{}} (wp);
\path (wp) edge[->] node {{}} (pr);
\path (wp) edge[->] node {{}} (mwp);
\path (pr) edge[->] node {{}} (prpr);
\path (mwp) edge[->] node {{}} (prpr);

\end{tikzpicture}
\end{center}
\caption{Logical relations between six types of negation.}\label{NegFig}
\end{figure}
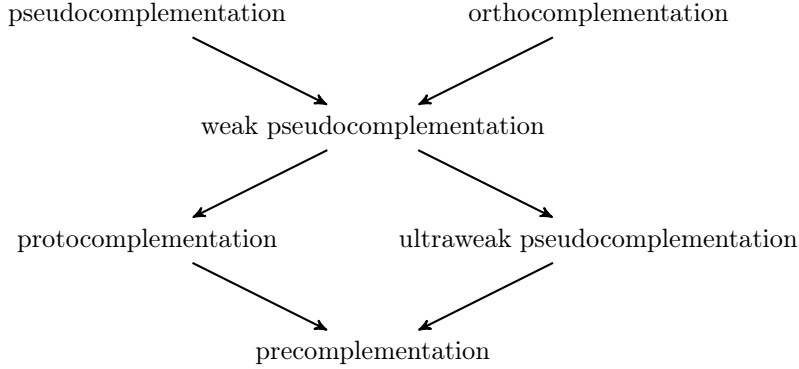

\begin{remark}\label{NCremark} The weakest notion of negation defined above is that of a precomplementation. Yet restricting to precomplementations already forecloses some types of negation studied in the literature. For example, negation in Johansson's \citeyearpar{Johansson1937} \textit{minimal logic} (cf.~\citealt{Kolmogorov1925}) is antitone and satisfies double negation introduction and the principle of non-contradiction in the form  $\neg (a\wedge\neg a)=1$ but not the semicomplementation axiom $a\wedge\neg a=0$; yet any ultraweak pseudocomplementation satisfying non-contradiction is a semicomplementation (and hence a weak pseudocomplementation). To give semantics for negation in minimal logic, we must drop the axiom $\neg 1=0$ of precomplementations. The same applies to the basic logic of Battilotti and Sambin \citeyearpar{Battilotti1999}, whose negation (which is quasi-minimal in the terminology of \citealt{Dunn2005}) satisfies none of $a\wedge\neg a=0$, $\neg (a\wedge\neg a)=1$, or $\neg 1=0$.  Although it is not our focus in this paper, we will explain how to handle negations that do not satisfy $\neg 1=0$ in Remark \ref{MoreGeneralNeg} below.\end{remark}

For later use we note the following facts.

\begin{lemma}\label{UsefulLem} Let $\neg $ be a unary operation on a bounded lattice $L$.
\begin{enumerate}
\item\label{UsefulLem1} If $\neg$ is a semicomplementation, then $\neg$ is \textit{anti-inflationary}: $a\not\leq \neg a$ for all nonzero $a\in L$. If $\neg$ is antitone and anti-inflationary, then $\neg$ is a semicomplementation. 
\item\label{UsefulLem2} $\neg$ satisfies antitonicity and double negation introduction iff for all $a,b\in L$,  $a\leq \neg b$ implies $b\leq\neg a$.
\item\label{UsefulLem3} $\neg$ is an orthocomplementation iff $\neg$ is a weak pseudocomplementation satisfying \textit{double negation elimination}: $\neg\neg a\leq a$ for all $a\in L$.
\end{enumerate}
\end{lemma}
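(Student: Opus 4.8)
The plan is to prove each of the three biconditionals in Lemma~\ref{UsefulLem} by unwinding the relevant definitions and doing short order-theoretic arguments. None of the three parts should require heavy machinery; the content is entirely in manipulating the defining inequalities of the various negation notions over a bounded lattice.

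For part~\ref{UsefulLem1}, recall that a semicomplementation satisfies $a\wedge\neg a=0$ for all $a$. If we had $a\leq\neg a$ for some nonzero $a$, then $a=a\wedge\neg a=0$, a contradiction; this gives anti-inflationarity. For the converse, assume $\neg$ is antitone and anti-inflationary, and fix $a\in L$. We must show $a\wedge\neg a=0$. Set $b=a\wedge\neg a$. Since $b\leq a$, antitonicity gives $\neg a\leq\neg b$; and since $b\leq\neg a$, transitivity gives $b\leq\neg b$. By anti-inflationarity (in contrapositive form, $b\leq\neg b$ forces $b=0$), we conclude $b=0$, i.e.\ $a\wedge\neg a=0$, as desired.

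For part~\ref{UsefulLem2}, the forward direction assumes antitonicity and double negation introduction ($b\leq\neg\neg b$). Given $a\leq\neg b$, antitonicity yields $\neg\neg b\leq\neg a$, and composing with $b\leq\neg\neg b$ gives $b\leq\neg a$. For the converse, assume the Galois-style condition that $a\leq\neg b$ implies $b\leq\neg a$ for all $a,b$. Double negation introduction follows by instantiating $a=\neg b$: since $\neg b\leq\neg b$, we get $b\leq\neg\neg b$. For antitonicity, suppose $a\leq b$; since $\neg b\leq\neg b$, the already-established double negation introduction plus transitivity gives $a\leq b\leq\neg\neg b$, so $a\leq\neg(\neg b)$, whence applying the hypothesis with the pair $(a,\neg b)$ yields $\neg b\leq\neg a$. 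For part~\ref{UsefulLem3}, a weak pseudocomplementation is by Definition~\ref{NegDefs} an antitone semicomplementation with double negation introduction, and an orthocomplementation is an antitone involutive complementation; the only gaps to bridge are whether $a\vee\neg a=1$ and $\neg\neg a=a$ follow, and these should reduce cleanly to double negation elimination $\neg\neg a\leq a$ together with part~\ref{UsefulLem2}.

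The main thing to watch is the orthocomplementation direction in part~\ref{UsefulLem3}, where I must recover the complement law $a\vee\neg a=1$ from the weaker semicomplement law. Here the plan is to use involutivity (which follows once we combine double negation introduction $a\leq\neg\neg a$ with double negation elimination $\neg\neg a\leq a$ to get $\neg\neg a=a$) together with antitonicity and De~Morgan-type reasoning: from $\neg a\wedge\neg\neg a=0$ (semicomplementation applied to $\neg a$) and $\neg\neg a=a$, dualize via the involution to obtain $a\vee\neg a=1$. Concretely, I expect to apply $\neg$ to the identity $\neg a\wedge a=0$ and use that $\neg$ is an involutive antitone map sending $0$ to $1$ and meets to joins on the relevant elements. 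The reverse direction—that every orthocomplementation is a weak pseudocomplementation with double negation elimination—is immediate from the definitions, since an orthocomplementation is antitone, involutive (giving both $a\leq\neg\neg a$ and $\neg\neg a\leq a$), and a complement is a fortiori a semicomplement.
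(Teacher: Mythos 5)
Your treatment of parts \ref{UsefulLem1} and \ref{UsefulLem2} is correct and coincides with the paper's proof essentially step for step (the paper phrases part \ref{UsefulLem1} contrapositively, applying anti-inflationarity to $a\wedge\neg a$ exactly as you do). The only divergence is in part \ref{UsefulLem3}, where the single nontrivial claim is $1\leq a\vee\neg a$. The paper gets this by a direct inequality chain: $\neg(a\vee\neg a)\leq\neg a\wedge\neg\neg a\leq\neg a\wedge a$ (antitonicity, then double negation elimination), hence $\neg(\neg a\wedge a)\leq\neg\neg(a\vee\neg a)\leq a\vee\neg a$, and since $\neg a\wedge a=0$ and $\neg 0=1$, done. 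You instead apply $\neg$ to $\neg a\wedge a=0$ and invoke the De Morgan identity $\neg(x\wedge y)=\neg x\vee\neg y$ for antitone involutions. That route is sound, but the half of the identity you actually need, $\neg(x\wedge y)\leq\neg x\vee\neg y$, is not just antitonicity; it requires involutivity and deserves to be recorded (if $z\geq\neg x$ and $z\geq\neg y$, then $\neg z\leq\neg\neg x\wedge\neg\neg y=x\wedge y$, so $\neg(x\wedge y)\leq\neg\neg z=z$, whence $\neg(x\wedge y)$ is the least upper bound of $\neg x,\neg y$). You also rely on $\neg 0=1$, which follows from $\neg 1=1\wedge\neg 1=0$ plus involutivity. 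With those two sub-steps filled in, your argument is complete; the paper's chain is marginally more economical because it never needs the full De Morgan law, only one-sided inequalities.
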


\begin{proof} For part \ref{UsefulLem1}, if for some nonzero $a\in L$, $a\leq \neg a$, then $a\wedge \neg a=a\neq 0$, so $\neg$ is not a semicomplementation. Now suppose $\neg$ is antitone and anti-inflationary. If $a\wedge\neg a\neq 0$, then by anti-inflationarity, $a\wedge\neg a\not\leq \neg (a\wedge\neg a)$, but since $a\wedge\neg a\leq a$, we have $\neg a\leq \neg (a\wedge\neg a)$ by antitonicity and hence $a\wedge\neg a\leq \neg (a\wedge\neg a)$.

For part \ref{UsefulLem2}, if $\neg$ satisfies antitonicity and double negation introduction, them $a\leq \neg b$ implies $\neg \neg b\leq\neg a$ and hence $b\leq\neg a$. Conversely, suppose $\neg$ satisfies the implication in part \ref{UsefulLem2}. Then starting with $\neg b\leq \neg b$ and $a=\neg b$, we have $b\leq \neg\neg b$. For antitonicity, if $a\leq c$, then $a\leq \neg\neg c$, so taking $b=\neg c$, we have  $\neg c\leq \neg a$.

For part \ref{UsefulLem3}, we need only show $1\leq a\vee\neg a$ when $\neg$ is a weak pseudocomplementation satisfying double negation elimination. Since $a\leq a\vee\neg a$ and $\neg a \leq a\vee\neg a$, we have $\neg(a\vee\neg a)\leq \neg a\wedge\neg\neg a$ and hence $\neg (a\vee\neg a)\leq \neg a\wedge a$, so $\neg (\neg a\wedge a)\leq \neg\neg (a\vee\neg a)\leq a\vee\neg a$. Then since a weak pseudocomplementation satisfies $\neg a\wedge a=0$ and $\neg 0=1$, we have $1\leq a\vee\neg a$.
\end{proof}

Figure \ref{N5Fig} shows the $\mathbf{N}_5$ lattice equipped with a pseudocomplementation that is not an orthocomplementation (left), a weak pseudocomplementation that is neither a pseudocomplementation nor an orthocomplementation (middle), and a protocomplementation that is not a weak pseudocomplementation (right). Figure \ref{O6Fig} shows the Benzene ring $\mathbf{O}_6$ equipped with an orthocomplementation that is not a pseudocomplementation (left) and a pseudocomplementation that is not an orthocomplementation (right). 

Note that any bounded lattice can be equipped with a weak pseudocomplementation by setting $\neg 0=1$ and $\neg a=0$ for all $a\neq 0$; and if there are nonzero $a,b\in L$ with $a\wedge b=0$, this $\neg$ is not a pseudocomplementation. Also note that any bounded lattice can be equipped with a precomplementation by setting $\neg 1=0$ and $\neg a=1$ for all $a\neq 1$; and if $L$ has more than one nonzero element, this $\neg$ is not a protocomplementation.

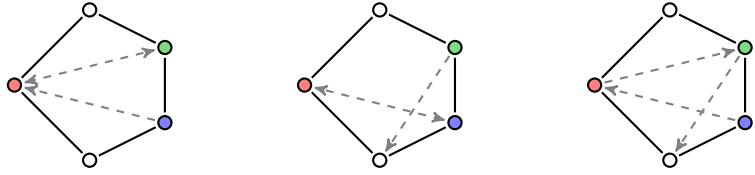
\begin{figure}[ht]
\vspace{.2in}
\begin{center}
\begin{tikzpicture}[->,>=stealth',shorten >=1pt,shorten <=1pt, auto,node
distance=2cm,thick,every loop/.style={<-,shorten <=1pt}]
\tikzstyle{every state}=[fill=gray!20,draw=none,text=black]

\node[circle,draw=black!100, label=right:$$,inner sep=0pt,minimum size=.175cm] (n1) at (0,0) {{}};
\node[circle,fill=darkgreen!50,draw=black!100, label=left:$$,inner sep=0pt,minimum size=.175cm] (nx) at (1,-.5) {{}};
\node[circle,fill=blue!50,draw=black!100, label=right:$$,inner sep=0pt,minimum size=.175cm] (ny) at (1,-1.5) {{}};
\node[circle,fill=red!50,draw=black!100, label=right:$$,inner sep=0pt,minimum size=.175cm] (nz) at (-1,-1) {{}};
\node[circle,draw=black!100, label=right:$$,inner sep=0pt,minimum size=.175cm] (n0) at (0,-2) {{}};
\path (nx) edge[-] node {{}} (n1);
\path (nx) edge[-] node {{}} (ny);
\path (n1) edge[-] node {{}} (nz);
\path (ny) edge[-] node {{}} (n0);
\path (nz) edge[-] node {{}} (n0);

\path (ny) edge[->,dashed,gray] node {{}} (nz);

\path (nz) edge[<->,dashed,gray] node {{}} (nx);

\end{tikzpicture}\qquad\qquad \begin{tikzpicture}[->,>=stealth',shorten >=1pt,shorten <=1pt, auto,node
distance=2cm,thick,every loop/.style={<-,shorten <=1pt}]
\tikzstyle{every state}=[fill=gray!20,draw=none,text=black]

\node[circle,draw=black!100, label=right:$$,inner sep=0pt,minimum size=.175cm] (n1) at (0,0) {{}};
\node[circle,fill=darkgreen!50,draw=black!100, label=left:$$,inner sep=0pt,minimum size=.175cm] (nx) at (1,-.5) {{}};
\node[circle,fill=blue!50,draw=black!100, label=right:$$,inner sep=0pt,minimum size=.175cm] (ny) at (1,-1.5) {{}};
\node[circle,fill=red!50,draw=black!100, label=right:$$,inner sep=0pt,minimum size=.175cm] (nz) at (-1,-1) {{}};
\node[circle,draw=black!100, label=right:$$,inner sep=0pt,minimum size=.175cm] (n0) at (0,-2) {{}};
\path (nx) edge[-] node {{}} (n1);
\path (nx) edge[-] node {{}} (ny);
\path (n1) edge[-] node {{}} (nz);
\path (ny) edge[-] node {{}} (n0);
\path (nz) edge[-] node {{}} (n0);

\path (ny) edge[<->,dashed,gray] node {{}} (nz);

\path (nx) edge[->,dashed,gray] node {{}} (n0);
x);

\end{tikzpicture}\qquad\qquad \begin{tikzpicture}[->,>=stealth',shorten >=1pt,shorten <=1pt, auto,node
distance=2cm,thick,every loop/.style={<-,shorten <=1pt}]
\tikzstyle{every state}=[fill=gray!20,draw=none,text=black]

\node[circle,draw=black!100, label=right:$$,inner sep=0pt,minimum size=.175cm] (n1) at (0,0) {{}};
\node[circle,fill=darkgreen!50,draw=black!100, label=left:$$,inner sep=0pt,minimum size=.175cm] (nx) at (1,-.5) {{}};
\node[circle,fill=blue!50,draw=black!100, label=right:$$,inner sep=0pt,minimum size=.175cm] (ny) at (1,-1.5) {{}};
\node[circle,fill=red!50,draw=black!100, label=right:$$,inner sep=0pt,minimum size=.175cm] (nz) at (-1,-1) {{}};
\node[circle,draw=black!100, label=right:$$,inner sep=0pt,minimum size=.175cm] (n0) at (0,-2) {{}};
\path (nx) edge[-] node {{}} (n1);
\path (nx) edge[-] node {{}} (ny);
\path (n1) edge[-] node {{}} (nz);
\path (ny) edge[-] node {{}} (n0);
\path (nz) edge[-] node {{}} (n0);

\path (ny) edge[->,dashed,gray] node {{}} (nz);

\path (nx) edge[->,dashed,gray] node {{}} (n0);

\path (nz) edge[->,dashed,gray] node {{}} (nx);

\end{tikzpicture}

\end{center}
\caption{$\mathbf{N}_5$ equipped with a pseudocomplementation (left), a weak pseudocomplementation (middle), and a protocomplementation (right), indicated by dashed arrows. Arrows for $\neg 0=1$ and $\neg 1=0$ are omitted.}\label{N5Fig}
\end{figure}

\begin{figure}[!ht]
\begin{center}
\begin{tikzpicture}[->,>=stealth',shorten >=1pt,shorten <=1pt, auto,node
distance=2cm,thick,every loop/.style={<-,shorten <=1pt}]
\tikzstyle{every state}=[fill=gray!20,draw=none,text=black]
\node[circle,draw=black!100, label=below:,inner sep=0pt,minimum size=.175cm] (0) at (0,0) {{}};
\node[circle,draw=black!100,fill=red!50, label=left:,inner sep=0pt,minimum size=.175cm] (a) at (-.75,.75) {{}};
\node[circle,draw=black!100,fill=orange!50, label=right:,inner sep=0pt,minimum size=.175cm] (b) at (.75,.75) {{}};
\node[circle,draw=black!100,fill=darkgreen!50, label=left:,inner sep=0pt,minimum size=.175cm] (1l) at (-.75,1.5) {{}};
\node[circle,draw=black!100,fill=blue!50, label=right:,inner sep=0pt,minimum size=.175cm] (1r) at (.75,1.5) {{}};
\node[circle,draw=black!100, label=above:,inner sep=0pt,minimum size=.175cm] (new1) at (0,2.25) {{}};

\path (new1) edge[-] node {{}} (1l);
\path (new1) edge[-] node {{}} (1r);
\path (1l) edge[-] node {{}} (a);
\path (1r) edge[-] node {{}} (b);
\path (a) edge[-] node {{}} (0);
\path (b) edge[-] node {{}} (0);

\path (1l) edge[<->,dashed,gray] node {{}} (b);
\path (1r) edge[<->,dashed,gray] node {{}} (a);

\end{tikzpicture}\qquad\qquad \begin{tikzpicture}[->,>=stealth',shorten >=1pt,shorten <=1pt, auto,node
distance=2cm,thick,every loop/.style={<-,shorten <=1pt}]
\tikzstyle{every state}=[fill=gray!20,draw=none,text=black]
\node[circle,draw=black!100, label=below:,inner sep=0pt,minimum size=.175cm] (0) at (0,0) {{}};
\node[circle,draw=black!100,fill=red!50, label=left:,inner sep=0pt,minimum size=.175cm] (a) at (-.75,.75) {{}};
\node[circle,draw=black!100,fill=orange!50, label=right:,inner sep=0pt,minimum size=.175cm] (b) at (.75,.75) {{}};
\node[circle,draw=black!100,fill=darkgreen!50, label=left:,inner sep=0pt,minimum size=.175cm] (1l) at (-.75,1.5) {{}};
\node[circle,draw=black!100,fill=blue!50, label=right:,inner sep=0pt,minimum size=.175cm] (1r) at (.75,1.5) {{}};
\node[circle,draw=black!100, label=above:,inner sep=0pt,minimum size=.175cm] (new1) at (0,2.25) {{}};

\path (new1) edge[-] node {{}} (1l);
\path (new1) edge[-] node {{}} (1r);
\path (1l) edge[-] node {{}} (a);
\path (1r) edge[-] node {{}} (b);
\path (a) edge[-] node {{}} (0);
\path (b) edge[-] node {{}} (0);

\path (1l) edge[<-,dashed,gray] node {{}} (b);
\path (1l) edge[<->,dashed,gray] node {{}} (1r);
\path (1r) edge[<-,dashed,gray] node {{}} (a);

\end{tikzpicture}
\end{center}
\caption{The Benzene ring $\mathbf{O}_6$ equipped with an orthocomplementation (left) and pseudocomplementation (right), indicated by dashed arrows. Arrows for $\neg 0=1$ and $\neg 1=0$ are omitted.}\label{O6Fig}
\end{figure}
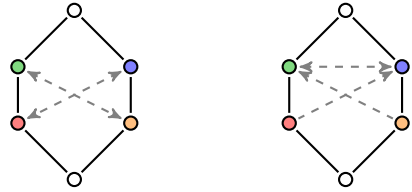

It is noteworthy that all of the intuitionistically acceptable De Morgan inequalities that hold in bounded lattices with pseudocomplementations also hold in bounded lattices with weak pseudocomplementations: $\neg (a\vee b)=\neg a\wedge\neg b$ and $\neg a\vee\neg b\leq \neg (a\wedge b)$. However, there are inequalities that hold in all bounded lattices with pseudocomplementations and all bounded lattices with orthcomplementations but do not hold in all bounded lattices with weak pseudocomplementations. An example is 
\[\neg\neg a \wedge \neg\neg b\leq \neg\neg (a\wedge b).\]
Consider the 4-element Boolean lattice equipped not with Boolean negation but with the weak pseudocomplementation with $\neg 0=1$ and $\neg c=0$ for $c\neq 0$. Where $a$ and $b$ are the side elements of the lattice, we have  $\neg\neg a\wedge\neg\neg b= 1\wedge 1=1$ while $\neg\neg (a\wedge b)=\neg\neg 0=0$.\footnote{This example shows that while in lattices with weak pseudocomplementation, double negation is a closure operator, it is not multiplicative and hence not a \textit{nucleus}, as it is in pseudocomplemented lattices (cf.~\citealt[\S~3]{BH2019}).} This suggests an interesting problem, not pursued here, of axiomatizing the intersection of orthologic and intuitionistic~logic (or  \textit{orthointuitionistic logic}).

As usual, we can interpret the language $\mathcal{L}$ in lattice expansions $(L,\neg)$ as follows.

\begin{definition} A \textit{valuation} on a lattice expansion $(L,\neg)$ is a function $\theta:\mathsf{Prop}\longrightarrow L$ that extends to $\tilde{\theta}:\mathcal{L}\longrightarrow L$ by: $\tilde{\theta}(p)=\theta(p)$, $\tilde{\theta}(\neg\varphi)=\neg\tilde{\theta}(\varphi)$, $\tilde{\theta}(\varphi\wedge\psi)=\tilde{\theta}(\varphi)\wedge \tilde{\theta}(\psi)$, and $\tilde{\theta}(\varphi\vee\psi)=\tilde{\theta}(\varphi)\vee \tilde{\theta}(\psi)$. 

Given a class $\mathscr{C}$ of lattice expansions, we define $\varphi\vDash_\mathscr{C}\psi$ if for every $(L,\neg)\in \mathscr{C}$ and valuation $\theta$ on $(L,\neg)$, we have $\tilde{\theta}(\varphi)\leq\tilde{\theta}(\psi)$.
\end{definition}

Let $\mathscr{W}$ be the class of lattices expanded with a weak pseudocomplementation. Then we have the following soundness result for our Fitch-style proof system.

\begin{proposition}\label{SoundProp} For any $\varphi,\psi\in\mathcal{L}$, if $\varphi\vdash_\mathsf{F}\psi$, then $\varphi\vDash_\mathscr{W}\psi$.
\end{proposition}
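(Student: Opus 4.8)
The plan is to exploit the algebraic characterization already in hand. By Proposition~\ref{BinaryLogicProp} together with Proposition~\ref{SmallestIntelim}, $\vdash_\mathsf{F}$ is the \emph{smallest} intro-elim logic, so it suffices to show that the semantic consequence relation $\vDash_\mathscr{W}$ is itself an intro-elim logic in the sense of Definition~\ref{BinaryLogic}. Granting this, every intro-elim logic contains $\vdash_\mathsf{F}$, and in particular $\vdash_\mathsf{F}\,\subseteq\,\vDash_\mathscr{W}$, which is exactly the claimed soundness. Thus the whole argument reduces to checking the eleven closure conditions of Definition~\ref{BinaryLogic} for $\vDash_\mathscr{W}$, where $\varphi\vDash_\mathscr{W}\psi$ unfolds to the assertion that $\tilde{\theta}(\varphi)\leq\tilde{\theta}(\psi)$ for every $(L,\neg)\in\mathscr{W}$ and every valuation $\theta$, and where $\tilde{\theta}$ commutes with $\wedge$, $\vee$, and $\neg$ by definition.

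Eight of the eleven conditions are pure bounded-lattice facts, holding pointwise for each fixed $(L,\neg)$ and $\theta$ and hence for $\vDash_\mathscr{W}$. Conditions 1--5 reduce to $a\leq a$, $a\wedge b\leq a$, $a\wedge b\leq b$, $a\leq a\vee b$, and $a\leq b\vee a$. Condition 8 is transitivity of $\leq$; condition 9 is the fact that $\wedge$ is the greatest lower bound (from $c\leq a$ and $c\leq b$ infer $c\leq a\wedge b$); and condition 10 is the dual fact that $\vee$ is the least upper bound (from $a\leq c$ and $b\leq c$ infer $a\vee b\leq c$). For conditions 8--10 one uses that $\vDash_\mathscr{W}$ quantifies over all $(L,\neg)$ and $\theta$ simultaneously, so the two hypotheses can be combined model-by-model.

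The remaining three conditions are precisely where the defining clauses of a weak pseudocomplementation are consumed. Condition 6, $\varphi\vDash_\mathscr{W}\neg\neg\varphi$, is double negation introduction $a\leq\neg\neg a$; condition 7, $\varphi\wedge\neg\varphi\vDash_\mathscr{W}\psi$, uses the semicomplementation axiom $a\wedge\neg a=0$ together with $0\leq b$; and condition 11, that $\varphi\vDash_\mathscr{W}\psi$ implies $\neg\psi\vDash_\mathscr{W}\neg\varphi$, is antitonicity: from $\tilde{\theta}(\varphi)\leq\tilde{\theta}(\psi)$ we get $\neg\tilde{\theta}(\psi)\leq\neg\tilde{\theta}(\varphi)$. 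These are exactly the three features (antitone, semicomplementation, double negation introduction) that distinguish a weak pseudocomplementation from an arbitrary unary operation, so the verification simultaneously proves soundness and makes transparent why $\mathscr{W}$ is the matching class.

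There is no genuine obstacle here; the only point requiring care is the reliance on Proposition~\ref{SmallestIntelim}. If one prefers a self-contained argument, the same inequalities feed a direct induction on the inductive definition of proofs: one shows that for every proof $\langle\sigma_1,\dots,\sigma_n\rangle$ with formula assumption $\sigma_1=\varphi$ and every formula entry $\sigma_i$, one has $\tilde{\theta}(\varphi)\leq\tilde{\theta}(\sigma_i)$, the $\vee$E and $\neg$I cases being handled by the join- and antitonicity-inequalities above (with the $\neg$I case packaged cleanly by Lemma~\ref{UsefulLem}\eqref{UsefulLem2}, namely that $a\leq\neg b$ implies $b\leq\neg a$). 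The intro-elim route is cleaner, so I would present that and note the induction only in passing.
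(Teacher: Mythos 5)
Your primary route is circular as the paper is set up. You reduce soundness to the claim that $\vdash_\mathsf{F}$ is the \emph{smallest} intro-elim logic (Proposition~\ref{SmallestIntelim}), but in the paper that proposition is derived \emph{from} Proposition~\ref{SoundProp}: the inclusion of $\vdash_\mathsf{F}$ in the smallest intro-elim logic $\vdash_{\min}$ is obtained by combining soundness of $\vdash_\mathsf{F}$ with respect to $\mathscr{W}$ with completeness of $\vdash_{\min}$ with respect to $\mathscr{W}$ (via the Lindenbaum--Tarski algebra of $\vdash_{\min}$). Proposition~\ref{BinaryLogicProp} only gives you that $\vdash_\mathsf{F}$ \emph{is} an intro-elim logic, i.e.\ $\vdash_{\min}\,\subseteq\,\vdash_\mathsf{F}$; the converse inclusion, which is what your reduction consumes, is exactly the content that is unavailable before soundness is proved. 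Your verification that $\vDash_\mathscr{W}$ satisfies the eleven conditions of Definition~\ref{BinaryLogic} is correct and is genuinely the algebraic heart of the matter, but by itself it only shows $\vdash_{\min}\,\subseteq\,\vDash_\mathscr{W}$, not $\vdash_\mathsf{F}\,\subseteq\,\vDash_\mathscr{W}$.

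One could rescue the reduction by proving proof-theoretically, without semantics, that every Fitch-derivable sequent lies in every intro-elim logic; but that argument is an induction on the inductive definition of proofs that is structurally identical to the direct soundness induction, so nothing is saved. Your fallback paragraph is in fact the paper's actual proof: an induction on proofs showing $\tilde{\theta}(\sigma_1)\leq\tilde{\theta}(\sigma_n)$ for every proof ending in a formula, using that initial segments of proofs are proofs, with the $\neg$I case handled by Lemma~\ref{UsefulLem}.\ref{UsefulLem2} exactly as you indicate. You should promote that induction to the main argument and drop the appeal to Proposition~\ref{SmallestIntelim}; the lattice inequalities you list then slot in as the case-by-case verifications.
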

\begin{proof} We claim that for any Fitch-style proof $\langle \sigma_1,\dots,\sigma_n\rangle$, if $\sigma_n$ is a formula, then $\sigma_1\vDash_\mathscr{W}\sigma_n$. We proceed by induction on proofs, using the fact that if $\langle \sigma_1,\dots,\sigma_k\rangle$ is a proof, so is $\langle \sigma_1,\dots,\sigma_\ell\rangle$ for $1\leq \ell\leq k$.  Suppose, for example, that $\langle \sigma_1,\dots,\sigma_{n+1}\rangle$ is a proof in which $\sigma_{n+1}=\neg\varphi$ is obtained by the $\neg$I rule: that is,  $\langle \sigma_1,\dots,\sigma_n\rangle$ is a  proof, there is a formula $\sigma_i$ of the form $\psi$, and $\sigma_n$ is a proof beginning with $\varphi$ and ending with $\neg\psi$. Then by the inductive hypothesis applied to the proof $\langle\sigma_1,\dots,\sigma_i\rangle$, we have $\sigma_1\vDash_\mathscr{W}\psi$; and by the inductive hypothesis applied to the proof  $\sigma_n$, we have $\varphi\vDash_\mathscr{W}\neg\psi$, which implies $\psi\vDash_\mathscr{W}\neg\varphi$  by Lemma \ref{UsefulLem}.\ref{UsefulLem2}. Putting the previous two steps together, we have $\sigma_1\vDash_\mathscr{W}\neg\varphi$. The other cases of the proof are similar.\end{proof}

As usual, the Lindenbaum-Tarski algebra of $\vdash_\mathsf{F}$ has as its elements the equivalence classes $[\varphi]$ of formulas of $\mathcal{L}$, where $\varphi$ and $\psi$ are equivalent if $\varphi\vdash_\mathsf{F}\psi$ and $\psi\vdash_\mathsf{F}\varphi$, and the operations are defined by $\neg[\varphi]=[\neg\varphi]$, $[\varphi]\wedge[\psi]=[\varphi\wedge\psi]$, and $[\varphi]\vee[\psi]=[\varphi\vee\psi]$. It is easy to show using Proposition \ref{BinaryLogicProp} that this algebra is a bounded lattice equipped with a weak pseudocomplementation, $(L,\neg)$, whose lattice order we denote by $\leq$. Then the valuation $\theta:\mathsf{Prop}\longrightarrow L$ defined by $\theta(p)=[p]$ is such that for all $\varphi\in L$, $\tilde{\theta}(\varphi)=[\varphi]$. Hence if $\varphi\nvdash_\mathsf{F}\psi$, so $[\varphi]\not\leq [\psi]$, then $\tilde{\theta}(\varphi)\not\leq \tilde{\theta}(\psi)$, so $\varphi\nvDash_\mathscr{W}\psi$. This yields the following completeness result.

\begin{proposition} For any $\varphi,\psi\in\mathcal{L}$, if $\varphi\vDash_\mathscr{W}\psi$, then $\varphi\vdash_\mathsf{F}\psi$.
\end{proposition}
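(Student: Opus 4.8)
The plan is to run the standard Lindenbaum--Tarski construction, producing a single algebra in $\mathscr{W}$ together with a valuation that refutes every non-derivable entailment. I would prove the contrapositive: assuming $\varphi\nvdash_\mathsf{F}\psi$, I exhibit a member of $\mathscr{W}$ and a valuation witnessing $\varphi\nvDash_\mathscr{W}\psi$.

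First I would form the quotient $L=\mathcal{L}/{\equiv}$, where $\varphi\equiv\psi$ iff $\varphi\vdash_\mathsf{F}\psi$ and $\psi\vdash_\mathsf{F}\varphi$, set $[\varphi]\leq[\psi]$ iff $\varphi\vdash_\mathsf{F}\psi$, and equip $L$ with the induced operations $\neg[\varphi]=[\neg\varphi]$, $[\varphi]\wedge[\psi]=[\varphi\wedge\psi]$, and $[\varphi]\vee[\psi]=[\varphi\vee\psi]$. The one genuinely load-bearing step is verifying that these are well defined and that $(L,\neg)$ is a bounded lattice with a weak pseudocomplementation, using only that $\vdash_\mathsf{F}$ is an intro-elim logic (Proposition \ref{BinaryLogicProp}). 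Concretely, clauses 1 and 8 of Definition \ref{BinaryLogic} make $\leq$ a preorder, so $\equiv$ is an equivalence and $\leq$ descends to a partial order on $L$; clauses 2, 3, 9 make $[\varphi\wedge\psi]$ the meet and clauses 4, 5, 10 make $[\varphi\vee\psi]$ the join, and the same clauses yield monotonicity of $\wedge$ and $\vee$ and hence well-definedness of those operations, while clause 11 gives both well-definedness and antitonicity of $\neg$. For the bounds, $[\bot]$ is least by clause 7, and $[\top]$ is greatest since $\psi\vdash_\mathsf{F}\neg\neg\psi\vdash_\mathsf{F}\neg\bot=\top$, combining clause 6 with the instance $\neg\neg\psi\vdash_\mathsf{F}\neg\bot$ obtained from $\bot\vdash_\mathsf{F}\neg\psi$ (clause 7) via antitonicity (clause 11). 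Finally, clause 6 supplies double negation introduction $a\leq\neg\neg a$, clause 7 supplies the semicomplementation law $a\wedge\neg a=0$, and clause 11 supplies antitonicity, so $\neg$ is a weak pseudocomplementation and $(L,\neg)\in\mathscr{W}$.

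Second, I would define the canonical valuation $\theta(p)=[p]$ and prove the truth lemma $\tilde{\theta}(\varphi)=[\varphi]$ by a routine induction on $\varphi$, each connective case being immediate from the corresponding clause in the definition of the operations on $L$. The argument then closes: if $\varphi\nvdash_\mathsf{F}\psi$ then $[\varphi]\not\leq[\psi]$ by the very definition of $\leq$, whence $\tilde{\theta}(\varphi)=[\varphi]\not\leq[\psi]=\tilde{\theta}(\psi)$, so $(L,\neg)$ with $\theta$ witnesses $\varphi\nvDash_\mathscr{W}\psi$.

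Everything apart from the algebra check is bookkeeping, so the main obstacle is confirming that $(L,\neg)$ genuinely lands in $\mathscr{W}$, i.e.\ that the axioms defining a weak pseudocomplementation correspond exactly to the intro-elim clauses. In particular one must be careful that clause 7 delivers the full semicomplementation identity $a\wedge\neg a=0$ (not merely a pseudocomplementation-style property), and that $[\top]$ really is the top element, which requires clauses 6 and 11 in tandem rather than clause 7 alone.
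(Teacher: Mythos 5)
Your proposal is correct and is precisely the paper's own argument: the paper proves this proposition by forming the Lindenbaum--Tarski algebra of $\vdash_\mathsf{F}$, noting (via Proposition 2.2) that it is a bounded lattice with a weak pseudocomplementation, and using the canonical valuation $\theta(p)=[p]$ with $\tilde{\theta}(\varphi)=[\varphi]$ to refute any non-derivable entailment. The only difference is that you spell out the verification that the quotient lands in $\mathscr{W}$, which the paper leaves as ``easy to show.''
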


\noindent By similar reasoning, we can show the soundness and completeness with respect to $\mathscr{W}$ of the smallest intro-elim logic, so we obtain the following. 

\begin{proposition}\label{SmallestIntelim} $\vdash_\mathsf{F}$ is the smallest intro-elim logic.
\end{proposition}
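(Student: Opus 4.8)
The plan is to follow the route signposted by the author's remark: establish that the smallest intro-elim logic is sound and complete with respect to $\mathscr{W}$, and conclude that it coincides with $\vdash_\mathsf{F}$, which has just been shown sound (Proposition \ref{SoundProp}) and complete for $\mathscr{W}$. Write $\vdash_0$ for the intersection of all intro-elim logics (this class is nonempty, since $\vdash_\mathsf{F}$ is one). First I would check that $\vdash_0$ is itself an intro-elim logic: conditions 1--7 are axioms (fixed pairs that every intro-elim logic contains, hence so does the intersection), while conditions 8--11 are closure rules preserved under intersection of relations. Thus $\vdash_0$ is the smallest intro-elim logic, and it suffices to prove $\vdash_0 = \vdash_\mathsf{F}$.

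For soundness of $\vdash_0$, I would verify that the semantic consequence relation $\vDash_\mathscr{W}$, regarded as a binary relation on $\mathcal{L}$, is an intro-elim logic. Each of conditions 1--11 translates into an order-theoretic fact about bounded lattices with a weak pseudocomplementation: conditions 1--5 and 8--10 are reflexivity and transitivity of $\leq$ together with the defining (in)equalities and universal properties of meets and joins; condition 6 is double negation introduction; condition 7 is the semicomplementation law $a\wedge\neg a=0$; and condition 11 is antitonicity of $\neg$. Since $\vdash_0$ is the smallest intro-elim logic, it follows that $\vdash_0\subseteq\,\vDash_\mathscr{W}$.

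For completeness of $\vdash_0$, I would repeat the Lindenbaum--Tarski construction already carried out for $\vdash_\mathsf{F}$, observing that the only facts it used were the intro-elim conditions; so the same construction applied to $\vdash_0$ yields a bounded lattice with a weak pseudocomplementation $A\in\mathscr{W}$, together with the canonical valuation $\theta(p)=[p]$ satisfying $\tilde\theta(\chi)=[\chi]$. If $\varphi\vDash_\mathscr{W}\psi$, then evaluating in $A$ gives $[\varphi]\leq[\psi]$, i.e.\ $\varphi\vdash_0\psi$; hence $\vDash_\mathscr{W}\,\subseteq\,\vdash_0$. Combining the two inclusions gives $\vdash_0=\,\vDash_\mathscr{W}$, and since $\vdash_\mathsf{F}=\,\vDash_\mathscr{W}$ as well by the soundness and completeness of $\vdash_\mathsf{F}$, we conclude $\vdash_\mathsf{F}=\vdash_0$, the smallest intro-elim logic.

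I do not expect a serious obstacle, as every step is routine; the one point needing care is the completeness direction, where I must confirm that the earlier Lindenbaum--Tarski argument invoked nothing about $\vdash_\mathsf{F}$ beyond the intro-elim axioms, so that it transfers verbatim to $\vdash_0$. A more direct alternative, bypassing the semantics, would be to fix an arbitrary intro-elim logic $\vdash$ and induct on the structure of Fitch proofs exactly as in the soundness proof, replacing each appeal to a lattice fact with the corresponding intro-elim condition (e.g.\ $\wedge$E by condition 2 or 3 with transitivity, $\vee$E by condition 10, $\neg$E by conditions 9 and 7, and $\neg$I by antitonicity (11) together with double negation introduction (6)); there the only delicate case is $\neg$I, and the bookkeeping ensuring the induction hypothesis applies to subproofs under their own assumptions.
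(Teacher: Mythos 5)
Your proposal is correct and follows exactly the route the paper takes: the paper's proof is precisely the observation that the smallest intro-elim logic is, like $\vdash_\mathsf{F}$, sound and complete with respect to $\mathscr{W}$ (via the same Lindenbaum--Tarski construction), whence the two coincide. Your only excess is the soundness half for $\vdash_0$, which is redundant since $\vdash_0\subseteq\,\vdash_\mathsf{F}$ already follows from $\vdash_\mathsf{F}$ being an intro-elim logic; the substantive direction is completeness of $\vdash_0$ combined with soundness of $\vdash_\mathsf{F}$, which you handle correctly.
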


Thus, $\vdash_\mathsf{F}$ is the logic of bounded lattices with weak pseudocomplementations. Figure \ref{NumbersOfObjects} shows the numbers of  algebras up to isomorphism of size up to $10$, calculated using Mace4 (\citealt{prover9-mace4}), for  $\vdash_\mathsf{F}$, intuitionistic logic (i.e., finite distributive lattices, each of which can be equipped with a unique pseudocomplementation), and orthologic. For comparison we also include the number of lattices and the number of pseudocomplemented lattices (i.e., lattices in which each element has a pseudocomplement).

\begin{figure}[ht]
\begin{center}
\begin{tabular}{lllllllllll}
 & $f(2)$ & $f(3)$ & $f(4)$ & $f(5)$ & $f(6)$ & $f(7)$ & $f(8)$ & $f(9)$ & $f(10)$ \\
 \hline
lattices with weak pseudocomp. & $1$ & $1$ & $3$ & $9$ & $38$ & $187$ & $1130$ & $7914 $ & $63,782$ \\
lattices & $1$ & $1$ & $2$ & $5$ & $15$ & $53$ & $222$ & $1078 $ & $5994$ \\
pseudocomplemented lattices & $1$ & $1$ & $2$ & $4$ & $10$ & $29$ & $99$ & $391 $ & $1357$ \\
distributive lattices & $1$ & $1$ & $2$ & $3$ & $5$ & $8$ & $15$ & $26 $ & $47$  \\
ortholattices & $1$ & $0$ & $1$ & $0$ & $2$ & $0$ & $5$ & $0 $ & $15$  \\
\end{tabular}
\end{center}
\caption{$f(n)$ is the number of algebras of size $n$ up to isomorphism in the given class.}\label{NumbersOfObjects}
\end{figure}

Finally, we note that the observation above that any bounded lattice can be equipped with a weak pseudocomplementation implies a conservativity fact about $\vdash_\mathsf{F}$: if $\varphi\vdash_\mathsf{F}\psi$ and $\varphi,\psi$ do not contain $\neg$, then $\psi$ is provable from $\varphi$ in the Fitch-style proof system for the $\{\wedge,\vee\}$-fragment of $\mathcal{L}$ defined as for $\vdash_\mathsf{F}$ but without the negation rules. That restricted proof system is easily shown to be sound and complete with respect to the class of all bounded lattices. Hence if $\psi$ is not provable from $\varphi$ in the restricted system, then there is a bounded lattice witnessing that $\psi$ is not a semantic consequence of $\varphi$, which we then expand to a bounded lattice with a weak pseudocomplementation witnessing that $\psi$ is not a semantic consequence of $\varphi$, so $\varphi\nvdash_\mathsf{F}\psi$.

\section{Relational representation and semantics}\label{RelationalSection}

In this section, we give a relational semantics for our logic via a relational representation of bounded lattices equipped with a weak pseudocomplementation. In \S\S~\ref{FrameToLatSection}-\ref{DiscreteRep}, we build on the discrete representation of  bounded lattices equipped with a protocomplementation from \citealt{Holliday2022}, extended and specialized for other kinds of negation from  \S~\ref{AlgSection} (and further extended to bounded lattices with implications in \S~\ref{Conditionals} and Appendix \ref{AppendixB}). In \S~\ref{TopRep}, we cover a topological representation of bounded lattices with negations. It would be natural to extend these representations to categorical dualities between categories of lattices with  negations and categories of relational frames, but we will not pursue such a project here. Finally, in \S~\ref{ModalTranslations}, we discuss translations of propositional logics into modal logics suggested by our relational semantics.

\subsection{From relational frames to lattices with negation}\label{FrameToLatSection}

In \citealt{Ploscica1995}, a representation of bounded lattices is developed using a set together with a reflexive binary relation and a topology. For now we ignore topology (until \S~\ref{TopRep}) and use relational frames for a discrete representation of complete lattices with negations as in \citealt{Holliday2022}. 

Relational representations of lattices with various negations have also been developed on the basis of Urquhart's \citeyearpar{Urquhart1978} doubly ordered sets in \citealt{Allwein1993b} and \citealt{Dzik2006,Dzik2006b} and on the basis of Birkhoff's \citeyearpar{Birkhoff1940} polarities in \citealt{Almeida2009}. Here we use a single relation on a single set  to realize both a lattice and its negation, in contrast to two relations to realize a lattice and a third to realize a negation (\citealt{Dzik2006,Dzik2006b}) or a relation between two sets to realize a lattice and a second relation to realize a negation (\citealt{Almeida2009}).  Using a single relation on a single set to realize a lattice and its negation goes back to Birkhoff and von Neumann (\citealt{Birkhoff1936}, \citealt[p.~25]{Birkhoff1940}), who applied this idea to ortholattices, leading to relational semantics for orthologic  (\citealt{Dishkant1972}, \citealt{Goldblatt1974}). Of course it also appears in relational semantics for intuitionistic logic (\citealt{Dummett1959}, \citealt{Grzegorczyk1964}, \citealt{Kripke1965}), which is a special case of the following approach (see Remark \ref{Downsets}), though using a single relation in this case is not surprising since the relevant negation is uniquely determined by the lattice.  

Inspired by the intuitionistic and orthological cases, Do\v{s}en \citeyearpar{Dosen1984,Dosen1986,Dosen1999}, Vakarelov \citeyearpar{Vakarelov1989}, and Dunn \citeyearpar{Dunn1993,Dunn1996,Dunn1999} (also see \citealt{Dunn2005}) study negation using triples $(X,\comp, \sqsubseteq)$ where $(X,\comp)$ is a relational frame as below, $\sqsubseteq$ is a partial order on $X$, and an interaction condition holds between $\comp$ and $\sqsubseteq$. Their definition of negation is the same as in \citealt{Birkhoff1940} for orthocomplementation, namely that $x\in\neg A$ iff for all $y\comp x$, $y\not\in A$ (or equivalently, for all $y\in A$, $ y\not \comp x$, and possibly writing $x\compflip y$ instead of $y\comp x$), which we will also use; the interaction condition between $\comp$ and $\sqsubseteq$ then ensures that the negation operation sends upsets (or downsets, depending on one's preference) to upsets (or downsets) of $\sqsubseteq$. Berto \citeyearpar{Berto2015} (also see \citealt{Berto2019}) uses their setup to argue that $\neg$ should satisfy at least antitonicity and $a\leq\neg\neg a$, a  congenial conclusion given our interest in $\vdash_\mathsf{F}$. However, the cited authors do not generate the underlying lattice of propositions using the closure operator $c_\comp$ as in Propositions  \ref{IsClosure}.\ref{IsClosure1} and \ref{FrameToLat}.\ref{FrameToLat1} below (Do\v{s}en and Vakarelov take the lattice of upsets/downsets, and Dunn sometimes takes the lattice of upsets/downsets and sometimes does not, e.g., when he wants to represent ortholattices), and their correspondences between conditions on $\comp$ and axioms for negation are not the same as in our setting (see Remark \ref{DunnDifference}). 

The single relation approach has recently been applied to a sublogic of orthologic and intuitionistic logic in  \citealt{Zhong2021}, which axiomatizes the logic of the reflexive frames below in the $\{\neg,\wedge\}$-fragment of $\mathcal{L}$ (see Theorem \ref{MainCompleteness}.\ref{MainCompleteness2} below for the axiomatization in the full language with $\vee$). Zhong \citeyearpar{Zhong2021} takes inspiration from  Dalla Chiara and Giuntini \citeyearpar[pp.~139-140]{Chiara2002}, who observe that there is a closure operator definable from a reflexive relation---the same closure operator used in \citealt{Ploscica1995}---whose fixpoints are propositions for orthologic if the relation is symmetric or  for intuitionistic logic if the relation is transitive.

Finally, the approach of representing a lattice using a binary relation on a set $X$ contrasts with the approach of representing a lattice using a binary relation between $X$ and $\wp(X)$, or equivalently, a function $N:X\to \wp(\wp(X))$, as in neighborhood semantics for modal logic (\citealt{Scott1970}, \citealt{Montague1970}, \citealt{Pacuit2017}). In the neighborhood approach, one imposes conditions on $N$ such that the operation $c:\wp(X)\longrightarrow\wp(X)$ defined by $c(A)=\{x\in X\mid A\in N(x)\}$ is a closure operator,\footnote{Another definition of $c$, building in monotonicity, is $c(A)=\{x\in X\mid \exists B\in N(x):B\subseteq A\}$.} whose fixpoints give us a complete lattice via Proposition \ref{ClosureLattice} below. Conversely, any complete lattice is representable as the lattice of fixpoints of a closure operator on a powerset (see, e.g., \citealt[Thm.~5.3]{Burris1981}), and any closure operator  $c$ on $\wp(X)$ is representable using a function $N$ as above, defined by $N(x)=\{A\subseteq X\mid x\in c(A)\}$. By contrast, in the approach with a binary relation on $X$, matching relational semantics for modal logic (see \S~\ref{ModalTranslations}) instead of neighborhood semantics, the representability of complete lattices is less immediate. Versions of the neighborhood approach have been used by van Fraassen \citeyearpar[\S~II]{Fraassen1986}, who applies it to Heyting algebras, ortholattices, and Boolean algebras, and Goldblatt \citeyearpar{Goldblatt2011}, who applies it to Heyting algebras. Dragalin \citeyearpar{Dragalin1979,Dragalin1988} also uses functions $N:X\to \wp(\wp(X))$ to represent Heyting algebras, but he defines his closure operator from $N$ in a kind of dual way (also see \citealt{BH2016}).

Our basic objects are simply the following frames.

\begin{definition} A \textit{relational frame} is a pair $(X,\comp)$ of a nonempty set $X$ and a binary relation $\comp$ on $X$. We say the frame is \textit{reflexive} if $\comp$ is reflexive.\end{definition}
\noindent We call elements of $X$ \textit{states} and read $x\comp y$ as \textit{$x$ is open to $y$} in the sense of the following remark.\footnote{In previous work (\citealt{Holliday2021,Holliday2022}), I read $x\comp y$ as \textit{$x$ is compatible with $y$}, but many readers have the intuition that ``compatibility'' is necessarily symmetric.} When convenient, we write $y\compflip x$ for $x\comp y$.

\begin{remark}\label{IntuitivePicture} For an intuitive picture to pair with the mathematical development to follow, start with the distinction between \textit{accepting} a proposition and \textit{rejecting} it. We want to allow for \textit{partial states} that are completely noncommittal about a proposition, so non-acceptance of a proposition should not entail rejection of it. Moreover, we want to allow for states that reject a proposition without accepting the negation of it; for example, an intuitionist might \textit{reject} a certain instance of the law of excluded middle, $A\vee\neg A$, but will certainly not accept its negation, which is an intuitionistic contradiction (cf.~Field's \citeyearpar{Field2003} separation of rejection, non-acceptance, and acceptance of the negation). These notions can be linked with our notion of \textit{openness} as follows: $x$ is open to $y$ iff $x$ does not reject any proposition that $y$ accepts. If this is consistent with $y$ rejecting some proposition that $x$ accepts, then openness in our sense is not necessarily symmetric. Now if we start with $(X,\comp)$ and a proposition $A\subseteq X$, say that $x$ accepts $A$ if $x\in A$; $x$ rejects $A$ if for all $y\compflip x$, $y\not\in A$; and $x$ accepts $\neg A$ if for all $y\comp x$, $y\not\in A$.\footnote{It follows that accepting $A$ entails rejecting $\neg A$. The ideas that accepting $A$ is inconsistent with rejecting $A$ and that accepting $\neg A$ entails rejecting $A$ will follow from the key conditions on frames for fundamental logic.} Then we will indeed have that $x\comp y$ iff  $x$ does not reject any proposition that $y$ accepts.\footnote{If $x\comp y$ and $y$ accepts $A$, so $y\in A$, then $x$ does not reject $A$ by definition. Conversely, if $x\not\comp y$, then using Proposition \ref{IsClosure} below, $y$ accepts the proposition $c_\comp(\{y\})$ but $x$ rejects it given $x\not\comp y$.}  Finally, another result of the partiality of states is that accepting a disjunction does not require accepting either disjunct. Instead, $x$ accepting $A\vee B$ will amount to the following: no state open to $x$ rejects both disjuncts.\end{remark}

Rather than moving from a relational frame to an associated Boolean algebra with an operator, as in modal logic, here we move   to an associated lattice equipped with a negation. See \citealt{Holliday2021} for comparison with the realization of  complete lattices using doubly ordered structures and polarities. 

First recall that a unary operation on a lattice  is a \textit{closure operator} if $c$ is inflationary ($a\leq c(a)$), idempotent ($c(c(a))=c(a)$), and monotone ($a\leq b$ implies $c(a)\leq c(b)$). We will use the relation $\comp$ to define a closure operator on $\wp(X)$, whose fixpoints give us a complete lattice as in the following classic result (see, e.g., \citealt[Thm.~5.2]{Burris1981}).

\begin{proposition}\label{ClosureLattice} Let $X$ be a nonempty set and $c$ a closure operator on $\wp(X)$. Then the fixpoints of $c$, i.e., those $A\subseteq X$ with $c(A)=A$, ordered by $\subseteq$ form a complete lattice with
\[\underset{i\in I}{\bigwedge}{A_i} =\underset{i\in I}{\bigcap}{A_i}\mbox{ and } \underset{i\in I}{\bigvee}{A_i} =c(\underset{i\in I}{\bigcup}{A_i}).\]
\end{proposition}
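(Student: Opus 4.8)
The plan is to lean entirely on the three defining properties of a closure operator—inflationarity, idempotence, and monotonicity—and to show directly that arbitrary intersections of fixpoints are again fixpoints (yielding the meet) and that applying $c$ to an arbitrary union of fixpoints yields their least upper bound among fixpoints (yielding the join). Since a poset in which every subset has a greatest lower bound is automatically a complete lattice, it would in principle suffice to treat meets; but because the statement records the explicit join formula, I would verify both claims directly.

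For the meet, I would let $\{A_i\}_{i\in I}$ be a family of fixpoints and set $A=\bigcap_{i\in I}A_i$. Inflationarity gives $A\subseteq c(A)$. For the reverse inclusion, $A\subseteq A_j$ for each $j$, so monotonicity yields $c(A)\subseteq c(A_j)=A_j$, the last equality because $A_j$ is a fixpoint; intersecting over $j$ gives $c(A)\subseteq A$. Hence $c(A)=A$, so $A$ is a fixpoint, and being the largest subset of $X$ contained in every $A_i$, it is the greatest lower bound within the poset of fixpoints.

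For the join, I would set $B=c\bigl(\bigcup_{i\in I}A_i\bigr)$. Idempotence gives $c(B)=B$, so $B$ is a fixpoint. It is an upper bound, since for each $j$ we have $A_j\subseteq\bigcup_{i\in I}A_i\subseteq c\bigl(\bigcup_{i\in I}A_i\bigr)=B$ by inflationarity. It is the least such: if $C$ is any fixpoint with $A_i\subseteq C$ for all $i$, then $\bigcup_{i\in I}A_i\subseteq C$, so monotonicity gives $B\subseteq c(C)=C$. Thus $B$ is the least upper bound among fixpoints, matching the stated formula.

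There is no real obstacle here; the argument is routine, and the only points requiring a little care are the degenerate cases. The empty meet should give the top element $X$, which is a fixpoint because inflationarity forces $X\subseteq c(X)\subseteq X$; the empty join gives $c(\emptyset)$, the least fixpoint, which serves as the bottom element. Once meets and joins of all families have been exhibited with the stated formulas, the fixpoints form a complete lattice, completing the proof.
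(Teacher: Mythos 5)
Your proof is correct and is exactly the standard argument; the paper does not prove this proposition itself but cites it as a classic result (Burris--Sankappanavar, Thm.~5.2), and your verification of the meet and join formulas, including the degenerate empty-family cases, is the expected one. No gaps.
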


In our case, the relevant closure operator is given in part \ref{IsClosure1} of the following, while the relevant negation operation on the fixpoints of the closure operator is given in part \ref{IsClosure2}. The proof is straighforward.

\begin{proposition}\label{IsClosure} For any relational frame $(X,\comp)$:
\begin{enumerate}
\item\label{IsClosure1} the operation $c_\comp: \wp(X)\longrightarrow\wp(X)$ defined by \[c_\comp(A)=\{x\in X\mid \forall x'\comp x\; \exists x''\compflip x':\, x''\in A\}\]
is a closure operator on $\wp(X)$; 
\item\label{IsClosure2} the operation $\neg_\comp: \wp(X)\longrightarrow\wp(X)$ defined by
\[\neg_\comp A = \{x\in X\mid \forall y\comp x\;\; y\not\in A \}\]
sends $c_\comp$-fixpoints to $c_\comp$-fixpoints.
\end{enumerate}
\end{proposition}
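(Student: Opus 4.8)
The plan is to prove the two parts separately, in each case unpacking the definitions and verifying the required closure/fixpoint conditions directly, since the statement asserts only that $c_\comp$ is a closure operator and that $\neg_\comp$ preserves $c_\comp$-fixpoints.

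For part \ref{IsClosure1}, I would verify the three defining properties of a closure operator. For inflationarity ($A\subseteq c_\comp(A)$), suppose $x\in A$ and fix any $x'\comp x$; taking $x''=x$ we have $x\compflip x'$ (i.e.\ $x'\comp x$), which holds, and $x''=x\in A$, so $x\in c_\comp(A)$. Monotonicity ($A\subseteq B$ implies $c_\comp(A)\subseteq c_\comp(B)$) is immediate from the definition, since the existential witness $x''\in A$ is also in $B$. The only real content is idempotence, $c_\comp(c_\comp(A))\subseteq c_\comp(A)$ (the reverse inclusion follows from inflationarity and monotonicity). To show this, take $x\in c_\comp(c_\comp(A))$ and fix an arbitrary $x'\comp x$; by assumption there is some $x''\compflip x'$ with $x''\in c_\comp(A)$. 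Now I would apply the definition of $c_\comp(A)$ to $x''$ \emph{using the witness $x'$ itself}: since $x''\compflip x'$ means $x'\comp x''$, membership $x''\in c_\comp(A)$ gives some $x'''\compflip x'$ with $x'''\in A$. That is exactly the witness needed to conclude $x\in c_\comp(A)$. I expect idempotence to be the main (though still routine) obstacle, as it requires feeding the outer quantifier's witness $x'$ back into the inner membership statement correctly.

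For part \ref{IsClosure2}, I must show that if $A$ is a $c_\comp$-fixpoint then so is $\neg_\comp A$; since $c_\comp$ is inflationary, it suffices to prove $c_\comp(\neg_\comp A)\subseteq \neg_\comp A$. So suppose $x\in c_\comp(\neg_\comp A)$; I want to show $x\in\neg_\comp A$, i.e.\ that for every $y\comp x$ we have $y\notin A$. Fix such a $y\comp x$. By the definition of $c_\comp(\neg_\comp A)$ applied to the witness $x'=y$, there exists $z\compflip y$ (i.e.\ $y\comp z$) with $z\in\neg_\comp A$. Unpacking $z\in\neg_\comp A$ and using $y\comp z$ gives $y\notin A$, which is precisely what we needed. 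Note this argument does not even require $A$ to be a fixpoint, only that $\neg_\comp A\subseteq \neg_\comp A$; the fixpoint hypothesis on $A$ is not needed for $\neg_\comp A$ to be closed, so the result is actually slightly stronger than stated.

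Since both arguments are short quantifier manipulations and the paper itself describes the proof as straightforward, I do not anticipate any genuine difficulty; the care required is only in tracking the direction of $\comp$ versus $\compflip$ and in choosing the correct quantifier witnesses, as indicated above.
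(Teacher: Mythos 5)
Your proof is correct, and since the paper omits this proof as ``straightforward,'' your direct verification (inflationarity by taking $x''=x$, idempotence by feeding the outer witness $x'$ back into $x''\in c_\comp(A)$, and closedness of $\neg_\comp A$ by instantiating at $x'=y$) is exactly the intended argument. Your observation that part 2 holds for arbitrary $A$, with no fixpoint hypothesis needed, is also accurate.
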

\noindent Thus, $x$ is in the closure of $A$ iff \textit{every state open to $x$ is open to some state in $A$};\footnote{\label{MorphismsNote}Given this definition of the closure operation, a candidate definition of morphism between $(X,\comp)$ and $(X',\comp')$ is a map $f:X\longrightarrow X'$ such that (i) $y\comp x$ implies $f(y)\comp' f(x)$, and (ii) if $y'\comp' f(x)$, then $\exists y\comp x$ $\forall z\compflip y$ $f(z)\compflip' y'$. Condition (ii) guarantees that if $A'$ is a fixpoint of $c_{\comp'}$, then $f^{-1}[A']$ is a fixpoint of $c_{\comp}$. For suppose $x'\not\in f^{-1}[A']$, so $f(x')\not\in A'$. Then since $A'$ is a fixpoint of $c_{\comp'}$, there is a $y'\comp' f(x')$ such that for all $z'\compflip' y'$, we have $z'\not\in A'$. By (ii), $\exists y\comp x$ $\forall z\compflip y$ $f(z)\compflip' y'$, which by the previous sentence implies $\exists y\comp x$ $\forall z\compflip y$ $f(z)\not\in A'$ and hence $z\not\in f^{-1}[A']$. This shows that $f^{-1}[A']$ is a fixpoint of $c_\comp$. If we want morphisms that also preserve negation, then $f^{-1}[\neg_{\comp'}A']\subseteq \neg_\comp f^{-1}[A'] $ follows from (i), and  $\neg_\comp f^{-1}[A']\subseteq f^{-1}[\neg_{\comp'}A']$ follows from the additional condition (iii) that if $y'\comp 'f(x)$, then $\exists y\comp x$ $\forall z'\comp' f(y)$ $z'\comp' y'$. For if $x\not\in f^{-1}[\neg_{\comp'}A']$, so $f(x)\not\in \neg_{\comp'}A'$, then there is a $y'\comp 'f(x)$ with $y'\in A'$. Then we claim for the $y\comp x$ given by (iii) that $f(y)\in A'$; for by (iii), $f(y)\in c_{\comp'}(\{y'\})$, and since $y'\in A'$, we have $c_{\comp'}(\{y'\})\subseteq c_{\comp'}(A')=A'$. Hence $x\not\in \neg_\comp f^{-1}[A']$.}  and $x$ is in the negation of $A$ iff \textit{no state open to $x$ is in $A$}. We call the fixpoints of the $c_\comp$ operation, those $A$ such that $c_\comp(A)=A$, the \textit{$c_\comp$-fixpoints}, rather than closed sets, since later (\S~\ref{TopRep}) we will add a topology in which the $c_\comp$-fixpoints are open but not necessarily closed, so our terminology avoids any possible confusion. We will assume that \textit{propositions} are $c_\comp$-fixpoints, which amounts to the following in the terms of Remark \ref{IntuitivePicture}: $A$ is a proposition ($c_\comp$-fixpoint) iff whenever a state $x$ does not accept $A$, then there is a state open to $x$ that rejects $A$.

In Section \ref{Conditionals} and Appendix \ref{AppendixB}, we also define binary implication operations from the $\comp$ relation, and from these implication operations, both $c_\comp$ and $\neg_\comp$ are in turn definable.

Proposition \ref{ClosureLattice} together with Proposition \ref{IsClosure}.\ref{IsClosure1} yields part \ref{FrameToLat1} of the following, while Proposition \ref{IsClosure}.\ref{IsClosure2} together with some easy additional reasoning yields parts \ref{FrameToLat2} and \ref{FrameToLat3}.

\begin{proposition}\label{FrameToLat} For any relational frame $(X,\comp)$:
\begin{enumerate}
\item\label{FrameToLat1} the $c_\comp$-fixpoints  ordered by $\subseteq$ form a complete lattice $\lat(X,\comp)$ with meet and join calculated as in Proposition \ref{ClosureLattice};
\item\label{FrameToLat2} $\neg_\comp$ is a precomplementation on $\lat(X,\comp)$;
\item\label{FrameToLat3} if $\comp$ is reflexive, then $\neg_\comp$ is a protocomplementation on $\lat(X,\comp)$.
\end{enumerate}
\end{proposition}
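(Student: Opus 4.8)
The plan is to treat part \ref{FrameToLat1} as essentially immediate and to reduce parts \ref{FrameToLat2} and \ref{FrameToLat3} to explicit evaluations of $\neg_\comp$ at the top and bottom of $\lat(X,\comp)$ together with a one-line antitonicity check. For part \ref{FrameToLat1}, I would simply invoke Proposition \ref{IsClosure}.\ref{IsClosure1}, which says $c_\comp$ is a closure operator on $\wp(X)$, and feed it into Proposition \ref{ClosureLattice}: the $c_\comp$-fixpoints ordered by $\subseteq$ form a complete lattice with $\bigwedge_i A_i = \bigcap_i A_i$ and $\bigvee_i A_i = c_\comp(\bigcup_i A_i)$. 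Along the way I would record the extremal elements, since they are needed below: the top is $1 = X$ (because $c_\comp$ is inflationary and $c_\comp(X)\subseteq X$ trivially, so $X$ is a fixpoint, clearly the largest), and the bottom is $0 = c_\comp(\emptyset)$ (for every fixpoint $A$ we have $\emptyset\subseteq A$, hence $c_\comp(\emptyset)\subseteq c_\comp(A)=A$ by monotonicity, and $c_\comp(\emptyset)$ is itself a fixpoint by idempotence).

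Before parts \ref{FrameToLat2} and \ref{FrameToLat3} I would note that Proposition \ref{IsClosure}.\ref{IsClosure2} guarantees $\neg_\comp$ restricts to a genuine unary operation on the lattice of fixpoints. Antitonicity is the easy direction and serves both parts at once: if $A\subseteq B$ and $x\in\neg_\comp B$, then no state open to $x$ lies in $B$, hence none lies in the smaller $A$, so $x\in\neg_\comp A$. For the condition $\neg_\comp 1 = 0$ of part \ref{FrameToLat2} I would unwind both sides. On one hand, $\neg_\comp X = \{x\mid \forall y\comp x,\ y\notin X\}$; since $y\notin X$ never holds, this collapses to the set of those $x$ such that no state is open to $x$. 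On the other hand, unwinding $c_\comp(\emptyset)$ the same way (the inner existential over members of $\emptyset$ is vacuously false) yields exactly the same set. Hence $\neg_\comp X = c_\comp(\emptyset) = 0$.

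For part \ref{FrameToLat3}, reflexivity does two things. First, it forces $c_\comp(\emptyset) = \emptyset$: reflexivity gives $x\comp x$ for every $x$, so in the defining condition for $c_\comp(\emptyset)$ there is always a witness $x'=x$ with $x'\comp x$ for which the inner existential fails, whence $0 = \emptyset$. Granting this, the semicomplementation law $A\wedge\neg_\comp A = 0$ reduces to showing $A\cap\neg_\comp A = \emptyset$: if $x\in A\cap\neg_\comp A$, then $x\comp x$ by reflexivity, so the clause defining $\neg_\comp A$ forces $x\notin A$, a contradiction. Finally $\neg_\comp 0 = 1$ follows since $0 = \emptyset$ and $\neg_\comp\emptyset = \{x\mid \forall y\comp x,\ y\notin\emptyset\} = X = 1$, the universal clause being vacuously true.

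The computations are routine; the one place to stay alert is the distinction between the empty set and the lattice bottom $0 = c_\comp(\emptyset)$. In part \ref{FrameToLat2} these need not coincide, which is exactly why one obtains only a precomplementation and must verify $\neg_\comp 1 = c_\comp(\emptyset)$ rather than $\neg_\comp 1 = \emptyset$; likewise the semicomplementation law can fail without reflexivity precisely because $A\cap\neg_\comp A$ lands in $\emptyset$ yet need not equal $c_\comp(\emptyset)$. Reflexivity is what collapses this gap, and pinning down that collapse is the main (though light) obstacle in the argument.
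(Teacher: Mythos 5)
Your proposal is correct and follows exactly the route the paper intends: part \ref{FrameToLat1} is Proposition \ref{IsClosure}.\ref{IsClosure1} fed into Proposition \ref{ClosureLattice}, and parts \ref{FrameToLat2} and \ref{FrameToLat3} are the ``easy additional reasoning'' from Proposition \ref{IsClosure}.\ref{IsClosure2} that the paper leaves implicit. You also correctly isolate the one genuine subtlety---that $0=c_\comp(\varnothing)$ need not be $\varnothing$ in a non-reflexive frame---which is precisely the point the paper records immediately afterwards in Lemma \ref{AbsurdLem}.
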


\noindent One subtlety to note is that the $0$ of the lattice $\lat(X,\comp)$ is $c_\comp(\varnothing)$, which is equal to $\varnothing$ in reflexive frames but not in arbitrary relational frames, where the situation with $0$ is as follows.

\begin{definition}\label{AbsurdDef} For a relational frame $(X,\comp)$ and $x\in X$,  $x$ is \textit{absurd} if there is no $y$ with $y\comp x$.
\end{definition}

\begin{lemma}\label{AbsurdLem}  For any relational frame $(X,\comp)$:
\begin{enumerate}
\item\label{AbsurdLem1} the $0$ of $\lat(X,\comp)$ is the set of absurd states, also equal to $\neg_\comp 1$;
\item\label{AbsurdLem2} $\neg_\comp 0=1$ iff there is no $y\in X$ and absurd $x\in X$ with $x\comp y$.
\end{enumerate}
\end{lemma}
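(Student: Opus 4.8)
The plan is to prove both parts by directly unwinding the definitions of $c_\comp$ and $\neg_\comp$, exploiting the fact that a universal quantifier ranging over a condition that is never met holds vacuously, while an existential quantifier whose matrix requires membership in $\varnothing$ (or a universal whose matrix requires non-membership in $X$) is never satisfiable. Throughout I use that the top of $\lat(X,\comp)$ is $1 = X$ (since $c_\comp(X)=X$ by inflationarity together with $c_\comp(X)\subseteq X$) and, as noted just before the lemma, that the bottom is $0 = c_\comp(\varnothing)$.

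For part \ref{AbsurdLem1}, I would first compute $c_\comp(\varnothing)$. By definition $x \in c_\comp(\varnothing)$ iff for every $x' \comp x$ there is some $x'' \compflip x'$ with $x'' \in \varnothing$; since $x'' \in \varnothing$ is impossible, the inner existential fails for every candidate $x'$, so the whole condition holds exactly when there is no $x'$ with $x' \comp x$, i.e.\ exactly when $x$ is absurd. Thus $0 = c_\comp(\varnothing)$ is the set of absurd states. Next I would compute $\neg_\comp 1 = \neg_\comp X$: by definition $x \in \neg_\comp X$ iff for all $y \comp x$ we have $y \notin X$, and since $y \notin X$ is likewise impossible, this holds precisely when no $y$ satisfies $y \comp x$, i.e.\ when $x$ is absurd. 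Hence $\neg_\comp 1$ coincides with $c_\comp(\varnothing)$ and with the set of absurd states, giving part \ref{AbsurdLem1}.

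For part \ref{AbsurdLem2}, I would substitute the description of $0$ obtained in part \ref{AbsurdLem1}. Writing $Z$ for the set of absurd states, so $0 = Z$, we have $\neg_\comp 0 = \{w \mid \forall y \comp w,\ y \notin Z\}$. Then $\neg_\comp 0 = 1 = X$ holds iff every $w \in X$ lies in this set, i.e.\ iff for all $w$ and all $y$ with $y \comp w$ we have $y \notin Z$; equivalently, there is no pair $(y,w)$ with $y$ absurd and $y \comp w$. Renaming the absurd state as $x$, this is exactly the stated condition that there is no $y \in X$ and absurd $x \in X$ with $x \comp y$, completing part \ref{AbsurdLem2}.

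I expect no substantive obstacle: the only thing to watch is the bookkeeping of the relation's direction ($\comp$ versus $\compflip$) and the two distinct sources of vacuity (an empty range of the universal quantifier in part \ref{AbsurdLem1} versus an unsatisfiable matrix), which must be kept straight to avoid accidentally inverting the final condition.
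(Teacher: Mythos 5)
Your proof is correct and takes essentially the same approach as the paper: both arguments just unwind the definitions of $c_\comp$ and $\neg_\comp$ and exploit vacuity (the paper phrases part \ref{AbsurdLem1} as a sandwich, absurd states $\subseteq 0 \subseteq \neg_\comp 1 \subseteq$ absurd states, while you compute $c_\comp(\varnothing)$ directly, but the content is identical). Your bookkeeping of $\comp$ versus $\compflip$ in part \ref{AbsurdLem2} is also right.
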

\begin{proof} For part \ref{AbsurdLem1}, an absurd state $x$ belongs to every $c_\comp$-fixpoint, since it holds vacuously that $\forall x'\comp x$ $\exists x''\compflip x'$: $x''\in A$, so the set of absurd states is a subset of every $c_\comp$-fixpoint and hence equal to $0$. Moreover, since $1=X$, we have $x\in\neg_\comp 1$ only if $x$ is absurd, so $\neg_\comp 1= 0$. Part \ref{AbsurdLem2} follows immediately from part \ref{AbsurdLem1}.\end{proof}

\begin{remark}\label{MoreGeneralNeg} A more general approach to negation, which would allow $\neg 1\neq 0$, uses triples $(X,\comp,F)$ where $(X,\comp)$ is a relational frame and $F$ is a distinguished $c_\comp$-fixpoint. Then we define the negation operation by
\[\neg_{\comp,F}A=\{x\in X\mid \forall x'\comp x\;(x'\in A\Rightarrow \exists x''\compflip x':x''\in F)\}.\]
Then $\neg_\comp$ is the special case $\neg_{\comp,0}$. The $\neg_{\comp,F}$ operation can in turn be obtained from the implication operation $\to_\comp$ studied in Appendix \ref{AppendixB}, as $\neg_{\comp,F} A= A\to_\comp F$. We will return to $\neg_{\comp,F}$ once more in Theorem \ref{NegThmAntitone}.\end{remark}

\begin{remark}\label{Downsets}It is easy to see that if $\comp$ is a reflexive and transitive relation $\leq$, then the lattice of $c_\comp$-fixpoints is simply the complete Heyting algebra of all downsets of $(X,\leq)$, as observed in \citealt[pp.~139-140]{Chiara2002} (cf.~\citealt[Prop.~4.1.1]{Conradie2020}, \citealt[Prop.~2.9(ii)]{Holliday2022}). Note, however, that this construction can only realize special complete Heyting algebras, namely those in which every element is a join of completely join-prime elements (see \citealt[Prop.~1.1]{Davey1979}). By contrast, the result in Theorem \ref{HeytOrthBoole}.\ref{HeytOrthBoole1} below applies to all complete Heyting algebras (cf.~\citealt[\S~4]{BH2019}).\end{remark}

\begin{example} Figures \ref{FramesForN5Fig} and \ref{FramesForO6Fig} show reflexive relational frames that give rise to the lattices with negations in Figures \ref{N5Fig} and \ref{O6Fig}, respectively. When drawing frames, an arrow with a triangle arrowhead from $y$ to $x$ indicates $y\compflip x$. Thus, we draw the directed graph $(X,\compflip)$ to represent the  frame $(X,\comp)$. Reflexive arrows are not shown but are assumed. The $c_\comp$-fixpoints, excluding $\varnothing$ and $X$, are outlined.  Looking at a diagram of a relational frame, one can check that $A$ is a $c_\comp$-fixpoint by checking that the following holds: 
\begin{itemize}
\item from any $x\in X\setminus A$, you can step forward along an arrow to a state $x'$ that cannot step backward along an arrow into $A$. 
\end{itemize}
Informally, ``from $x$ you can see a state that cannot be seen from $A$.'' 

For instance, in the reflexive frame on the left of Figure \ref{FramesForN5Fig}, $\{x\}$ is a $c_\comp$-fixpoint since  obviously any state outside of $\{x\}$ can see a state that cannot be seen from $\{x\}$; the only close call is $y$, but $y$ can see $z$, which cannot be seen from $\{x\}$. By contrast, $\{y\}$ is not a $c_\comp$-fixpoint, because although $x\not\in \{y\}$, $x$ cannot see a state that cannot be seen from $\{y\}$. For a more interesting calculation, consider the reflexive frame on the right of Figure \ref{FramesForO6Fig}. Here $\{z\}$ is a $c_\comp$-fixpoint; the only close call is $w$, but $w$ can see $u$, which cannot be seen from  $z$ (though $u$ can see $z$, but that is irrelevant). By contrast, $\{w\}$ is \textit{not} a $c_\comp$-fixpoint, because $z$ cannot see a state that cannot be seen from $w$ (note that the arrow between $v$ and $w$ is symmetric). 

\begin{figure}[h]

\begin{center}
\begin{tikzpicture}[->,>=stealth',shorten >=1pt,shorten <=1pt, auto,node
distance=2cm,thick,every loop/.style={<-,shorten <=1pt}]
\tikzstyle{every state}=[fill=gray!20,draw=none,text=black]
\node[label=center:$x$,inner sep=0pt,minimum size=.175cm] at (0,0) (D) {}; 
\node[label=center:$y$,inner sep=0pt,minimum size=.175cm] at (2,0) (F) {}; 
\node[label=center:$z$,inner sep=0pt,minimum size=.175cm] at (4,0) (H) {}; 
\node[label=center:$w$,inner sep=0pt,minimum size=.175cm] at (3,1) (W) {}; 

\path[{Triangle[open]}-{Triangle[open]},draw,thick] (D) to node {{}}  (F);
\path[-{Triangle[open]},draw,thick] (F) to node{{}}  (H);
\path[{Triangle[open]}-{Triangle[open]},draw,thick] (F) to node {{}}  (W);
\path[{Triangle[open]}-{Triangle[open]},draw,thick] (H) to node {{}}  (W);

\path[-, draw=blue, opacity=0.5, thick, rounded corners]  (4, .4) -- (4.4, .4) -- (4.4, -.4) -- (3.6, -.4) -- (3.6, .4) -- (4.4, .4) -- (4, .4); 

\path[-, draw=darkgreen, opacity=0.5, thick, rounded corners] (3.1, 1.5) -- (3.5, 1.5) -- (4.5, .5) -- (4.5, -.5) -- (3.5, -.5) -- (2.5, .5)  -- (2.5, 1.5) -- (3.5, 1.5) -- (3.1, 1.5); 

\path[-, draw=red, opacity=0.5, thick, rounded corners] (0, .4) -- (0.4, .4) -- (0.4, -.4) -- (-.4, -.4) -- (-.4, .4) -- (0.4, .4) -- (0, .4); 

\end{tikzpicture}\qquad\qquad \begin{tikzpicture}[->,>=stealth',shorten >=1pt,shorten <=1pt, auto,node
distance=2cm,thick,every loop/.style={<-,shorten <=1pt}]
\tikzstyle{every state}=[fill=gray!20,draw=none,text=black]
\node[label=center:$x$,inner sep=0pt,minimum size=.175cm] at (0,0) (D) {}; 
\node[label=center:$y$,inner sep=0pt,minimum size=.175cm] at (2,0) (F) {}; 
\node[label=center:$z$,inner sep=0pt,minimum size=.175cm] at (4,0) (H) {}; 
\node[label=center:$w$,inner sep=0pt,minimum size=.175cm] at (3,1) (W) {}; 

\path[{Triangle[open]}-{Triangle[open]},draw,thick] (D) to node {{}}  (F);
\path[-{Triangle[open]},draw,thick] (H) to node{{}}  (F);
\path[{Triangle[open]}-{Triangle[open]},draw,thick] (F) to node {{}}  (W);
\path[{Triangle[open]}-{Triangle[open]},draw,thick] (H) to node {{}}  (W);

\path[-, draw=darkgreen, opacity=0.5, thick, rounded corners]  (-.4, .5) -- (2.5, .5) -- (2.5, -.5) -- (-.5, -.5) -- (-.5, .5) -- (1, .5); 

\path[-, draw=red, opacity=0.5, thick, rounded corners] (3.1, 1.5) -- (3.5, 1.5) -- (4.5, .5) -- (4.5, -.5) -- (3.5, -.5) -- (2.5, .5)  -- (2.5, 1.5) -- (3.5, 1.5) -- (3.1, 1.5); 

\path[-, draw=blue, opacity=0.5, thick, rounded corners] (0, .4) -- (0.4, .4) -- (0.4, -.4) -- (-.4, -.4) -- (-.4, .4) -- (0.4, .4) -- (0, .4); 

\end{tikzpicture}\qquad\qquad\begin{tikzpicture}[->,>=stealth',shorten >=1pt,shorten <=1pt, auto,node
distance=2cm,thick,every loop/.style={<-,shorten <=1pt}]
\tikzstyle{every state}=[fill=gray!20,draw=none,text=black]
\node[label=center:$x$,inner sep=0pt,minimum size=.175cm] at (0,0) (D) {}; 
\node[label=center:$y$,inner sep=0pt,minimum size=.175cm] at (1,1) (F) {}; 
\node[label=center:$z$,inner sep=0pt,minimum size=.175cm] at (2,0) (H) {}; 

\path[-{Triangle[open]},draw,thick] (D) to node {{}}  (F);
\path[-{Triangle[open]},draw,thick] (F) to node {{}}  (H);

\path[-, draw=red, opacity=0.5, thick, rounded corners]  (0, .4) -- (.4, .4) -- (.4, -.4) -- (-.4, -.4) -- (-.4, .4) -- (.4, .4) -- (0, .4); 

\path[-, draw=darkgreen, opacity=0.5, thick, rounded corners] (1.1, 1.5) -- (1.5, 1.5) -- (2.5, .5) -- (2.5, -.5) -- (1.5, -.5) -- (.5, .5)  -- (.5, 1.5) -- (1.5, 1.5) -- (1.1, 1.5); 

\path[-, draw=blue, opacity=0.5, thick, rounded corners] (2, .4) -- (2.4, .4) -- (2.4, -.4) -- (1.6, -.4) -- (1.6, .4) -- (2.4, .4) -- (2, .4); 

\end{tikzpicture}
\end{center}
\caption{Reflexive frame representations of the lattice expansions in Figure \ref{N5Fig}.}\label{FramesForN5Fig}
\end{figure}
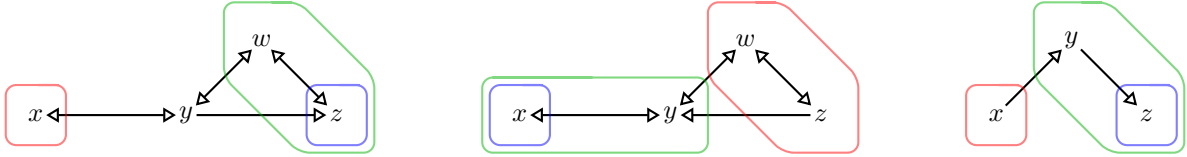

\begin{figure}[ht]

\begin{center}

\begin{tikzpicture}[->,>=stealth',shorten >=1pt,shorten <=1pt, auto,node
distance=2cm,thick,every loop/.style={<-,shorten <=1pt}]
\tikzstyle{every state}=[fill=gray!20,draw=none,text=black]
\node[label=center:$x$,inner sep=0pt,minimum size=.175cm] at (0,0) (A) {}; 
\node[label=center:$y$,inner sep=0pt,minimum size=.175cm] at (0,1.5) (B) {}; 
\node[label=center:$w$,inner sep=0pt,minimum size=.175cm] at (1.5,1.5) (C) {}; 
\node[label=center:$z$,inner sep=0pt,minimum size=.175cm] at (1.5,0) (D) {}; 

\path[{Triangle[open]}-{Triangle[open]},draw,thick] (A) to node {{}}  (B);
\path[{Triangle[open]}-{Triangle[open]},draw,thick] (B) to node {{}}  (C);
\path[{Triangle[open]}-{Triangle[open]},draw,thick] (C) to node {{}}  (D);

\path[-, draw=red, opacity=0.5, thick, rounded corners]  (0, .4) -- (.4, .4) -- (.4, -.4) -- (-.4, -.4) -- (-.4, .4) -- (.4, .4) -- (0, .4); 

\path[-, draw=darkgreen, opacity=0.5, thick, rounded corners]  (0, 2) -- (.5, 2) -- (.5, -.5) -- (-.5, -.5) -- (-.5, 2) -- (.5, 2) -- (0, 2); 

\path[-, draw=blue, opacity=0.5, thick, rounded corners]  (1.5, 2) -- (2, 2) -- (2, -.5) -- (1, -.5) -- (1, 2) -- (2, 2) -- (1.5, 2); 

\path[-, draw=orange, opacity=0.5, thick, rounded corners]  (1.5, .4) -- (1.9, .4) -- (1.9, -.4) -- (1.1, -.4) -- (1.1, .4) -- (1.9, .4) -- (1.5, .4); 

\end{tikzpicture}\qquad\qquad\begin{tikzpicture}[->,>=stealth',shorten >=1pt,shorten <=1pt, auto,node
distance=2cm,thick,every loop/.style={<-,shorten <=1pt}]
\tikzstyle{every state}=[fill=gray!20,draw=none,text=black]
\node[label=center:$x$,inner sep=0pt,minimum size=.175cm] at (0,0) (A) {}; 
\node[label=center:$y$,inner sep=0pt,minimum size=.175cm] at (0,1.5) (B) {}; 
\node[label=center:$v$,inner sep=0pt,minimum size=.175cm] at (1.5,1.5) (C) {}; 
\node[label=center:$u$,inner sep=0pt,minimum size=.175cm] at (1.5,0) (D) {}; 
\node[label=center:$z$,inner sep=0pt,minimum size=.175cm] at (3,1.5) (E) {}; 
\node[label=center:$w$,inner sep=0pt,minimum size=.175cm] at (3,0) (F) {}; 

\path[{Triangle[open]}-{Triangle[open]},draw,thick] (A) to node {{}}  (B);
\path[{Triangle[open]}-{Triangle[open]},draw,thick] (B) to node {{}}  (C);
\path[{Triangle[open]}-{Triangle[open]},draw,thick] (C) to node {{}}  (D);
\path[{Triangle[open]}-{Triangle[open]},draw,thick] (A) to node {{}}  (D);
\path[{Triangle[open]}-{Triangle[open]},draw,thick] (E) to node {{}}  (F);
\path[{Triangle[open]}-{Triangle[open]},draw,thick] (D) to node {{}}  (F);
\path[{Triangle[open]}-{Triangle[open]},draw,thick] (C) to node {{}}  (E);
\path[{Triangle[open]}-{Triangle[open]},draw,thick] (B) to node {{}}  (D);
\path[{Triangle[open]}-{Triangle[open]},draw,thick] (C) to node {{}}  (F);
\path[-{Triangle[open]},draw,thick] (C) to node {{}}  (A);
\path[-{Triangle[open]},draw,thick] (D) to node {{}}  (E);

\path[-, draw=red, opacity=0.5, thick, rounded corners]  (0, .4) -- (.4, .4) -- (.4, -.4) -- (-.4, -.4) -- (-.4, .4) -- (.4, .4) -- (0, .4); 

\path[-, draw=darkgreen, opacity=0.5, thick, rounded corners]  (0, 2) -- (.5, 2) -- (.5, -.5) -- (-.5, -.5) -- (-.5, 2) -- (.5, 2) -- (0, 2); 

\path[-, draw=blue, opacity=0.5, thick, rounded corners]  (3, 2) -- (3.5, 2) -- (3.5, -.5) -- (2.5, -.5) -- (2.5, 2) -- (3.4, 2) -- (3, 2); 

\path[-, draw=orange, opacity=0.5, thick, rounded corners]  (3, 1.9) -- (3.4, 1.9) -- (3.4, 1.1) -- (2.6, 1.1) -- (2.6, 1.9) -- (3.4, 1.9) -- (3, 1.9); 

\end{tikzpicture}

\end{center}

\caption{Reflexive frame representations of the lattice expansions in Figure \ref{O6Fig}.}\label{FramesForO6Fig}
\end{figure}
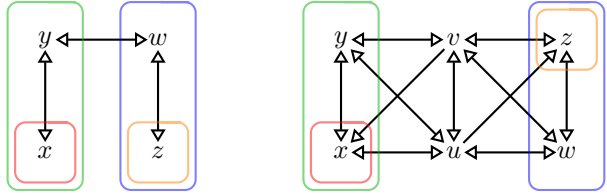

A more efficient procedure for calculating $c_\comp$-fixpoints, using Ganter's \citeyearpar{Ganter2010} algorithm for calculating fixpoints of a closure operator, is implemented in one of the notebooks mentioned in \S~\ref{Intro}.
\end{example}

From this starting point, algebras for intuitionistic logic, orthologic, and classical logic arise from natural constraints on the  relation $\comp$. It has long been known that reflexive frames in which $\comp$ is \textit{symmetric} give rise to ortholattices (\citealt[\S\S~32-4]{Birkhoff1940}), and all complete ortholattices can be so represented (\citealt{MacLaren1964}), which yields a relational semantics for orthologic (\citealt{Goldblatt1974}, cf.~\citealt{Dishkant1972}). To characterize the complete Heyting case, \citealt{Holliday2022} uses the following concepts.\footnote{\citealt[\S~3.4]{Holliday2021} uses the pre-refinement and post-refinement relations to translate from single relation structures, as used in this paper and \citealt{Holliday2022}, to doubly ordered structures, as used in the duality for complete lattices in \citealt{Massas2020}.}

\begin{definition}\label{RefinementDef} Given a relational frame $(X,\comp)$ and $x,y\in X$:
\begin{enumerate}
\item\label{RefinementDef1} $x$ \textit{pre-refines $y$} if for all $z\in X$, $z\comp x$ implies $z\comp y$;
\item $x$ \textit{post-refines $y$} if for all $z\in X$, $x\comp z$ implies $y\comp z$;
\item $x$ \textit{refines} $y$ if $x$ pre-refines and post-refines $y$;
\item $x$ is \textit{compossible with $y$} if there is a non-absurd $w\in X$ that refines $x$ and pre-refines $y$.
\end{enumerate} 
We say that $\comp$ is \textit{compossible} if whenever $x\comp y$, then $x$ is compossible with $y$.\end{definition}

\noindent Note that if $\comp$ is symmetric, then pre-refinement and post-refinement are equivalent, and $x$ is compossible with $y$ just in case they have a common non-absurd refinement.

The following lemma will be useful below.

\begin{lemma}\label{prerefinelem} For any relational frame $(X,\comp)$ and $x,y\in X$, if $x$ pre-refines $y$, then for every $c_\comp$-fixpoint $A$, if $y\in A$, then $x\in A$.
\end{lemma}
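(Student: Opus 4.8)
The plan is to reduce the claim to membership in the closure $c_\comp(A)$ and then simply unwind the two quantifier definitions, taking care about the direction of $\comp$ since the frame is not assumed symmetric. Because $A$ is a $c_\comp$-fixpoint, we have $A=c_\comp(A)$, so it suffices to show $x\in c_\comp(A)$; by the definition of $c_\comp$ in Proposition \ref{IsClosure}.\ref{IsClosure1}, this means showing that for every $x'$ with $x'\comp x$ there is some $x''$ with $x''\compflip x'$ (equivalently $x'\comp x''$) and $x''\in A$.

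First I would fix an arbitrary $x'$ with $x'\comp x$. The hypothesis that $x$ pre-refines $y$ (Definition \ref{RefinementDef}.\ref{RefinementDef1}) says precisely that every state open to $x$ is open to $y$; applying it to $x'$ gives $x'\comp y$. Next I would invoke the hypothesis $y\in A$: since $A=c_\comp(A)$, unwinding the definition of $c_\comp$ at the point $y$ and instantiating its universal quantifier at our $x'$ (which does satisfy $x'\comp y$) produces a state $x''$ with $x'\comp x''$ and $x''\in A$. This $x''$ is exactly the witness required to conclude $x\in c_\comp(A)=A$, which finishes the argument.

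Since the whole proof is a direct chaining of the definitions, I do not expect any genuine obstacle beyond the bookkeeping of the relation's direction. The one thing to watch is that $\comp$ is not symmetric, so one must keep track of whether each condition is of the form $z\comp x$ or $x\comp z$. The key alignment is that pre-refinement is stated in terms of \emph{incoming} edges ($z\comp x\Rightarrow z\comp y$), and this matches the outer universal quantifier of $c_\comp$, which also ranges over states $x'$ with $x'\comp x$; it is precisely this match that lets the proof go through with no appeal to reflexivity or symmetry of the frame.
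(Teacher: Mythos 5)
Your proof is correct and follows exactly the paper's own argument: from $x'\comp x$ and pre-refinement obtain $x'\comp y$, then use $y\in A=c_\comp(A)$ to produce the witness $x''\compflip x'$ in $A$, which shows $x\in c_\comp(A)=A$. The care you take with the direction of $\comp$ is exactly right and matches the paper.
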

\begin{proof} If $x'\comp x$, then since $x$ pre-refines $y$, $x'\comp y$. Then since $y\in A$, there is an $x''\compflip x'$ with $x''\in A$. Hence for any $x'\comp x$ there is an $x''\compflip x'$ with $x''\in A$, which shows $x\in A$.
\end{proof}
\noindent Note that if $x$ post-refines $y$, then for any $A$ that $y$ rejects in the sense of Remark \ref{IntuitivePicture}, $x$ rejects $A$ too. Hence if $x$ refines $y$, then $x$ accepts every proposition that $y$ does and rejects every proposition that $y$ does. 

 Now we can characterize complete Heyting algebras, ortholattices, and Boolean algebras using relational frames as follows. For a proof, see  \citealt[Theorems 2.21 and 3.18]{Holliday2022}. Part \ref{HeytOrthBoole1} also follows from our results concerning lattices with implications in  Appendix \ref{AppendixB}.

\begin{theorem}\label{HeytOrthBoole}$\,$
\begin{enumerate}
\item\label{HeytOrthBoole1} $(L,\neg)$ is a complete Heyting algebra with pseudocomplementation $\neg$  iff $(L,\neg)$ is isomorphic to \\ ${(\lat(X,\comp), \neg_\comp)}$ for a relational frame $(X,\comp)$ in which $\comp$ is reflexive and compossible. 
\item $(L,\neg)$ is a complete ortholattice with orthocomplementation $\neg$ iff $(L,\neg)$ is isomorphic to $(\lat(X,\comp), \neg_\comp)$ for a relational frame $(X,\comp)$ in which $\comp$ is reflexive and symmetric.
\item $(L,\neg)$ is a complete Boolean algebra with Boolean negation $\neg$ iff $(L,\neg)$ is isomorphic to $(\lat(X,\comp), \neg_\comp)$ for a relational frame $(X,\comp)$ in which $\comp$ is reflexive, symmetric, and compossible.
\end{enumerate}
\end{theorem}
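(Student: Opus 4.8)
The plan is to prove each of the three biconditionals by splitting it into a \emph{soundness} direction (the frame conditions imply the stated algebraic type) and a \emph{representation} direction (every algebra of that type arises as $(\lat(X,\comp),\neg_\comp)$ for such a frame). The soundness directions build directly on Proposition \ref{FrameToLat}: reflexivity of $\comp$ already guarantees that $\neg_\comp$ is a protocomplementation on $\lat(X,\comp)$, i.e.\ an antitone semicomplementation with $\neg_\comp 0=1$, so in each case it remains only to extract the additional structure contributed by symmetry and/or compossibility.

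For the symmetric case I would first observe that when $\comp$ is symmetric we have $c_\comp=\neg_\comp\neg_\comp$: unwinding the definitions, $x\in c_\comp(A)$ says that every $x'\comp x$ is compatible with some element of $A$, which is exactly the negation of $x\notin\neg_\comp\neg_\comp A$. Applying this to a $c_\comp$-fixpoint $A$ yields $\neg_\comp\neg_\comp A=A$, so $\neg_\comp$ is involutive; together with antitonicity and the semicomplement property this makes $\neg_\comp$ a weak pseudocomplementation satisfying double negation elimination, hence an orthocomplementation by Lemma \ref{UsefulLem}.\ref{UsefulLem3}. For the compossible case I would show that compossibility forces $\neg_\comp$ to be the pseudocomplement and forces the lattice to be distributive. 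The first is the cleaner half: if $A\wedge B=0$ (so $A\cap B=\varnothing$, since a reflexive frame has no absurd states) yet some $x\in B$ had $y\comp x$ with $y\in A$, then compossibility would produce a non-absurd $w$ refining $y$ and pre-refining $x$; by Lemma \ref{prerefinelem} and the remark following it, $w$ would lie in both $A$ and $B$, a contradiction, so $B\subseteq\neg_\comp A$ and $\neg_\comp A$ is the maximum semicomplement. Frame distributivity, $A\cap c_\comp(\bigcup_i B_i)\subseteq c_\comp(\bigcup_i(A\cap B_i))$, I would derive by the same style of refinement juggling, inserting the common refinements supplied by compossibility. The Boolean case is then immediate by combining the two, since a distributive complete ortholattice is a complete Boolean algebra.

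For the representation directions, the symmetric case is classical. Setting $X=L\setminus\{0\}$ and declaring $x\comp y$ iff $x\not\le\neg y$, anti-inflationarity (Lemma \ref{UsefulLem}.\ref{UsefulLem1}) gives reflexivity and involutivity of $\neg$ gives symmetry; since $\neg_\comp$ is then the orthogonal-complement operation and $c_\comp=\neg_\comp\neg_\comp$ is biorthogonal closure, the map $a\mapsto\{x\neq 0\mid x\le a\}$ is a lattice isomorphism onto the $c_\comp$-fixpoints carrying $\neg$ to $\neg_\comp$, recovering MacLaren's representation of complete ortholattices. The Heyting case is where the real work lies: the naive relation $x\comp y$ iff $x\not\le\neg y$ \emph{fails} here, since on the three-element chain it is total and yields only a two-element lattice, so it cannot recover non-spatial complete Heyting algebras such as $[0,1]$. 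Here I would instead invoke the discrete representation of arbitrary complete lattices from \citealt{Holliday2022}, which uses a genuinely non-transitive reflexive relation whose $c_\comp$-fixpoints recover $L$, and then verify that distributivity of $L$ translates into compossibility of the relation while $\neg_\comp$ matches the pseudocomplement. The Boolean representation combines this construction with symmetry.

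The main obstacle is precisely this Heyting representation: producing, for an arbitrary complete Heyting algebra that need \emph{not} be join-generated by its completely join-prime elements, a reflexive frame whose closure operator carves out exactly $L$, and checking that the order-theoretic content of distributivity is captured by the first-order compossibility condition of Definition \ref{RefinementDef}. This is the step I would lean on the prior representation theorem for rather than attempt from scratch; by contrast, the soundness directions and the orthocomplemented representation are comparatively routine.
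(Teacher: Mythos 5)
Your proposal is correct and matches the paper's route: the paper supplies no independent argument for this theorem, simply citing Theorems 2.21 and 3.18 of \citealt{Holliday2022} (and noting that part 1 also follows from the implication results of Appendix \ref{AppendixB}), and your outline---direct frame-side verification for the soundness halves via the refinement lemmas, the classical MacLaren/Birkhoff--von Neumann construction for the ortholattice representation, and deferral of the non-spatial Heyting representation to the prior discrete representation theorem---is exactly the content of those cited results. You also correctly locate the one step that cannot be done with the naive relation $x\comp y$ iff $x\not\leq\neg y$, namely the Heyting representation, which is precisely the step the paper itself outsources.
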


Not every pseudocomplemented lattice $(L,\neg)$ is a Heyting algebra, as Heyting algebras require a \textit{relative pseudocomplementation} $\to$ such that for all $a,b,c\in L$, $a\wedge b\leq c$ iff $a\leq b\to c$, which implies that $L$ is distributive. Thus, let us isolate a condition just for pseudocomplementation, which is the conjunction of two conditions: $a\wedge \neg a=0$, and $a\wedge b=0$ implies $a\leq \neg b$. Let us also isolate the condition for  double negation introduction that we want for weak pseudocomplementations, as well as the condition for double negation elimination that turns weak pseudocomplementations into orthocomplementations (Lemma \ref{UsefulLem}.\ref{UsefulLem3}).

\begin{proposition}\label{CorrLemm} For any relational frame $(X,\comp)$, in each of the following pairs, (a) and (b) are equivalent:
\begin{enumerate}
\item \label{CorrLemm1}
\begin{enumerate}
\item for all $c_\comp$-fixpoints $A$, we have $A\cap \neg_\comp A=0$; 
\item for all non-absurd $x\in X$, there is a $z\comp x$ that pre-refines $x$.
\end{enumerate}
\item\label{CorrLemm3}
\begin{enumerate}
\item for all $c_\comp$-fixpoints $A$, we have $A\subseteq\neg_\comp\neg_\comp A$;
\item \textit{pseudosymmetry}: for all $x\in X$ and $y\comp x$, there is a $z\comp y$ that pre-refines $x$.
\end{enumerate}
\item\label{CorrLemm2}
\begin{enumerate}
\item for all $c_\comp$-fixpoints $A,B$, if $A\cap B=0$, then $A\subseteq\neg_\comp B$.
\item \textit{weak compossibility}: for all $x\in X$ and $y\comp x$, there is a non-absurd $z$ that pre-refines $y$ and $x$.
\end{enumerate}
\item\label{CorrLemm4} 
\begin{enumerate}
\item for all $c_\comp$-fixpoints $A$, we have $\neg_\comp\neg_\comp A\subseteq A$;
\item for all $x\in X$ and $y\comp x$, there is a $y'\comp x$ such that for all $z\in X$, if $z\comp y'$ then $y\comp z$.
\end{enumerate}
\end{enumerate}
\end{proposition}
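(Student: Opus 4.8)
The plan is to treat all four items as correspondence results, each proved by two implications that share a common toolkit: the unfoldings of $c_\comp$ and $\neg_\comp$, Lemma~\ref{prerefinelem}, and two preliminary observations about distinguished fixpoints. The first observation is that the definition of $c_\comp$ gives, for each $y\in X$, the clean identity $c_\comp(\{y\})=\{x\in X\mid x\text{ pre-refines }y\}$; in particular $y\in c_\comp(\{y\})$, and this is the least fixpoint containing $y$. The second observation, needed only for item~\ref{CorrLemm4}, is that $\neg_\comp S$ is a $c_\comp$-fixpoint for \emph{every} $S\subseteq X$ (not just for fixpoints as in Proposition~\ref{IsClosure}.\ref{IsClosure2}): if $x\in c_\comp(\neg_\comp S)$ and $w\comp x$, applying the definition of $c_\comp$ with $x'=w$ yields some $x''$ with $w\comp x''$ and $x''\in\neg_\comp S$, and since $w\comp x''$ we get $w\notin S$, so $x\in\neg_\comp S$. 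I will also record the routine unfolding $x\in\neg_\comp\neg_\comp A$ iff for every $y\comp x$ there is $w\comp y$ with $w\in A$.

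For the (b)$\Rightarrow$(a) directions in items~\ref{CorrLemm1}--\ref{CorrLemm3}, I would argue directly and uniformly. For \ref{CorrLemm1}, if $x\in A\cap\neg_\comp A$ is non-absurd, the witness $z\comp x$ pre-refining $x$ forces $z\in A$ by Lemma~\ref{prerefinelem}, contradicting $x\in\neg_\comp A$; hence $A\cap\neg_\comp A$ consists of absurd states, i.e.\ equals $0$. For \ref{CorrLemm3} (double negation introduction), given $x\in A$ and $y\comp x$, pseudosymmetry provides $z\comp y$ pre-refining $x$, so $z\in A$ by Lemma~\ref{prerefinelem}, witnessing $y\notin\neg_\comp A$; thus $x\in\neg_\comp\neg_\comp A$. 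For \ref{CorrLemm2}, if $A\cap B=0$ but some $x\in A$ has $y\comp x$ with $y\in B$, weak compossibility yields a non-absurd $z$ pre-refining both $x$ and $y$, so $z\in A\cap B=0$ by Lemma~\ref{prerefinelem}, contradicting non-absurdity.

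The (a)$\Rightarrow$(b) directions in items~\ref{CorrLemm1}--\ref{CorrLemm3} are where the principal fixpoints do the work. For \ref{CorrLemm1}, take $A=c_\comp(\{x\})$ for non-absurd $x$: then $x\in A\setminus 0$, so $x\notin\neg_\comp A$, which unpacks to a $z\comp x$ with $z\in c_\comp(\{x\})$, i.e.\ $z$ pre-refines $x$. For \ref{CorrLemm3}, again take $A=c_\comp(\{x\})$ and apply $x\in\neg_\comp\neg_\comp A$ at the neighbor $y$ to extract $w\comp y$ with $w\in c_\comp(\{x\})$, giving pseudosymmetry. For \ref{CorrLemm2}, set $A=c_\comp(\{x\})$, $B=c_\comp(\{y\})$; if $A\cap B=0$ then (a) gives $A\subseteq\neg_\comp B$, so $x\in\neg_\comp B$, contradicting $y\comp x$ with $y\in B$, whence $A\cap B$ contains a non-absurd $z$ pre-refining both $x$ and $y$.

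The main obstacle is item~\ref{CorrLemm4} (double negation elimination), where a principal fixpoint does not obviously suffice. Here I would use the second preliminary observation and take $A=\neg_\comp\{y\}=\{v\mid y\not\comp v\}$, which is a fixpoint. A short computation gives $\neg_\comp A=\{u\mid \forall z\,(z\comp u\Rightarrow y\comp z)\}$, so that $u\in\neg_\comp A$ is precisely the inner clause of \ref{CorrLemm4}(b); consequently $x\in\neg_\comp\neg_\comp A$ holds exactly when \emph{no} $y'\comp x$ lies in $\neg_\comp A$, i.e.\ exactly when \ref{CorrLemm4}(b) fails at $x,y$. Since $y\comp x$ forces $x\notin A$, a failure of (b) produces $x\in\neg_\comp\neg_\comp A\setminus A$, contradicting (a); this gives (a)$\Rightarrow$(b). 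The matching (b)$\Rightarrow$(a) runs by contraposition: for a fixpoint $A$ and $x\notin A=c_\comp(A)$ there is $x_0\comp x$ whose forward image avoids $A$, and feeding $x_0$ into (b) yields $y'\comp x$ with $y'\in\neg_\comp A$, so $x\notin\neg_\comp\neg_\comp A$. I expect the bookkeeping in this last item—keeping the forward/backward directions of $\comp$ straight and verifying that $\neg_\comp\{y\}$ behaves as claimed—to be the only genuinely delicate part; the other three items are essentially direct applications of Lemma~\ref{prerefinelem} to principal fixpoints.
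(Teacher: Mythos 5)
Your proof is correct and follows essentially the same route as the paper's: principal fixpoints $c_\comp(\{x\})$ (equivalently, the set of pre-refinements of $x$) together with Lemma~\ref{prerefinelem} handle items \ref{CorrLemm1}--\ref{CorrLemm3}, and the fixpoint $\{v\mid y\not\comp v\}$ handles item \ref{CorrLemm4}, exactly as in the paper. The only cosmetic differences are that you package the fixpoint-hood of $\{v\mid y\not\comp v\}$ via the general fact that $\neg_\comp S$ is a fixpoint for arbitrary $S$ (the paper checks this instance directly) and that you phrase a few directions directly where the paper argues by contraposition.
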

\begin{proof} For part \ref{CorrLemm1}, suppose (b) holds,  $x\in A$, and $x\not\in 0$, so by Lemma \ref{AbsurdLem}.\ref{AbsurdLem1}, $x$ is non-absurd. Then by (b) there is a $z\comp x$ that pre-refines $x$, which with Lemma \ref{prerefinelem} implies $z\in A$ and hence $x\not\in \neg_\comp A$. This proves $A\cap\neg_\comp A\subseteq 0$. Conversely, suppose (b) does not hold, so there is a non-absurd $x$ that is not pre-refined by any state open to $x$. First, we claim $x\in \neg_\comp c_\comp (\{x\})$. For suppose $y\comp x$. Since $y$ does not pre-refine $x$, there is a $z\comp y$ such that $z\not\comp x$. This shows $y\not\in c_\comp (\{x\})$, so $x\in \neg_\comp c_\comp(\{x\})$ and hence $x\in c_\comp(\{x\})\cap \neg_\comp c_\comp(\{x\})$. Then since $x$ is non-absurd, we have $c_\comp(\{x\})\cap \neg_\comp c_\comp(\{x\}) \neq 0$.

For part \ref{CorrLemm3}, suppose (b) holds, $x\in A$, and $y\comp x$. Then by pseudosymmetry, there is a $z\comp y$ that pre-refines $x$. Since $x\in A$, it follows by Lemma \ref{prerefinelem} that $z\in A$, which with $z\comp y$ implies $y\not\in \neg_\comp A$. Thus, we have $x\in \neg_\comp \neg_\comp A$, so $A\subseteq \neg_\comp \neg_\comp A$. Conversely, suppose (b)  does not hold, so there are $x,y\in X$ with $y\comp x$ such that for all $z\comp y$, there is some $w\comp z$ with $w\not\comp x$, which implies $z\not\in c_\comp(\{x\})$. Hence $y\in \neg c_\comp(\{x\})$, which with $y\comp x$ implies $x\not\in \neg_\comp\neg c_\comp(\{x\})$. Yet $x\in c_\comp(\{x\})$, so $c_\comp(\{x\})\not\subseteq \neg_\comp\neg c_\comp(\{x\})$.

For part \ref{CorrLemm2}, suppose (b) holds, $A\cap B=0$, $x\in A$, but $x\not\in \neg_\comp B$, so there is a $y\comp x$ with $y\in B$. Then by weak compossibility, there is a non-absurd $z$ that pre-refines $y$ and $x$. Hence $z\in A\cap B$ by Lemma \ref{prerefinelem}. Since $z$ is non-absurd, it follows that $A\cap B\neq 0$ by Lemma \ref{AbsurdLem}.\ref{AbsurdLem1}. Conversely,  suppose (b) does not hold, so there are $x,y\in X$ with $y\comp x$ but there is no non-absurd $z$ that pre-refines $y$ and $x$. It follows that $c_\comp (\{y\})\cap c_\comp(\{x\})=0$. But since $y\comp x$, we have $x\not\in \neg_\comp c_\comp(\{y\})$, so $c_\comp (\{x\})\not\subseteq \neg_\comp c_\comp(\{y\})$.

For part \ref{CorrLemm4}, suppose (b) holds and $x\not\in A$, so there is a $y\comp x$ such that for all $w\compflip y$, $w\not\in A$. By (b), there is a $y'\comp x$ such that for all $z\in X$,  $z\comp y'$ implies $y\comp z$ and hence $z\not\in A$ by the previous sentence. Thus, $y'\in \neg_\comp A$, which with $y'\comp x$ implies $x\not\in \neg_\comp \neg_\comp A$. Conversely, suppose (b) does not hold, so there is some $y\comp x$ such that (i) for all $y'\comp x$, there is a $z\comp y'$ such that  $y\not \comp z$. Let $A=\{w\in X\mid y\not\comp w\}$. Then $A$ is a $c_\comp$-fixpoint, for if $v\not\in A$, then $y\comp v$ and for all $u\compflip y$, $u\not\in A$. Moreover, $x\in \neg_\comp\neg_\comp A$ by (i), but $x\not\in A$.\end{proof}

\begin{remark}Note the relation between the (b) conditions in parts \ref{CorrLemm1} and \ref{CorrLemm3} of Lemma \ref{CorrLemm}: the first says that if $y\comp x$, then there is a pre-refinement of $x$ that is open to $x$, while the second says that if $y\comp x$, then there is a pre-refinement of $x$ that is open to $y$. In Appendix \ref{AppendixB}, we consider a pair of analogous conditions for an implication $\to_\comp$ in place of the negation $\neg_\comp$ (Lemma \ref{ImpCorr}).\end{remark}

\noindent Concerning part \ref{CorrLemm1}, it turns out (Theorem \ref{NegThm}.\ref{NegThm2}) that for the purposes of representing protocomplementations, we can strengthen the condition in \ref{CorrLemm1}(b) to reflexivity without loss of generality. Concerning part \ref{CorrLemm3},  pseudosymmetry  is a weakening of the symmetry property that yields ortholattices. Pseudosymmetry says that if $y$ is open to $x$, then  while $x$ might not be open to $y$, some pre-refinement of $x$ is open to $y$. In the terms of Remark \ref{IntuitivePicture}, pseudosymmetry corresponds to the condition that for any proposition $A$ and $y\in X$, \[\mbox{if $y$ accepts $\neg A$, then $y$ rejects $A$.}\] For assume pseudosymmetry and that $y$ does not reject $A$, so there is an $x\compflip y$ with $x\in A$; then taking $z$ as in the statement of pseudosymmetry, we have $z\in A$ by Lemma \ref{prerefinelem}, so $z\comp y$ implies that $y$ does not accept $\neg A$. Conversely, if pseudosymmetry fails, then $y$ does not reject $c_\comp(\{x\})$ but does accept $\neg_\comp c_\comp(\{x\})$.

\begin{remark}\label{DunnDifference} In Dunn's setting with triples $(X,\comp,\leq)$ referenced in \S~\ref{FrameToLatSection}, $A\subseteq\neg\neg A$ corresponds to the symmetry of $\comp$ (\citealt[Thm.~2.10]{Dunn2005}, \citealt[Thm.~11.41]{Restall2000}), which in our setting overshoots and makes $\neg$ an orthocomplementation.
\end{remark}

We will also consider the following strengthening of pseudosymmetry.

\begin{definition}\label{StrongPseudo} A relational frame $(X,\comp)$ is \textit{strongly pseudosymmetric} if for all $x\in X$ and $y\comp x$, there is a $z\comp y$ such that $z$ pre-refines $x$ and $x$ pre-refines $z$.
\end{definition}
\noindent Note that if $z$ pre-refines $x$ and vice versa, then $x$ and $z$ belong to exactly the same propositions, i.e., $c_\comp$-fixpoints, by Lemma \ref{prerefinelem} (though they may \textit{reject} different propositions).

We will see (Theorem \ref{NegThm}.\ref{NegThm4}) that lattices with weak pseudocomplementations can be represented using  pseudosymmetric reflexive frames---or even strongly pseudosymmetric ones at the expense of a bigger frame.

\begin{example} In Figure \ref{FramesForN5Fig}, the reflexive frame on the left is pseudosymmetric but not strongly pseudosymmetric; the frame in the middle is strongly pseudosymmetric but not symmetric; and the frame on the right is not pseudosymmetric. In Figure \ref{FramesForO6Fig}, the reflexive frame on the left is symmetric while the one on the right is strongly pseudosymmetric but not symmetric.
\end{example}

Finally, let us turn from lattices to our formal language $\mathcal{L}$. Proposition \ref{FrameToLat} leads immediately to the following relational semantics for  $\mathcal{L}$.

\begin{definition}\label{RelModel} A \textit{relational model} is a triple $\mathcal{M}=(X,\comp,V)$ where $(X,\comp)$ is a relational frame and $V$ maps each $p\in\mathsf{Prop}$ to a $c_\comp$-fixpoint $V(p)\subseteq X$. We define a forcing relation between states in $\mathcal{M}$ and formulas of $\mathcal{L}$ as follows:
\begin{enumerate}
\item $\mathcal{M},x\Vdash p$ iff $x\in V(p)$;
\item $\mathcal{M},x\Vdash\neg\varphi$ iff for all $x'\comp x$, $\mathcal{M},x'\nVdash\varphi$;
\item $\mathcal{M},x\Vdash\varphi\wedge\psi$ iff $\mathcal{M},x\Vdash\varphi$ and $\mathcal{M},x\Vdash\psi$;
\item $\mathcal{M},x\Vdash\varphi\vee\psi$ iff $\forall x'\comp x$ $\exists x''\compflip x'$: $\mathcal{M},x''\Vdash\varphi$ or $\mathcal{M},x''\Vdash\psi$.
\end{enumerate}
Given a class $\mathbb{C}$ of relational frames, we define $\varphi\vDash_\mathbb{C}\psi$ if for all $(X,\comp)\in \mathbb{C}$, all models $\mathcal{M}$ based on $(X,\comp)$,  and all $x\in X$, if $\mathcal{M},x\Vdash \varphi$, then $\mathcal{M},x\Vdash\psi$.
\end{definition}

Where $\llbracket\varphi\rrbracket^\mathcal{M}=\{x\in X\mid \mathcal{M},x\Vdash\varphi\}$, an easy induction shows the following.
\begin{lemma} For any relational model $\mathcal{M}=(X,\comp,V)$ and $\varphi\in\mathcal{L}$, $\llbracket\varphi\rrbracket^\mathcal{M}$ is a $c_\comp$-fixpoint.
\end{lemma}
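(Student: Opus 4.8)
The plan is to proceed by induction on the structure of $\varphi$, observing in each case that the forcing clause in Definition \ref{RelModel} computes exactly the corresponding operation on $c_\comp$-fixpoints, so that $\llbracket\varphi\rrbracket^\mathcal{M}$ lands back among the fixpoints. For the base case $\varphi=p$, we have $\llbracket p\rrbracket^\mathcal{M}=V(p)$, which is a $c_\comp$-fixpoint by the definition of a relational model. For the negation case $\varphi=\neg\psi$, unwinding clause (2) gives $\llbracket\neg\psi\rrbracket^\mathcal{M}=\{x\in X\mid \forall x'\comp x,\ x'\notin\llbracket\psi\rrbracket^\mathcal{M}\}=\neg_\comp\llbracket\psi\rrbracket^\mathcal{M}$; by the inductive hypothesis $\llbracket\psi\rrbracket^\mathcal{M}$ is a $c_\comp$-fixpoint, whence $\neg_\comp\llbracket\psi\rrbracket^\mathcal{M}$ is one as well by Proposition \ref{IsClosure}.\ref{IsClosure2}.

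For conjunction, clause (3) gives $\llbracket\psi\wedge\chi\rrbracket^\mathcal{M}=\llbracket\psi\rrbracket^\mathcal{M}\cap\llbracket\chi\rrbracket^\mathcal{M}$, and the intersection of two $c_\comp$-fixpoints is again a $c_\comp$-fixpoint, since it is their meet in $\lat(X,\comp)$ (Proposition \ref{ClosureLattice}, Proposition \ref{FrameToLat}.\ref{FrameToLat1}). The one case worth spelling out is disjunction: comparing clause (4) directly with the definition of $c_\comp$ in Proposition \ref{IsClosure}.\ref{IsClosure1}, we see that $\llbracket\psi\vee\chi\rrbracket^\mathcal{M}=\{x\in X\mid \forall x'\comp x\ \exists x''\compflip x':\ x''\in\llbracket\psi\rrbracket^\mathcal{M}\cup\llbracket\chi\rrbracket^\mathcal{M}\}=c_\comp(\llbracket\psi\rrbracket^\mathcal{M}\cup\llbracket\chi\rrbracket^\mathcal{M})$, which is a $c_\comp$-fixpoint by idempotence of the closure operator $c_\comp$ (equivalently, it is the join of the two fixpoints computed as in Proposition \ref{ClosureLattice}). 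Notably, this case does not even require the inductive hypothesis, since $c_\comp$ applied to any subset of $X$ is automatically a fixpoint.

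No step here is a genuine obstacle; the only thing to get right is matching clause (4) to the closure operator, i.e.\ recognizing that the forcing clause for a disjunction is not the pointwise union of the two denotations but its $c_\comp$-closure, exactly as the join is computed in Proposition \ref{ClosureLattice}. Because all four clauses of Definition \ref{RelModel} are designed precisely so as to track the operations $\cap$, $\neg_\comp$, and $c_\comp(\,\cdot\cup\cdot\,)$ on fixpoints, the induction goes through mechanically.
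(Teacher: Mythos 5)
Your proof is correct and is exactly the "easy induction" the paper alludes to without spelling out: each forcing clause is matched to the corresponding fixpoint-preserving operation ($V(p)$ a fixpoint by definition, $\neg_\comp$ by Proposition \ref{IsClosure}.\ref{IsClosure2}, intersection as the meet, and the $\vee$ clause as $c_\comp$ of the union, a fixpoint by idempotence). Nothing further is needed.
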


\begin{figure}[h]
\begin{center}
\begin{tikzpicture}[->,>=stealth',shorten >=1pt,shorten <=1pt, auto,node
distance=2cm,thick,every loop/.style={<-,shorten <=1pt}]
\tikzstyle{every state}=[fill=gray!20,draw=none,text=black]
\node[label=center:$x$,inner sep=0pt,minimum size=.175cm] at (0,0) (D) {}; 
\node[label=center:$y$,inner sep=0pt,minimum size=.175cm] at (2,0) (F) {}; 
\node[label=center:$z$,inner sep=0pt,minimum size=.175cm] at (4,0) (H) {}; 
\node[label=center:$w$,inner sep=0pt,minimum size=.175cm] at (3,1) (W) {}; 

\node at (.6,-.3) (q) {$q$};

\node at (2.7,-.3) (q) {$p$};

\node at (4.7,-.3) (q) {$r$};

\path[{Triangle[open]}-{Triangle[open]},draw,thick] (D) to node {{}}  (F);
\path[-{Triangle[open]},draw,thick] (H) to node{{}}  (F);
\path[{Triangle[open]}-{Triangle[open]},draw,thick] (F) to node {{}}  (W);
\path[{Triangle[open]}-{Triangle[open]},draw,thick] (H) to node {{}}  (W);

\path[-, draw=darkgreen, opacity=0.5, thick, rounded corners]  (-.4, .5) -- (2.5, .5) -- (2.5, -.5) -- (-.5, -.5) -- (-.5, .5) -- (1, .5); 

\path[-, draw=red, opacity=0.5, thick, rounded corners] (3.1, 1.5) -- (3.5, 1.5) -- (4.5, .5) -- (4.5, -.5) -- (3.5, -.5) -- (2.5, .5)  -- (2.5, 1.5) -- (3.5, 1.5) -- (3.1, 1.5); 

\path[-, draw=blue, opacity=0.5, thick, rounded corners] (0, .4) -- (0.4, .4) -- (0.4, -.4) -- (-.4, -.4) -- (-.4, .4) -- (0.4, .4) -- (0, .4); 

\end{tikzpicture}
\end{center}
\caption{A valuation on the reflexive frame from the middle of Figure \ref{FramesForN5Fig}.}\label{ValFig}
\end{figure}
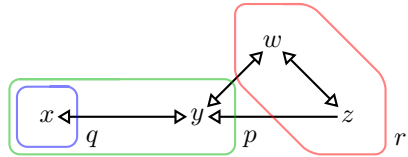

\begin{example} Consider a valuation $V$ on the reflexive frame in Figure \ref{ValFig} that sets $V(p)=\{x,y\}$,  $V(q)=\{x\}$, and $V(r)=\{w,z\}$. Then observe that $\mathcal{M},y\Vdash q\vee r$, even though $\mathcal{M},y\nVdash q$ and $\mathcal{M},y\nVdash r$. Thus, $\mathcal{M},y\Vdash p\wedge (q\vee r)$. However, $\mathcal{M},y\nVdash (p\wedge q)\vee (p\wedge r)$, since $y$ can see $w$, but $w$ cannot be seen from a state forcing $p\wedge q$ (namely from $x$) or a state forcing $p\wedge r$ (since there are no such states). Thus, this model provides a counterexample to the distributive law. Also observe that no state  forces $\neg p$, so $\mathcal{M},z\Vdash \neg\neg p$, yet $\mathcal{M},z\nVdash p$. Thus, this model provides a counterexample to double negation elimination. Similar calculations can be done upon evaluating propositional variables as other $c_\comp$-fixpoints in Figures \ref{FramesForN5Fig} or \ref{FramesForO6Fig}.\end{example}

\subsection{Discrete representation of lattices with negation}\label{DiscreteRep}

Having seen how to go from a relational frame to a lattice with negation, let us now consider the converse direction: given a lattice with negation, we build a relational frame into whose lattice of $c_\comp$-fixpoints the given lattice embeds. The following definition and result are from \citealt{Holliday2022} with some details expanded.

\begin{definition}\label{Good} Let $L$ be a lattice and $P$ a set of pairs of elements of $L$. Define a binary relation $\comp$ on $P$ by 
$(a,b)\comp (c,d)$ if $c\not\leq b$. Then we say $P$ is \textit{separating} if for all $a,b\in L$:
\begin{enumerate}
\item\label{Good2} if $a\not\leq b$, then there is a $(c,d)\in P$ with $c\leq a$ and $c\not\leq b$;
\item\label{Good3} for all $(c,d)\in P$, if $c\not\leq b$, then there is a $(c',d')\comp (c,d)$ such that for all $(c'',d'')\compflip (c',d')$, we have $c''\not\leq b$.
\end{enumerate}
\end{definition}
\noindent One can interpret the pairs in $P$ intuitively as in Remark \ref{IntuitivePicture}: the state $(a,b)$ accepts everything entailed by proposition~$a$ and rejects everything that entails proposition $b$; and $(a,b)$ is open to $(c,d)$ if $(a,b)$ does not reject anything that $(c,d)$ accepts, i.e., $c\not\leq b$.

A \textit{complete embedding} of a lattice $L$ into a lattice $L'$ is an injective map $f:L\longrightarrow L'$ that preserves all existing meets and joins of $L$. A complete embedding of lattice expansions $(L,\neg)$ is defined in the same way but also requiring the preservation of $\neg$.

\begin{proposition}\label{CompRep} Let $L$ be a lattice and $P$ a separating set of pairs of elements of $L$. For $a\in L$, define $ f(a)= \{(x,y) \in P\mid x\leq a\}$. Then:
\begin{enumerate}
\item\label{CompRep1} $f$ is a complete embedding of $L$ into $\lat(P,\comp)$;
\item\label{CompRep2} if $L$ is complete, then $f$ is an isomorphism from $L$ to $\lat(P,\comp)$.
\end{enumerate}
\end{proposition}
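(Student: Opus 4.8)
The plan is to check in turn that $f$ maps into $\lat(P,\comp)$, is an order-embedding, and preserves all existing meets and joins, and then to supply a preimage for each $c_\comp$-fixpoint for part \ref{CompRep2}. The only real work is bookkeeping with the definition $(a,b)\comp(c,d)$ iff $c\not\leq b$: note that $(c',d')\comp(c,d)$ unwinds to $c\not\leq d'$, and $(c'',d'')\compflip(c',d')$ unwinds to $c''\not\leq d'$. To see that each $f(a)$ is a $c_\comp$-fixpoint, only $c_\comp(f(a))\subseteq f(a)$ needs proof (inflationarity gives the rest). Assuming $c\not\leq a$, separating condition \ref{Good3} with $b:=a$ applied to $(c,d)$ yields some $(c',d')\comp(c,d)$ all of whose $\compflip$-successors $(c'',d'')$ satisfy $c''\not\leq a$; this one $(c',d')$ witnesses $(c,d)\notin c_\comp(f(a))$. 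Contrapositively, membership in $c_\comp(f(a))$ forces $c\leq a$.

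That $f$ is order-preserving is immediate, and order-reflection is exactly separating condition \ref{Good2}: from $a\not\leq b$ it produces $(c,d)$ with $c\leq a$, $c\not\leq b$, i.e.\ $(c,d)\in f(a)\setminus f(b)$. Hence $f$ is an order-embedding, in particular injective. Meet preservation is then automatic, since meets in $\lat(P,\comp)$ are intersections and $x\leq\bigwedge_i a_i$ iff $x\leq a_i$ for all $i$, so $f(\bigwedge_i a_i)=\bigcap_i f(a_i)$.

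The substantive step is join preservation, $f(\bigvee_i a_i)=c_\comp(\bigcup_i f(a_i))$. The inclusion $\supseteq$ is formal: each $f(a_i)\subseteq f(\bigvee_j a_j)$ by monotonicity, so applying the monotone closure operator to $\bigcup_i f(a_i)$ and using that $f(\bigvee_j a_j)$ is a fixpoint gives it. For $\subseteq$, let $c\leq\bigvee_i a_i$ and take any $(c',d')\comp(c,d)$, so $c\not\leq d'$ and hence $\bigvee_i a_i\not\leq d'$; by the least-upper-bound property some $a_i\not\leq d'$, and condition \ref{Good2} supplies $(c'',d'')\in P$ with $c''\leq a_i$ and $c''\not\leq d'$. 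This $(c'',d'')$ lies in $\bigcup_i f(a_i)$ and is a $\compflip$-successor of $(c',d')$, witnessing $(c,d)\in c_\comp(\bigcup_i f(a_i))$. The empty-index instance records that $f$ preserves the bottom. This completes part \ref{CompRep1}.

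For part \ref{CompRep2}, assume $L$ is complete and let $A$ be a $c_\comp$-fixpoint; put $a=\bigvee\{x\mid(x,y)\in A\text{ for some }y\}$. Then $A\subseteq f(a)$ is clear, and for $f(a)\subseteq A=c_\comp(A)$ one repeats the join argument: for $(c,d)\in f(a)$ and $(c',d')\comp(c,d)$, the relation $a\not\leq d'$ together with the definition of $a$ yields some $(c'',d'')\in A$ with $c''\not\leq d'$, so $(c,d)\in c_\comp(A)=A$. Thus $f$ is onto, and a surjective order-embedding is an isomorphism. The main obstacle throughout is simply keeping the nested quantifiers of $c_\comp$ aligned with the definition of $\comp$; once conditions \ref{Good2} and \ref{Good3} are recognized as precisely the ``separating witness'' statements demanded by the join/surjectivity and fixpoint arguments, each verification is a one-line unwinding.
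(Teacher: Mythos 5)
Your proof is correct and follows essentially the same route as the paper's: fixpoint-ness of $f(a)$ from separating condition \ref{Good3}, injectivity and the join/surjectivity witnesses from condition \ref{Good2}, meets as intersections, and the same choice $a=\bigvee\{x\mid(x,y)\in A\}$ for surjectivity. The only difference is that you spell out the $c_\comp$-fixpoint verification that the paper leaves implicit.
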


\begin{proof} For part \ref{CompRep1}, condition \ref{Good3} of Definition \ref{Good} implies that $ f(b)$ is a $c_\comp$-fixpoint for each $b\in L$. Clearly $f$ preserves all existing meets: \[ f(\underset{a\in A}{\bigwedge} a)=\{(x,y)\in P\mid x\leq \underset{a\in A}{\bigwedge} a\}=\underset{a\in A}{\bigcap}\{(x,y)\in P\mid x\leq a\}= \underset{a\in A}{\bigcap} f(a).\] For joins, to see that $ f(\bigvee A )\subseteq  \bigvee \{ f(a)\mid a\in A\}$, suppose that $(x,y)\in  f(\bigvee A )$ and $(x',y')\comp (x,y)$. Hence $x\leq \bigvee A $ but $x\not\leq y'$, so $\bigvee A  \not\leq y'$, which implies $a\not\leq y'$ for some $a\in A$. Then part \ref{Good2} of Definition \ref{Good} yields an $(x'',y'')\in  f(a)$ with $(x',y')\comp (x'',y'')$. This proves that $(x,y)\in \bigvee \{ f(a)\mid a\in A\}$. The converse inclusion $ \bigvee \{ f(a)\mid a\in A\}\subseteq f(\bigvee A )$ follows from order preservation, which follows from meet preservation. Finally, part~\ref{Good2} of Definition \ref{Good} ensures that $f$ is injective.

For part \ref{CompRep2}, we claim $f$ is surjective. Given a $c_{\comp}$-fixpoint $A$, define $a=\bigvee\{a_i\mid \exists b_i:(a_i,b_i)\in A\}$. We claim $A= f(a)$. For $A\subseteq f(a)$, suppose $(a_i,b_i)\in A$. Then by definition of $a$,  $a_i\leq a$, so $(a_i,b_i)\in   f(a)$. For $A\supseteq f(a)$, suppose $(c,d)\in  f(a)$, so $c\leq a$. Since $A$ is a $c_{\comp}$-fixpoint, to show $(c,d)\in A$, it suffices to show that for every $(c,'d')\comp (c,d)$ there is a $(c'',d'')\compflip (c',d')$ with $(c'',d'')\in A$. Suppose $(c,'d')\comp (c,d)$, so $c\not\leq d'$, which with $c\leq a$ implies $a\not\leq d'$. Then for some $(a_i,b_i)\in A$, we have $a_i\not\leq d'$.  Setting $(c'',d'')=(a_i,b_i)$, from $a_i\not\leq d'$ we have $(c',d')\comp (c'',d'')$, and $(c'',d'')\in A$, so we are done.
\end{proof}

Different choices of a separating set $P$ of pairs can lead to more or less efficient representations of different types of lattices. Cases where $L$ is an arbitrary lattice, ortholattice, or Heyting algebra are covered in \citealt[Prop.~3.16]{Holliday2022}. In the case of bounded lattices with $\neg$, we choose the pairs with the $\neg$ operation in mind. But the following theorem applies to bounded lattices in general, given the point in \S~\ref{AlgSection} that any bounded lattice can be equipped with a weak pseudocomplementation. In Section \ref{Conditionals} and Appendix \ref{AppendixB}, we prove analogous theorems for bounded lattices with implications. Recall that a set of elements in a lattice $L$ is \textit{join-dense} (resp.~\textit{meet-dense}) if every element of $L$ is a join (resp.~meet) of a (possibly infinite) set of elements of $L$. E.g., the set of all elements of $L$ is trivially join- (and meet-) dense in $L$.  

\begin{theorem}\label{NegThm} Let $L$ be a bounded lattice, $\mathrm{V}$ a join-dense set of elements of $L$, and $\Lambda$ a meet-dense set of elements of $L$. Given a set $P$ of pairs of elements of $L$, define $\comp$ on $P$ by $(a, b)\comp (c, d)$ if $c\not\leq b$.  
\begin{enumerate}
\item\label{NegThm1} If $\neg$ is a precomplementation on $L$, then where
\[P = \{(a,\neg a)\mid a\in L\} \cup \{(1,b)\mid b\in \Lambda\},\]
there is a complete embedding of $(L,\neg)$ into $(\lat(P,\comp),\neg_\comp)$.
\item\label{NegThm2} If $\neg$ is a protocomplementation on $L$, then where
\[P = \{(a,\neg a)\mid a\in L, a\neq 0\} \cup \{(1,b)\mid b\in \Lambda, b\neq 1\},\]
there is a complete embedding of $(L,\neg)$ into $(\lat(P,\comp),\neg_\comp)$, and $\comp$ is reflexive.
\item\label{NegThm3} If $\neg$ is an ultraweak pseudocomplementation on $L$, then  where
\[P = \{(a,\neg a)\mid a\in \mathrm{V}\} \cup \{(1,b)\mid b\in \Lambda\},\]
 there is a complete embedding of $(L,\neg)$ into $(\lat(P,\comp),\neg_\comp)$, and $\comp$ is  pseudosymmetric (and strongly pseudosymmetric if $\mathrm{V}=L$).
\item\label{NegThm4} If $\neg$ is a weak pseudocomplementation on $L$, then where
\[P = \{(a,\neg a)\mid a\in \mathrm{V}, a\neq 0\} \cup \{(1,b)\mid b\in \Lambda, b\neq 1\},\]
there is a complete embedding of $(L,\neg)$ into $(\lat(P,\comp),\neg_\comp)$, and $\comp$ is reflexive and  pseudosymmetric (and strongly pseudosymmetric if $\mathrm{V}=L$). Moreover, if $\neg$ is a pseudocomplementation, then $\comp$ is weakly compossible.
\end{enumerate}
In each case, if $L$ is complete, then the embedding is an isomorphism.
\end{theorem}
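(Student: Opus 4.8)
The plan is to reduce everything to Proposition \ref{CompRep} by checking that each displayed $P$ is separating, and then to verify separately that the resulting complete lattice embedding $f(a)=\{(x,y)\in P\mid x\le a\}$ also preserves $\neg$ and that $\comp$ has the advertised properties.

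First I would show $P$ is separating in each case. For condition \ref{Good2} of Definition \ref{Good}, given $a\not\le b$: in parts \ref{NegThm1}--\ref{NegThm2} the pair $(a,\neg a)$ already works (note $a\neq 0$ since $a\not\le b$), while in parts \ref{NegThm3}--\ref{NegThm4} join-density of $\mathrm{V}$ yields $v\in\mathrm{V}$ with $v\le a$ and $v\not\le b$, and then $(v,\neg v)\in P$ works (again $v\neq0$). For condition \ref{Good3}, given $(c,d)\in P$ with $c\not\le b$, meet-density of $\Lambda$ gives $\lambda\in\Lambda$ with $b\le\lambda$ and $c\not\le\lambda$; since $c\le 1$ we have $\lambda\neq1$, so $(1,\lambda)\in P$ in every part, $(1,\lambda)\comp(c,d)$ holds as $c\not\le\lambda$, and any $(c'',d'')$ with $c''\not\le\lambda$ satisfies $c''\not\le b$ because $b\le\lambda$. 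Thus Proposition \ref{CompRep} supplies a complete lattice embedding $f$, which is an isomorphism when $L$ is complete.

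Next I would show $f(\neg a)=\neg_\comp f(a)$. Unwinding the definitions, $(p,q)\in\neg_\comp f(a)$ iff for every $(r,s)\in P$ with $r\le a$ one has $p\le s$, so the goal is the equivalence $p\le\neg a \Leftrightarrow \forall (r,s)\in P\,(r\le a\Rightarrow p\le s)$. The forward direction is uniform across all four parts: if $r\le a$ and $(r,s)=(r,\neg r)$ then $s=\neg r\ge\neg a\ge p$ by antitonicity, and if $(r,s)=(1,b)$ then $r=1\le a$ forces $a=1$, whence $p\le\neg 1=0\le b=s$. For the converse, parts \ref{NegThm1}--\ref{NegThm2} use the test pair $(a,\neg a)\in P$ directly (handling $a=0$ via $\neg 0=1$ in part \ref{NegThm2}), since double negation introduction is unavailable there; parts \ref{NegThm3}--\ref{NegThm4} instead use that $\mathrm{V}$ is join-dense: for each $v\in\mathrm{V}$ with $v\le a$ the pair $(v,\neg v)$ gives $p\le\neg v$, hence $v\le\neg p$ by Lemma \ref{UsefulLem}.\ref{UsefulLem2}, and taking the join yields $a\le\neg p$, so $p\le\neg a$ again by Lemma \ref{UsefulLem}.\ref{UsefulLem2}. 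Together with the lattice embedding this makes $f$ a complete embedding of $(L,\neg)$.

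Finally I would translate the frame conditions into the pair language, using that $(z_1,z_2)$ pre-refines $(x_1,x_2)$ whenever $z_1\le x_1$. Reflexivity in parts \ref{NegThm2} and \ref{NegThm4} amounts to $a\not\le\neg a$ for the first-coordinate $a\neq0$ and $1\not\le b$ for $b\neq1$, the former being the anti-inflationary property of semicomplementations (Lemma \ref{UsefulLem}.\ref{UsefulLem1}). For pseudosymmetry in parts \ref{NegThm3}--\ref{NegThm4}, given $(y_1,y_2)\comp(x_1,x_2)$ (i.e.\ $x_1\not\le y_2$) I pick $v\in\mathrm{V}$ with $v\le x_1$ and $v\not\le y_2$ and take $z=(v,\neg v)$: then $z$ pre-refines $x$ since $v\le x_1$, and $z\comp y$ because $v\not\le y_2$ forces $y_1\not\le\neg v$ (via Lemma \ref{UsefulLem}.\ref{UsefulLem2} when $y=(y_1,\neg y_1)$, and via $\neg v\neq1$, which follows from $v\neq0$, double negation introduction, and $\neg1=0$, when $y=(1,b)$). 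When $\mathrm{V}=L$ one may take $v=x_1$, so $x$ pre-refines $z$ as well, giving strong pseudosymmetry. For the last clause of part \ref{NegThm4}, weak compossibility, I would use that for a pseudocomplementation $x_1\not\le\neg y_1$ is equivalent to $x_1\wedge y_1\neq0$ (and $x_1\wedge y_1=x_1\neq0$ for type $(1,b)$ pairs), pick a nonzero $v\in\mathrm{V}$ below $x_1\wedge y_1$, and note that $(v,\neg v)$ pre-refines both $x$ and $y$ and is non-absurd by reflexivity. The main obstacle is the $\neg$-preservation step, where one must notice that the argument genuinely bifurcates: parts \ref{NegThm1}--\ref{NegThm2} have no double negation introduction and so must carry the exact pair $(a,\neg a)$, whereas parts \ref{NegThm3}--\ref{NegThm4} cannot (since $\mathrm{V}$ may be a proper join-dense subset) and must instead exploit the symmetry $a\le\neg b\Leftrightarrow b\le\neg a$ from Lemma \ref{UsefulLem}.\ref{UsefulLem2}.
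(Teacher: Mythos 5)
Your proposal is correct and follows essentially the same route as the paper's proof: reduce to Proposition \ref{CompRep} by checking that $P$ is separating, verify $f(\neg a)=\neg_\comp f(a)$ with the same bifurcation (exact test pair $(a,\neg a)$ in parts \ref{NegThm1}--\ref{NegThm2} versus join-density of $\mathrm{V}$ plus Lemma \ref{UsefulLem}.\ref{UsefulLem2} in parts \ref{NegThm3}--\ref{NegThm4}), and establish reflexivity, pseudosymmetry, and weak compossibility by the same pre-refinement calculations. The only cosmetic differences are that you prove the inclusion $\neg_\comp f(a)\subseteq f(\neg a)$ directly by taking joins over $\{v\in\mathrm{V}\mid v\le a\}$ where the paper finds a contrapositive witness, and you case-split on the pair type in the pseudosymmetry argument where the paper argues uniformly via $\neg a\le b$ for $(a,b)\in P$.
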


\begin{proof} Note first that (i) for all parts of the theorem, for $(a,b)\in P$, we have $\neg a\leq b$, using that $\neg 1=0$.

First we claim that in each part,  $P$ is separating in the sense of Definition~\ref{Good}.  To prove part \ref{Good2} of Definition~\ref{Good}, suppose $a\not\leq b$. In parts \ref{NegThm1} and \ref{NegThm2} of the  theorem, we  take $(c,d)=(a,\neg a)$. Since $a\neq 0$, we have $(a,\neg a)\in P$. In parts \ref{NegThm3} and \ref{NegThm4} of the theorem, from $a\not\leq b$ we obtain a nonzero $a'\in\mathrm{V}$ such that $a'\leq a$ and $a'\not\leq b$, and we set $(c,d)=(a', \neg a')$.  To prove part \ref{Good3} of Definition \ref{Good}, suppose $(c,d)\in P$ and $c\not\leq b$. Hence there is some $b'\in\Lambda$ such that $c\not\leq b'$ and $b\leq b'$.  Let $(c',d')=(1,b')$. Since $c\not\leq b'$, we have $b'\neq 1$ and hence $(c',d')\in P$, and also  $(c',d')\comp (c,d)$. Now consider any $(c'',d'')\in P$ with $(c',d')\comp (c'',d'')$. Then $c''\not\leq d'=b'$, so $c''\not\leq b$. Hence part \ref{Good3} of Definition \ref{Good} holds. Thus, by Proposition \ref{CompRep}, $f$ is a complete embedding of $L$ into $\mathfrak{L}(P,\comp)$, which is a lattice isomorphism if $L$ is complete.  

Next we claim that for each part, $ f(\neg a)=\neg_\comp f(a)$. Suppose $(x,y)\in  f(\neg a)$, so $x\leq\neg a$, and $(x',y')\comp (x,y)$. If $x'\leq a$, then $\neg a\leq\neg x'$, which with $x\leq\neg a$ implies $x\leq \neg x'$, which with $\neg x'\leq y'$ from (i) implies $x\leq y'$, contradicting $(x',y')\comp (x,y)$. Thus, $x'\not\leq a$, so $(x',y')\not\in  f(a)$. Hence $(x,y)\in \neg_\comp  f(a)$. Conversely, let $(x,y)\in P\setminus f(\neg a)$, so $x\not\leq \neg a$. In part \ref{NegThm1}, we immediately have $(a,\neg a)\in P$, and $(a,\neg a)\comp (x,y)$, so $(x,y)\not\in \neg_\comp f(a)$. For part \ref{NegThm2}, we use that  $\neg 0=1$, so from $x\not\leq \neg a$ we have  $a\neq 0$, so $(a,\neg a)\in P$. For part \ref{NegThm3}, we have that $x\not\leq \neg a$ implies $a\not\leq\neg x$ (Lemma \ref{UsefulLem}.\ref{UsefulLem2}), so there is some $a'\in\mathrm{V}$ such that $a'\leq a$ but $a'\not\leq\neg x$,  so $x\not\leq \neg a'$. Hence $(a',\neg a')\in P$ and $(a',\neg a')\comp (x,y)$,  which with $a'\leq a$ yields $(x,y)\not\in \neg_\comp f(a)$. For part \ref{NegThm4}, we again use that $\neg 0=1$, so from $x\not\leq \neg a'$ we have $a'\neq 0$, so $(a',\neg a')\in P$.

For parts \ref{NegThm2} and \ref{NegThm4}, that $\comp$ is reflexive follows from the anti-inflationary property of semicomplementations (Lemma \ref{UsefulLem}.\ref{UsefulLem1}). For parts \ref{NegThm3} and \ref{NegThm4}, we prove  pseudosymmetry. Suppose $(c,d)\comp (a,b)$, which implies $\neg c\leq d$ by (i) and $a\not\leq d$. Hence $a\not\leq \neg c$, so there is a nonzero $a'\in\mathrm{V}$ such that $a'\leq a$ but $a'\not\leq \neg c$, which implies $c\not\leq \neg a'$  (Lemma \ref{UsefulLem}.\ref{UsefulLem2}). Hence  $(a',\neg a')\comp (c,d)$, and since $a'\leq a$, $(a',\neg a')$ pre-refines $(a,b)$. If $\mathrm{V}=L$, then we can take $a'=a$, in which case $(a,\neg a)$ pre-refines $(a,b)$ and vice versa. Finally, for the claim about pseudocomplementations in part \ref{NegThm4}, if $(a,b)\comp (c,d)$, then $a\wedge c\neq 0$, for otherwise $c\leq \neg a$, and $\neg a \leq b$ by (i), so $c\leq b$, contradicting $(a,b)\comp (c,d)$. Hence there is a nonzero $e\in \mathrm{V}$ with $e\leq a\wedge c$. Then $(e,\neg e)\in P$, and since $e\leq a$ and $e\leq c$, we have that $(e,\neg e)$ pre-refines $(a,b)$ and $(c,d)$. Hence $\comp$ is weakly compossible.\end{proof}

\begin{example} As an illustration of part \ref{NegThm4} of Theorem \ref{NegThm}, consider the lattice with weak pseudocomplementation shown on the left of Figure \ref{NegThmFig}. Setting $\mathrm{V}=\Lambda = \{2,3\}$, we have \[P = \{(a,\neg a)\mid a\in \mathrm{V}, a\neq 0\} \cup \{(1,b)\mid b\in \Lambda, b\neq 1\} = \{(2,0),(3,0)\}\cup \{(1,2),(1,3)\}.\]
Then the definition of $\comp$ by $(a, b)\comp (c, d)$ if $c\not\leq b$ yields the relational frame on the right of Figure \ref{NegThmFig}.

\begin{figure}[h]
\begin{center}
\begin{minipage}{2in}
\begin{tikzpicture}[->,>=stealth',shorten >=1pt,shorten <=1pt, auto,node
distance=2cm,thick,every loop/.style={<-,shorten <=1pt}]
\tikzstyle{every state}=[fill=gray!20,draw=none,text=black]
\node[label=right:$$,inner sep=0pt,minimum size=.175cm] (1) at (0,0) {{$1$}};
\node[label=left:$$,inner sep=0pt,minimum size=.175cm] (x) at (-1,-1) {{\textcolor{red}{$2$}}};

\node[label=right:$$,inner sep=0pt,minimum size=.175cm] (z) at (1,-1) {{\textcolor{darkgreen}{$3$}}};
\node[label=right:$$,inner sep=0pt,minimum size=.175cm] (0) at (0,-2) {{$0$}};

\path (1) edge[-] node {{}} (x);
\path (1) edge[-] node {{}} (z);
\path (x) edge[-] node {{}} (0);
\path (z) edge[-] node {{}} (0);

\path (x) edge[->,dashed,bend right,gray] node {{}} (0);
\path (z) edge[->,dashed,bend left,gray] node {{}} (0);

\end{tikzpicture}
\end{minipage}\begin{minipage}{2in}
\begin{tikzpicture}[->,>=stealth',shorten >=1pt,shorten <=1pt, auto,node
distance=2cm,thick,every loop/.style={<-,shorten <=1pt}]
\tikzstyle{every state}=[fill=gray!20,draw=none,text=black]
\node[inner sep=0pt,minimum size=.175cm] at (0,0) (A) {$(2,0)$}; 
\node[inner sep=0pt,minimum size=.175cm] at (0,1.5) (B) {$(1,2)$}; 
\node[inner sep=0pt,minimum size=.175cm] at (2,1.5) (C) {$(1,3)$}; 
\node[inner sep=0pt,minimum size=.175cm] at (2,0) (D) {$(3,0)$}; 

\path[{Triangle[open]}-,draw,thick] (A) to node {{}}  (B);
\path[{Triangle[open]}-{Triangle[open]},draw,thick] (B) to node {{}}  (C);
\path[-{Triangle[open]},draw,thick] (C) to node {{}}  (D);
\path[{Triangle[open]}-{Triangle[open]},draw,thick] (A) to node {{}}  (D);
\path[{Triangle[open]}-{Triangle[open]},draw,thick] (B) to node {{}}  (D);
\path[{Triangle[open]}-{Triangle[open]},draw,thick] (C) to node {{}}  (A);

\path[-, draw=red, opacity=0.5, thick, rounded corners]  (0, .5) -- (.5, .5) -- (.5, -.5) -- (-.5, -.5) -- (-.5, .5) -- (.5, .5) -- (0, .5); 

\path[-, draw=darkgreen, opacity=0.5, thick, rounded corners]  (2.1, .5) -- (2.5, .5) -- (2.5, -.5) -- (1.5, -.5) -- (1.5, .5) -- (2.5, .5) -- (2.1, .5); 

\end{tikzpicture}

\end{minipage}
\end{center}
\caption{A lattice with weak pseudocomplementation (left) represented by a pseudosymmetric reflexive frame (right, with reflexive loops assumed but not shown) as in Theorem \ref{NegThm}.\ref{NegThm4}.}\label{NegThmFig}
\end{figure}
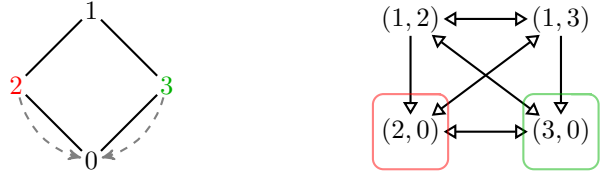
\end{example}

\begin{remark} Less economical choices of $P$ than in Theorem \ref{NegThm} are possible, e.g., setting $P=\{(a,b)\mid a,b\in L, \neg a\leq b\}$ in parts \ref{NegThm1} and \ref{NegThm3} and $P=\{(a,b)\mid a,b\in L, a\not\leq b, \neg a\leq b\}$ in parts \ref{NegThm2} and \ref{NegThm4}, as in \citealt[Thm.~3.19]{Holliday2022}. Note that if we equip $L$ with the weak pseudocomplementation defined by $\neg 0=1$ and $\neg a=0$ for $a\neq 0$, then the latter choice of $P$ reduces to $\{(a,b)\mid a,b\in L, a\not\leq b\}$, which is used as the underlying set of the reflexive frame dual to a complete lattice in \citealt[Thm.~2.11]{Holliday2021}.\end{remark}

Theorem \ref{NegThm} yields five completeness theorems, as two come from part \ref{NegThm4}. Define a \textit{prelogic} in the same way as an intro-elim logic in Definition \ref{BinaryLogic} but dropping part 6 ($\varphi\vdash\neg\neg\varphi$) and part 7 ($\varphi\wedge\neg\varphi\vdash\psi$).\footnote{Note that in this setting, `$\bot$' and `$\top$' are arguably no longer appropriate symbols to abbreviate $p\wedge\neg p$ and $\neg (p\wedge\neg p)$.} Let $\vdash_\mathsf{pre}$ be the weakest prelogic. Define a \textit{protologic} in the same way as an intro-elim logic in Definition \ref{BinaryLogic} but with part 6  replaced by $\psi\vdash \neg(\varphi\wedge\neg\varphi)$. Let $\vdash_\mathsf{pro}$ be the weakest protologic. Define a \textit{paraconsistent intro-elim logic} in the same way as an intro-elim logic in Definition \ref{BinaryLogic} but dropping part 7. Let  $\vdash_\mathsf{para}$ be the weakest paraconsistent intro-elim logic, which can be equivalently defined using our Fitch-style proof system for $\vdash_\mathsf{F}$ but without the $\neg$E rule. Finally, define a \textit{pseudocomplementary logic} in the same way as an intro-elim logic in Definition \ref{BinaryLogic} but with the added principle that if $\varphi\wedge\psi\vdash\bot$, then $\varphi\vdash\neg\psi$. Let $\vdash_\mathsf{psu}$ be the weakest pseudocomplementary logic. 
 
\begin{theorem}\label{MainCompleteness} Let $\mathbb{K}$ be the class of all relational frames, $\mathbb{R}$ the class of reflexive frames, $\mathbb{P}$ (resp.~$\mathbb{S}$)  the class of pseudosymmetric (resp.~strongly pseudosymmetric) frames, $\mathbb{PR}$ (resp.~$\mathbb{SR}$)  the class of pseudosymmetric (resp.~strongly pseudosymmetric) reflexive frames, and $\mathbb{WCR}$ the class of weakly compossible reflexive frames. Then for any formulas $\varphi,\psi\in\mathcal{L}$:
\begin{enumerate}
\item\label{MainCompleteness1} $\varphi\vdash_\mathsf{pre}\psi$ if and only if $\varphi\vDash_\mathbb{K}\psi$;
\item\label{MainCompleteness2} $\varphi\vdash_\mathsf{pro}\psi$ if and only if $\varphi\vDash_\mathbb{R}\psi$;
\item\label{MainCompleteness3} $\varphi\vdash_\mathsf{para}\psi$ if and only if $\varphi\vDash_\mathbb{P}\psi$ (resp.~$\varphi\vDash_\mathbb{S}\psi$);
\item\label{MainCompleteness4} $\varphi\vdash_\mathsf{F}\psi$ if and only if $\varphi\vDash_\mathbb{PR}\psi$ (resp.~$\varphi\vDash_\mathbb{SR}\psi$);
\item\label{MainCompleteness5} $\varphi\vdash_\mathsf{psu}\psi$ if and only if $\varphi\vDash_\mathbb{WCR}\psi$.
\end{enumerate}
\end{theorem}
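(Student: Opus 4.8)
The plan is to prove each biconditional in the two directions of soundness ($\vdash\Rightarrow\vDash$) and completeness ($\vDash\Rightarrow\vdash$), handling all five parts by one template that pairs each logic $\mathsf{X}$ with its defining class $\mathscr{C}_\mathsf{X}$ of lattice expansions (precomplementations for part~\ref{MainCompleteness1}, protocomplementations for~\ref{MainCompleteness2}, ultraweak pseudocomplementations for~\ref{MainCompleteness3}, weak pseudocomplementations for~\ref{MainCompleteness4}, pseudocomplementations for~\ref{MainCompleteness5}) and its frame class $\mathbb{C}_\mathsf{X}$.

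For soundness I use that each $\vdash_\mathsf{X}$ is by definition the smallest logic of its type, so it suffices to check that $\vDash_{\mathbb{C}_\mathsf{X}}$ is itself a logic of that type. The conditions of Definition~\ref{BinaryLogic} involving only $\wedge,\vee$ (items 1--5 and 8--10) hold in $\vDash_{\mathbb{C}_\mathsf{X}}$ because $\llbracket\cdot\rrbracket$ interprets $\wedge,\vee$ as genuine meet and join in $\lat(X,\comp)$. For the negation conditions, Proposition~\ref{FrameToLat}.\ref{FrameToLat2} shows $\neg_\comp$ is a precomplementation on any frame, which delivers antitonicity (item 11) everywhere and settles part~\ref{MainCompleteness1}; Proposition~\ref{FrameToLat}.\ref{FrameToLat3} adds $\neg0=1$ on reflexive frames (needed for parts~\ref{MainCompleteness2}, \ref{MainCompleteness4}, \ref{MainCompleteness5}); the direction (b)$\Rightarrow$(a) of Proposition~\ref{CorrLemm}.\ref{CorrLemm3} turns (strong) pseudosymmetry into double negation introduction (parts~\ref{MainCompleteness3}, \ref{MainCompleteness4}); Proposition~\ref{CorrLemm}.\ref{CorrLemm1} (which reflexivity strengthens) yields the semicomplementation clause (explosion); and Proposition~\ref{CorrLemm}.\ref{CorrLemm2} turns weak compossibility into the pseudocomplement condition $A\cap B=0\Rightarrow A\subseteq\neg_\comp B$ (part~\ref{MainCompleteness5}). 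Matching these against the defining axioms of each logic shows $\vDash_{\mathbb{C}_\mathsf{X}}$ is a type-$\mathsf{X}$ logic, whence $\vdash_\mathsf{X}\,\subseteq\,\vDash_{\mathbb{C}_\mathsf{X}}$; for part~\ref{MainCompleteness4} this final step invokes Proposition~\ref{SmallestIntelim}.

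For completeness I argue contrapositively. Given $\varphi\nvdash_\mathsf{X}\psi$, I form the Lindenbaum--Tarski algebra $(L_\mathsf{X},\neg)$ with $[\chi_1]\leq[\chi_2]$ iff $\chi_1\vdash_\mathsf{X}\chi_2$; the defining axioms make $\neg$ a negation of type $\mathscr{C}_\mathsf{X}$, and $[\varphi]\not\leq[\psi]$. By the relevant part of Theorem~\ref{NegThm} (part~\ref{NegThm1}, \ref{NegThm2}, \ref{NegThm3}, or~\ref{NegThm4}, taking $\mathrm{V}=\Lambda=L_\mathsf{X}$) there is a complete, $\neg$-preserving embedding $f$ of $(L_\mathsf{X},\neg)$ into $(\lat(P,\comp),\neg_\comp)$ with $(P,\comp)\in\mathbb{C}_\mathsf{X}$. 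Setting $V(p)=f([p])$ gives a relational model in which an easy induction, using that $f$ preserves $\cap$, the closure-join, and $\neg_\comp$ while $\llbracket\cdot\rrbracket$ is computed by exactly these operations, yields $\llbracket\chi\rrbracket^{\mathcal{M}}=f([\chi])$ for all $\chi$. Since $f$ is an order-embedding, $f([\varphi])\not\subseteq f([\psi])$, so some state forces $\varphi$ but not $\psi$, i.e.\ $\varphi\nvDash_{\mathbb{C}_\mathsf{X}}\psi$. For the ``resp.'' clauses of parts~\ref{MainCompleteness3} and~\ref{MainCompleteness4}, note that $\mathbb{S}\subseteq\mathbb{P}$ and $\mathbb{SR}\subseteq\mathbb{PR}$, so soundness over the larger class together with completeness over the smaller class, obtained by choosing $\mathrm{V}=L_\mathsf{X}$ (which Theorem~\ref{NegThm} guarantees yields strong pseudosymmetry), sandwich both equivalences simultaneously; part~\ref{MainCompleteness5} similarly uses the final clause of Theorem~\ref{NegThm}.\ref{NegThm4} to land in $\mathbb{WCR}$.

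The main obstacle is boundedness of the Lindenbaum--Tarski algebra for the two weakest logics. Because a prelogic lacks both explosion and double negation introduction, and a paraconsistent intro-elim logic lacks explosion, $[p\wedge\neg p]$ need not be least and $[\neg(p\wedge\neg p)]$ need not be greatest, so $L_\mathsf{pre}$ and $L_\mathsf{para}$ may be unbounded, whereas Theorem~\ref{NegThm} requires a bounded lattice. The remedy is to adjoin a fresh top $1$ and bottom $0$ and set $\neg1=0$, $\neg0=1$; one checks this preserves antitonicity and, in the paraconsistent case, double negation introduction, without forcing the semicomplementation axiom, so the enlarged algebra lies in $\mathscr{C}_\mathsf{X}$ while the inclusion $[\chi]\mapsto[\chi]$ still reflects non-derivability (the new bounds are never values of $\tilde{\theta}$). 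For protologic the replacement axiom $\psi\vdash\neg(\varphi\wedge\neg\varphi)$ together with explosion already supplies $0=[\bot]$ and $1=[\top]$, and for fundamental and pseudocomplementary logic explosion plus double negation introduction do the same, exactly as for $\vdash_\mathsf{F}$ in \S\ref{AlgSection}, so no adjunction is needed there. The remaining work is the routine bookkeeping of verifying that each list of defining axioms corresponds precisely to the negation type of $\mathscr{C}_\mathsf{X}$ and to the frame condition of $\mathbb{C}_\mathsf{X}$.
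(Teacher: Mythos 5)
Your proposal is correct and follows essentially the same route as the paper: soundness via Propositions \ref{FrameToLat} and \ref{CorrLemm}, completeness via the Lindenbaum--Tarski algebra embedded into $(\lat(P,\comp),\neg_\comp)$ by the appropriate part of Theorem \ref{NegThm} with $V(p)=f([p])$ and the induction $\llbracket\chi\rrbracket^{\mathcal{M}}=f([\chi])$, including the same fix of adjoining new bounds with $\neg 0=1$, $\neg 1=0$ for the unbounded Lindenbaum--Tarski algebras of $\vdash_\mathsf{pre}$ and $\vdash_\mathsf{para}$. Your handling of the ``resp.''\ clauses by sandwiching soundness over the larger class against completeness over the smaller (strongly pseudosymmetric) class, obtained by taking $\mathrm{V}=L$, is exactly what the paper's appeal to Theorem \ref{NegThm} implicitly relies on.
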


\begin{proof} Soundness follows from Propositions \ref{FrameToLat} and \ref{CorrLemm}.

For completeness, we first prove parts \ref{MainCompleteness2}, \ref{MainCompleteness4}, and \ref{MainCompleteness5}. The proof is structurally the same in each case. Given $\varphi\nvdash_\mathsf{F}\psi$, where $\theta$ is the valuation on the Lindenbaum-Tarski algebra $(L,\neg)$ of $\vdash_\mathsf{F}$ for which $\tilde{\theta}(\varphi)\not\leq \tilde{\theta}(\psi)$, and $f$ is the embedding of $(L,\neg)$ into $(\lat(P,\comp), \neg_{\comp})$ from Theorem \ref{NegThm}.\ref{NegThm4}, define a valuation $V$ on $\mathfrak{L}(P,\comp)$ by $V(p)= f(\theta(p))$, yielding a model $\mathcal{M}=(P,\comp,V)$. An easy induction shows that for any $\chi\in\mathcal{L}$, $\llbracket \chi\rrbracket^\mathcal{M}= f(\tilde{\theta}(\chi))$. Then from $\tilde{\theta}(\varphi)\not\leq \tilde{\theta}(\psi)$ we have $f(\tilde{\theta}(\varphi))\not\subseteq f(\tilde{\theta}(\psi))$, so $\llbracket\varphi\rrbracket^\mathcal{M}\not\subseteq\llbracket\psi\rrbracket^\mathcal{M}$, so  $\varphi\nvDash_\mathbb{SR}\psi$.

For parts \ref{MainCompleteness1} and \ref{MainCompleteness3}, the Lindenbaum-Tarski algebra of $\vdash_\mathsf{pre}$ (resp.~$\vdash_\mathsf{para}$) is not bounded; but we can embed it into a bounded lattice by adjoining a new minimum $0$ and maximum $1$ to the lattice and setting $\neg 0=1$ and $\neg 1=0$.\footnote{This shows that $\vdash_\mathsf{pre}$ is complete with respect to bounded lattices with precomplementations satisfying $\neg 0 =1$. This depends on the fact that we do not have primitive symbols $\bot$ and $\top$ interpreted as $0$ and $1$ in our language. If we had such symbols in a language $\mathcal{L}_{\bot,\top}$ with corresponding rules $\bot\vdash\varphi$ and $\varphi\vdash\top$ in the definition of $\vdash_{\mathsf{pre}_{\bot,\top}}$, then $\vdash_{\mathsf{pre}_{\bot,\top}}$ would not be complete with respect to lattices with precomplementations satisfying $\neg 0=1$, and the Lindenbaum-Tarski algebra of $\vdash_{\mathsf{pre}_{\bot,\top}}$ would be bounded in the first place.}  Then the rest of the proof is the same as above, using Theorem \ref{NegThm}.\ref{NegThm1} (resp.~\ref{NegThm}.\ref{NegThm3}).\end{proof}

 \noindent Compare part \ref{MainCompleteness2} of Theorem \ref{MainCompleteness} to Theorems 2 and 3 of \citealt{Zhong2021}, which axiomatize the logic of the class $\mathbb{R}$ of reflexive frames in the $\{\neg,\wedge\}$-fragment of~$\mathcal{L}$.

One of the appealing aspects of this relational semantics is how it allows us to apply reasoning that is very familiar from the intuitionistic setting to our non-distributive setting. For example, consider the following proof of the disjunction property for $\vdash_\mathsf{F}$ that takes the disjoint union of two models and adds a new root as in the standard intuitionistic proof. Essentially the same proof applies to the other logics in Theorem \ref{MainCompleteness}.

\begin{proposition}\label{DisjunctionProp} For any $\varphi,\psi\in\mathcal{L}$, if $\top\vdash_\mathsf{F}\varphi\vee\psi$, then $\top\vdash_\mathsf{F}\varphi$ or $\top\vdash_\mathsf{F}\psi$.
\end{proposition}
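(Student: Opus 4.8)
The plan is to argue by contraposition using completeness of $\vdash_\mathsf{F}$ with respect to the class $\mathbb{PR}$ of pseudosymmetric reflexive frames (Theorem \ref{MainCompleteness}.\ref{MainCompleteness4}). First I would record two reductions. Since every reflexive frame validates $\top$ --- in a reflexive frame no state forces $\bot=p\wedge\neg p$ (reflexivity gives $x\comp x$, so $x\Vdash\neg p$ precludes $x\in V(p)$), whence $\llbracket\bot\rrbracket^\mathcal{M}=\varnothing$ and $\llbracket\top\rrbracket^\mathcal{M}=X$ --- the hypothesis $\top\vdash_\mathsf{F}\chi$ is equivalent to $\chi$ being forced at every state of every $\mathbb{PR}$-model. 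Second, unwinding the clause for $\vee$, a state $s$ with $s\comp s$ has $\mathcal{M},s\nVdash\varphi\vee\psi$ as soon as $\mathcal{M},s\Vdash\neg\varphi$ and $\mathcal{M},s\Vdash\neg\psi$ (take $s$ itself as the witnessing $s'\comp s$). Thus it suffices to show: if neither $\varphi$ nor $\psi$ is valid on $\mathbb{PR}$, then some $\mathbb{PR}$-model has a state forcing $\neg\varphi\wedge\neg\psi$.

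Assuming $\top\nvdash_\mathsf{F}\varphi$ and $\top\nvdash_\mathsf{F}\psi$, completeness yields $\mathbb{PR}$-models $\mathcal{M}_1=(X_1,\comp_1,V_1)$ and $\mathcal{M}_2=(X_2,\comp_2,V_2)$ with states $w_1,w_2$ such that $w_1\nVdash\varphi$ and $w_2\nVdash\psi$; relabel so that $X_1\cap X_2=\varnothing$. Since $\llbracket\varphi\rrbracket^{\mathcal{M}_1}$ is a $c_{\comp_1}$-fixpoint and $w_1\notin\llbracket\varphi\rrbracket^{\mathcal{M}_1}=c_{\comp_1}(\llbracket\varphi\rrbracket^{\mathcal{M}_1})$, the definition of $c_{\comp_1}$ produces an $x_1'$ with $w_1\comp_1 x_1'$ that is open to no $\varphi$-state, i.e.\ $x_1'\Vdash\neg\varphi$; symmetrically there is an $x_2'\in X_2$ with $x_2'\Vdash\neg\psi$. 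I would then take the disjoint union and adjoin a fresh root: let $X=X_1\sqcup X_2\sqcup\{s\}$, keep $\comp_1$ and $\comp_2$ on the two components, add the reflexive loop $s\comp s$ together with $s\comp x_1'$ and $s\comp x_2'$, and add no other edges (in particular none between the components and none into $s$). Put $V(p)=c_\comp(V_1(p)\cup V_2(p))$.

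The verification breaks into three routine-but-essential checks. (i) \emph{Generated-subframe preservation}: each $X_i$ is closed under $\comp$-successors, so an induction on formulas gives $\llbracket\chi\rrbracket^\mathcal{M}\cap X_i=\llbracket\chi\rrbracket^{\mathcal{M}_i}$; in particular $x_1'\Vdash_\mathcal{M}\neg\varphi$ and $x_2'\Vdash_\mathcal{M}\neg\psi$, while $V(p)$ restricts to $V_i(p)$ on $X_i$ and is a $c_\comp$-fixpoint. (ii) Because the only state open to $s$ is $s$ itself, and $s\comp x_1'$, $s\comp x_2'$, the root $s$ pre-refines both $x_1'$ and $x_2'$; hence Lemma \ref{prerefinelem}, applied to the fixpoints $\llbracket\neg\varphi\rrbracket^\mathcal{M}$ and $\llbracket\neg\psi\rrbracket^\mathcal{M}$, gives $s\Vdash\neg\varphi$ and $s\Vdash\neg\psi$, so $s\nVdash\varphi\vee\psi$ by the second reduction while $s\Vdash\top$ by reflexivity. (iii) The frame $(X,\comp)$ is reflexive and pseudosymmetric: reflexivity is immediate, and pseudosymmetry is inherited componentwise, the only genuinely new instances being at $x_1'$, $x_2'$, and $s$, each handled by the witness $z=s$ (using that $s$ pre-refines $x_1'$ and $x_2'$ and pre-refines itself). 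Together these give a $\mathbb{PR}$-model refuting $\varphi\vee\psi$ at $s$, so $\top\nvdash_\mathsf{F}\varphi\vee\psi$, as required.

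I expect the main obstacle to be conceptual rather than computational: unlike in intuitionistic Kripke semantics, $\vee$ is interpreted by the closure $c_\comp(\llbracket\varphi\rrbracket\cup\llbracket\psi\rrbracket)$ rather than by plain union, so disjunction is \emph{not} local and one cannot simply say ``the root refutes $\varphi\vee\psi$ because it refutes each disjunct.'' The device that replaces locality is the observation that refuting $\varphi\vee\psi$ at a reflexive point $s$ is equivalent to $s$ forcing $\neg\varphi\wedge\neg\psi$, which I secure by building $s$ to pre-refine the two states $x_1',x_2'$ that already force $\neg\varphi$ and $\neg\psi$ in the summands and invoking Lemma \ref{prerefinelem}. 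The remaining delicate point is confirming that pseudosymmetry survives the gluing, where the new incoming edges at $x_1'$ and $x_2'$ must be checked not to create unmet pseudosymmetry obligations.
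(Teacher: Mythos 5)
Your strategy fails at the point where you reduce the problem to finding a $\mathbb{PR}$-model with a state forcing $\neg\varphi\wedge\neg\psi$. That reduction target is false: take $\varphi=p\vee\neg p$ and $\psi=q\vee\neg q$. Neither is valid on $\mathbb{PR}$, yet $\neg(p\vee\neg p)\vdash_\mathsf{F}\neg p\wedge\neg\neg p\vdash_\mathsf{F}\bot$, so by soundness $\neg\varphi$ (let alone $\neg\varphi\wedge\neg\psi$) is unsatisfiable in every $\mathbb{PR}$-model, even though the disjunction property must still deliver $\top\nvdash_\mathsf{F}\varphi\vee\psi$ here. The error that leads you to this dead end is a conflation of two notions the paper is careful to separate (Remark \ref{IntuitivePicture}): from $w_1\notin c_{\comp_1}(\llbracket\varphi\rrbracket)$ you get a witness $x_1'\comp_1 w_1$ such that no state \emph{that $x_1'$ is open to} lies in $\llbracket\varphi\rrbracket$, i.e., $x_1'$ \emph{rejects} $\varphi$; but $x_1'\Vdash\neg\varphi$ requires that no state \emph{open to $x_1'$} lies in $\llbracket\varphi\rrbracket$. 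These quantify over opposite directions of $\comp$, and rejecting $\varphi$ does not entail forcing $\neg\varphi$ (pseudosymmetry gives only the converse). So the states $x_1',x_2'$ on which your root rests need not exist.

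Even granting such states, the gluing fails. Adding $s\comp x_1'$ and $s\comp x_2'$ gives $x_1'$ and $x_2'$ a \emph{new state open to them}, which (a) can destroy pseudosymmetry for old pairs: if the only pseudosymmetry witness for some $y\comp_1 a$ in $\mathcal{M}_1$ was $z=x_1'$, then after gluing $x_1'$ no longer pre-refines $a$, since $s\comp x_1'$ but $s\not\comp a$; and (b) invalidates your generated-subframe claim (i), since $\mathcal{M},x_1'\Vdash\neg\chi$ now quantifies over $s$, entangling the two components through the root. The paper's construction orients every new edge the other way, setting $x\comp' r$ for all $x$: then no old state acquires a new state open to it, every state pre-refines $r$, so $r$ simply \emph{fails to force} $\varphi$ and $\psi$ by Lemma \ref{prerefinelem} (no appeal to negation is needed), and $r\nVdash\varphi\vee\psi$ because the only state $r$ is open to is $r$ itself, which forces neither disjunct. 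Your root is designed as a common \emph{refinement} of two negation-forcing states; the paper's root is designed to be \emph{pre-refined by everything}, i.e., maximally uncommitted. The latter is the device that actually works.
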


\begin{proof} Suppose $\top\nvdash_\mathsf{F}\varphi$ and $\top\nvdash_\mathsf{F}\psi$, so by the completeness direction of Theorem \ref{MainCompleteness}.\ref{MainCompleteness4}, there are models $\mathcal{M}_1=(X_1,\comp_1,V_1)$ and $\mathcal{M}_2=(X_2,\comp_2,V_2)$ based on  pseudosymmetric reflexive frames, $x_1\in X_1$, and $x_2\in X_2$ such that $\mathcal{M}_1,x_1\nVdash\varphi$ and $\mathcal{M}_2,x_2\nVdash\psi$. Without loss of generality,  assume $X_1\cap X_2=\varnothing$. Define the disjoint union $\mathcal{M}=(X,\comp,V)$ by $X=X_1\cup X_2$, $\comp \,=\, \comp_1\cup \comp_2$, and $V(p)=V_1(p)\cup V_2(p)$ for $p\in\mathsf{Prop}$. Clearly $(X,\comp)$ is a  pseudosymmetric reflexive frame, $V(p)$ is a $c_\comp$-fixpoint, and  $\mathcal{M},x_1\nVdash\varphi$ and $\mathcal{M},x_2\nVdash\psi$.  

Fixing some $r\not\in X$, define $\mathcal{M}'=(X',\comp',V')$ by $X'=X\cup \{r\}$, $\mathord{\comp'}=\mathord{\comp} \cup \{(x,r) \mid x\in X'\}$, and $V'(p)=V(p)$ for $p\in\mathsf{Prop}$. Then $\comp$ is clearly reflexive. For pseudosymmetry, for $x,y\in X$, suppose $y\comp' x$. If $x\neq r$, then $y\comp x$, so pseudosymmetry of $\comp$ implies there is a $z\comp y$ that pre-refines $x$ with respect to $\comp$. From $z\comp y$ we have $z\comp'y$, and we claim that $z$ pre-refines $x$ with respect to $\comp'$. For suppose $w\comp' z$. Then $w\neq r$, so $w\comp z$, which implies $w\comp x$ since $z$ pre-refines $x$ with respect to $\comp$, so $w\comp ' x$. On the other hand, if $x=r$, then set $z=y$. Hence $z\comp y$, and clearly $z$ pre-refines $r$, since $v\comp r$ for all $v\in X'$. Thus, $\comp'$ is pseudosymmetric. It is also easy to see that $V'(p)$ is a $c_{\comp'}$-fixpoint, so $\mathcal{M}'$ is a model.

Now we claim that for all $\chi\in\mathcal{L}$ and $x\in X$, $\mathcal{M},x\Vdash\chi$ iff $\mathcal{M}',x\Vdash\chi$. The proof is by induction on $\chi$. The base case for $p$ is immediate from the definition of $V'$; the $\wedge$ case is immediate from the inductive hypothesis; and the $\neg$ case and the implication from $\mathcal{M},x\Vdash\chi_1\vee\chi_2$ to  $\mathcal{M}',x\Vdash\chi_1\vee\chi_2$  follow from the inductive hypothesis and the fact that $r\not\comp x$.  Finally, suppose $\mathcal{M}',x\Vdash\chi_1\vee\chi_2$ and $x'\comp x$, so $x'\comp' x$.  Hence there is some $x''\compflip' x'$ such that $\mathcal{M}',x''\Vdash \chi_i$ for some $i\in\{1,2\}$. If $x''\in X$, then $x''\compflip x'$, and by the inductive hypothesis, $\mathcal{M}, x''\Vdash\chi_i$. If $x''=r$, then since $x'$ pre-refines $r$, we have $\mathcal{M},x'\Vdash\chi_i$ by Lemma \ref{prerefinelem}. In either case, we have shown that for all $x'\comp x$ there is a $y\compflip x'$ such that $\mathcal{M},y\Vdash \chi_i$ for some $i\in \{1,2\}$. Thus, $\mathcal{M},x\Vdash \chi_1\vee\chi_2$.

By the previous paragraph, $\mathcal{M}',x_1\nVdash\varphi$ and $\mathcal{M}',x_2\nVdash\psi$. Then since $x_1$ and $x_2$ pre-refine $r$, $\mathcal{M}',r \nVdash \varphi$ and $\mathcal{M}',r \nVdash \psi$ by Lemma \ref{prerefinelem}. Then since $r$ can see a state, namely itself, that cannot be seen by any state forcing $\varphi$ or  $\psi$, we have $\mathcal{M}',r\nVdash\varphi\vee\psi$. Hence $\top\nvdash_\mathsf{F}\varphi\vee\psi$ by the soundness part of Theorem \ref{MainCompleteness}.\ref{MainCompleteness4}.
\end{proof}

We  conclude this section by briefly following up on the idea from Remarks \ref{NCremark} and \ref{MoreGeneralNeg} of representing lattices with negations that do not necessarily satisfy $\neg 1 =0$. We prove an analogue of Theorem \ref{NegThm}.\ref{NegThm1} for such negations; analogues of the other parts of Theorem \ref{NegThm} can be similarly obtained.

\begin{theorem}\label{NegThmAntitone} Let $L$ be a bounded lattice, $\Lambda$ a meet-dense set of elements of $L$, and $\neg$ an antitone unary operation on $L$. Define $P =  \{(a,\neg a)\mid a\in L\} \cup \{(1,b)\mid b\in \Lambda\}$, ${(a, b)\comp (c, d)}$  if  $c\not\leq b$, and $F =  c_\comp(\{(\neg 1, \neg\neg 1)\})$. Then there is a complete embedding of $(L,\neg)$ into $(\lat(P,\comp),\neg_{\comp,F})$ with $\neg_{\comp,F}$  defined as in Remark \ref{MoreGeneralNeg}, which is an isomorphism if $L$ is complete.\end{theorem}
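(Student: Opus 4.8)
The plan is to follow the template of the proof of Theorem~\ref{NegThm}.\ref{NegThm1}. First I would show that $P$ is separating in the sense of Definition~\ref{Good}, so that Proposition~\ref{CompRep} furnishes a complete embedding $f(a)=\{(x,y)\in P\mid x\leq a\}$ of $L$ into $\lat(P,\comp)$, an isomorphism when $L$ is complete. The real work is then to verify that $f$ preserves negation in the appropriate sense, namely $f(\neg a)=\neg_{\comp,F}f(a)$ for all $a\in L$. Checking that $P$ is separating is essentially identical to the argument in Theorem~\ref{NegThm}: for Definition~\ref{Good}.\ref{Good2}, given $a\not\leq b$ take $(c,d)=(a,\neg a)$; for Definition~\ref{Good}.\ref{Good3}, given $(c,d)\in P$ with $c\not\leq b$, use meet-density of $\Lambda$ to find $b'\in\Lambda$ with $c\not\leq b'$ and $b\leq b'$, and take $(c',d')=(1,b')$. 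Since this uses no property of $\neg$ beyond its being a unary operation, it transfers without change.

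The crux is to compute $F=c_\comp(\{(\neg 1,\neg\neg 1)\})$ explicitly. Unwinding the definitions of $c_\comp$ and $\comp$, a pair $(u,v)\in P$ lies in $F$ iff for every $(u',v')\in P$ with $u\not\leq v'$ one has $\neg 1\not\leq v'$; contrapositively, iff $u\leq v'$ whenever $\neg 1\leq v'$, as $(u',v')$ ranges over $P$. The second coordinates occurring in $P$ are exactly the elements $\neg a$ (for $a\in L$) together with the elements of $\Lambda$. Using meet-density I would write $\neg 1=\bigwedge\{b\in\Lambda\mid \neg 1\leq b\}$; since each such $b$ is the second coordinate of a pair $(1,b)\in P$ with $\neg 1\leq b$, membership of $(u,v)$ in $F$ forces $u\leq b$ for all such $b$, hence $u\leq\neg 1$. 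The converse is immediate, so I obtain the clean characterization $F=\{(u,v)\in P\mid u\leq\neg 1\}=f(\neg 1)$. I expect this identification of $F$ to be the main obstacle, since everything downstream hinges on it.

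Granting $F=f(\neg 1)$, the negation equation follows from antitonicity. For $f(\neg a)\subseteq \neg_{\comp,F}f(a)$: given $(x,y)$ with $x\leq\neg a$, any $(x',y')\comp(x,y)$ (so $x\not\leq y'$) with $(x',y')\in f(a)$ (so $x'\leq a$) satisfies $x\leq\neg a\leq\neg x'$ by antitonicity; were $(x',y')$ a first-type pair $(x',\neg x')$ this would give $x\leq y'$, a contradiction, so $(x',y')=(1,y')$ with $a=1$, and then the pair $(\neg 1,\neg\neg 1)\in F$ witnesses the required $\compflip$-successor, since $\neg 1\not\leq y'$ (as $x\leq\neg 1$ and $x\not\leq y'$). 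For the reverse inclusion, given $x\not\leq\neg a$ I would take the witness $(x',y')=(a,\neg a)$, which is $\comp(x,y)$ precisely because $x\not\leq\neg a$ and lies in $f(a)$; any $(x'',y'')\compflip(a,\neg a)$ satisfies $x''\not\leq\neg a$, and since $\neg 1\leq\neg a$ by antitonicity this gives $x''\not\leq\neg 1$, i.e.\ $(x'',y'')\notin F$, so $(x,y)\notin\neg_{\comp,F}f(a)$.

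Finally, one should record that $\neg_{\comp,F}$ genuinely maps $c_\comp$-fixpoints to $c_\comp$-fixpoints, so that it is an operation on $\lat(P,\comp)$ and the phrase ``embedding into $(\lat(P,\comp),\neg_{\comp,F})$'' is meaningful; this can be seen via $\neg_{\comp,F}A=A\to_\comp F$ from Remark~\ref{MoreGeneralNeg} together with the implication results of Appendix~\ref{AppendixB}, or checked directly. The isomorphism claim for complete $L$ is then immediate from Proposition~\ref{CompRep}.\ref{CompRep2}.
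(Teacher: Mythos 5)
Your proposal is correct and follows essentially the same route as the paper's proof: establish that $P$ is separating exactly as in Theorem \ref{NegThm}.\ref{NegThm1}, invoke Proposition \ref{CompRep}, and then verify $f(\neg a)=\neg_{\comp,F}f(a)$ using antitonicity in the forward direction and the witness $(a,\neg a)$ in the reverse direction. Your explicit identification $F=f(\neg 1)$ is a tidy packaging of facts the paper verifies inline (there the forward direction shows $(x,y)\in F$ directly and uses it as its own witness, where you use $(\neg 1,\neg\neg 1)$), but the substance is the same.
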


\begin{proof} The proof that the map $f$ in Proposition \ref{CompRep} is a complete lattice embedding of $L$ into $\lat(P,\comp)$, which is an isomorphism if $L$ is complete, is exactly as in the proof of Theorem \ref{NegThm}.\ref{NegThm1}. It only remains to verify that $ f(\neg a)=\neg_{\comp,F} f(a)$. 

Suppose $(x,y)\in  f(\neg a)$, so $x\leq\neg a$. Further suppose $(x',y')\comp (x,y)$ and $(x',y')\in f(a)$, so $x'\leq a$. Then $\neg a\leq\neg x'$, which with $x\leq\neg a$ implies $x\leq \neg x'$. Now if $(x',y')\in  \{(a,\neg a)\mid a\in L\}$, then from $x\leq \neg x'$ we have $x\leq y'$, contradicting $(x',y')\comp (x,y)$. Thus, we have $(x',y')\in \{(1,b)\mid b\in\Lambda\}$. Then from $x\leq\neg x'$, we have $x\leq \neg 1$, in which case we claim $(x,y)\in F$. For if $(x^*,y^*)\comp (x,y)$, so $x\not\leq y^*$, then $\neg 1\not\leq y^*$, so $(x^*,y^*)\comp (\neg 1,\neg\neg 1)$, which shows $(x,y)\in c_\comp(\{(\neg 1,\neg\neg 1)\}$. Thus, for  all $(x',y')\comp (x,y)$, if $(x',y')\in f(a)$, then $(x,y)\in F$. It follows that $(x,y)\in \neg_{\comp,F}  f(a)$.  Conversely, let $(x,y)\in P\setminus f(\neg a)$, so $x\not\leq \neg a$. Then $(a,\neg a)\comp (x,y)$. Moreover, since $a\leq 1$, we have $\neg 1\leq \neg a$, so $(a,\neg a)\not\comp (\neg 1,\neg\neg 1)$, which implies there is no $(z,w)\compflip (a,\neg a)$ with $(z,w)\in F$. It follows that $(x,y)\not\in \neg_{\comp,F} f(a)$. \end{proof}

\subsection{Topological representation of lattices with negations}\label{TopRep}

Topological representations of bounded lattices using reflexive frames endowed with a topology were developed in \citealt{Ploscica1995} and \citealt{Craig2013}, building on \citealt{Urquhart1978} and \citealt{Allwein1993}. In \citealt{Holliday2022}, we considered a variant of the approach of \citealt{Craig2013} using disjoint filter-ideal pairs but with a different topology in the spirit of the choice-free Stone duality of \citealt{BH2020}. In this section, we briefly show how the filter-ideal representation can be adapted to bounded lattices equipped with protocomplementations and hence in particular weak pseudocomplementations. For topological representations of ortholattices in particular, using symmetric and reflexive frames of proper filters equipped with a topology, see \citealt{Goldblatt1975} and \citealt{McDonald2021}, and for associated categorical dualities, see \citealt{Bimbo2007}, \citealt{Dmitrieva2021}, and \citealt{McDonald2021}.

Given a bounded  lattice $L$ and a protocomplementation $\neg$, define $\mathsf{FI}(L,\neg)=(X,\comp)$ as follows: $X$ is the set of all pairs $(F,I)$ such that $F$ is a  filter in $L$, $I$ is a ideal in $L$, $F\cap I=\varnothing$, and ${\{\neg a\mid a\in F\}\subseteq I}$. One can interpret the states in $X$ intuitively as in Remark \ref{IntuitivePicture}: the state $(F,I)$ accepts the propositions in $F$ and rejects the propositions in $I$. Then define  $(F,I)\comp (F',I')$ iff $I\cap F'=\varnothing$. Note that since $\neg$ is a protocomplementation, $\comp$ is reflexive; but if we are interested in negations that are not semicomplementations, we can drop the condition that $F\cap I=\varnothing$ (see the end of Appendix \ref{AppendixB} and compare the odd vs.~even parts of Theorem~\ref{NegThm}). Given $a\in L$, let $\widehat{a}=\{(F,I)\in X\mid a\in F\}$. Finally, let $\mathsf{S}(L)$ be $\mathsf{FI}(L,\neg )$ endowed with the topology generated by $\{\widehat{a}\mid a\in L\}$.

\begin{theorem}\label{EmbedThm} For any bounded lattice $L$ and protocomplementation $\neg$ on $L$, the map $a\mapsto\widehat{a}$ is 
\begin{enumerate}
\item\label{EmbedThm1} an embedding  of $(L,\neg)$ into $(\lat(\mathsf{FI}(L,\neg)),\neg_\comp)$ and 
\item\label{EmbedThm2} an isomorphism from $L$ to the subalgebra of $(\lat(\mathsf{FI}(L,\neg )),\neg_\comp)$ consisting of $c_\comp$-fixpoints that are compact open in the space $\mathsf{S}(L)$.
\end{enumerate}
\end{theorem}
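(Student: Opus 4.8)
The plan is to verify directly that the map $h:a\mapsto\widehat a$ has all the required properties, the argument running parallel to the discrete representation of Proposition \ref{CompRep} but with the concrete filter--ideal states in place of abstract pairs. Throughout, the workhorses are the \emph{principal states}: for $a\neq 0$ the pair $p_a:=(\mathord{\uparrow}a,\mathord{\downarrow}\neg a)$ lies in $\mathsf{FI}(L,\neg)$, since $\mathord{\uparrow}a$ is a filter, $\mathord{\downarrow}\neg a$ an ideal, they are disjoint by the anti-inflationary property $a\not\leq\neg a$ (Lemma \ref{UsefulLem}.\ref{UsefulLem1}), and $\{\neg c\mid c\geq a\}\subseteq\mathord{\downarrow}\neg a$ by antitonicity; and for $a\neq 1$ the pair $q_a:=(\mathord{\uparrow}1,\mathord{\downarrow}a)$ is a state. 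The key feature of $p_a$ is that it accepts exactly the elements $\geq a$, i.e.\ $p_a\in\widehat b$ iff $a\leq b$.

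For part \ref{EmbedThm1} I would first show each $\widehat a$ is a $c_\comp$-fixpoint: if $a\notin F$ then $a\neq 1$, and the state $q_a$ is open to $(F,I)$ while it is itself open only to states that do not accept $a$, witnessing $(F,I)\notin c_\comp(\widehat a)$. Injectivity and order-reflection follow because $a\not\leq b$ gives $p_a\in\widehat a\setminus\widehat b$. Meet preservation $\widehat{a\wedge b}=\widehat a\cap\widehat b$ is immediate from $F$ being a filter, and the bounds map to the bounds since $\widehat 1=X$ and $\widehat 0=\varnothing=c_\comp(\varnothing)$ (there are no absurd states, as $\comp$ is reflexive). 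For the join, $c_\comp(\widehat a\cup\widehat b)\subseteq\widehat{a\vee b}$ holds because $\widehat{a\vee b}$ is a fixpoint containing $\widehat a\cup\widehat b$; conversely, given $(F,I)$ with $a\vee b\in F$ and any $(F',I')$ open to it, we have $a\vee b\notin I'$, so $I'$ being an ideal forces $a\notin I'$ or $b\notin I'$, and then the corresponding principal state $p_a$ or $p_b$ is an open successor of $(F',I')$ accepting a disjunct, giving $(F,I)\in c_\comp(\widehat a\cup\widehat b)$. The one genuinely structural step is negation: to get $\widehat{\neg a}\subseteq\neg_\comp\widehat a$ I use the state-admissibility condition $\{\neg c\mid c\in F'\}\subseteq I'$, so that if $\neg a\in F$ and some open predecessor $(F',I')$ accepted $a$, then $\neg a\in I'\cap F$, contradicting openness; for the reverse inclusion, if $\neg a\notin F$ then $a\neq 0$ and $p_a$ is open to $(F,I)$ and accepts $a$, so $(F,I)\notin\neg_\comp\widehat a$.

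For part \ref{EmbedThm2}, openness of each $\widehat a$ is immediate since $\{\widehat a\}$ is the generating family (and it is a basis, being closed under intersection via $\widehat a\cap\widehat b=\widehat{a\wedge b}$ and containing $\widehat 1=X$). Compactness of $\widehat a$ reduces to basic covers $\widehat a\subseteq\bigcup_i\widehat{b_i}$; the principal state $p_a\in\widehat a$ must lie in some $\widehat{b_i}$, which by the displayed property of $p_a$ gives $a\leq b_i$ and hence $\widehat a\subseteq\widehat{b_i}$ --- a single-set subcover. For surjectivity onto the compact open fixpoints, take any such set $A$. Being open it is a union of basic opens, and being compact it equals a finite union $\widehat{b_1}\cup\dots\cup\widehat{b_n}$; since $A$ is moreover a $c_\comp$-fixpoint, applying $c_\comp$ together with finite join preservation yields $A=c_\comp(\widehat{b_1}\cup\dots\cup\widehat{b_n})=\widehat{b_1\vee\dots\vee b_n}$. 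Thus every compact open fixpoint is some $\widehat a$, the collection of such sets is exactly the image of $h$, and part \ref{EmbedThm1} shows this image is a subalgebra on which $h$ is an isomorphism.

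The main obstacle --- or rather the only place where the hypotheses on $\neg$ do real work --- is the negation clause, where the admissibility condition $\{\neg a\mid a\in F\}\subseteq I$ together with the identities $\neg 0=1$ and $a\not\leq\neg a$ must be combined precisely; every other step is routine filter--ideal bookkeeping, and the potentially delicate compactness argument collapses to a one-element subcover because the principal states detect the order on $L$ exactly.
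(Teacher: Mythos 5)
Your proposal is correct and follows essentially the same route as the paper's proof: the ``principal states'' $p_a=(\mathord{\uparrow}a,\mathord{\downarrow}\neg a)$ and $q_a=(\mathord{\uparrow}1,\mathord{\downarrow}a)$ are exactly the witnesses the paper uses for the fixpoint property, injectivity, join preservation, the negation clause, and the one-element subcover in the compactness argument. The only (shared, harmless) omission is the trivial case $a=0$ in the compactness step, where $\widehat{0}=\varnothing$ is vacuously compact.
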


\begin{proof} Given $a\in L$, let $\mathord{\uparrow}a$ and $\mathord{\downarrow}a$ be the filter and ideal, respectively, generated by $a$. 

First observe that for any $a\in L$,  $\widehat{a}$ is a $c_\comp$-fixpoint. It suffices to show that if $(F,I)\not\in \widehat{a}$, then there is an $(F',I')\comp (F,I)$ such that for all $(F'',I'')\compflip (F',F')$, we have $(F'',I'')\not\in \widehat{a}$. Suppose $(F,I)\not\in\widehat{a}$, so $a\not\in F$ and hence $a\neq 1$. Let $F'=\mathord{\uparrow}1$ and $I'=\mathord{\downarrow}a$. Then $(F',I')\in X$. Now consider any $(F'',I'')$ such that $(F',I')\comp (F'',I'')$, so $I'\cap F''=\varnothing$. Then since $a\in I'$, we have $a\not\in F''$, so $(F'',I'')\not\in \widehat{a}$, as desired.

Next, the map $a\mapsto\widehat{a}$ is clearly injective:  if $a\not\leq b$, then $(\mathord{\uparrow}a, \mathord{\downarrow}\neg a)\in X$, $(\mathord{\uparrow}a, \mathord{\downarrow}\neg a)\in \widehat{a}$, and $(\mathord{\uparrow}a, \mathord{\downarrow}\neg a)\not\in\widehat{b}$. Obviously $\widehat{1}=X$ and $\widehat{0}=\varnothing$. The map also preserves $\wedge$:  $\widehat{a\wedge b}=\{(F,I)\in X\mid a\wedge b\in F\}=\{(F,I)\in X\mid a,b\in F\}=\{(F,I)\in X\mid a\in F\}\cap \{(F,I)\in X\mid b\in F\}=\widehat{a}\cap \widehat{b}=\widehat{a}\wedge\widehat{b}$. 

Next we show $\widehat{a\vee b}\subseteq\widehat{a}\vee\widehat{b}$, as the converse inclusion follows from meet preservation. Recall from Proposition \ref{ClosureLattice} that ${\widehat{a}\vee\widehat{b}}=c_\comp(\widehat{a}\cup\widehat{b})$. Suppose $(F,I)\in \widehat{a\vee b}$, so $a\vee b\in F$. Consider any ${(F',I')\comp (F,I)}$, so $I'\cap F=\varnothing$ and hence $a\vee b\not\in I'$. Then since $I'$ is an ideal,  $a\not\in I'$ or ${b\not\in I'}$. Without loss of generality, suppose $a\not \in I'$, so $a\neq 0$. Then setting $F''=\mathord{\uparrow}a$ and $I''=\mathord{\downarrow}\neg a$, we have $(F'',I'')\in X$ and $I'\cap F''=\varnothing$, so ${(F',I')\comp (F'',I'')}$, and $(F'',I'')\in\widehat{a}$. Thus, we have shown that for any $(F',I')\comp (F,I)$ there is an $(F'',I'')\compflip (F',I')$ with $(F'',I'')\in\widehat{a}\cup \widehat{b}$. Hence $(F,I)\in\widehat{a}\vee\widehat{b}$.  Finally, we show that $\widehat{\neg a}=\neg_\comp\widehat{a}$. First suppose $(F,I)\in \widehat{\neg a}$ and $(F',I')\comp (F,I)$. Since $(F,I)\in \widehat{\neg a}$, we have $\neg a\in F$, which with $(F',I')\comp (F,I)$ implies $\neg a\not\in I'$, which with the definition of $X$ implies $a\not\in F'$, so $(F',I')\not\in \widehat{a}$. Hence $(F,I)\in \neg_\comp\widehat{a}$. Conversely, if $(F,I)\not\in \widehat{\neg a}$, so $\neg a\not\in F$, then $(\mathord{\uparrow}a,\mathord{\downarrow}\neg a)\comp (F,I)$ and $(\mathord{\uparrow}a,\mathord{\downarrow}\neg a)\in\widehat{a}$, so $(F,I)\not\in \neg_\comp \widehat{a}$.

For part \ref{EmbedThm2}, we first show that $\widehat{a}$ is compact open. Since the $\widehat{b}$'s form a basis, we need only show that if $\widehat{a}\subseteq \bigcup \{\widehat{b}_k\mid k\in K\}$, then there is a finite subcover. Indeed, since $(\mathord{\uparrow}a,\mathord{\downarrow}\neg a)\in \widehat{a}$, we have $(\mathord{\uparrow}a,\mathord{\downarrow}\neg a)\in \widehat{b_k}$ for some $k\in K$, which implies $a\leq b_k$, so $\widehat{a}\subseteq\widehat{b_k}$. Finally, we show that $a\mapsto\widehat{a}$  is onto the set of compact open $c_\comp$-fixpoints. Suppose $U$ is compact open, so $U=\widehat{a_1}\cup\dots\cup\widehat{a_n}$ for some $a_1,\dots,a_n\in L$. Further suppose $U$ is a $c_\comp$-fixpoint, so $c_\comp(U)=U$. Where $d=a_1\vee\dots\vee a_n$, an obvious induction using part \ref{EmbedThm1} and the fact that $c_\comp(c_\comp(A)\cup B)=c_\comp(A\cup B)$ for any $A,B\subseteq X$ yields $\widehat{d}= c_\comp(\widehat{a_1}\cup\dots\cup\widehat{a_n})$, so $\widehat{d}=c_\comp (U)= U$.
\end{proof}
\noindent In Appendix \ref{AppendixB}, we prove an analogue of Theorem \ref{EmbedThm} for bounded lattices with implications.

\begin{remark} The difference between the embedding part of Theorem \ref{NegThm} and the embedding part of Theorem \ref{EmbedThm} is that in the former  we are embedding $(L,\neg)$ into its \textit{MacNeille completion} (see \citealt[Thm.~2.2]{Gehrke2005}) whereas in the latter  we are embedding $(L,\neg)$ into its \textit{canonical extension} (see \citealt{Gehrke2001}, \citealt{Craig2014}).
\end{remark}

Finally, consider the case where $\neg$ is a weak pseudocomplementation in line with our logic $\vdash_\mathsf{F}$.

\begin{proposition}\label{FIprop} If $\neg$ is a weak pseudocomplementation on $L$, then $\comp$ in $\mathsf{FI}(L,\neg)$ is strongly pseudosymmetric.
\end{proposition}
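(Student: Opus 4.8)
The plan is to show directly that $\comp$ in $\mathsf{FI}(L,\neg)$ satisfies Definition \ref{StrongPseudo}. Fix an arbitrary state $x=(F,I)$ and any $y=(G,J)$ with $y\comp x$, i.e.\ with $J\cap F=\varnothing$. In analogy with the choice $z=(a,\neg a)$ used for the case $\mathrm{V}=L$ in the proof of Theorem \ref{NegThm}.\ref{NegThm4}, the natural witness is the state $z=(F,K)$ that shares the \emph{same} filter $F$ as $x$ but carries the smallest admissible ideal, namely $K$ the ideal generated by $\{\neg a\mid a\in F\}$. First I would check that $z\in X$: here $F$ is a filter, $K$ is an ideal containing $\{\neg a\mid a\in F\}$ by construction, and since $(F,I)\in X$ gives $\{\neg a\mid a\in F\}\subseteq I$ with $I$ an ideal, we get $K\subseteq I$ and hence $F\cap K\subseteq F\cap I=\varnothing$. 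Thus all the defining conditions of $\mathsf{FI}(L,\neg)$ hold for $(F,K)$.

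Next, mutual pre-refinement of $x$ and $z$ is immediate from the fact that they share the filter $F$: whether a state $w=(G',J')$ is open to $(F,\cdot)$ depends only on $F$, since $w\comp(F,\cdot)$ holds iff $J'\cap F=\varnothing$, independently of the ideal coordinate. Hence $w\comp z$ and $w\comp x$ are literally the same condition, so $z$ pre-refines $x$ and $x$ pre-refines $z$.

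The crux, and the step I expect to be the main obstacle, is verifying $z\comp y$, i.e.\ $K\cap G=\varnothing$. I would argue by contradiction: suppose $b\in K\cap G$. Since $b\in K$, we have $b\leq\neg a_1\vee\cdots\vee\neg a_n$ for some $a_1,\dots,a_n\in F$; setting $a=a_1\wedge\cdots\wedge a_n\in F$, antitonicity of $\neg$ gives $\neg a_i\leq\neg a$ for each $i$, whence $b\leq\neg a$ and therefore $\neg a\in G$ because $G$ is an upward-closed filter. Now $(G,J)\in X$ forces $\neg\neg a\in J$ (applying $\{\neg c\mid c\in G\}\subseteq J$ to $c=\neg a\in G$), and double negation introduction $a\leq\neg\neg a$ together with $J$ being downward closed yields $a\in J$. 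But $a\in F$, contradicting $J\cap F=\varnothing$. This is precisely where the two defining properties of a weak pseudocomplementation enter: antitonicity collapses the finite join into a single $\neg a$, and double negation introduction pushes $a$ back into the ideal $J$. With $K\cap G=\varnothing$ established we have $z\comp y$, completing the verification of strong pseudosymmetry.
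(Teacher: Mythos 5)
Your proof is correct and follows essentially the same route as the paper: the same witness $(F,K)$ with $K$ the ideal generated by $\{\neg a\mid a\in F\}$, the same observation that mutual pre-refinement is automatic because openness to a state depends only on its filter coordinate, and the same antitonicity-plus-double-negation-introduction argument for $K\cap G=\varnothing$. The only (minor, and in fact slightly cleaner) difference is that you verify $F\cap K=\varnothing$ by noting $K\subseteq I$, whereas the paper derives a contradiction from $a\wedge\neg a=0$ and the properness of $F$; both work.
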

\begin{proof} Suppose $(F',I')\comp (F,I)$. Where $I''$ is the ideal generated by $\{\neg a\mid a\in F\}$, we claim that $F\cap I''=\varnothing$. Otherwise there are $a_1,\dots,a_n,b\in F$ such that $b\leq \neg a_1\vee\dots\vee \neg a_n$. Then where $a=a_1\wedge\dots\wedge a_n$, we have $a\in F$ and $b\leq \neg a$, so $\neg a\in F$, which implies  $a\wedge\neg a\in F$ and hence $0\in F$, contradicting the fact that $F$ is a proper filter. Hence $(F, I'')\in X$. Now we claim that $(F, I'')\comp (F',I')$. For otherwise there is some $b\in F'$ and $a_1,\dots,a_n\in F$ such that $b\leq \neg a_1\vee\dots\vee \neg a_n$, so where $a=a_1\wedge\dots\wedge a_n$, we have $a\in F$ and $b\leq \neg a$, so $\neg a\in F'$ and hence $\neg\neg a\in I'$, which implies $a\in I'$, which contradicts $(F',I')\comp (F,I)$.  Finally, since $(F,I'')$ and $(F,I)$ have the same first coordinate, $(F, I'')$ pre-refines $(F,I)$ and vice versa.\end{proof}

Thus, by analogy with modal logic, we may say that our propositional logic $\vdash_\mathsf{F}$ is \textit{canonical} in the sense that it is validated by its canonical frame, whether one considers that to be the relational frame built from the Lindenbaum-Tarski algebra of the logic by  Theorem \ref{EmbedThm} or by Theorem \ref{NegThm}.\ref{NegThm4}. 

\subsection{Modal translations}\label{ModalTranslations}

Relational semantics for non-classical propositional logics immediately raise the possibility of translating such logics into modal logics on a classical base, as in  G\"{o}del's  translation of intuitionistic logic into the normal modal logic \textbf{S4} (\citealt{Godel1933b}, \citealt{McKinsey1948}), the modal logic of reflexive and transitive frames. In a similar spirit, Goldblatt \citeyearpar{Goldblatt1974} gave a full and faithful embedding of orthologic into the normal modal logic \textbf{KTB}, the modal logic of reflexive and symmetric frames. Below we will give a full and faithful embedding of our logic $\vdash_\mathsf{F}$ into the extension of the minimal temporal logic $\mathbf{K}_t$ (\citealt[Def.~4.33]{Blackburn2001}) with the reflexivity axiom $Hq\to q$ and the pseudosymmetry axiom $Hq\to HPHq$ (or $FHq \to PHq$), based on viewing $\comp$ in our frames as the temporal relation. We call this logic $\mathbf{K}_t\mathbf{TP}$.  The pseudosymmetry axiom $\mathbf{P}$ is Sahlqvist and hence canonical (\citealt[Thm.~4.42]{Blackburn2001}), so $\mathbf{K}_t\mathbf{TP}$ is complete with respect to the class of pseudosymmetric reflexive frames.  In fact, the canonical frame for $\mathbf{K}_t\mathbf{TP}$ (\citealt[Def.~4.34]{Blackburn2001}) is strongly pseudosymmetric. For where $\Gamma$ and $\Sigma$ are maximally consistent sets and $R$ is the canonical relation, we claim that if $\Gamma R \Sigma$, then \[\Delta_0=\{\varphi\mid H\varphi \in\Gamma\}\cup \{H\psi \mid H\psi\in \Sigma\}\] is consistent.  If not, then for  $H\varphi_1,\dots,H\varphi_n\in \Gamma$ and $H\psi_1,\dots,H\psi_m\in \Sigma$, we have \[\varphi_1\wedge\dots\wedge\varphi_n\vdash \neg (H\psi_1\wedge\dots\wedge H\psi_m)\vdash \neg H\chi\] where $\chi=\psi_1\wedge\dots\wedge\psi_m$, which implies  $H\varphi_1\wedge\dots\wedge H\varphi_n\vdash H\neg H\chi$, so $H\neg H\chi \in \Gamma$. But  $H\chi\in \Sigma$, so we have $HPH\chi\in\Sigma$ by the $\mathbf{P}$ axiom, which with $\Gamma R \Sigma$ implies $PH\chi\in\Gamma$, contradicting $H\neg H\chi \in \Gamma$. Extending $\Delta_0$ to a maximally consistent set provides the desired witness for strong pseudosymmetry, as $\Delta R\Gamma$ and $\Delta$ and $\Sigma$ have the same temporal predecessors. 
 
 The translation $t$ from our language $\mathcal{L}$ to the temporal language is given by: \[\mbox{$t(p)=HF p$, $t(\neg\varphi)=H\neg t(\varphi)$, $t(\varphi\wedge\psi)=(t(\varphi)\wedge t(\psi))$, and  $t(\varphi\vee\psi)= HF(t(\varphi)\vee t(\psi))$.}\]
 Then the following is easy to prove using completeness for both logics (where $\alpha\vdash_{\mathbf{K}_t\mathbf{TP}}\beta$ means that $\alpha\to \beta$ is a theorem of $\mathbf{K}_t\mathbf{TP}$), transferring countermodels on one side to countermodels on the other side.
 
 \begin{proposition} For all $\varphi,\psi\in\mathcal{L}$, we have $\varphi\vdash_\mathsf{F}\psi$ iff $t(\varphi)\vdash_{\mathbf{K}_t\mathbf{TP}}t(\psi)$.
 \end{proposition}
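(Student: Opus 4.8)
The plan is to exploit the fact that both logics are complete with respect to the \emph{same} class of frames and then relate the two semantics on a common frame through a truth-preservation lemma for $t$. For $\vdash_\mathsf{F}$ this is Theorem~\ref{MainCompleteness}.\ref{MainCompleteness4}: $\varphi\vdash_\mathsf{F}\psi$ iff $\varphi\vDash_\mathbb{PR}\psi$. For $\mathbf{K}_t\mathbf{TP}$ the preceding discussion already records that $\mathbf{P}$ is Sahlqvist and canonical, so $\mathbf{K}_t\mathbf{TP}$ is sound and complete with respect to the class of pseudosymmetric reflexive frames. Thus a single relational frame $(X,\comp)$ can be read simultaneously as a model for $\vdash_\mathsf{F}$ (Definition~\ref{RelModel}) and as a temporal Kripke frame in which $\comp$ is the flow-of-time relation, with $H,P$ quantifying over $\comp$-predecessors and $F,G$ over $\comp$-successors. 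With these readings fixed, I would prove the whole biconditional by transferring countermodels back and forth.

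The heart of the argument is the following correspondence, proved by induction on $\chi\in\mathcal{L}$: for any model $\mathcal{M}$ on a frame $(X,\comp)$ whose propositional valuation sends each variable to a $c_\comp$-fixpoint, and every $x\in X$, one has $\mathcal{M},x\Vdash\chi$ iff $\mathcal{M},x\models t(\chi)$ under the temporal reading. The observation driving the induction is that the modal prefix $HF$ computes the closure operator: unwinding the semantics, $\llbracket HF\theta\rrbracket=c_\comp(\llbracket\theta\rrbracket)$ for every temporal formula $\theta$, since $x\models HF\theta$ says exactly that for all $x'\comp x$ there is an $x''$ with $x'\comp x''$ and $x''\models\theta$, which is the defining condition of $c_\comp$ in Proposition~\ref{IsClosure}.\ref{IsClosure1}. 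The atomic clause then follows because $\llbracket t(p)\rrbracket=\llbracket HFp\rrbracket=c_\comp(V(p))=V(p)$ when $V(p)$ is a fixpoint; the $\vee$ clause follows because $\llbracket t(\varphi\vee\psi)\rrbracket=c_\comp(\llbracket\varphi\rrbracket^\mathcal{M}\cup\llbracket\psi\rrbracket^\mathcal{M})$ is exactly the join $\llbracket\varphi\rrbracket^\mathcal{M}\vee\llbracket\psi\rrbracket^\mathcal{M}$ by Proposition~\ref{ClosureLattice}, matching the forcing clause for $\vee$; the $\neg$ clause follows because $\llbracket H\neg t(\varphi)\rrbracket=\neg_\comp\llbracket t(\varphi)\rrbracket$ directly from the definition of $\neg_\comp$ in Proposition~\ref{IsClosure}.\ref{IsClosure2}; and the $\wedge$ clause is immediate.

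One wrinkle to handle carefully is the mismatch between the two notions of valuation: valuations for $\vdash_\mathsf{F}$ send variables to $c_\comp$-fixpoints, while temporal valuations are arbitrary. The $HF$ prefix in $t(p)$ is precisely what absorbs this discrepancy. Given a temporal model $\mathcal{N}=(X,\comp,V)$ on a pseudosymmetric reflexive frame, I would pass to the fundamental model $\mathcal{M}=(X,\comp,V')$ with $V'(p)=c_\comp(V(p))=\llbracket HFp\rrbracket^\mathcal{N}$; each $V'(p)$ is a fixpoint, so $\mathcal{M}$ is legitimate, the correspondence lemma applies to $\mathcal{M}$, and the atomic identity $\llbracket t(p)\rrbracket^\mathcal{N}=V'(p)=\llbracket p\rrbracket^\mathcal{M}$ keeps $\mathcal{N}$ and $\mathcal{M}$ in register. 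Conversely, a fundamental model already has fixpoint valuations, so it serves directly as its own temporal counterpart.

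Assembling these pieces gives both directions by contraposition. If $\varphi\nvdash_\mathsf{F}\psi$, completeness yields a fundamental countermodel on a pseudosymmetric reflexive frame; reading it temporally, the lemma produces a state at which $t(\varphi)$ holds but $t(\psi)$ fails, so $t(\varphi)\nvdash_{\mathbf{K}_t\mathbf{TP}}t(\psi)$ by soundness of $\mathbf{K}_t\mathbf{TP}$. If $t(\varphi)\nvdash_{\mathbf{K}_t\mathbf{TP}}t(\psi)$, completeness of $\mathbf{K}_t\mathbf{TP}$ yields a temporal countermodel on such a frame; converting it to a fundamental model as above produces a refutation of $\varphi\vDash_\mathbb{PR}\psi$, whence $\varphi\nvdash_\mathsf{F}\psi$. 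I expect the main obstacle to be pinning down the orientation of the temporal relation so that $HF$ really implements $c_\comp$ and $H\neg$ really implements $\neg_\comp$; once the quantifier patterns of the temporal clauses are aligned with the defining conditions in Proposition~\ref{IsClosure}, the remaining steps are routine.
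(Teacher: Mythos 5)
Your proposal is correct and follows essentially the same route as the paper, which leaves this result as a sketch: invoke completeness of $\vdash_\mathsf{F}$ and of $\mathbf{K}_t\mathbf{TP}$ with respect to pseudosymmetric reflexive frames and transfer countermodels in both directions. Your truth lemma identifying $HF$ with $c_\comp$ and $H\neg$ with $\neg_\comp$, together with the adjustment $V'(p)=c_\comp(V(p))$ to reconcile arbitrary temporal valuations with fixpoint-valued ones, is exactly the intended filling-in of that sketch.
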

 \noindent Similarly, the other logics in Theorem \ref{MainCompleteness} embed via $t$ into corresponding temporal logics; e.g., $\vdash_\mathsf{pre}$ embeds into $\mathbf{K}_t$, so we obtain the decidability of the former from the known decidability of the latter.

A referee asked whether if we restrict attention to the $\{\wedge,\neg\}$-fragment of $\mathcal{L}$, denoted $\mathcal{L}_{\wedge,\neg}$, then we obtain a full and faithful embedding of $\vdash_\mathsf{F}$ into $\mathbf{KTB}$ by modifying Goldblatt's \citeyearpar{Goldblatt1974} modal translation as follows: \[\mbox{$m(p)=p$ (instead of $\Box\Diamond p$), $m(\neg\varphi)=\Box\neg m(\varphi)$, and $m(\varphi\wedge\psi)=(m(\varphi)\wedge m(\psi))$.}\] 
Recall that $\mathbf{KTB}$ is the smallest normal modal logic containing the axioms $\Box p\to p$ and $p\to\Box\Diamond p$, and let $\alpha\vdash_\mathbf{KTB}\beta$ mean that $\alpha\to \beta$ is a theorem of $\mathbf{KTB}$. Under the $m$ translation,  $p\vdash_\mathsf{F}\neg\neg p$ corresponds to  $p\vdash_\mathbf{KTB}\Box\Diamond p$, while $\neg\neg p\nvdash_\mathsf{F} p$ corresponds to $\Box\Diamond p\nvdash_\mathbf{KTB}p$. More generally, we prove the following.

\begin{proposition} For all $\varphi,\psi\in\mathcal{L}_{\wedge,\neg}$, we have $\varphi\vdash_\mathsf{F}\psi$ iff $m(\varphi)\vdash_\mathbf{KTB}m(\psi)$.
\end{proposition}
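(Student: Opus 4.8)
The plan is to establish the two directions of the biconditional separately. Because the atoms of $\mathcal{L}_{\wedge,\neg}$ are translated to bare propositional variables $p$ rather than to ``regular'' formulas such as $\Box\Diamond p$, it is cleanest to treat soundness ($\Rightarrow$) algebraically and faithfulness ($\Leftarrow$) by transferring a relational countermodel into a $\mathbf{KTB}$ Kripke countermodel.

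For soundness I would argue algebraically. Let $(B,\Box)$ be any modal algebra validating $\mathbf{KTB}$, so $B$ is Boolean, $\Box$ preserves finite meets, $\Box 1=1$, $\Box a\le a$ (from $\mathbf{T}$), and $a\le\Box\Diamond a$ (from $\mathbf{B}$). Define $\neg' a:=\Box\neg a$ on the bounded lattice reduct of $B$. Then $\neg'$ is antitone by monotonicity of $\Box$; it satisfies $\neg' 1=\Box 0=0$ using $\Box 0\le 0$; it is a semicomplementation since $a\wedge\neg' a\le a\wedge\neg a=0$ by $\mathbf{T}$; and it satisfies double negation introduction since $a\le\Box\Diamond a=\neg'\neg' a$ by $\mathbf{B}$. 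Hence $(B,\neg')$ is a bounded lattice with a weak pseudocomplementation, i.e.\ a member of $\mathscr{W}$. An easy induction shows that, for any valuation $\theta$, the modal interpretation of $m(\chi)$ in $(B,\Box)$ equals the interpretation of $\chi$ in $(B,\neg')$, the only nontrivial clause being $m(\neg\varphi)=\Box\neg m(\varphi)$, which unfolds to $\neg'$ applied to the interpretation of $m(\varphi)$. Since $\vdash_\mathsf{F}$ is sound with respect to $\mathscr{W}$ (Proposition \ref{SoundProp}), $\varphi\vdash_\mathsf{F}\psi$ forces the interpretation of $\varphi$ below that of $\psi$ in every $(B,\neg')$, hence $m(\varphi)\le m(\psi)$ in every $\mathbf{KTB}$-algebra, so $m(\varphi)\vdash_\mathbf{KTB}m(\psi)$ by algebraic completeness of $\mathbf{KTB}$.

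For faithfulness I would argue the contrapositive. If $\varphi\nvdash_\mathsf{F}\psi$, then by the completeness direction of Theorem \ref{MainCompleteness}.\ref{MainCompleteness4} there is a pseudosymmetric reflexive relational model $\mathcal{M}=(X,\comp,V)$ with $\llbracket\varphi\rrbracket^\mathcal{M}\not\subseteq\llbracket\psi\rrbracket^\mathcal{M}$. Put $R:=\comp\cup\comp^{-1}$, which is reflexive and symmetric, and form the Kripke model $\mathcal{N}=(X,R,V)$, a legitimate $\mathbf{KTB}$ model. The heart of the matter is the claim that $\llbracket m(\chi)\rrbracket^\mathcal{N}=\llbracket\chi\rrbracket^\mathcal{M}$ for every $\chi\in\mathcal{L}_{\wedge,\neg}$, proved by induction: the base case holds because each $V(p)$ is already a $c_\comp$-fixpoint, $\wedge$ is routine, and the inductive hypothesis guarantees that $\llbracket m(\varphi)\rrbracket^\mathcal{N}$ is a $c_\comp$-fixpoint. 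The crucial negation clause reduces to showing $\Box_R(X\setminus A)=\neg_\comp A$ for every $c_\comp$-fixpoint $A$. The inclusion $\Box_R(X\setminus A)\subseteq\neg_\comp A$ is immediate since $y\comp x$ implies $xRy$. For the reverse inclusion, suppose $x\in\neg_\comp A$ and $xRy$; if $y\comp x$ then $y\notin A$ directly, while if instead $x\comp y$ and $y\in A$, pseudosymmetry supplies a $z\comp x$ that pre-refines $y$, whence $z\in A$ by Lemma \ref{prerefinelem}, contradicting $x\in\neg_\comp A$. Thus forcing of $m$-images over $\mathcal{N}$ mirrors the relational semantics over $\mathcal{M}$, so $\mathcal{N}$ refutes $m(\varphi)\to m(\psi)$ at any state in $\llbracket\varphi\rrbracket^\mathcal{M}\setminus\llbracket\psi\rrbracket^\mathcal{M}$, giving $m(\varphi)\nvdash_\mathbf{KTB}m(\psi)$ by soundness of $\mathbf{KTB}$ over reflexive symmetric frames.

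The main obstacle, which is also the conceptual content of the proposition, is the gap between the pseudosymmetric (generally non-symmetric) frames characterizing $\vdash_\mathsf{F}$ and the symmetric frames of $\mathbf{KTB}$: naively identifying $\comp$ with a symmetric $R$ would collapse $\neg_\comp$ into an orthocomplementation and validate double negation elimination, which $\vdash_\mathsf{F}$ lacks. The resolution is the observation that, although passing to the symmetric closure $\comp\cup\comp^{-1}$ enlarges accessibility, pseudosymmetry forces $\Box_R\neg$ and $\neg_\comp$ to agree on propositions (the $c_\comp$-fixpoints), which is all the forcing induction ever inspects. A secondary subtlety is that, since atoms are translated to bare variables, one cannot run the soundness direction by the same model transfer, as an arbitrary $\mathbf{KTB}$-valuation need not assign fixpoints to atoms; this is exactly why soundness is handled algebraically, where the issue does not arise.
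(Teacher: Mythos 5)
Your proof is correct. The faithfulness direction coincides with the paper's: both pass from the pseudosymmetric reflexive countermodel supplied by Theorem \ref{MainCompleteness}.\ref{MainCompleteness4} to the Kripke model over the symmetric closure of $\comp$ and run the same induction, with pseudosymmetry plus Lemma \ref{prerefinelem} doing the work in the negation case. Your soundness direction, however, is genuinely different. The paper proceeds proof-theoretically: it checks that the relation given by $\varphi\vdash\psi$ iff $m(\varphi)\vdash_\mathbf{KTB}m(\psi)$ is an intro-elim logic for $\mathcal{L}_{\wedge,\neg}$, and then must show that $\vdash_\mathsf{F}$ is conservative over the smallest intro-elim logic for the fragment, which requires a detour through the meet-semilattice Lindenbaum--Tarski algebra and an adaptation of the representation of Theorem \ref{NegThm}.\ref{NegThm4}. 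You instead observe that on any $\mathbf{KTB}$-algebra $(B,\Box)$ the operation $a\mapsto\Box\neg a$ is a weak pseudocomplementation on the Boolean lattice reduct (antitonicity from monotonicity of $\Box$, the identity $a\wedge\Box\neg a=0$ from the $\mathbf{T}$ axiom, and $a\leq\Box\Diamond a$ from the $\mathbf{B}$ axiom), so Proposition \ref{SoundProp} applies directly and algebraic completeness of $\mathbf{KTB}$ finishes the job. This is shorter and sidesteps the fragment-conservativity issue entirely; what it costs is a reliance on algebraic completeness of $\mathbf{KTB}$ (standard, but not otherwise invoked in the paper), and it does not deliver the by-product of the paper's route, namely that $\vdash_\mathsf{F}$ restricted to $\mathcal{L}_{\wedge,\neg}$ is exactly the smallest intro-elim logic for that fragment.
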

\begin{proof} Let an \textit{intro-elim logic for $\mathcal{L}_{\wedge,\neg}$} be defined as in Definition \ref{BinaryLogic} but without the conditions involving $\vee$. It is easy to check that the relation $\vdash$ defined on $\mathcal{L}_{\wedge,\neg}$ by $\varphi\vdash\psi$ iff $m(\varphi)\vdash_\mathbf{KTB}m(\psi)$ is an intro-elim logic for $\mathcal{L}_{\wedge,\neg}$. Now where  $\vdash_\mathsf{F}^{\wedge,\neg}$ is the smallest intro-elim logic for $\mathcal{L}_{\wedge,\neg}$, we claim that  $\varphi\vdash_\mathsf{F}\psi$ implies $\varphi\vdash_\mathsf{F}^{\wedge,\neg}\psi$ for $\varphi,\psi\in\mathcal{L}_{\wedge,\neg}$. For if $\varphi\nvdash_\mathsf{F}^{\wedge,\neg}\psi$, then the Lindenbaum-Tarski algebra of $\vdash_\mathsf{F}^{\wedge,\neg}$ is a meet semilattice with $0$ and $1$ equipped with a weak pseudocomplementation, denoted $(M,\neg)$, that refutes the entailment from $\varphi$ to $\psi$. Now the proof of Theorem \ref{NegThm}.\ref{NegThm4}, replacing $\mathrm{V}$ with $L$, works for meet semilattices with $0$ and $1$  equipped with a weak pseudocomplementation, delivering a $(\wedge,\neg)$-embedding of $(M,\neg)$ into  a complete lattice with weak pseudocomplementation, $(\mathfrak{L}(X,\comp),\neg_\comp)$, that also refutes the entailment from $\varphi$ to $\psi$. Hence $\varphi\nvdash_\mathsf{F}\psi$ by Proposition \ref{SoundProp}. Thus, $\varphi\vdash_\mathsf{F}\psi$  implies $\varphi\vdash_\mathsf{F}^{\wedge,\neg}\psi$ and therefore $m(\varphi)\vdash_\mathbf{KTB}m(\psi)$.

 Conversely, if $\varphi\nvdash_\mathsf{F}\psi$, then by Theorem~\ref{MainCompleteness}.\ref{MainCompleteness4}, there is a model $\mathcal{M}=(X,\comp, V)$ based on a pseudosymmetric reflexive frame and $w\in X$ such that $\mathcal{M},w\Vdash\varphi$ and $\mathcal{M},w\nVdash\psi$. Let $\mathcal{M}^s=(X,\comp^s,V)$ be the model for the unimodal language where $\comp^s$ is the symmetric closure of $\comp$. Although $V(p)$ may not be a $c_{\comp^s}$-fixpoint,  this is not required for a modal model. Now we prove by induction on the structure of formulas $\varphi\in\mathcal{L}_{\wedge,\neg}$ that for all $x\in X$, $\mathcal{M},x\Vdash \varphi$ iff $\mathcal{M}^s,x\vDash m(\varphi)$, where $\vDash$ is the usual modal satisfaction relation with $\compflip$ as the accessibility relation for $\Box$. The base case and $\wedge$ case are obvious. For the $\neg$ case, if $\mathcal{M},x\nVdash \neg\varphi$, then there is a $y\comp x$ with $\mathcal{M},y\Vdash\varphi$, which implies $y\comp^s x$ and $\mathcal{M}^s,y\vDash m(\varphi)$ by the inductive hypothesis, so $\mathcal{M}^s,x\nvDash \Box\neg m(\varphi)$. Conversely, suppose $\mathcal{M}^s,x\nvDash \Box\neg m(\varphi)$, so there is some $y\comp^s x$ with $\mathcal{M}^s,y\vDash m(\varphi)$ and hence $\mathcal{M},y\Vdash \varphi$ by the inductive hypothesis. Given $y\comp^s x$, we have either $y\comp x$ or $x\comp y$. If $y\comp x$, then $\mathcal{M},x\nVdash \neg\varphi$. If $x\comp y$, then by pseudosymmetry, there is a $z\comp x$ that pre-refines $y$. Then from $\mathcal{M},y\Vdash \varphi$ we obtain $\mathcal{M},z\Vdash \varphi$ by Lemma \ref{prerefinelem}, so again $\mathcal{M},x\nVdash \neg\varphi$. Thus, we conclude $\mathcal{M}^s,w\vDash m(\varphi)$ and $\mathcal{M}^s,w\nvDash m(\psi)$, so $m(\varphi)\nvdash_\mathbf{KTB}m(\psi)$ by the soundness of $\mathbf{KTB}$ with respect to reflexive and symmetric frames.\end{proof}
 
 Note that if we compose the $m$ translation above with the $g$ translation from orthologic to $\vdash_\mathsf{F}$ in \S~\ref{FitchSection}, then we obtain Goldblatt's translation of orthologic into $\mathbf{KTB}$.

\section{Quantification}\label{QuantSection}

In this section, we extend the logic $\vdash_\mathsf{F}$ with rules for the universal and existential quantifiers. For simplicitly, we consider a first-order language $\mathcal{LQ}$ with no function symbols, no constants, and no identity symbol. Atomic formulas are of the form $P(v_1,\dots,v_n)$ where $P$ is an $n$-ary predicate and $v_1,\dots,v_n$ belong to a countably infinite set $\mathsf{Var}$ of variables. Thus, formulas are given by the grammar
\[\varphi::= P(v_1,\dots,v_n)\mid \neg\varphi\mid (\varphi\wedge\varphi)\mid (\varphi\vee\varphi)\mid \forall v\varphi \mid \exists v\varphi\]
where $v_1,\dots,v_n,v\in\mathsf{Var}$. We assume familiarity with the notions of free variables and of one variable being substitutable for another in $\varphi$ (see, e.g., \citealt[p.~113]{Enderton2001}); $\varphi^v_u$ is the result of substituting $u$ for $v$ in $\varphi$. 

We define proofs for $\vdash_\mathsf{FQ}$, \textit{fundamental first-order logic}, as for $\vdash_\mathsf{F}$ in \S~\ref{FitchSection} but with the following additional clauses, represented diagrammatically in Figure \ref{QuantRules}, where $1\leq i\leq n$:
\begin{itemize}
\item If $\langle \sigma_1,\dots,\sigma_n\rangle$ is a proof, $\sigma_i$ is a formula $\varphi$, and $v$ does not occur free in $\sigma_1$, then $\langle \sigma_1,\dots,\sigma_n,\forall v\varphi\rangle$ is a proof ($\forall$I).
\item If $\langle \sigma_1,\dots,\sigma_n\rangle$ is a proof, $\sigma_i$ is a formula of the form $\forall v\varphi$, and $u$ is substitutable for $v$ in $\varphi$, then $\langle \sigma_1,\dots,\sigma_n,\varphi^v_u\rangle$ is a proof ($\forall$E).
\item If $\langle \sigma_1,\dots,\sigma_n\rangle$ is a proof, $\sigma_i$ is a formula of the form $\varphi^v_u$, and $u$ is substitutable for $v$ in $\varphi$, then $\langle \sigma_1,\dots,\sigma_n,\exists v\varphi\rangle$ is a proof ($\exists$I). 
\item If $\langle \sigma_1,\dots,\sigma_n\rangle$ is a proof, $\sigma_i$ is a formula of the form $\exists v\varphi$, $\sigma_n$ is a proof beginning with $\varphi$ and ending with $\psi$, and $v$ does not occur free in $\psi$, then $\langle \sigma_1,\dots,\sigma_n,\psi\rangle$ is a proof  ($\exists$E).
\end{itemize}
As in the propositional case, by adding RAA we obtain first-order orthologic; by adding Reiteration\footnote{When defining a proof \textit{given a set $R$ of reiterables} as in Appendix \ref{AppendixA}, $\forall$I states that if  $\langle \sigma_1,\dots,\sigma_n\rangle$ is a proof given $R$, $\sigma_i$ is a formula $\varphi$, and $v$ does not occur free in $\sigma_1$ or in any formula in $R$, then $\langle \sigma_1,\dots,\sigma_n,\forall v\varphi\rangle$ is a proof given $R$.} we obtain intuitionistic first-order logic; and by adding both we obtain classical first-order logic. Moreover, the negative translation from orthologic to $\vdash_\mathsf{F}$ in \S~\ref{FitchSection} also extends to a translation from first-order orthologic to $\vdash_\mathsf{FQ}$ by setting $g(\forall v\varphi)=\forall vg(\varphi)$ and $g(\exists v\varphi)=\neg\forall v\neg g(\varphi)$.

\begin{figure}[h]
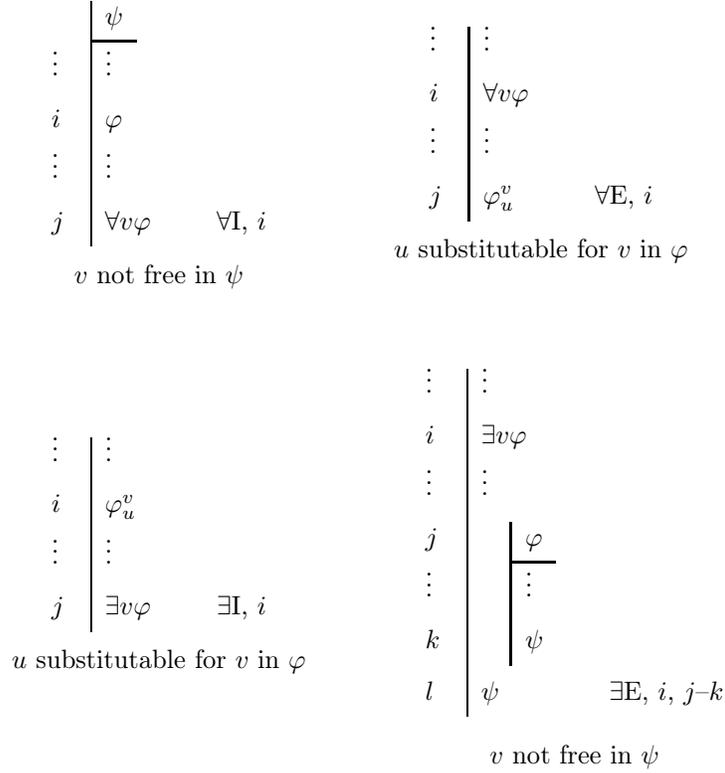

\begin{center}
\begin{minipage}{2in}
\[\begin{nd}
\hypo [\,] {} {\psi}
\have [\vdots] {0} {\vdots}
\have [i] {3}   {\varphi}
\have [\vdots] {4}   {\vdots}
\have [j]{5} {\forall v\varphi} \Ai{3}
\end{nd}
\]\[v\mbox{ not free in }\psi\]\end{minipage}\begin{minipage}{2in}
\[\begin{nd}
\have [\vdots] {0} {\vdots}
\have [i] {3}   {\forall v\varphi}
\have [\vdots] {4}   {\vdots}
\have [j]{5} {\varphi^v_u} \Ae{3}
\end{nd}
\]\[u\mbox{ substitutable for }v\mbox{ in }\varphi\]\end{minipage}\vspace{.2in}

\begin{minipage}{2.35in}
\[\begin{nd}
\have [\vdots] {0} {\vdots}
\have [i] {3}   {\varphi^v_u}
\have [\vdots] {4}   {\vdots}
\have [j]{5} {\exists v\varphi} \Ei{3}
\end{nd}
\]\[u\mbox{ substitutable for }v\mbox{ in }\varphi\]\end{minipage}\begin{minipage}{2in}
\[\begin{nd}
\have [\vdots] {0} {\vdots}
\have [i] {1} {\exists v\varphi}
\have [\vdots] {2} {\vdots}
\open
\hypo [j] {3}   {\varphi}
\have [\vdots] {4}   {\vdots}
\have [k] {6}   {\psi}
\close
\have [l]{7} {\psi} \Ee{1,3-6}
\end{nd}
\]\[v\mbox{ not free in }\psi\]\end{minipage}
\end{center}
\caption{Fitch-style rules for the logic with quantifiers.}\label{QuantRules}
\end{figure}

 By the kind of sequent calculus analysis mentioned at the end of \S~\ref{FitchSection}, Aguilera and Byd\u{z}ovsk\'y \citeyearpar{Aguilera2022} have shown that in striking contrast to intuitionistic or classical first-order logic, fundamental first-order logic is  decidable. Thus, just the addition of Reiteration takes us from decidability to undecidability.

\begin{theorem}[Aguilera and Byd\u{z}ovsk\'y] It is decidable in double exponential time whether $\varphi\vdash_\mathsf{FQ}\psi$.
\end{theorem}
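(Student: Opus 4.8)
The plan is to follow the sequent-calculus route already sketched for the propositional case and lift it to first order. First I would extend the calculus LF to a first-order calculus LFQ by adjoining the standard Gentzen rules for $\forall$ and $\exists$ (with the usual eigenvariable side conditions on $\forall$-right and $\exists$-left), while retaining the two defining restrictions of LF: every sequent $\Gamma \Rightarrow \Delta$ satisfies $|\Delta| \leq 1$ and $|\Gamma| + |\Delta| \leq 2$. I would then check that $\varphi \Rightarrow_{\mathrm{LFQ}} \psi$ iff $\varphi \vdash_{\mathsf{FQ}} \psi$, verifying soundness and completeness against the Fitch rules of Figure~\ref{QuantRules} exactly as in the propositional correspondence, and establish cut-elimination for LFQ by extending the propositional cut-elimination argument with the two new principal and commuting cases for the quantifiers.

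The payoff of cut-elimination is the subformula property: in a cut-free LFQ derivation of $\varphi \Rightarrow \psi$, every formula occurring is an instance, under a variable substitution, of a subformula of $\varphi$ or $\psi$. Here the crucial feature of the language $\mathcal{LQ}$ is that it has no function symbols and no constants, so every term is simply a variable. Consequently the only data a cut-free proof manipulates are the finitely many subformula shapes of the endsequent together with assignments of variables to their free positions. The severe size restriction $|\Gamma| + |\Delta| \leq 2$ then does the decisive work: because a sequent never holds more than two formulas, one can never duplicate a universal premise (a sequent $\forall v\varphi, \forall v\varphi \Rightarrow \chi$ would have three formulas), so the calculus is effectively contraction-free. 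Thus a $\forall$-left step consumes its principal formula rather than retaining a copy for re-instantiation, and dually for $\exists$-right. This caps how many times any single quantifier occurrence can be instantiated along a branch, and hence bounds the supply of distinct eigenvariables and instantiating variables that can ever appear.

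Making that bound explicit is the heart of the argument and the step I expect to be the main obstacle. The goal is a lemma stating that if $\varphi \Rightarrow \psi$ is cut-free provable at all, then it is provable using only variables drawn from a fixed finite pool whose size is controlled by the size of the endsequent: the free variables of $\varphi,\psi$ together with a bounded supply of fresh eigenvariables, with the two-formula width capping how many are simultaneously live. Given such a pool, the set of sequents reachable in backward proof search is finite, of cardinality at most the number of subformula shapes squared times the number of variable assignments into the pool, which is singly exponential in the endsequent size. Backward proof search over this finite space, pruning any sequent repeated along a branch so that branches have bounded length, then terminates; and accounting for the branching over a branch whose length is itself singly exponential yields the stated double-exponential time bound. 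The genuinely delicate point, and precisely where first-order \emph{intuitionistic} or \emph{classical} logic fails to be decidable, is this variable-pool lemma: one must exploit the absence of contraction forced by $|\Gamma| + |\Delta| \leq 2$ to rule out the unbounded re-instantiation of universal premises that otherwise drives first-order proof search to diverge.
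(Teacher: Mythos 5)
The paper does not actually prove this theorem: it is imported from Aguilera and Byd\u{z}ovsk\'y (2022), and the text only gestures at their method --- a sequent calculus obtained from LK by restricting to sequents with $|\Delta|\leq 1$ and $|\Gamma|+|\Delta|\leq 2$, cut elimination, and an analysis of cut-free proof search in the style of Egly and Tompits. Your plan follows exactly that route lifted to first order, so you have correctly identified the intended architecture. But as a proof your proposal has a genuine gap, and you name it yourself: the ``variable-pool lemma'' bounding the variables needed in a cut-free derivation is not established, and it is not a routine step --- it \emph{is} the theorem. Everything before it (adding quantifier rules, checking $\varphi\Rightarrow_{\mathrm{LFQ}}\psi$ iff $\varphi\vdash_{\mathsf{FQ}}\psi$, extending cut elimination) is standard bookkeeping; everything after it (counting sequents, pruning repetitions) is routine once the lemma is in hand. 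A reviewer cannot accept ``I expect this to be the main obstacle'' in place of the argument, especially since the analogous claim fails for intuitionistic and classical first-order logic, so nothing generic can be doing the work.

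Two further points need care. First, your contraction-freeness claim is stated too strongly: a sequent with empty succedent may carry two antecedent formulas under $|\Gamma|+|\Delta|\leq 2$ (e.g., $\forall v\varphi,\forall v\varphi\Rightarrow\ $ has total width $2$), and sequents with empty succedent do arise in backward search via the left negation rule; so the argument that $\forall$-left must consume its principal formula, and hence that each universal occurrence is instantiated boundedly often along a branch, has cases you have not handled. Second, the equivalence of LFQ with the Fitch system $\vdash_{\mathsf{FQ}}$ is itself delicate, because the Fitch rules here deliberately forbid Reiteration (no side assumptions in subproofs for $\vee$E, $\neg$I, $\exists$E); you must check that the standard Gentzen quantifier rules, under the width restriction, neither over- nor under-generate relative to that discipline. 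Until the instantiation bound is proved and these two points are settled, the proposal is a plausible reconstruction of the cited result rather than a proof of it.
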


Turning to semantics, relational frames for $\mathcal{LQ}$ are triples $(X,\comp, D)$ where $(X,\comp)$ is a relational frame and $D$ is a nonempty set disjoint from $X$. A relational model $(X,\comp, D,V)$ adds a function  $V$ assigning to each $n$-ary predicate $P$ and $n$-tuple of objects $d_1,\dots,d_n$ from $D$ a $c_\comp$-fixpoint $V(P,d_1,\dots,d_n)\subseteq X$. Given $v\in\mathsf{Var}$ and variable assignments $g,h\in D^\mathsf{Var}$, let $h\sim_v g$ mean that $h$ and $g$ differ at most at $v$. Then the forcing clauses are:
\begin{itemize}
\item $\mathcal{M},x\Vdash_g P(v_1,\dots,v_n)$ iff $x\in V(P,g(v_1),\dots,g(v_n))$;
\item clauses for $\neg$, $\wedge$, and $\vee$ as before;
\item $\mathcal{M},x\Vdash_g \forall v\varphi$ iff $\forall h\sim_v g$, $\mathcal{M},x\Vdash_h \varphi$;
\item $\mathcal{M},x\Vdash_g \exists v\varphi$ iff $\forall x'\comp x$ $\exists x''\compflip x'$ $\exists h\sim_v g$: $\mathcal{M},x''\Vdash_h\varphi$.
\end{itemize}
Where $\llbracket \varphi\rrbracket^\mathcal{M}_g=\{x\in X\mid \mathcal{M},x\Vdash_g\varphi\}$, an easy induction shows that $\llbracket \varphi\rrbracket^\mathcal{M}_g$ is a $c_\comp$-fixpoint, and 
\begin{eqnarray*}
\llbracket \forall v \varphi\rrbracket^\mathcal{M}_g &=& \bigwedge \{\llbracket\varphi\rrbracket_h^\mathcal{M}\mid h\sim_v g\}\\
\llbracket \exists v\varphi\rrbracket^\mathcal{M}_g &=& \bigvee \{\llbracket\varphi\rrbracket_h^\mathcal{M}\mid h\sim_v g\}.
\end{eqnarray*}
Given a class $\mathbb{C}$ of relational frames for $\mathcal{LQ}$, we define $\varphi\vDash_\mathbb{C}\psi$ if for all $(X,\comp,D)\in \mathbb{C}$, all models $\mathcal{M}={(X,\comp,D, V)}$ based on $(X,\comp, D)$, and all variable assignments $g\in D^\mathsf{Var}$, if $\mathcal{M},x\Vdash_g\varphi$, then $\mathcal{M},x\Vdash_g\psi$.

Let $\mathbb{PRQ}$ be the class of pseudosymmetric reflexive frames for $\mathcal{LQ}$. We can use Theorem \ref{NegThm}.\ref{NegThm4} to prove completeness of $\vdash_\mathsf{FQ}$ with respect to $\mathbb{PRQ}$. The Lindenbaum-Tarski algebra of $\vdash_\mathsf{FQ}$ is defined as usual.

\begin{lemma}\label{QuantLem} In the Lindenbaum-Tarski algebra of $\vdash_\mathsf{FQ}$, for all $\varphi\in\mathcal{L}$ and $v\in\mathsf{Var}$:
\begin{eqnarray*}
[\forall v\varphi]&=& \bigwedge \{ [\varphi^v_u] \mid u\in\mathsf{Var}\mbox{ and substitutable for }v\mbox{ in }\varphi \}\\
{[}\exists v\varphi{]} &=& \bigvee \{ [\varphi^v_u] \mid u\in\mathsf{Var}\mbox{ and substitutable for }v\mbox{ in }\varphi \}.
\end{eqnarray*}
\end{lemma}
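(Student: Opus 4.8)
The plan is to prove the two displayed identities by verifying that $[\forall v\varphi]$ (resp.\ $[\exists v\varphi]$) satisfies the universal property of the meet (resp.\ join) of the set $S=\{[\varphi^v_u]\mid u\text{ substitutable for }v\text{ in }\varphi\}$ in the Lindenbaum-Tarski algebra of $\vdash_\mathsf{FQ}$, which is a bounded lattice ordered by $[\alpha]\le[\beta]$ iff $\alpha\vdash_\mathsf{FQ}\beta$. I treat the $\forall$ case, the $\exists$ case being order-dual. That $[\forall v\varphi]$ is a lower bound of $S$ is immediate from $\forall$E: for each substitutable $u$ we have $\forall v\varphi\vdash_\mathsf{FQ}\varphi^v_u$.

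The substance is showing $[\forall v\varphi]$ is the \emph{greatest} lower bound. Suppose $[\psi]$ is a lower bound, i.e.\ $\psi\vdash_\mathsf{FQ}\varphi^v_u$ for every substitutable $u$; I must derive $\psi\vdash_\mathsf{FQ}\forall v\varphi$. The obstruction is that $v$ may occur free in $\psi$, so $\forall$I cannot be applied to a proof $\langle\psi,\dots,\varphi\rangle$ directly. I circumvent this with a renaming: choose a variable $u$ fresh for both $\psi$ and $\varphi$, so $u\ne v$ and $u$ is substitutable for $v$ in $\varphi$. Then $\psi\vdash_\mathsf{FQ}\varphi^v_u$ by hypothesis, and since $u$ is not free in $\psi$, $\forall$I yields $\psi\vdash_\mathsf{FQ}\forall u\,\varphi^v_u$. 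It then remains to see $\forall u\,\varphi^v_u\vdash_\mathsf{FQ}\forall v\varphi$: starting a proof from the assumption $\forall u\,\varphi^v_u$, which has no free occurrence of $v$ (as $u$ replaced all free $v$'s and $u\ne v$), I apply $\forall$E to obtain $(\varphi^v_u)^u_v=\varphi$ using the standard fact that $v$ is substitutable for $u$ in $\varphi^v_u$ and that this substitution recovers $\varphi$ because $u$ was fresh, and then apply $\forall$I with respect to $v$ (licensed since $v$ is not free in the assumption) to conclude $\forall v\varphi$. Chaining these via transitivity (condition~8 for the intro-elim logic $\vdash_\mathsf{FQ}$) gives $\psi\vdash_\mathsf{FQ}\forall v\varphi$, as required.

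For the join identity the roles invert. $\exists$I gives $\varphi^v_u\vdash_\mathsf{FQ}\exists v\varphi$ for each substitutable $u$, so $[\exists v\varphi]$ is an upper bound of $S$. For leastness, assume $\varphi^v_u\vdash_\mathsf{FQ}\psi$ for all substitutable $u$ and derive $\exists v\varphi\vdash_\mathsf{FQ}\psi$. Again the difficulty is that $\exists$E requires the end formula to omit the existentially bound variable, yet $v$ may be free in $\psi$; so I pick $u$ fresh and argue in two moves. First, $\exists v\varphi\vdash_\mathsf{FQ}\exists u\,\varphi^v_u$: inside a subproof from $\varphi$ I derive $\exists u\,\varphi^v_u$ by $\exists$I (using $\varphi=(\varphi^v_u)^u_v$ with $v$ substitutable for $u$), and since $v$ is not free in $\exists u\,\varphi^v_u$, $\exists$E discharges the assumption. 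Second, $\exists u\,\varphi^v_u\vdash_\mathsf{FQ}\psi$: the subproof $\varphi^v_u\vdash_\mathsf{FQ}\psi$ is given by hypothesis and $u$ is not free in $\psi$ by freshness, so $\exists$E applies. Transitivity then yields $\exists v\varphi\vdash_\mathsf{FQ}\psi$.

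I expect the main obstacle to be the bookkeeping around the variable side conditions---choosing $u$ fresh for all relevant formulas, checking substitutability of $u$ for $v$ and of $v$ for $u$, and invoking $(\varphi^v_u)^u_v=\varphi$---rather than any conceptual difficulty. The essential idea is simply that the fresh-variable detour lets us satisfy the eigenvariable conditions of $\forall$I and $\exists$E even when $v$ occurs free in the comparison formula $\psi$.
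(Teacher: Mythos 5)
Your proof is correct and is precisely the "standard exercise using the introduction and elimination rules for the quantifiers" that the paper leaves unspecified: verifying the universal properties of meet and join via $\forall$E/$\exists$I for the bound direction and a fresh-variable renaming detour to satisfy the eigenvariable side conditions of $\forall$I/$\exists$E for the extremality direction. The bookkeeping you flag (substitutability of $v$ for $u$ in $\varphi^v_u$ and $(\varphi^v_u)^u_v=\varphi$ when $u$ is fresh) is indeed the standard renaming lemma and goes through.
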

\begin{proof} A standard exercise using the introduction and elimination rules for the quantifiers.
\end{proof}

\begin{theorem}\label{QuantComplete} For all formulas $\varphi,\psi\in\mathcal{LQ}$, we have $\varphi\vdash_\mathsf{FQ}\psi$ if and only if $\varphi\vDash_\mathbb{PRQ}\psi$.
\end{theorem}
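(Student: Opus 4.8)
The plan is to prove soundness and completeness separately, following the pattern of Theorem~\ref{MainCompleteness}.\ref{MainCompleteness4} but tracking the new quantifier clauses. For soundness I would argue by induction on proofs that for any proof $\langle\sigma_1,\dots,\sigma_n\rangle$, any pseudosymmetric reflexive model $\mathcal{M}$, and any assignment $g$, if $\sigma_n$ is a formula then $\llbracket\sigma_1\rrbracket^\mathcal{M}_g\subseteq\llbracket\sigma_n\rrbracket^\mathcal{M}_g$. The propositional rules are handled exactly as in Proposition~\ref{SoundProp}, using that by Propositions~\ref{FrameToLat} and~\ref{CorrLemm}.\ref{CorrLemm3} the $c_\comp$-fixpoints of a pseudosymmetric reflexive frame form a complete lattice with a weak pseudocomplementation. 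For the quantifier rules I would use the meet/join identities for $\llbracket\forall v\varphi\rrbracket^\mathcal{M}_g$ and $\llbracket\exists v\varphi\rrbracket^\mathcal{M}_g$ recorded just before the theorem, together with a routine semantic substitution lemma $\llbracket\varphi^v_u\rrbracket^\mathcal{M}_g=\llbracket\varphi\rrbracket^\mathcal{M}_{g[v\mapsto g(u)]}$ for $u$ substitutable for $v$. The eigenvariable side conditions do the real work: in the $\forall$I case, $v$ not occurring free in $\sigma_1$ gives $\llbracket\sigma_1\rrbracket_g=\llbracket\sigma_1\rrbracket_h$ for every $h\sim_v g$, so $\llbracket\sigma_1\rrbracket_g\subseteq\bigcap_{h\sim_v g}\llbracket\varphi\rrbracket_h=\llbracket\forall v\varphi\rrbracket_g$; the $\exists$E case is dual.

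For completeness I would argue contrapositively. Suppose $\varphi\nvdash_\mathsf{FQ}\psi$. Form the Lindenbaum--Tarski algebra $(L,\neg)$ of $\vdash_\mathsf{FQ}$, which, as in the propositional case, is a bounded lattice with weak pseudocomplementation in which $[\varphi]\not\leq[\psi]$, and which by Lemma~\ref{QuantLem} satisfies $[\forall v\chi]=\bigwedge_u[\chi^v_u]$ and $[\exists v\chi]=\bigvee_u[\chi^v_u]$ over $u$ substitutable for $v$. I would then apply Theorem~\ref{NegThm}.\ref{NegThm4} with $\mathrm{V}=\Lambda=L$ to obtain a complete embedding $f$ of $(L,\neg)$ into $(\lat(P,\comp),\neg_\comp)$ with $\comp$ reflexive and pseudosymmetric. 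Since $f$ preserves all existing meets and joins, it carries the identities of Lemma~\ref{QuantLem} to the corresponding meets and joins of $\lat(P,\comp)$. Finally I build the canonical model $\mathcal{M}=(P,\comp,\mathsf{Var},V)$, taking the domain to be $\mathsf{Var}$ and setting $V(Q,u_1,\dots,u_n)=f([Q(u_1,\dots,u_n)])$.

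The crux is the truth lemma: for every $\chi\in\mathcal{LQ}$ and every assignment $g\in\mathsf{Var}^\mathsf{Var}$, $\llbracket\chi\rrbracket^\mathcal{M}_g=f([\chi^g])$, where $\chi^g$ denotes $\chi$ with each free variable $v$ replaced by $g(v)$. I would prove this by induction on $\chi$. The atomic, $\neg$, $\wedge$, and $\vee$ cases are as in Theorem~\ref{MainCompleteness}.\ref{MainCompleteness4}, using that $f$ is an embedding of lattices with $\neg$. For $\forall v\chi$, the semantic clause with the inductive hypothesis gives $\llbracket\forall v\chi\rrbracket^\mathcal{M}_g=\bigcap_{h\sim_v g}\llbracket\chi\rrbracket^\mathcal{M}_h=\bigcap_{u\in\mathsf{Var}}f([\chi^{g[v\mapsto u]}])$, and since $f$ preserves the meet of Lemma~\ref{QuantLem} this equals $f([(\forall v\chi)^g])$; the $\exists$ case is dual, using that $c_\comp$ of a union computes the lattice join and that $f$ preserves the join of Lemma~\ref{QuantLem}. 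Taking $g$ to be the identity, $[\varphi]\not\leq[\psi]$ then yields $\llbracket\varphi\rrbracket^\mathcal{M}_{\mathrm{id}}=f([\varphi])\not\subseteq f([\psi])=\llbracket\psi\rrbracket^\mathcal{M}_{\mathrm{id}}$, witnessing $\varphi\nvDash_\mathbb{PRQ}\psi$.

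I expect the main obstacle to be the bookkeeping in the quantifier cases of the truth lemma: matching the semantic meets and joins, which range over all $h\sim_v g$ (equivalently over all $u\in\mathsf{Var}$), with the algebraic meets and joins of Lemma~\ref{QuantLem}, which range only over $u$ substitutable for $v$. Since $\mathsf{Var}$ is infinite, every needed instance is reachable, after passing to an alphabetic variant, by a substitutable variable, and provably equivalent variants collapse to the same Lindenbaum--Tarski class; making this precise, and checking that $\chi^g$ is well defined modulo variable capture, is the one genuinely delicate point, whereas all the lattice-theoretic content is supplied by the completeness of $f$ and Lemma~\ref{QuantLem}.
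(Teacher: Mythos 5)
Your proposal is correct and follows essentially the same route as the paper: soundness by induction on proofs as in Proposition~\ref{SoundProp}, and completeness via the Lindenbaum--Tarski algebra, Lemma~\ref{QuantLem}, the complete embedding from Theorem~\ref{NegThm}.\ref{NegThm4}, and a model with domain $\mathsf{Var}$ and the identity assignment. The paper compresses the truth lemma into ``it is easy to show that $\llbracket\varphi\rrbracket^\mathcal{M}_g=f([\varphi])$'' for the identity assignment, whereas you correctly state the inductive invariant for arbitrary assignments and flag the substitutability bookkeeping; that is exactly the detail the paper elides.
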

\begin{proof} Soundness is straightforward (cf.~Proposition \ref{SoundProp}). For completeness, suppose that $\varphi\nvdash_\mathsf{FQ}\psi$, so in the Lindenbaum-Tarski algebra $(L,\neg)$ for $\vdash_\mathsf{FQ}$, we have $[\varphi]\not\leq[\psi]$. By Theorem \ref{NegThm}.\ref{NegThm4}, there is a complete embedding $f$ of $(L,\neg)$ into $(\lat(X,\comp),\neg_\comp)$ for a pseudosymmetric reflexive frame $(X,\comp)$. We turn $(X,\comp)$ into a model $\mathcal{M}=(X,\comp,D,V)$ for $\mathcal{LQ}$ by setting $D=\mathsf{Var}$ and $V(P,v_1,\dots,v_n)=f([P(v_1,\dots,v_n)])$. Let the variable assignment $g$ be the identity function on $\mathsf{Var}$. Given Lemma \ref{QuantLem} and the fact that $f$ is a \textit{complete} embedding, it is easy to show that for all formulas $\varphi\in\mathcal{LQ}$, $\llbracket \varphi\rrbracket^\mathcal{M}_g= f ([\varphi])$. Then from $[\varphi]\not\leq[\psi]$, we have $f([\varphi])\not\leq f([\psi])$,  so $\llbracket \varphi\rrbracket^\mathcal{M}_g \not\subseteq \llbracket \psi\rrbracket^\mathcal{M}_g$ and hence $\varphi\nvDash_\mathbb{PRQ}\psi$.
\end{proof}
Clearly the same strategy also works for quantified versions of other logics we have discussed.

\section{Comments on conditionals}\label{Conditionals}

So far we have said nothing about ``the'' conditional. But there are many kinds of conditionals, especially when moving out of the classical or intuitionistic world and into the orthological world or beyond. Indeed, there are at least three paths we could pursue when adding a conditional to our language: add the traditional introduction and elimination rules for $\to$ to $\vdash_\mathsf{F}$; add rules meant to capture properties of the indicative conditional `if\dots then' of natural language, which might differ from the traditional rules for $\to$; or consider how the relational semantics of \S~\ref{RelationalSection} might be extended to treat conditionals. In this section, we consider these three paths in roughly reverse order.  We will mention options without making definitive choices.

Semantically, where $\Phi(y,A,B)$ is a condition on a state $y$ and subsets $A,B$ of a frame $(X,\comp)$ such that $y$ is the only free state variable in $\Phi(y,A,B)$, the set
\[A\to_\Phi B =\{x\in X\mid \forall y\comp x\,(y\in A\Rightarrow\Phi(y,A,B))\}\]
is a $c_\comp$-fixpoint and hence a candidate for a kind of conditional proposition. Examples of $\Phi(y,A,B)$ include:
\begin{enumerate}
\item $y\in B$;
\item $\exists z\comp y$: $z\in B$;
\item $\exists z\compflip y$: $z\in B$;
\item $\exists z\comp y$: $z\in A\cap B$;
\item $\exists z\compflip y$: $z\in A\cap B$.
\end{enumerate}
Let us consider these options from a technical point of view and a natural language point of view. On the technical side, option 1 has been considered a kind of ``strict'' implication (cf.~\citealt[p.~150]{Chiara2002}, \citealt{Chen2022}, \citealt{Kawano2022}) in the context of quantum logic. Options 3 and 5 both determine the Heyting implication in compossible reflexive frames representing Heyting algebras\footnote{\label{HeytingNote}Recall  Theorem \ref{HeytOrthBoole}.\ref{HeytOrthBoole1}. In compossible reflexive frames, a definition used in  \citealt[Thm.~2.21(i)]{Holliday2022}  that is equivalent to options 3 and 5 is that $x\in A\to B$ iff for every $y$ that pre-refines $x$, if $y\in A$, then $y\in B$. Toward proving  the equivalence, first a lemma about Modus Ponens under option 3: if $x\in A$ and $x\in A\to B$, then $x\in B$. For if $y\comp x$, then by compossibility, there is a $z$ that refines $y$  and pre-refines $x$; since $\comp$ is reflexive and $z$ pre-refines $x$, we have $z\comp x$ and $z\in A$ by Lemma \ref{prerefinelem}.  Given $x\in A\to B$, $z\comp x$, and $z\in A$, there is a $w\in B$ with $z\comp w$. Then since $z$ post-refines $y$, we have $y\comp w$. Thus, we have shown that $\forall y\comp x$ $\exists w\compflip y$: $w\in B$, so $x\in B$. Now for the equivalence, suppose $x\in A\to B$ according to option 3. Further suppose that $y$ pre-refines $x$, and $y\in A$. Then $y\in A\to B$ by Lemma \ref{prerefinelem}, so $y\in B$ by the Modus Ponens lemma, so $x\in A\to B$ according to the definition from \citealt{Holliday2022}. Conversely, suppose $x\in A\to B$ according to that definition, which obviously validates Modus Ponens. Further suppose $y\comp x$ and $y\in A$. Then by compossibility, there is a $z$ that refines $y$ and pre-refines $x$, and by reflexivity, $z\comp z$. Hence $y\comp z$, $z\in A$, and $z\in A\to B$, so $z\in A\cap B$ by Modus Ponens, so $x\in A\to B$ according to options 3 and 5.} and are equivalent to options 2 and 4, respectively, in symmetric frames for ortholattices. Our representation theorem for negation, Theorem \ref{NegThm}, smoothly generalizes to an implication of type 3 or 5, as we show for 5 in Theorem~\ref{CombImp} below and for 3 in Appendix \ref{AppendixB}.

From the point of view of `if...then' in natural language, options 1-3 are all problematic, as they validate antecedent strengthening, i.e., if $A\subseteq C$, then $C\to B\subseteq A\to B$, which is apparently invalid for `if...then' in natural language (\citealt{Stalnaker1968}). For example,  `if it's cloudy, then it might be raining' ($c\to\Diamond r$) clearly does not entail `if it's cloudy and it's not raining, then it might be raining' ($(c\wedge\neg r)\to \Diamond r$). The problem for option 4 (resp.~2) is that it renders $A\to B= \neg (A\cap \neg (A\cap B))$ (resp.~$A\to B=\neg (A\cap\neg B)$); but the right-to-left inclusion is rejected by semanticists for `if...then' in natural language (see, e.g., \citealt{Edgington1995}) and by intuitionists even for `if...then' in mathematical proofs. Option 5 does not appear to enforce any problematic principles if we restrict attention to the operations $\wedge,\vee,\to$, as shown by Theorem \ref{CombImp} below. However, like all the other options, option 5 leads to $\neg A\subseteq A\to 0$;  yet we can assign high probability to `It's not raining' and yet almost no probability to `If it is raining, then a tsunami is flattening Manhattan', which shows that $\neg p$ should not entail $p\to \bot$ under an understanding of entailment with respect to which probability is monotonic (as it must be if we are to have anything like standard probability~theory). 

If we temporarily set aside the interaction of $\to$ and $\neg$, then the basic properties of the option 5 conditional, which we will write as 
\[ A\twoheadrightarrow_\comp B = \{x\in X\mid \forall y\comp x\;(y\in A\Rightarrow \exists z\compflip y:z\in A\cap B)\},\]
appear quite plausible, as listed in the following definition. Note that in the terms of Remark \ref{IntuitivePicture}, we have $x \in A\twoheadrightarrow_\comp B$ iff no $y$ open to $x$ accepts $A$ but rejects $A\cap B$. Also note that our closure operator $c_\comp$ is definable from $\twoheadrightarrow_\comp$ by $c_\comp(A)=X \twoheadrightarrow_\comp A$. 

\begin{definition}\label{ImpAlg} Given a bounded lattice $L$, a \textit{preconditional} on $L$ is a binary operation $\to$  on $L$ satisfying the following for all $a,b,c\in L$:
\begin{enumerate}
\item\label{ImpAlg1} $1\to a\leq a$;
\item\label{ImpAlg2} $a\wedge b\leq a\to b$;
\item\label{ImpAlg4} $ a\to b \leq a\to (a\wedge b)$;
\item\label{ImpAlg3} if $b\leq a$, then $a\to (b\to c)\leq b\to c$;
\item\label{ImpAlg5} if $b\leq c$, then $a\to b\leq a\to c$.
\end{enumerate}
\end{definition}
\noindent Any bounded lattice can be equipped with a preconditional defined by: if $a\leq b$, then $a\to b=1$; otherwise $a\to b=a\wedge b$. Moreover, in any bounded lattice with a precomplementation $\neg$, the operation $\to$ defined by $a\to b=\neg a\vee (a\wedge b)$ is a preconditional with $\neg a=a\to 0$.  In Appendix \ref{AppendixB}, we give an axiomatization of \textit{preimplications} that differs from that of preconditionals by replacing the ability to combine antecedent and consequent as in axiom \ref{ImpAlg4} above with the ability to strengthen the antecedent. In a Heyting algebra, the relative pseudocomplementation $\to$ is both a preconditional and a preimplication.\footnote{By contrast, we note that the implication in algebras for Visser's \citeyearpar{Visser1981} \textit{basic propositional logic} is not necessarily a preconditional or preimplication, since it can violate $1\to a\leq a$.} 

\begin{proposition} For any relational frame $(X,\comp)$, the operation $\twoheadrightarrow_\comp$ is a preconditional on $\lat(X,\comp)$.
\end{proposition}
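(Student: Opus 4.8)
The plan is to verify directly the five conditions of Definition \ref{ImpAlg}, after first recording two preliminary facts. First, as already observed in the setup for the general operation $\to_\Phi$, the set $A\twoheadrightarrow_\comp B$ is a $c_\comp$-fixpoint whenever $A$ and $B$ are, so $\twoheadrightarrow_\comp$ is a genuine binary operation on $\lat(X,\comp)$; this is the only point that relies on the structure of $c_\comp$-fixpoints rather than on pure set manipulation. Second, I would record the identity $X\twoheadrightarrow_\comp A=c_\comp(A)$, which holds because $y\in X$ is vacuous and $X\cap A=A$, collapsing the defining clause of $\twoheadrightarrow_\comp$ to that of $c_\comp$. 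Throughout I will use that $\leq$ in $\lat(X,\comp)$ is $\subseteq$, that meet is $\cap$, and that $z\compflip y$ abbreviates $y\comp z$.

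Conditions 1, 2, 3, and 5 are then short unwindings. For condition 1, since $A$ is a fixpoint, $X\twoheadrightarrow_\comp A=c_\comp(A)=A\subseteq A$. For condition 2, given $x\in A\cap B$ and any $y\comp x$ with $y\in A$, the state $x$ itself witnesses the existential: $y\comp x$ gives $x\compflip y$, and $x\in A\cap B$, so $x\in A\twoheadrightarrow_\comp B$. Condition 3 in fact holds with equality, since $A\cap(A\cap B)=A\cap B$ makes the defining conditions of $A\twoheadrightarrow_\comp(A\cap B)$ and $A\twoheadrightarrow_\comp B$ literally identical. For condition 5, if $B\subseteq C$ then $A\cap B\subseteq A\cap C$, so any witness $z\in A\cap B$ for membership in $A\twoheadrightarrow_\comp B$ is also a witness in $A\cap C$; hence $A\twoheadrightarrow_\comp B\subseteq A\twoheadrightarrow_\comp C$.

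The one step requiring genuine care---and the part I expect to be the main obstacle---is condition 4: assuming $B\subseteq A$, show $A\twoheadrightarrow_\comp(B\twoheadrightarrow_\comp C)\subseteq B\twoheadrightarrow_\comp C$. Here I would argue by chaining the defining clause through two applications. Fix $x\in A\twoheadrightarrow_\comp(B\twoheadrightarrow_\comp C)$; to show $x\in B\twoheadrightarrow_\comp C$, take any $y\comp x$ with $y\in B$. Since $B\subseteq A$ we have $y\in A$, so the hypothesis on $x$ yields a state $w\compflip y$ with $w\in A\cap(B\twoheadrightarrow_\comp C)$. Now $w\compflip y$ means $y\comp w$, and $y\in B$, so applying the defining clause of $w\in B\twoheadrightarrow_\comp C$ to the argument $y$ produces a $z\compflip y$ with $z\in B\cap C$. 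This $z$ is exactly the witness needed to place $x$ in $B\twoheadrightarrow_\comp C$. The delicacy is purely bookkeeping about the direction of $\comp$ versus $\compflip$ and about which of $A$, $B$ the intermediate states are assumed to lie in; once the inclusion $B\subseteq A$ is used at the right moment to pass from $y\in B$ to $y\in A$, the two-step chase goes through. Collecting the five verifications completes the proof.
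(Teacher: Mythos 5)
Your proof is correct and follows essentially the same route as the paper's: condition 1 via the identity $X\twoheadrightarrow_\comp A=c_\comp(A)$, conditions 2, 3, and 5 by direct unwinding of the definition, and condition 4 by the same two-step chase through the defining clause (your $y,w,z$ are the paper's $x',y,x''$). The only difference is that you spell out the "immediate" cases in more detail, which is harmless.
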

\begin{proof} Part \ref{ImpAlg1} of Definition \ref{ImpAlg} follows from the observation that $c_\comp(A)=X\twoheadrightarrow_\comp A$, so if $A$ is a $c_\comp$-fixpoint, then $A=X\twoheadrightarrow_\comp A$.  Parts \ref{ImpAlg2}, \ref{ImpAlg4}, and \ref{ImpAlg5} are immediate from the definition of $\twoheadrightarrow_\comp$. For part \ref{ImpAlg3}, suppose $B\subseteq A$, $x\in A\twoheadrightarrow_\comp (B\twoheadrightarrow_\comp C)$,  $x'\comp x$, and $x'\in B$, so $x'\in A$. It follows that there is a $y\compflip x'$ such that $y\in B\twoheadrightarrow_\comp C$, which with $x'\in B$ implies there is an $x''\compflip x'$ with $x''\in B\cap C$. This shows that $x\in B \twoheadrightarrow_\comp C$.\end{proof}

We now show that any preconditional can be represented as $\twoheadrightarrow_\comp$ in a relational frame.

\begin{theorem}\label{CombImp}
 Let $L$ be a bounded lattice and $\to $ preconditional on $L$. Then where
\begin{eqnarray*}
P=\{(x,x\to y)\mid x,y\in L\}\mbox{ and }(a, b)\comp (c, d)\mbox{ if }c\not\leq b,
\end{eqnarray*}
there is a complete embedding of $(L,\to)$ into $(\lat(P,\comp),\twoheadrightarrow_\comp)$, which is an isomorphism if $L$ is complete.\end{theorem}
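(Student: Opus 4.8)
The plan is to mirror the proof of Theorem~\ref{NegThm}. The work splits into two independent tasks: (a) show that $P$ is \emph{separating} in the sense of Definition~\ref{Good}, so that Proposition~\ref{CompRep} produces the complete lattice embedding $f(a)=\{(x,y)\in P\mid x\leq a\}$ of $L$ into $\lat(P,\comp)$ (an isomorphism when $L$ is complete); and (b) verify the single extra identity $f(a\to b)=f(a)\twoheadrightarrow_\comp f(b)$, which upgrades $f$ to an embedding of $(L,\to)$ that preserves the conditional. Granting (a) and (b), the theorem follows, with the isomorphism claim coming from Proposition~\ref{CompRep}.\ref{CompRep2}.

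For (a), part~\ref{Good2} of Definition~\ref{Good} is immediate: if $a\not\leq b$, then $(a,a\to a)\in P$ has first coordinate $a\leq a$ with $a\not\leq b$. For part~\ref{Good3}, the useful observation is that $1\to b=b$ for every $b\in L$, since $1\to b\leq b$ by Definition~\ref{ImpAlg}.\ref{ImpAlg1} and $b=1\wedge b\leq 1\to b$ by Definition~\ref{ImpAlg}.\ref{ImpAlg2}; hence $(1,b)\in P$. Given $(c,d)\in P$ with $c\not\leq b$, taking $(c',d')=(1,b)$ yields $(c',d')\comp(c,d)$, and any $(c'',d'')\compflip(c',d')$ satisfies $c''\not\leq b$ directly, as required.

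Task (b) is the heart of the argument. The inclusion $f(a)\twoheadrightarrow_\comp f(b)\subseteq f(a\to b)$ is easy: if $x\not\leq a\to b$, then $(a,a\to b)\in f(a)$ and $(a,a\to b)\comp(x,y)$, so the defining clause of $\twoheadrightarrow_\comp$ would hand us some $(x'',y'')\in f(a)\cap f(b)$ with $(a,a\to b)\comp(x'',y'')$, i.e.\ $x''\leq a\wedge b$ yet $x''\not\leq a\to b$---impossible since $a\wedge b\leq a\to b$ by Definition~\ref{ImpAlg}.\ref{ImpAlg2}. The reverse inclusion $f(a\to b)\subseteq f(a)\twoheadrightarrow_\comp f(b)$ is the delicate one. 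Assume $x\leq a\to b$ and take any $(x',y')\comp(x,y)$ in $f(a)$; writing $y'=x'\to w$, we have $x'\leq a$ and $x\not\leq x'\to w$. The candidate witness is $(a\wedge b,(a\wedge b)\to(a\wedge b))$, which lies in $f(a)\cap f(b)$, so the whole inclusion reduces to showing $a\wedge b\not\leq x'\to w$.

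I expect this last inequality to be the main obstacle, as it is exactly where the preconditional axioms must be orchestrated. The intended chain argues by contradiction: if $a\wedge b\leq x'\to w$, then $x\leq a\to b\leq a\to(a\wedge b)$ by Definition~\ref{ImpAlg}.\ref{ImpAlg4}, and monotonicity in the consequent (Definition~\ref{ImpAlg}.\ref{ImpAlg5}) promotes this to $x\leq a\to(x'\to w)$; finally $x'\leq a$ with Definition~\ref{ImpAlg}.\ref{ImpAlg3} gives $a\to(x'\to w)\leq x'\to w$, so $x\leq x'\to w$, contradicting $x\not\leq x'\to w$. This is the conditional analogue of the Modus Ponens reasoning recorded in the footnote to Theorem~\ref{HeytOrthBoole}. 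With $f(a\to b)=f(a)\twoheadrightarrow_\comp f(b)$ in hand, (a) and (b) together complete the proof.
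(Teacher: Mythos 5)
Your proposal is correct and follows essentially the same route as the paper's own proof: establish that $P$ is separating so that Proposition~\ref{CompRep} gives the complete embedding, then verify $f(a\to b)=f(a)\twoheadrightarrow_\comp f(b)$ using the witness $(a\wedge b,\cdot)$ for one inclusion and $(a,a\to b)$ for the other, with the same chain $x\leq a\to b\leq a\to(a\wedge b)\leq a\to(x'\to y')\leq x'\to y'$ at the crux. The only differences are immaterial choices of second coordinates for the witnesses (e.g., $(a\wedge b)\to(a\wedge b)$ versus the paper's $(a\wedge b)\to 0$) and your explicit use of $1\to b=b$, which the paper uses implicitly.
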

\begin{proof} First we claim that $P$ is separating as in Definition~\ref{Good}. For part~\ref{Good2} of Definition~\ref{Good}, given $a\not\leq b$, set $(c,d)=(a,a\to 0)$. For part \ref{Good3} of Definition \ref{Good},  suppose $(c,d)\in P$ and $c\not\leq b$. Then set $(c',d')=(1,1\to b)$. From $c\not\leq b$, we have $c\not\leq 1\to b$ by Definition \ref{ImpAlg}.\ref{ImpAlg1}, so $(c',d')\comp (c,d)$. Now consider any $(c'',d'')\in P$ with $(c',d')\comp (c'',d'')$. Then $c''\not\leq d'=1\to b$ and hence $c''\not\leq 1\wedge b = b$ by Definition \ref{ImpAlg}.\ref{ImpAlg2}, so $c''\not\leq b$. Hence parts \ref{Good2} and \ref{Good3} of Definition \ref{Good} hold, so by Proposition \ref{CompRep}, $f$ is a complete embedding of $L$ into $\mathfrak{L}(P,\comp)$, which is a lattice isomorphism if $L$ is complete.

Next we claim that $ f(a\to b)= f(a)\twoheadrightarrow_\comp f(b)$. First suppose $(x,x\to y)\in  f(a\to b)$, so $x\leq a\to b$. Further suppose that $(x',x'\to y')\comp (x,x\to y)$ and $(x',x'\to y')\in f(a)$, so  $x'\leq a$. From $(x',x'\to y')\comp (x,x\to y)$,  we have $x\not\leq x'\to y'$. Now we claim that $a\wedge b\not\leq x'\to y'$.  For if $a\wedge b\leq x'\to y'$, then by Definition \ref{ImpAlg}.\ref{ImpAlg4}, \ref{ImpAlg}.\ref{ImpAlg5}, and \ref{ImpAlg}.\ref{ImpAlg3} (given $x'\leq a$), we have
\[x\leq a\to b\leq a\to (a\wedge b)\leq a\to (x'\to y') \leq x'\to y',\]
contradicting $x\not\leq x'\to y'$. Let $(x'',x''\to y'')=(a\wedge b,(a\wedge b)\to 0)$. Then $(x'',x''\to y'')\in P$, $(x',x'\to y')\comp (x'',x''\to y'')$, and $(x'',x''\to y'')\in f(b)$. Hence $(x,x\to y)\in f(a)\twoheadrightarrow_\comp f(b)$. 
 
 Conversely, suppose $(x,x\to y)\in P\setminus f(a\to b)$, so $x\not\leq a\to b$. Let  $(x',x'\to y')= (a,a\to b)$, so $(x',x'\to y')\comp (x,x\to y)$. Now suppose $(x',x'\to y')\comp (x'',x''\to y'')$, so $x''\not\leq x'\to y'=a\to b$. It follows by Definition~\ref{ImpAlg}.\ref{ImpAlg2} that $x''\not\leq a\wedge b$, so $(x'',y'')\not\in f(a)\cap f(b)$. Hence $(x,x\to y)\not\in f(a)\twoheadrightarrow_\comp f(b)$. \end{proof}
 
 \noindent A completeness theorem for a \textit{preconditional logic} with a connective $\to$ obeying principles matching those of Definition \ref{ImpAlg} can easily be obtained from Theorem \ref{CombImp}, just as we obtained completeness theorems for logics with $\neg$ from Theorem \ref{NegThm}. It is also straightforward to add the quantifiers $\forall$ and $\exists$ (recall Theorem \ref{QuantComplete}) to such a logic. The next step is to consider reasonable axioms to add to those of preconditionals and to characterize the corresponding classes of relational frames, though we will not do so here. (Some of the correspondence facts for $\twoheadrightarrow_\comp$ are the same as for $\to_\comp$ in Appendix \ref{AppendixB}, such as Lemma \ref{ImpCorr}.\ref{ImpCorr0} and \ref{ImpCorr}.\ref{ImpCorr3}.)
 
There are multiple ways to deal with the problem that $\neg_\comp A\subseteq A\twoheadrightarrow_\comp 0$. One is to work with frames ${(X,\comp,\blacktriangleleft)}$ with two relations $\comp$ and $\blacktriangleleft$, interpreting negation as $\neg_\blacktriangleleft$ and the conditional as $\twoheadrightarrow_\comp$, with an interaction condition between $\comp$ and $\blacktriangleleft$ equivalent to the condition that $\neg_\blacktriangleleft$ maps $c_\comp$-fixpoints to $c_\comp$-fixpoints.\footnote{The equivalent condition is that if $y\blacktriangleleft x$, then $\exists x'\comp x$ $\forall x''\compflip x'$ $\exists z\blacktriangleleft x''$: $z$ pre-refines $y$. To see this is sufficient, suppose $x\not\in \neg_\blacktriangleleft A$, so there is a $y\blacktriangleleft x$ with $y\in A$. Then by the condition, $\exists x'\comp x$ $\forall x''\compflip x'$ $\exists z\blacktriangleleft x''$: $z$ pre-refines $y$. Since $z$ pre-refines $y$ and $A$ is a $c_\comp$-fixpoint, $z\in A$ by Lemma \ref{prerefinelem}, so $x''\not\in \neg_\blacktriangleleft A$. Thus, assuming $x\not\in \neg_\blacktriangleleft A$, we have  $\exists x'\comp x$ $\forall x''\compflip x'$, $x''\not\in \neg_\blacktriangleleft A$, which shows that $\neg_\blacktriangleleft A$ is a $c_\comp$-fixpoint. For necessity, suppose the condition does not hold. Let $A=c_\comp(\{y\})$, which is the set of states that pre-refine $y$. Then $x\not\in \neg_\blacktriangleleft A$ but $\forall x'\comp x$ $\exists x''\compflip x'$: $x''\in \neg_\blacktriangleleft A$, so $\neg_\blacktriangleleft A$ is not a $c_\comp$-fixpoint.} Since presumably we want $A\twoheadrightarrow_\comp 0\subseteq\neg_\blacktriangleleft A$, we require $\blacktriangleleft\,\subseteq \,\comp$. Then we extend Theorem \ref{CombImp} as follows.

\begin{theorem}\label{CombImpNeg} Let $L$ be a bounded lattice, $\to $ preconditional on $L$, and $\neg$ an antitone operation on $L$ such that for all $a\in L$, $a\to 0\leq \neg a$. Then where $P$ and $\comp$ are defined as in Theorem \ref{CombImp}, and $\blacktriangleleft$ is defined by
\[(x',x'\to y')\blacktriangleleft (x,x\to y)\mbox{ iff } (x',x'\to y')\comp (x,x\to y)\mbox{ and for all }a\in L, x\leq \neg a\mbox{ implies }x'\not\leq a,\]
there is a complete embedding of $(L,\to,\neg)$ into $(\lat(P,\comp),\twoheadrightarrow_\comp,\neg_\blacktriangleleft)$, which is an isomorphism if $L$ is complete.
\end{theorem}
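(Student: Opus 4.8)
The proof of Theorem~\ref{CombImpNeg} builds directly on Theorem~\ref{CombImp}. The lattice $(L,\to)$ embeds into $(\lat(P,\comp),\twoheadrightarrow_\comp)$ exactly as before, so the map $f(a)=\{(x,x\to y)\in P\mid x\leq a\}$ is already a complete embedding preserving $\wedge$, $\vee$, and $\twoheadrightarrow_\comp$ (an isomorphism when $L$ is complete). Nothing about the definition of $P$ or $\comp$ has changed, so I would simply invoke Theorem~\ref{CombImp} for all of this. The entire burden of the new theorem is therefore to verify two things about the auxiliary relation $\blacktriangleleft$: first, that $\neg_\blacktriangleleft$ is a well-defined operation on $c_\comp$-fixpoints (i.e.\ $\blacktriangleleft$ satisfies the interaction condition with $\comp$ quoted in the footnote, guaranteeing $\neg_\blacktriangleleft$ sends $c_\comp$-fixpoints to $c_\comp$-fixpoints); and second, the crucial preservation identity $f(\neg a)=\neg_\blacktriangleleft f(a)$.

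\textbf{The key steps.} I would first record the basic consequences of the definition of $\blacktriangleleft$, namely that $\blacktriangleleft\,\subseteq\,\comp$ (immediate, since $(x',x'\to y')\blacktriangleleft(x,x\to y)$ requires $\comp$ to hold), which is what underwrites the desired inclusion $A\twoheadrightarrow_\comp 0\subseteq\neg_\blacktriangleleft A$. Then I would prove $f(\neg a)=\neg_\blacktriangleleft f(a)$ by the usual two inclusions, mirroring the negation argument in the proof of Theorem~\ref{NegThm}. For the forward inclusion, suppose $(x,x\to y)\in f(\neg a)$, so $x\leq\neg a$; I must show that every $(x',x'\to y')\blacktriangleleft(x,x\to y)$ fails to lie in $f(a)$, i.e.\ $x'\not\leq a$. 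But this is built into the definition of $\blacktriangleleft$: taking the witnessing $a$ in ``for all $a\in L$, $x\leq\neg a$ implies $x'\not\leq a$'' to be the very $a$ at hand, from $x\leq\neg a$ we get $x'\not\leq a$, hence $(x',x'\to y')\notin f(a)$, giving $(x,x\to y)\in\neg_\blacktriangleleft f(a)$. For the converse, given $(x,x\to y)\in P\setminus f(\neg a)$, so $x\not\leq\neg a$, I would exhibit a $\blacktriangleleft$-predecessor in $f(a)$; the natural candidate is $(a,a\to 0)$. I need $(a,a\to 0)\blacktriangleleft(x,x\to y)$: the $\comp$-part says $x\not\leq a\to 0$, which follows from $x\not\leq\neg a$ together with $a\to 0\leq\neg a$; and the extra $\blacktriangleleft$-clause requires that whenever $x\leq\neg b$ we have $a\not\leq b$, which I would derive from antitonicity of $\neg$ (if $a\leq b$ then $\neg b\leq\neg a$, so $x\leq\neg b\leq\neg a$ would contradict $x\not\leq\neg a$). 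Since $(a,a\to 0)\in f(a)$, this shows $(x,x\to y)\notin\neg_\blacktriangleleft f(a)$.

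\textbf{The main obstacle.} The routine parts---the lattice embedding and the $\twoheadrightarrow_\comp$ preservation---are handed to us by Theorem~\ref{CombImp}. The genuinely new verification is that $\neg_\blacktriangleleft$ actually maps $c_\comp$-fixpoints to $c_\comp$-fixpoints, so that $(\lat(P,\comp),\twoheadrightarrow_\comp,\neg_\blacktriangleleft)$ is a legitimate target structure. This is where I expect to spend the most care: I would check that $\blacktriangleleft$ satisfies the interaction condition stated in the footnote to Theorem~\ref{CombImpNeg} (for each $y\blacktriangleleft x$ there is $x'\comp x$ with every $x''\compflip x'$ having a $\blacktriangleleft$-predecessor pre-refining $y$), exploiting that $\comp$ and the separating structure of $P$ behave well under pre-refinement. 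Alternatively, and perhaps more cleanly, once $f(\neg a)=\neg_\blacktriangleleft f(a)$ is established for \emph{all} $a$, the image $f[L]$ is meet- and join-dense enough (via completeness of the embedding) that one can argue $\neg_\blacktriangleleft$ preserves fixpoints by reducing to the basic fixpoints $c_\comp(\{(x,x\to y)\})$; in the complete case, where $f$ is onto, every $c_\comp$-fixpoint is of the form $f(a)$ and the identity $f(\neg a)=\neg_\blacktriangleleft f(a)$ settles the matter directly. I would present the well-definedness check first, then the preservation identity, and finally note that the isomorphism claim for complete $L$ is inherited verbatim from Theorem~\ref{CombImp}.
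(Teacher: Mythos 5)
Your proposal is correct and follows essentially the same route as the paper: invoke Theorem~\ref{CombImp} for everything except the negation clause, then prove $f(\neg a)=\neg_\blacktriangleleft f(a)$ by the two inclusions exactly as you describe, with $(a,a\to 0)$ as the witnessing $\blacktriangleleft$-predecessor and antitonicity of $\neg$ supplying the extra clause of $\blacktriangleleft$. The ``main obstacle'' you flag (that $\neg_\blacktriangleleft$ sends arbitrary $c_\comp$-fixpoints to $c_\comp$-fixpoints) is not addressed in the paper's proof either and is not needed for the statement as given, since for sets in the image of $f$ one has $\neg_\blacktriangleleft f(a)=f(\neg a)$, which is automatically a fixpoint, and in the complete case $f$ is onto---which is the cleaner of the two resolutions you sketch.
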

\begin{proof} We need only add to the proof of Theorem \ref{CombImp} that $f(\neg a)=\neg_\blacktriangleleft f(a)$. Suppose $(x,x\to y)\in f(\neg a)$, so $x\leq \neg a$. Then for all $(x',x'\to y')\blacktriangleleft (x,x\to y)$, we have $x'\not\leq a$ and hence $(x',x'\to y')\not\in f(a)$, so $(x,x\to y)\in  \neg_\blacktriangleleft f(a)$. Conversely, suppose $(x,x\to y)\in f(\neg a)$, so $x\not\leq \neg a$. Then $x\not\leq a\to 0$, so $(a,a\to 0)\comp (x,x\to y)$. Moreover, for all $b\in L$, if $x\leq \neg b$, then given $x\not\leq \neg a$ we have $\neg b\not\leq \neg a$, so $a\not\leq b$ by the antitonicity of $\neg$. Thus, $(a,a\to 0)\blacktriangleleft (x,x\to y)$, which shows $(x,x\to y)\not\in  \neg_\blacktriangleleft f(a)$.\end{proof}
\noindent One can then impose additional conditions on $\blacktriangleleft$ to validate additional principles for $\neg_\blacktriangleleft$. Moreover, an analogue of the topological representation of lattices with $\neg$ in Theorem \ref{TopRep} can be given for lattices with $\to$ and $\neg$ based on the idea of Theorem \ref{CombImpNeg} (cf.~Theorem \ref{EmbedThmImp} in Appendix \ref{AppendixB}). We leave for future work the systematic investigation of this approach to handling negation and conditionals.

A different semantic approach continues to represent $(L,\neg)$ as $(\lat(X,\comp),\neg_\comp)$  but  treats $A\to(\cdot)$ as a normal modal operation interpreted by an accessibility relation $R_A$ on $X$, as in ``set-selection function'' semantics (\citealt[\S~2.7]{Lewis1973}), such that $xR_A y$ implies $y\in A$. Thus, $xR_Ay$ means that $y$ is one of the relevant $A$-possibilities at which $B$ must hold in order for $A\to B$ to hold at $x$. \citealt[\S~4]{Holliday2022} includes representation theorems for bounded lattices equipped with both a negation $\neg$ and a normal modal $\Box$, using triples $(X,\comp, R)$ where $R$ is a binary relation on $X$ satisfying an interaction condition with $\comp$ that guarantees that the $\Box_R$ operation defined by $\Box_R B = \{x\in X\mid \forall y\in X (xRy\Rightarrow y\in B)\}$ sends $c_\comp$-fixpoints to $c_\comp$-fixpoints. The same approach can be applied to conditionals, only we now represent each normal modal operation $A\to(\cdot)$ by a binary relation $R_A$. In the filter-ideal space of $(L,\neg,\to)$ as in \S~\ref{TopRep},\footnote{Applying the discrete representation of \S~\ref{DiscreteRep} to complete lattices with modalities raises additional issues, such as the requirement that $\Box$ (resp.~$A\to(\cdot)$) be \textit{completely multiplicative} (see \citealt[\S~4]{Holliday2022}).} one defines \[\mbox{$(F,I)R_{\widehat{a}}(F',I')$ iff for all $b\in L$, $a\to b\in F$ implies $b\in F'$,}\]
and then the modal operation $a\to (\cdot)$ is represented by $\Box_{R_{\widehat{a}}}$ (cf.~\citealt[Prop.~4.10]{Holliday2022}). Assuming $(L,\neg,\to)$ satisfies $a\to a=1$ for all $a\in L$, then $R_{\widehat{a}}$ satisfies the constraint that $R_{\widehat{a}}$-successors belong to $\widehat{a}$.

Treating $A\to (\cdot)$ as a normal modality matches a natural proof-theoretic approach to $\to$ based on Fitch-style proofs for modal logic (\citealt{Fitch1966}). Fitch distinguishes between ordinary subproofs, used for $\neg$I and $\vee$E, and box subproofs (his terms is `strict column'), used for $\Box$I. Similarly, we  distinguish between ordinary subproofs, used for $\neg$I and $\vee$E, and arrow subproofs, used for $\to$I. This slightly complicates the rigorous inductive definition of proofs, but the basic idea is straightforward. Just as Fitch indicates his box subproofs with a $\Box$ symbol to the left of the vertical subproof line, we will indicate our arrow subproofs with a $\to$ symbol to the left of the vertical subproof line. A more important difference is that since Fitch \citeyearpar{Fitch1966} dealt only with a unary modal $\Box$, rather than our binary or indexed operators, his box subproofs have no assumptions, whereas our arrow subproofs will. Our  $\to$I rule says that if a proof contains an arrow subproof beginning with $\varphi$ and ending with $\psi$, then one can add $\varphi\to\psi$ on the next line of the proof. The $\to$E rules says that if a proof contains $\varphi\to \psi$ and ends with an arrow subproof whose  assumption is $\varphi$, then that arrow subproof can be extended with $\psi$. The rules are shown diagramatically in Figure \ref{ToRules}. 

\begin{figure}[h]
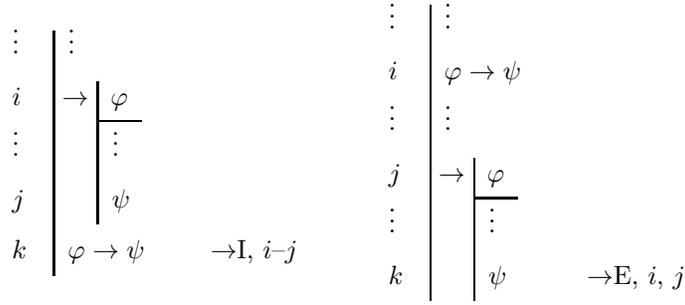

\begin{center}
\begin{minipage}{2in}
\[\begin{nd}
\have [\vdots] {0} {\vdots}
\open
\hypo [i] {3}   {\hspace{-.26in}\to\;\;\varphi}
\have [\vdots] {4}   {\vdots}
\have [j] {6}   {\psi}
\close
\have [k]{7} {\varphi\to\psi} \ii{3-6}
\end{nd}
\]\end{minipage}\begin{minipage}{2in}\[\begin{nd}
\have [\vdots] {} {\vdots}
\have [i] {0} {\varphi\to\psi}
\have [\vdots] {1} {\vdots}
\open
\hypo [j] {3}   {\hspace{-.26in}\to\;\;\varphi}
\have [\vdots] {4}   {\vdots}
\have [k] {6}   {\psi} \ie{0,3}
\end{nd}
\]
\end{minipage}\end{center}
\caption{Introduction and elimination rules for $\to$.}\label{ToRules}
\end{figure}

One might argue for adding to the $\to$E rule that if a proof contains $\varphi$ and $\varphi\to\psi$, then one can extend the proof with $\psi$, per Modus Ponens, the traditional $\to$ elimination rule, as shown on the left of Figure \ref{ToNegRule}. But McGee \citeyearpar{McGee1985} has famously argued that one can assign higher probability to $p\wedge (p\to (q\to r))$ than to $q\to r$, so the former does not entail the latter (cf.~\citealt[\S~4.2]{Santorio2022}). On the other hand, if instead of trying to capture a notion of entailment with respect to which probability is monotonic, we try to capture  \textit{preservation of probability~$1$}, then Modus Ponens seems unimpeachable: if the probability of $\varphi$ is 1 and the probability of $\varphi\to\psi$ is 1, then the probability of $\psi$ should be 1 as well. Now if we simply extend $\vdash_\mathsf{F}$ with the traditional introduction and elimination rules for $\to$ (left of Figure \ref{ToRules}, left of Figure \ref{ToNegRule}), but without $\to$E from Figure \ref{ToRules} (in which case there is no real difference between arrow subproofs and ordinary subproofs), then we obtain a logic whose algebraic semantics is given by bounded lattices equipped with a weak pseudocomplementation and a binary operation $\to$ satisfying the properties that if $a\leq b$, then $a\to b=1$ (for $\to$I), and $a\wedge (a\to b)\leq b$ (for MP). But it would seem that if Modus Ponens is acceptable, then so is the $\to$E rule of Figure \ref{ToRules}, so we should have both. Under the interpretation of $A\to(\cdot)$ as $\Box_{R_A}$ above, which matches the rules in Figure \ref{ToRules}, to validate Modus Ponens it suffices to assume   \textit{weak centering} (\citealt{Lewis1973}): if $x\in A$, then $xR_Ax$.

One might also argue for strengthening  $\to$E  so that if a proof contains $\varphi\to\psi$ and an \textit{ordinary} subproof beginning with $\varphi$, then one can extend that ordinary subproof with $\psi$. But applying this to ordinary subproofs for $\neg$I yields the Modus Tollens inference, $ \neg\psi \wedge (\varphi\to\psi)\vdash \neg\varphi$, which Veltman \citeyearpar[p.~3]{Veltman1985} has argued is invalid using examples in which $\psi$ contains conditionals and Yalcin \citeyearpar{Yalcin2012} has argued is invalid using examples in which $\psi$ contains epistemic modals; e.g., from `The card might not be diamonds or hearts, but if it is red, then it must be diamonds or hearts', it does not follow that  `The card is not red' (we assume that `might not' entails `not must'). The idea of applying $\to$E to ordinary subproofs for $\vee$E yields $(\varphi\vee\psi)\wedge (\varphi\to\chi)\wedge (\psi\to\chi)\vdash \chi$, which arguably also admits counterexamples with epistemic modals: from `the card is red or black; if it's red, it must be diamonds or hearts; and if it's black, it must be clubs or spades', it does not follow that  `it must be diamonds or hearts, or it must be clubs or spades', since surely it might not be diamonds or hearts, and it might not be clubs or spades (cf.~\citealt{Kolodny2010} for examples with deontic modals). The basic problem is that an ordinary subproof beginning with $\varphi$ corresponds to considering a possibility where $\varphi$ is merely true, whereas the natural language uses of `if' above seem to involve a hypothetical update of a body of information to a new body of information in which the antecedent is ``known.'' In contrast to ordinary subproofs, one can interpret an arrow subproof beginning with $\varphi$ as corresponding to a hypothetical update of that kind. See \citealt{HM2022} for further discussion of the logic of conditionals and epistemic~modals.

In the context of the proof system with the rules of Figure \ref{ToRules}, one might argue for another way of introducing negation: if a hypothetical update with $\varphi$ leads to a contradiction, then conclude $\neg\varphi$. Supposing we now have $\bot$ as a primitive symbol in our language (interpreted as $0$ in our lattices), this rule is shown on the right of Figure \ref{ToNegRule}. Algebraically, this is just $a\to 0\leq\neg a$ again. With respect to our pseudosymmetric reflexive frames equipped with accessibility relations $R_A$ to define $A\to (\cdot)$, the condition that $A\to 0\subseteq\neg A$ corresponds to the condition that if $x\compflip y\in A$, then $R_A(x)\neq\varnothing$.

\begin{figure}[h]
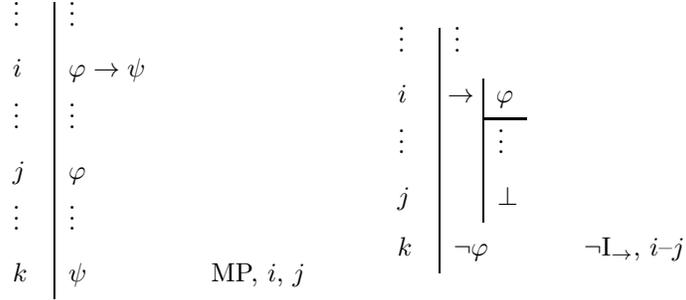

\begin{center}
\begin{minipage}{2in}
\[\begin{nd}
\have [\vdots] {} {\vdots}
\have [i] {0} {\varphi\to\psi}
\have [\vdots] {1} {\vdots}
\have [j] {3}   {\varphi}
\have [\vdots] {4}   {\vdots}
\have [k] {6}   {\psi} \MP{0,3}
\end{nd}
\]
\end{minipage}\begin{minipage}{2in}
\[\begin{nd}
\have [\vdots] {0} {\vdots}
\open
\hypo [i] {3}   {\hspace{-.26in}\to\;\;\varphi}
\have [\vdots] {4}   {\vdots}
\have [j] {6}   {\bot}
\close
\have [k]{7} {\neg\varphi} \nii{3-6}
\end{nd}
\]\end{minipage}
\end{center}
\caption{Modus Ponens (left) and another way to introduce negation (right).}\label{ToNegRule}
\end{figure}

None of the above is meant to suggest that the proof system for $\wedge$, $\vee$, $\neg$, and $\to$ consisting of the I and E rules in Figures \ref{FitchRules}, \ref{ToRules}, and \ref{ToNegRule} (and perhaps Modus Ponens, depending on one's target notion of entailment) is the strongest reasonable system. But our goal has not been to formulate as strong a logic as ultimately reasonable but rather to identify a fundamental starting point based on introduction and elimination rules.

\section{Conclusion}\label{ConclusionSection}

We have presented a logic in the signature with conjunction, disjunction, negation, and the universal and existential quantifiers that is based purely on the introduction and elimination rules for the logical constants. The corresponding algebraic semantics is based on bounded lattices with weak pseudocomplementations. We have seen that such lattice expansions admit representation theorems using  pseudosymmetric reflexive frames, furnishing an elegant relational semantics for the logic. From this starting point, intuitionistic logic, orthologic, and classical logic can be obtained either proof-theoretically---by adding to our Fitch-style proof system Reiteration, Reductio ad Absurdum, or both---or semantically---by adding to our relational frames the properties of compossibility, symmetry, or both. We also sketched options for adding a conditional to our logic, though it remains to fill out this sketch in future work.

The relational semantic approach we have developed applies far beyond the fundamental logic, both to stronger and weaker logics that can be systematically investigated in the style of investigations of  logics intermediate between intuitionistic and classical logic. Moreover, adding to our relational frames a second relation of accessibility yields semantics for modal versions of these logics (\citealt{Holliday2022}). In richer modal languages, we may be able to delineate those fragments of a language for which classical, intuitionistic, or orthological reasoning is safe from those that call for the caution of the fundamental logic.

\subsection*{Acknowledgements}

I thank Juan P. Aguilera, Johan van Benthem, Ahmee Christensen, Yifeng Ding, Cian Dorr, Kit Fine, Bas van Fraassen, Peter Fritz, Valentin Goranko, Dominic Hughes, John MacFarlane, Paolo Mancosu, Matthew Mandelkern, Guillaume Massas, Eric Pacuit, Francesca Poggiolesi, Daniel Villalon, James Walsh, Dag Westerst{\aa}hl, and the anonymous referees for helpful comments. I am also grateful to audiences at Advances in Modal Logic 2022, Colloquium Logicum 2022, the University of Pennsylvania Logic Seminar and UC Berkeley Logic Colloquium in October 2022, and the NYU Philosophy Colloquium in December 2022.

\appendix

\section{Appendix}\label{AppendixA}

In this appendix, we give a definition of Fitch-style proofs for intuitionistic logic in the $\wedge,\vee,\neg$ fragment.\footnote{The introduction and elimination rules for the intuitionistic implication $\to$ can obviously be added in the same style.} In particular, we define the notion of a \textit{proof $\sigma$ given a set $R$ of reiterables}, where reiterables are formulas. Then $\varphi\vdash \psi$ if there is a proof given the empty set of reiterables that begins with $\varphi$ and ends with $\psi$. 

For each set $R$ of formulas, the set of \textit{proofs given $R$} is the smallest set containing for each formula $\varphi$ the sequence  $\langle \varphi\rangle$ and satisfying the following closure conditions for $1\leq i,j\leq n$:
\begin{itemize}
\item If $\langle \sigma_1,\dots,\sigma_n\rangle$ is a proof given $R$ and $\tau$ is a proof given $R\,\cup\, \{\sigma_i\mid \sigma_i\mbox{ a formula}\}$, then $\langle \sigma_1,\dots,\sigma_n,\tau\rangle$ is a proof given $R$.
\item If $\langle \sigma_1,\dots,\sigma_n\rangle$ is a proof given $R$ and $\varphi\in R$, then $\langle \sigma_1,\dots,\sigma_n,\varphi\rangle$ is a proof given $R$ (Reiteration).
\item closure conditions for $\wedge$I, $\wedge$E, $\vee$I, $\vee$E, $\neg$I, and $\neg$E as in \S~\ref{FitchSection} with `proof' replaced by `proof given $R$'.
\end{itemize}
The Reiteration rule is illustrated in Figures \ref{FirstFigure} and \ref{PseudoFig}. In Figure \ref{FirstFigure}, the first subproof is a proof given $\{\Diamond\neg p\}$ as the set of reiterables, while in Figure \ref{PseudoFig}, the sole subproof is a proof given $\{\psi\}$ as the set of reiterables; in both cases, the proof as a whole, containing the subproof(s), is a proof given the empty set of reiterables. For an example in the sequential format, consider the following proof of distributivity:
\[\big\langle p\wedge (q\vee r), p, q\vee r, \big\langle q, p, (p\wedge q), (p\wedge q)\vee (p\wedge r) \big\rangle,\big\langle r,p,(p\wedge r), (p\wedge q)\vee (p\wedge r) \big\rangle, (p\wedge q)\vee (p\wedge r) \big\rangle.\]
Here $\langle p\wedge (q \vee r), p,q\vee r\rangle$ is a proof given the empty set of reiterables. Then since $p$ appears in that proof, and $\langle q, p, (p\wedge q), (p\wedge q)\vee (p\wedge r) \big\rangle$ is a proof given $\{p\}$ as the set of reiterables, we obtain that $ \big\langle p\wedge (q\vee r), p, q\vee r, \big\langle q, p, (p\wedge q), (p\wedge q)\vee (p\wedge r) \big\rangle\big\rangle$ is a proof given the empty set of reiterables, and so on.

Note that if we drop the second bullet point for Reiteration from the definition above, then the notion of \textit{proof given $R$} coincides with our original notion of proof for $\vdash_\mathsf{F}$ in \S~\ref{FitchSection}. Thus, the only gap between $\vdash_\mathsf{F}$ and intuitionistic logic is indeed the Reiteration rule.

For classical logic, we simply add the following to the definition above:
\begin{itemize}
\item If $\langle \sigma_1,\dots,\sigma_n\rangle$ is a  proof given $R$, $\sigma_i$ is a formula of the form $\psi$, and $\sigma_n$ is a sequence beginning with $\neg\varphi$ and ending with $\neg\psi$, then $\langle \sigma_1,\dots,\sigma_n,\varphi\rangle$ is a proof given $R$ (RAA).
\end{itemize} 

\section{Appendix}\label{AppendixB}

In this appendix, we extend the relational representation of lattices with negations from \S~\ref{RelationalSection} to certain kinds of implications. Given a relational frame $(X,\comp)$, we define a binary operation $\to_\comp$ on $\lat(X,\comp)$ by
\[A\to_\comp B=\{x\in X\mid \forall x'\comp x\;(x'\in A\Rightarrow \exists x''\compflip x':x''\in B)\}.\]
The operation $\twoheadrightarrow_\comp$ from \S~\ref{Conditionals} is then definable by
\[A\twoheadrightarrow_\comp B = A\to_\comp (A\cap B),\]
and the closure operator $c_\comp$ and negation $\neg_\comp$ from Proposition \ref{IsClosure} are definable by
\begin{eqnarray*}
c_\comp(A)&=& X\to_\comp A  \\
\neg_\comp A&=& A\to_\comp 0 ,
\end{eqnarray*}
using Lemma \ref{AbsurdLem}.\ref{AbsurdLem1} for the second equation.\footnote{Returning to the issue of morphisms broached in Footnote \ref{MorphismsNote}, a candidate notion of morphism between relational frames that also preserves $\to_\comp$ is a map $f$ that satisfies (i) and (ii) from Footnote \ref{MorphismsNote} plus two extra conditions for $\to_\comp$. First recall (iii) from Footnote \ref{MorphismsNote}, expressed in the language of Definition \ref{RefinementDef}: if $y'\comp 'f(x)$, then $\exists y\comp x$: $f(y)$ pre-refines $y'$. This ensures $\neg_\comp f^{-1}[A']\subseteq f^{-1}[\neg_{\comp'}A']$. To ensure $f^{-1}[A']\to_\comp f^{-1}[B'] \subseteq f^{-1}[A'\to_{\comp'} B']$, we strengthen (iii) to (iii$^+$): if $y'\comp 'f(x)$, then $\exists y\comp x$: $f(y)$ \textit{refines} $y'$. For suppose $x\in f^{-1}[A']\to_\comp f^{-1}[B']$. To show $f(x)\in A'\to_{\comp'} B'$, suppose $y'\comp' f(x)$ and $y'\in A'$. Then picking $y$ as in (iii$^+$), since $f(y)$ pre-refines $y'$, we have $f(y)\in A'$ by Lemma \ref{prerefinelem}. Hence $y\in f^{-1}[A']$, which with $y\comp x$ and $x\in f^{-1}[A']\to_\comp f^{-1}[B']$ implies there is a $z\compflip y$ with $z\in f^{-1}[B']$, so $f(z)\in B'$. Then from $z\compflip y$ we have $f(z)\compflip' f(y)$ by (i), and then since $f(y)$ post-refines $y'$, we have $f(z)\compflip' y'$. Thus, we have shown that for all $ y'\comp' f(x)$ with $y'\in A'$, there is a $ z'\compflip' y'$ with $z'\in B'$, so $f(x)\in A'\to_\comp B'$. Finally, to ensure $f^{-1}[A'\to_{\comp'} B']\subseteq f^{-1}[A']\to_\comp f^{-1}[B']$, consider (iv) (and compare it with (iii)): if $y'\compflip' f(x)$, then $\exists y\compflip x$: $f(y)$ pre-refines $y'$. We will apply (iv) with a change of variables: if $z'\compflip' f(y)$, then $\exists z\compflip y$: $f(z)$ pre-refines $z'$. Now suppose $f(x)\in A'\to_{\comp'} B'$. To show  $x\in   f^{-1}[A']\to_\comp f^{-1}[B']$, suppose $y\comp x$ and $y\in f^{-1}[A']$, so $f(a)\in A'$. By (i), we have $f(y)\comp' f(x)$. Then since $f(x)\in A'\to_{\comp'} B'$, there is a $z'\compflip' f(y)$ such that $z'\in B'$. Then taking $z$ as in (iv), we have $f(z)\in B'$ by Lemma \ref{prerefinelem}, so $z\in f^{-1}[B']$. Thus, we have shown that for all $y\comp x$ with $y\in f^{-1}[A']$, there is a $z\compflip y$ with $z\in f^{-1}[B']$, so $x\in   f^{-1}[A']\to_\comp f^{-1}[B']$.}

Just as we identified  conditions on $\comp$ corresponding to axioms on $\neg_\comp$ (Lemma \ref{CorrLemm}), we can do the same for $\to_\comp$. We give only a brief sample in the following. For axioms on an implication $\to$ on a lattice $L$,  we consider relativizing earlier axioms involving $0$ to an arbitrary $b\in L$:\footnote{A referee informed me that this idea is what led Meyer and Slaney \citeyearpar{Meyer1989} to their Abelian Logic by generalizing the classical axiom $\neg\neg a\to a$ to $((a\to b)\to b)\to a$.}
\begin{itemize}
\item $\neg 0=1$ turns into $b\to b=1$;
\item $a\wedge \neg a\leq 0$ turns into $a\wedge (a\to b)\leq b$;
\item $a\leq\neg\neg a$ turns into $a\leq (a\to b)\to b$;
\item $a\wedge c\leq 0\Rightarrow a\leq \neg c$ turns into $a\wedge c\leq b\Rightarrow a\leq c\to b$.
\end{itemize}
Note by contrast that $\neg\neg a\leq a$ does not turn into a classically valid law when replacing $0$ with $b$.

\begin{lemma}\label{ImpCorr} For any relational frame $(X,\comp)$, in each of the following pairs, (a) and (b) are equivalent:
\begin{enumerate}
\item\label{ImpCorr0}\begin{enumerate}
\item for all $c_\comp$-fixpoints $B$, we have $B\to_\comp B=1$;
\item for all $x\in X$ and $y\comp x$, there is a $z\compflip y$ that pre-refines $y$.
\end{enumerate}
\item\label{ImpCorr1}\begin{enumerate}
\item for all $c_\comp$-fixpoints $A,B$, we have $A\cap (A\to_\comp B)\subseteq B$;
\item \textit{right pre-interpolation}: for all $x\in X$ and $y\comp x$, there is a $z\comp x$ that post-refines $y$ and pre-refines~$x$.  
\end{enumerate}
\item\label{ImpCorr2} \begin{enumerate}
\item for all $c_\comp$-fixpoints $A,B$, we have $A\subseteq (A\to_\comp B)\to_\comp B$;
\item \textit{left pre-interpolation}: for all $x\in X$ and $y\comp x$, there is a $z\comp y$ that post-refines $y$ and pre-refines~$x$.  
\end{enumerate}
\item\label{ImpCorr3}
\begin{enumerate}
\item for all $c_\comp$-fixpoints $A,B,C$, if $A\cap C\subseteq B$, then $A\subseteq C\to_\comp B$;
\item \textit{left post-extendability}: for all $x\in X$ and $y\comp x$, there is a $z\compflip y$ that pre-refines $y$ and~$x$.
\end{enumerate}
\end{enumerate}
\end{lemma}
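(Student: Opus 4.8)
The plan is to treat all four equivalences by a single two-step recipe that mirrors the correspondence proofs in Lemma~\ref{CorrLemm}. In each pair the direction (b)$\Rightarrow$(a) is a direct verification: one unwinds the definition of $\to_\comp$ at an arbitrary state, invokes the frame condition (b) to produce an interpolating or extending witness $z$, and then transfers membership using two elementary facts—Lemma~\ref{prerefinelem} (if $z$ pre-refines $w$ and $w$ lies in a $c_\comp$-fixpoint, so does $z$) and the dual observation, immediate from Definition~\ref{RefinementDef}, that if $z$ post-refines $w$ then every $\comp$-successor of $z$ is a $\comp$-successor of $w$. For example, in part~\ref{ImpCorr1}, given $w\in A\cap(A\to_\comp B)$ and $w'\comp w$, right pre-interpolation yields $z\comp w$ post-refining $w'$ and pre-refining $w$; pre-refinement gives $z\in A$, so from $z\comp w$ and $w\in A\to_\comp B$ we get some $z''\compflip z$ with $z''\in B$, and post-refinement upgrades $z''\compflip z$ to $z''\compflip w'$, establishing $w\in c_\comp(B)=B$. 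Parts~\ref{ImpCorr0}, \ref{ImpCorr2}, and \ref{ImpCorr3} run the same way, each time taking the witness supplied by the relevant clause of (b) and shuttling membership through the pre- and post-refinement facts.

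For the converses (a)$\Rightarrow$(b) I would argue contrapositively, fixing a pair $(x,y)$ with $y\comp x$ at which (b) fails and exhibiting $c_\comp$-fixpoints refuting (a), built from principal closures via the identity $c_\comp(\{p\})=\{u\mid u\text{ pre-refines }p\}$ together with one negation-style set. Concretely: for parts~\ref{ImpCorr1} and \ref{ImpCorr2} I set $A=c_\comp(\{x\})$ and $B=\{v\in X\mid y\not\comp v\}$, the latter being a $c_\comp$-fixpoint by the same computation as in Lemma~\ref{CorrLemm}.\ref{CorrLemm4} (if $y\comp v$, then the state $y$ itself witnesses that $v\notin c_\comp(B)$). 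In part~\ref{ImpCorr1} the failure of right pre-interpolation says every $x'\comp x$ that pre-refines $x$ fails to post-refine $y$, which is exactly what places $x$ in $A\to_\comp B$, while $y\comp x$ gives $x\notin B$. In part~\ref{ImpCorr2} the same $B$ makes $y$ a member of $A\to_\comp B$, and since every $\comp$-successor of $y$ is excluded from $B$ by definition, $x\notin(A\to_\comp B)\to_\comp B$. For part~\ref{ImpCorr0} the single fixpoint $B=c_\comp(\{y\})$ suffices, since $y\in B$ but no $\comp$-successor of $y$ pre-refines $y$, so $x\notin B\to_\comp B$. For part~\ref{ImpCorr3} I take $A=c_\comp(\{x\})$, $C=c_\comp(\{y\})$, and $B=A\cap C$, so that $A\cap C\subseteq B$ is trivial while the failure of left post-extendability guarantees that no $\comp$-successor of $y$ lies in $A\cap C$, whence $x\notin C\to_\comp B$.

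I expect the only real friction to be in the (a)$\Rightarrow$(b) directions of parts~\ref{ImpCorr1} and \ref{ImpCorr2}: one must guess the correct refuting pair and, in particular, verify that the auxiliary set $\{v\mid y\not\comp v\}$ is genuinely a $c_\comp$-fixpoint, since the argument collapses if $B$ is merely an arbitrary subset. Once the right witnesses are identified the verifications are short, and the remaining parts are routine specializations of the negation correspondences in Lemma~\ref{CorrLemm}. Throughout I would keep the bookkeeping uniform by phrasing everything in terms of pre-refinement and post-refinement rather than re-expanding the nested quantifiers at each step.
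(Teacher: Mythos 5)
Your proposal is correct and follows essentially the same route as the paper's proof: the (b)$\Rightarrow$(a) directions are verified directly by shuttling membership through Lemma~\ref{prerefinelem} and the post-refinement transfer of successors, and the (a)$\Rightarrow$(b) directions are proved contrapositively with exactly the paper's witnesses, namely $c_\comp(\{x\})$, $c_\comp(\{y\})$, and $\{v\mid y\not\comp v\}$ (whose fixpoint verification you correctly import from Lemma~\ref{CorrLemm}.\ref{CorrLemm4}). The only detail left tacit is the trivial observation $x\in c_\comp(\{x\})$ needed to complete the counterexample in part~\ref{ImpCorr1}.
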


\begin{proof} For part \ref{ImpCorr0}, suppose (b) holds, $y\comp x$, and $y\in B$. Hence there is a $z\compflip y$ that pre-refines $y$, so $z\in B$ by Lemma \ref{prerefinelem}. This shows $x\in B\to_\comp B$. Conversely, suppose (b) does not hold, so there are $y\comp x$ for which no $z\compflip y$ belongs to $c_\comp(\{y\})$. Then since $y\in c_\comp(\{y\})$ and $y\comp x$, we have $x\not\in c_\comp(\{y\})\to_\comp c_\comp(\{y\})$.

For part \ref{ImpCorr1}, suppose (b) holds, $x\in A\cap (A\to_\comp B)$, and $y\comp x$. Let $z$ be as in right pre-interpolation. Since $z$ pre-refines $x$, we have $z\in A$, and then since $z\comp x$ and $x\in A\to_\comp B$, there is a $w\compflip z$ with $w\in B$. Since $z$ post-refines $y$, we have $w\compflip y$. Thus, we have shown that $\forall y\comp x$ $\exists w\compflip y$: $y\in B$, so $x\in B$. Conversely, suppose (b) does not hold, so there are $y\comp x$ such that (i) no $z\comp x$ that  pre-refines $x$ post-refines $y$. Let $A$ be the set of states that pre-refine $x$, i.e., $A=c_\comp(\{x\})$, and $B=\{w\in X \mid y\not\comp w\}$. Then $A$ and $B$ are $c_\comp$-fixpoints, and by (i), $x\in A\to_\comp B$, and yet $x\not\in B$. 

For part \ref{ImpCorr2}, suppose (b) holds, $x\in A$, $y\comp x$, and $y\in A\to_\comp B$. Let $z$ be as in left pre-interpolation. Since $z$ pre-refines $x$, we have $z\in A$, and then since $z\comp y$ and $y\in A\to_\comp B$, there is a $w\compflip z$ with $w\in B$. Since $z$ post-refines $y$, we have $w\compflip y$. Thus, we have shown that for all $y\comp x$ with $y\in A\to_\comp B$, there is a $w\compflip y$ with $w\in B$, so $x\in (A\to_\comp B)\to_\comp B$. Conversely, suppose (b) does not hold, so there are $y\comp x$ such that (i) no $z\comp y$ that pre-refines $x$ post-refines $y$. Let $A$ be the set of states that pre-refine $x$ and $B=\{w\in X \mid y\not\comp w\}$. Then $A$ and $B$ are $c_\comp$-fixpoints, and by (i), $y\in A\to_\comp B$, yet there is no $w\compflip y$ with $w\in B$, which with $y\comp x$ implies $x\not\in (A\to_\comp B)\to_\comp B$, and yet $x\in A$.

For part \ref{ImpCorr3}, suppose (b) holds, $A\cap C\subseteq B$, $x\in A$, $y\comp x$, and $y\in C$. Let $z$ be as in left post-extendability. Then since $z$ pre-refines $x$ and $y$, we have $z\in A\cap C$ and hence $z\in B$. Thus, we have shown that for all $y\comp x$, if $y\in C$, then there is a $z\compflip y$ with $z\in B$, which shows $x\in C\to_\comp B$. Conversely, suppose (b) does not hold,  so (i) there are $y\comp x$ such that no $z\compflip y$ pre-refines both $x$ and $y$. Let $A$ be the set of states that pre-refine $x$, $C$ the set of states that pre-refine $y$, and $B=A\cap C$. Then $A$, $B$, and $C$ are $c_\comp$-fixpoints, and by (i), $x\not\in C\to_\comp B$, and yet $x\in A$.\end{proof}

We now identify the implications on lattices that we will be able to represent using the $\to_\comp$ operation (compare the \textit{preconditionals} of Definition \ref{ImpAlg} representable using $\twoheadrightarrow_\comp$).

\begin{definition}\label{MatAlg} Given a bounded lattice $L$, a \textit{preimplication} on $L$ is a binary operation $\to$  on $L$ satisfying the following for all $a,b,c\in L$:
\begin{enumerate}
\item\label{MatAlg1} $a=1\to a$;
\item\label{MatAlg3} $a\to (a\to b)\leq a\to b$;
\item\label{MatAlg4} if $a\leq b$, then $b\to c\leq a\to c$;
\item\label{MatAlg5} if $a\leq b$, then $c\to a\leq c\to b$.
\end{enumerate}
From $\to$ we define a unary operation $\neg$ by $\neg a=a\to 0$. 
\end{definition}
\noindent Any bounded lattice can be equipped with a preimplication defined by: if $a\leq b$, then $a\to b=1$; otherwise $a\to b=b$. In a Heyting algebra, the relative pseudocomplementation $\to$ is clearly a preimplication. In an otholattice with orthocomplementation $\neg$, the operation $\to$ defined by $a\to b=\neg (a\wedge\neg b)$ is a preimplication from which we recover the orthocomplementation by $\neg a=a\to 0$.  More generally, in a bounded lattice with a precomplementation $\neg$, the operation $\to$ defined by $a\to b=\neg a\vee b$ is a preimplication with $\neg a=a\to 0$.

\begin{lemma} For any relational frame $(X,\comp)$, the operation $\to_\comp$ is a preimplication on $\lat(X,\comp)$.
\end{lemma}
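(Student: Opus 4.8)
The plan is to verify the four conditions of Definition \ref{MatAlg} directly from the defining formula of $\to_\comp$, working in the lattice $\lat(X,\comp)$ of $c_\comp$-fixpoints. First I would record that $\to_\comp$ is genuinely an operation on this lattice: $A\to_\comp B$ is the instance of $A\to_\Phi B$ with $\Phi(y,A,B)$ the condition $\exists z\compflip y:z\in B$, so it is always a $c_\comp$-fixpoint, as observed in \S\ref{Conditionals}. After that, three of the four axioms drop out of routine quantifier bookkeeping, and only condition \ref{MatAlg3} calls for a short argument. The overall shape parallels the verification already given for the preconditional $\twoheadrightarrow_\comp$.

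For condition \ref{MatAlg1} I would appeal to the identity $c_\comp(A)=X\to_\comp A$ recorded just above: since $1=X$ and $A$ is a $c_\comp$-fixpoint, $1\to_\comp A=c_\comp(A)=A$. Condition \ref{MatAlg4} (antitonicity in the first argument) is immediate: if $A\subseteq B$ and $x\in B\to_\comp C$, then for any $x'\comp x$ with $x'\in A$ we have $x'\in B$, so the defining clause for $x\in B\to_\comp C$ supplies an $x''\compflip x'$ with $x''\in C$; hence $x\in A\to_\comp C$. Condition \ref{MatAlg5} (monotonicity in the second argument) is equally direct: if $A\subseteq B$ and $x\in C\to_\comp A$, then every witness $x''\in A$ produced for an $x'\comp x$ with $x'\in C$ already lies in $B$, so $x\in C\to_\comp B$.

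The one step that deserves care, and the mild obstacle, is condition \ref{MatAlg3}, i.e.\ $A\to_\comp(A\to_\comp B)\subseteq A\to_\comp B$. Here I would fix $x\in A\to_\comp(A\to_\comp B)$ and, to show $x\in A\to_\comp B$, take an arbitrary $x'\comp x$ with $x'\in A$. Applying the hypothesis on $x$ to this $x'$ yields a $z$ with $x'\comp z$ and $z\in A\to_\comp B$. The key move is to feed $x'$ itself back into the defining clause for $z\in A\to_\comp B$: since $x'\comp z$ and $x'\in A$, that clause produces an $x''$ with $x'\comp x''$ and $x''\in B$. As $x'$ was an arbitrary state open to $x$ lying in $A$, this establishes $x\in A\to_\comp B$. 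The whole difficulty is thus isolated in noticing that the state witnessing membership in the inner implication can be re-used as the state to which the inner implication's own quantifier condition is applied; every other clause is a one-line quantifier chase. I expect no further subtleties, since no frame conditions (reflexivity, pseudosymmetry, etc.) are needed for any of the four axioms.
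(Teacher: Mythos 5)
Your proof is correct and follows essentially the same route as the paper: condition \ref{MatAlg1} via the identity $c_\comp(A)=X\to_\comp A$ together with the fixpoint property, conditions \ref{MatAlg4} and \ref{MatAlg5} by direct quantifier checks, and condition \ref{MatAlg3} by exactly the paper's key move of applying the clause for $z\in A\to_\comp B$ back to the state $x'$ that witnessed $x'\comp z$. No gaps.
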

\begin{proof} Part \ref{MatAlg1} follows from the observation that $c_\comp(A)=X\to_\comp A$, so if $A$ is a $c_\comp$-fixpoint, then $A=X\to_\comp A$.  For part \ref{MatAlg3}, suppose $x\in A\to_\comp (A\to_\comp B)$,  $x'\comp x$, and $x'\in A$. Then there is a $y\compflip x'$ such that $y\in A\to_\comp B$, which with $x'\in A$ implies there is an $x''\compflip x'$ with $x''\in B$. This shows that $x\in A \to_\comp B$.  Parts \ref{MatAlg4} and \ref{MatAlg5} are immediate from the definition of $\to_\comp$. 
\end{proof}

Next we introduce terminology for preimplications satisfying axioms considered in Lemma \ref{ImpCorr}. 

\begin{definition}\label{ImpDefs} A \textit{protoimplication} is a preimplication satisfying \[b\to b=1\mbox{ and }a\wedge (a\to b)\leq b\] for all $a,b\in L$; an \textit{ultraweak pseudoimplication} (resp.~\textit{weak pseudoimplication}) is a preimplication (resp.~protoimplication) satisfying \[a\leq (a\to b)\to b\] for all $a,b\in L$; and a \textit{relative pseudocomplementation} is a protoimplication satisfying
\[a\wedge c\leq b\Rightarrow a\leq c\to b\]
for all $a,b,c\in L$.
\end{definition}

\noindent The preimplication we defined above on any bounded lattice is in fact a weak pseudoimplication. Concerning the axiom for ultraweak pseudoimplications, we note the following analogue of Lemma \ref{UsefulLem}.\ref{UsefulLem2}.

\begin{lemma}\label{ImpUsefulLem} For a preimplication $\to$, the following are equivalent:
\begin{enumerate}
\item\label{ImpUsefulLem1} for all $a,b\in L$, $a\leq (a\to b)\to b$; 
\item\label{ImpUsefulLem2} for all $a,b,c\in L$, if $a\leq c\to b$, then $c\leq a \to b$.
\end{enumerate}
\end{lemma}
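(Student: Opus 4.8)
The goal is to prove Lemma \ref{ImpUsefulLem}, the equivalence of the two conditions for a preimplication $\to$ on a bounded lattice $L$. This is precisely the implicational analogue of Lemma \ref{UsefulLem}.\ref{UsefulLem2}, with the negation $\neg(\cdot)$ replaced by the binary $(\cdot)\to b$ and with the constant $0$ generalized to an arbitrary $b$; so I expect the proof to mirror that earlier argument closely, substituting the relevant preimplication axioms from Definition \ref{MatAlg} for the bare antitonicity and double-negation-introduction properties used before.

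The plan is to prove each implication separately. For \ref{ImpUsefulLem1} $\Rightarrow$ \ref{ImpUsefulLem2}: suppose condition \ref{ImpUsefulLem1} holds and that $a\leq c\to b$. Applying monotonicity in the antecedent (Definition \ref{MatAlg}.\ref{MatAlg4}, which reverses order: $a\leq c\to b$ gives $(c\to b)\to b\leq a\to b$), and combining with the instance $c\leq (c\to b)\to b$ of condition \ref{ImpUsefulLem1}, I obtain $c\leq (c\to b)\to b\leq a\to b$ by transitivity, which is exactly $c\leq a\to b$. For the converse \ref{ImpUsefulLem2} $\Rightarrow$ \ref{ImpUsefulLem1}: instantiate condition \ref{ImpUsefulLem2} at the trivially true inequality. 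Taking $c=a\to b$, the reflexive fact $a\to b\leq a\to b$ has the shape ``$(a\to b)\leq a\to b$,'' so setting the hypothesis's ``$a$'' to be $a\to b$ and its ``$c$'' to be $a$ — i.e. starting from $a\to b\leq a\to b$ and reading it as the premise of \ref{ImpUsefulLem2} — yields $a\leq (a\to b)\to b$, which is condition \ref{ImpUsefulLem1}.

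I would be careful to match the variable roles exactly, since the implication in \ref{ImpUsefulLem2} is not symmetric and it is easy to transpose arguments incorrectly; writing out which symbol plays the role of $a$, $b$, $c$ in each instantiation is the only genuine care the argument requires. The main (and only real) obstacle is making sure that the antecedent-antitonicity axiom \ref{MatAlg}.\ref{MatAlg4} is applied with the correct orientation — it flips the order, so $x\leq y$ gives $y\to b\leq x\to b$ rather than the reverse — and that this orientation is indeed what delivers the transitivity step in the first direction. Everything else is a direct substitution into the preimplication axioms, with no use of the $0$ structure beyond what is inherited from $b$, so I expect the proof to be a few lines long and to parallel the proof of Lemma \ref{UsefulLem}.\ref{UsefulLem2} almost verbatim.
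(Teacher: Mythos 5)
Your proposal is correct and follows essentially the same route as the paper: the forward direction combines the instance $c\leq (c\to b)\to b$ of condition (1) with the antitonicity axiom of Definition \ref{MatAlg}.\ref{MatAlg4} applied to $a\leq c\to b$, and the converse instantiates condition (2) at $a\to b\leq a\to b$. Your careful bookkeeping of which variable plays which role in the instantiation is exactly the only point of care the paper's two-line proof implicitly relies on.
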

\begin{proof} By \ref{ImpUsefulLem1}, we have $c\leq (c\to b)\to b$, which with $a\leq c\to b$ yields $c\leq a\to b$ by Definition \ref{MatAlg}.\ref{MatAlg4}. Conversely, by \ref{ImpUsefulLem2},  $a\to b\leq a\to b$ implies $a\leq (a\to b)\to b$.
\end{proof}
\noindent Note as a corollary that if $\to$ is an ultraweak pseudoimplication, then from $b\leq 1\to b$, we have $1\leq b\to b$.

We now prove the representation theorem for bounded lattices with preimplications.

\begin{theorem}\label{ImpThm} Let $L$ be a bounded lattice, $\mathrm{V}$ a join dense set of elements of $L$, and $\Lambda$ a meet dense set of elements of $L$. Given a set $P$ of pairs of elements of $L$, define $\comp$ on $P$ by $(a, b)\comp (c, d)$ if $c\not\leq b$. 
\begin{enumerate}
\item\label{ImpThm1} If $\to$ is a preimplication on $L$, then where
\[P=\{(a,a\to b)\mid a,b\in L\},\]
there is a complete embedding of $(L,\to)$ into $(\lat(P,\comp),\to_\comp)$. Moreover, if $\neg$ is an ultraweak pseudocomplementation, then $\neg$ is strongly pseudosymmetric (recall Definition \ref{StrongPseudo}). 
\item\label{ImpThm2} If $\to$ is a protoimplication on $L$, then where
\[P=\{(a,a\to b)\mid a,b\in L,  a\not\leq b\},\]
there is a complete embedding of $(L,\to)$ into $(\lat(P,\comp),\to_\comp)$, and $\comp$ is reflexive and satisfies right pre-interpolation. Moreover, if $\neg$ is a weak pseudocomplementation, then $\comp$ is strongly pseudosymmetric.
\item\label{ImpThm3} If $\to$ is an ultraweak pseudoimplication on $L$, then where
\[P=\{(a,a\to b)\mid a\in \mathrm{V}, b\in L\}\cup \{(1,1\to b)\mid b\in\Lambda\},\]
there is a complete embedding of $(L,\to)$ into $(\lat(P,\comp),\to_\comp)$, and $\comp$ satisfies left pre-interpolation.  
\item\label{ImpThm4} If $\to$ is a weak pseudoimplication on $L$, then where 
\[P=\{(a,a\to b)\mid a\in \mathrm{V}, b\in L, a\not\leq  b\},\]
there is a complete embedding of $(L,\to)$ into $(\lat(P,\comp),\to_\comp)$, and $\comp$ is reflexive and satisfies right pre-interpolation and left pre-interpolation. Moreover, if $\neg$ is a pseudocomplementation, then $\comp$ is weakly compossible (recall Proposition \ref{CorrLemm}.\ref{CorrLemm2}).
\item\label{ImpThm5} If $\to$ is a relative pseudocomplementation on $L$, then where
\[P=\{(a,a\to b)\mid a\in \mathrm{V}, b\in \Lambda, a\not\leq  b\},\]
there is a complete embedding of $(L,\to)$ into $(\lat(P,\comp),\to_\comp)$, and  $\comp$ is reflexive and compossible (recall Definition \ref{RefinementDef}).
\end{enumerate}
In each case, if $L$ is complete, then the embedding is an isomorphism.
\end{theorem}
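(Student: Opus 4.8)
The plan is to treat all five parts uniformly, following the template of Theorem \ref{NegThm} and, most closely, Theorem \ref{CombImp}. For each part the argument has three stages: first show the chosen $P$ is \emph{separating} in the sense of Definition \ref{Good}, so that Proposition \ref{CompRep} hands us a complete lattice embedding $f(a)=\{(x,y)\in P\mid x\le a\}$ of $L$ into $\lat(P,\comp)$ (an isomorphism when $L$ is complete); second, verify the implication-preservation identity $f(a\to b)=f(a)\to_\comp f(b)$; third, verify the stated relational conditions on $\comp$. A single preliminary observation drives the whole argument, playing the role of fact (i) in the proof of Theorem \ref{NegThm}: every preimplication satisfies $b\le a\to b$, since $b=1\to b$ by Definition \ref{MatAlg}.\ref{MatAlg1} and $1\to b\le a\to b$ by antitonicity of the first argument (Definition \ref{MatAlg}.\ref{MatAlg4}).

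\textbf{Separating and the key identity.} To see $P$ is separating: for Definition \ref{Good}.\ref{Good2}, given $a\not\le b$ take $(a,a\to 0)$, passing to a join-dense $a'\le a$ with $a'\not\le b$ in parts \ref{ImpThm3}--\ref{ImpThm5}; for Definition \ref{Good}.\ref{Good3}, given $(c,d)\in P$ with $c\not\le b$, use meet-density of $\Lambda$ to get $b'\in\Lambda$ with $b\le b'$ and $c\not\le b'$, and take $(1,1\to b')$, the point being that $1\to b'=b'$ by Definition \ref{MatAlg}.\ref{MatAlg1}, so every $(c'',d'')$ open from it has $c''\not\le b'$ and hence $c''\not\le b$. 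For the identity, the forward inclusion starts from $(x,x\to y)\in f(a\to b)$ and a witness $(x',x'\to y')\comp(x,x\to y)$ in $f(a)$, so $x\le a\to b$, $x'\le a$, and $x\not\le x'\to y'$; antitonicity gives $x\le x'\to b$, and assuming $b\le x'\to y'$ forces $x\le x'\to b\le x'\to(x'\to y')\le x'\to y'$ by monotonicity in the consequent (Definition \ref{MatAlg}.\ref{MatAlg5}) and the contraction axiom (Definition \ref{MatAlg}.\ref{MatAlg3}), a contradiction; hence $b\not\le x'\to y'$ and $(b,b\to 0)$ is the required continuation in $f(b)$ (in parts \ref{ImpThm3}--\ref{ImpThm5}, take instead a join-dense $b'\le b$ with $b'\not\le x'\to y'$). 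The converse inclusion, given $x\not\le a\to b$, exhibits a witness in $f(a)$ open to $(x,x\to y)$ past which nothing reaches $f(b)$: in parts \ref{ImpThm1}--\ref{ImpThm2} this is $(a,a\to b)$ itself (with $a\not\le b$ guaranteed in part \ref{ImpThm2} by $b\to b=1$), and the preliminary observation $b\le a\to b$ blocks all continuations; in parts \ref{ImpThm3}--\ref{ImpThm5} one first uses Lemma \ref{ImpUsefulLem} (the equivalence $a\le c\to b\Leftrightarrow c\le a\to b$) together with join-density to produce $a'\in\mathrm V$ with $a'\le a$ and $x\not\le a'\to b$, and takes $(a',a'\to b)$.

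\textbf{Frame conditions.} Preservation of a negation $\neg$, when it is present, is then free: since $\neg a=a\to 0$ and $\neg_\comp A=A\to_\comp 0$, the identity just proved together with preservation of $0$ gives $f(\neg a)=\neg_\comp f(a)$, so the negation-side conclusions of Theorem \ref{NegThm} transfer. Reflexivity in the even-numbered parts follows exactly as in Theorem \ref{NegThm}: the protoimplication axiom $a\wedge(a\to b)\le b$ rules out $a\le a\to b$ when $a\not\le b$, so $(a,a\to b)\comp(a,a\to b)$. The interpolation, (strong) pseudosymmetry, weak compossibility, and compossibility conditions are checked directly on $(P,\comp)$, re-deriving in the concrete frame the correspondences of Lemma \ref{ImpCorr} and Proposition \ref{CorrLemm}; the left-handed conditions and strong pseudosymmetry again lean on Lemma \ref{ImpUsefulLem}, and the compossibility of part \ref{ImpThm5} uses the adjunction clause $a\wedge c\le b\Rightarrow a\le c\to b$ of relative pseudocomplementations together with the join- and meet-density of $\mathrm V$ and $\Lambda$ to produce refining witnesses of the correct form in $P$.

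\textbf{Main obstacle.} I expect no single deep idea but rather the bookkeeping across the five parts to be the crux. Each part imposes its own restriction on $P$---the clause $a\not\le b$, membership $a\in\mathrm V$, membership $b\in\Lambda$, or combinations---and at every step where a witness pair is produced one must confirm it genuinely lies in the relevant $P$ while invoking exactly the right subset of the axioms of Definition \ref{MatAlg} and Definition \ref{ImpDefs}. The most delicate verifications are the converse of the key identity and the left-interpolation and pseudosymmetry conditions in parts \ref{ImpThm3}--\ref{ImpThm5}, where the join-density restriction and Lemma \ref{ImpUsefulLem} must be combined so that the chosen witness simultaneously belongs to $P$, is open to the required state, and carries the needed refinement relations; and the relative-pseudocomplementation case of part \ref{ImpThm5}, where the extra meet-density restriction $b\in\Lambda$ interacts with the adjunction axiom.
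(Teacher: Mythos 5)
Your overall architecture is exactly the paper's: show $P$ is separating so Proposition \ref{CompRep} gives the complete embedding, then verify $f(a\to b)=f(a)\to_\comp f(b)$, then check the frame conditions directly on $(P,\comp)$; the forward inclusion of the key identity (deriving $b\not\leq x'\to y'$ via the chain $x\leq a\to b\leq x'\to b\leq x'\to(x'\to y')\leq x'\to y'$) and the converse for parts \ref{ImpThm1}--\ref{ImpThm2} match the paper's argument step for step, as does the use of Lemma \ref{ImpUsefulLem} and join-density in parts \ref{ImpThm3}--\ref{ImpThm4}.

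However, several of the concrete witnesses you name do not lie in the relevant $P$, which is precisely the bookkeeping you flag as the crux. For condition \ref{Good3} of Definition \ref{Good} you uniformly take $(1,1\to b')$; in parts \ref{ImpThm4} and \ref{ImpThm5} the first coordinate must come from $\mathrm{V}$, and a join-dense set need not contain $1$ (e.g., the two coatoms of $\mathbf{M}_2$ are join-dense but $1$ is not among them). The paper instead takes $(c,c\to b')$ there, using the protoimplication axiom $c\wedge(c\to b')\leq b'$ to get $(c,c\to b')\comp(c,d)$. More seriously, part \ref{ImpThm5} restricts second-coordinate generators to $\Lambda$, so your witnesses $(a',a'\to 0)$, $(b',b'\to 0)$, and $(a',a'\to b)$ all fail there unless $0,b\in\Lambda$, which meet-density does not guarantee. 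The paper's fix for the forward inclusion is to pick $b'\in\mathrm{V}$ and $c'\in\Lambda$ with $b'\leq b\wedge x'$, $y'\leq c'$, $b'\not\leq c'$ and use $(b',b'\to c')$; for the converse it picks $a'\in\mathrm{V}$, $b'\in\Lambda$ with $a'\leq a\wedge x$, $b\leq b'$, $a'\not\leq b'$ via the adjunction $a\wedge x\not\leq b$. Finally, note that the ``Moreover'' clauses cannot be obtained by appealing to Lemma \ref{ImpCorr} or Proposition \ref{CorrLemm} as correspondences: those are equivalences over the full lattice of $c_\comp$-fixpoints, whereas your algebraic hypotheses hold only on the image $f[L]$ when $L$ is incomplete, so the frame conditions must be verified by exhibiting explicit refining pairs in $P$ (e.g., $(a,a\to 0)$ for strong pseudosymmetry, $(z,z\to(x'\to y'))$ for the interpolation conditions), as the paper does.
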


\begin{proof} First we claim that in each part, $P$ is separating in the sense of Definition~\ref{Good}.  The proof that $P$ is separating in part \ref{ImpThm5} is already in \citealt[Prop.~3.16(iii)]{Holliday2022}, so we give the other cases.  To prove part~\ref{Good2} of Definition~\ref{Good}, assume $a\not\leq b$. For parts \ref{ImpThm1} and \ref{ImpThm2} of the theorem, we set $(c,d)=(a,a\to 0)$, so $(c,d)\in P$ since $a\neq 0$. For parts \ref{ImpThm3} and \ref{ImpThm4} of the theorem, from $a\not\leq b$ we obtain a nonzero $a'\in\mathrm{V}$ such that $a'\leq a$ but $a'\not\leq b$, and we set $(c,d)=(a',a'\to 0)$.  To prove part \ref{Good3} of Definition \ref{Good},  suppose $(c,d)\in P$ and $c\not\leq b$. Hence there is some $b'\in\Lambda$ such that $c\not\leq b'$ and $b\leq b'$.  For parts \ref{ImpThm1} and \ref{ImpThm3} of the theorem, we set $(c',d')=(1,1\to b')$, so $(c',d')\in P$. From $c\not\leq b'$ we also have $c\not\leq 1\to b'$ by the right-to-left inequality in Definition \ref{MatAlg}.\ref{MatAlg1}, so   $(c',d')\comp (c,d)$. For parts \ref{ImpThm2} and \ref{ImpThm4}, we set $(c',d')=(c,c\to b')$. Since $c\not\leq b'$, we have $(c',d')\in P$, and since $\to$ is a protoimplication, $c\not\leq c\to b'$,  so $(c',d')\comp (c,d)$. Now consider any $(c'',d'')\in P$ with $(c',d')\comp (c'',d'')$. For parts \ref{ImpThm1} and \ref{ImpThm3}, $c''\not\leq d'=1\to b'$ and hence $c''\not\leq b'$ by the left-to-right inequality in Definition \ref{MatAlg}.\ref{MatAlg1}, so $c''\not\leq b$; similarly, for  parts \ref{ImpThm2} and \ref{ImpThm4}, $c''\not\leq d'=c\to b'$ and hence $c''\not\leq b'$ by Definition \ref{MatAlg}.\ref{MatAlg1} and Definition \ref{MatAlg}.\ref{MatAlg4}, so $c''\not\leq b$.  Hence part \ref{Good3} of Definition \ref{Good} holds. Thus, by Proposition \ref{CompRep}, $f$ is a complete embedding of $L$ into $\mathfrak{L}(P,\comp)$, which is a lattice isomorphism if $L$ is complete.

Next we claim that in each part, $ f(a\to b)= f(a)\to_\comp f(b)$. Suppose $(x,x\to y)\in  f(a\to b)$, so $x\leq a\to b$. Further suppose that $(x',x'\to y')\comp (x,x\to y)$ and $(x',x'\to y')\in f(a)$, so  $x'\leq a$. From $(x',x'\to y')\comp (x,x\to y)$,  we have $x\not\leq x'\to y'$. Now we claim that $b\not\leq x'\to y'$.  For if $b\leq x'\to y'$, then by Definition \ref{MatAlg}.\ref{MatAlg5}, \ref{MatAlg}.\ref{MatAlg4}, and \ref{MatAlg}.\ref{MatAlg3}, we have
\[x\leq a\to b\leq a\to (x'\to y')\leq x'\to (x'\to y')\leq x'\to y',\]
contradicting $x\not\leq x'\to y'$. For parts \ref{ImpThm1} and \ref{ImpThm2} of the theorem, we set $(x'',x''\to y'')=(b,b\to 0)$, so $(x'',x''\to y'')\in P$. For parts \ref{ImpThm3} and \ref{ImpThm4}, from $b\not\leq x'\to y'$, we obtain a nonzero $b'\in\mathrm{V}$ such that $b'\leq b$ and $b'\not\leq x'\to y'$, and we set $(x'',x''\to y'')=(b',b'\to 0)$. For part \ref{ImpThm5}, from $b\not\leq x'\to y'$, we have $b\wedge x' \not\leq y'$, so we obtain a $b'\in\mathrm{V}$ and $c'\in\Lambda$ such that   $b'\leq b\wedge x'$,  $y'\leq c'$, and  $b'\not\leq c'$, which together imply $b'\not\leq x'\to y'$. In this case, we set $(x'',x''\to y'')=(b',b'\to c')$. In each case, we have $(x'',x''\to y'')\in P$, $(x',x'\to y')\comp (x'',x''\to y'')$, and $(x'',x''\to y'')\in f(b)$. Hence $(x,x\to y)\in f(a)\to_\comp f(b)$. 
 
 Conversely, suppose $(x,x\to y)\in P\setminus f(a\to b)$, so $x\not\leq a\to b$. For parts \ref{ImpThm1} and \ref{ImpThm2} of the theorem, we set  $(x',x'\to y')= (a,a\to b)$, which immediately belongs to $P$ in part \ref{ImpThm1} and also belongs to $P$ in part \ref{ImpThm2} since if $a\leq b$, then $1\leq b\to b \leq a\to b$ using Definition \ref{MatAlg}.\ref{MatAlg4}, contradicting $x\not\leq a\to b$. For parts \ref{ImpThm3} and \ref{ImpThm4}, from $x\not\leq a\to b$, we have $a\not\leq x\to b$ by Lemma \ref{ImpUsefulLem}, so there is a nonzero $a'\in\mathrm{V}$ such that $a'\leq a$ but $a'\not\leq x\to b$, so $x\not\leq a'\to b$ by Lemma \ref{ImpUsefulLem}, and we set $(x',x'\to y')=(a',a'\to b)$.  For part \ref{ImpThm4}, we also have $a'\not\leq b$, for otherwise $a'\leq b\leq 1\to b\leq x\to b$ using Definition \ref{MatAlg}.\ref{MatAlg1} and \ref{MatAlg}.\ref{MatAlg4}, which contradicts what we derived above. Thus, in parts \ref{ImpThm1}-\ref{ImpThm4}, $(x',x'\to y')\in P$ and $(x',x'\to y')\comp (x,x\to y)$. Now suppose $(x',x'\to y')\comp (x'',x''\to y'')$, so $x''\not\leq x'\to y'=x'\to b$. It follows that $x''\not\leq 1\to b$ by Definition \ref{MatAlg}.\ref{MatAlg4} and then $x''\not\leq b$ by Definition \ref{MatAlg}.\ref{MatAlg1}, so $(x'',y'')\not\in f(b)$. Hence $(x,x\to y)\not\in f(a)\to_\comp f(b)$. For part \ref{ImpThm5}, from $x\not\leq a\to b$ we have $a\not\leq x\to b$ by Lemma \ref{ImpUsefulLem} and then $a\wedge x\not\leq b$, so there are $a'\in\mathrm{V}$ and $b'\in\Lambda$ such that (i) $a'\leq a\wedge x$, (ii)  $b\leq b'$, and  (iii) $a'\not\leq b'$; hence $(a',a'\to b')\in P$, and (i) and (iii) imply $x\not\leq a'\to b'$ and therefore $(a',a'\to b')\comp (x,x\to y)$. We set  $(x',x'\to y')=(a',a'\to b')$.  Then if $(x',x'\to y')\comp (x'',x''\to y'')$, so $x''\not\leq a'\to b'$, then $x''\not\leq b'$ as above and hence $x''\not\leq b$ by (ii), so $(x'',y'')\not\in f(b)$. Thus, $(x,x\to y)\not\in f(a)\to_\comp f(b)$.

Now for parts \ref{ImpThm1} and \ref{ImpThm2}, we show that if $\neg$ is an ultraweak pseudocomplementation, then $\comp$ is strongly pseudosymmetric.  Suppose  $(c,c\to d)\comp (a,a\to b)$, so $a\not\leq c\to d$. Hence $a\neq 0$, so $(a,a\to 0)\in P$, and $a\not\leq c\to 0$ by Definition \ref{MatAlg}.\ref{MatAlg5}, so $c\not\leq a\to 0$ by Lemma \ref{UsefulLem}.\ref{UsefulLem2}.   Thus, $(a, a\to 0)\comp (c,c\to d)$. Since $(a, a\to 0)$ and $(a,a\to b)$ have the same first coordinate, $(a, a\to 0)$ pre-refines $(a,a\to b)$ and~vice~versa. 

For parts \ref{ImpThm2}, \ref{ImpThm4}, and \ref{ImpThm5}, that $\comp$ is reflexive follows from the fact that if $\to$ is a protoimplication, then $a\not\leq b$ implies $a\not\leq a\to b$. For parts \ref{ImpThm2} and \ref{ImpThm4}, we also show that $\comp$ satisfies right pre-interpolation. Suppose $(x', x'\to y')\comp (x,x\to y)$, so $x\not\leq x'\to y'$. For part \ref{ImpThm2}, we let $z=x$. For part \ref{ImpThm4}, from $x\not\leq x'\to y'$, we obtain a nonzero $a\in \mathrm{V}$ such that $a\leq x$ and $a\not\leq x'\to y'$, and we let $z=a$. In either case, since $\to$ is a protoimplication,  $z\not\leq x'\to y'$ implies $z\not\leq z\to (x'\to y')$; then given $z\leq x$, we have $x\not\leq  z\to (x'\to y')$ as well.  Thus,  $(z, z\to (x'\to y'))\in P$ and $(z, z\to (x'\to y'))\comp (x,x\to y)$. Moreover, $(z, z\to (x'\to y'))$ post-refines $(x', x'\to y')$, for if $w\leq x'\to y'$, then $w\leq 1\to (x'\to y')\leq z\to (x'\to y')$ by Definition \ref{MatAlg}.\ref{MatAlg1} and \ref{MatAlg}.\ref{MatAlg4}; and since $z\leq x$,  $(z, z\to (x'\to y'))$ pre-refines $(x,x\to y)$.\footnote{For parts \ref{ImpThm2} and \ref{ImpThm4} when $\mathrm{V}=L$, we can take $z=x$, in which case $(z, z\to (x'\to y'))$ pre-refines $(x,x\to y)$ and vice versa, so a \textit{strong} right pre-interpolation property holds.}

For parts \ref{ImpThm3} and \ref{ImpThm4}, we show that $\comp$ satisfies left pre-interpolation. Suppose $(x', x'\to y')\comp (x,x\to y)$, so $x\not\leq x'\to y'$. Hence there is a nonzero $z\in\mathrm{V}$ such that $z\leq x$ but $z\not\leq x'\to y'$, so $(z,z\to (x'\to y'))\in P$. Moreover, from $z\not\leq x'\to y'$ it follows that $x'\not\leq z\to (x'\to y')$, for otherwise $z\leq x'\to (x'\to y')\leq x'\to y'$ by Lemma \ref{ImpUsefulLem} and Definition \ref{MatAlg}.\ref{MatAlg3}. Thus,  $(z,z\to (x'\to y'))\comp (x',x'\to y')$. Moreover, $(z,z\to (x'\to y'))$ post-refines $(x',x'\to y')$ and pre-refines $(x,x\to y)$ as in the previous paragraph.\footnote{For parts \ref{ImpThm3} and \ref{ImpThm4} when $\mathrm{V}=L$, we can take $z=x$, in which case $(z, z\to (x'\to y'))$ pre-refines $(x,x\to y)$ and vice versa, so a \textit{strong} left pre-interpolation property holds.} 

For part \ref{ImpThm4}, we show that if $\neg$ is a pseudocomplementation, then $\comp$ is weakly compossible. Suppose $(a,a\to b)\comp (c,c\to d)$, so $c\not\leq a\to b$ and hence $c\not\leq a\to 0$ by Definition \ref{MatAlg}.\ref{MatAlg5}, so $a\wedge c\neq 0$ since $\neg$ is pseudocomplementation. Hence there is a nonzero $e\in \mathrm{V}$ with $e\leq a\wedge c$. Then $(e, e\to 0)\in P$, and since $e\leq a$ and $e\leq c$, we have that $(e, e\to 0)$ pre-refines $(a,b)$ and $(c,d)$. Hence $\comp$ is weakly compossible.

Finally, for part \ref{ImpThm5}, that $\comp$ is compossible is proved in \citealt[Prop.~3.17(iii)]{Holliday2022}.\end{proof}

\noindent For part \ref{ImpThm5}, recall the equivalent definition of $\to_\comp$ in compossible reflexive frames from Footnote \ref{HeytingNote}.

Completeness theorems for \textit{preimplication logics} with a connective $\to$ obeying principles matching those of Definition \ref{MatAlg} can easily be obtained from Theorem \ref{ImpThm}, just as we obtained completeness theorems for logics with $\neg$ from Theorem \ref{NegThm}. It is also straightforward to add the quantifiers $\forall$ and $\exists$ (recall Theorem \ref{QuantComplete}) to such logics. One could attempt a systematic study of preimplicational logics (or the preconditional logics of \S~\ref{Conditionals}) analogous to the study of superintuitionistic logics (see \citealt{BH2019} and references therein), which can be seen as preimplicational (or preconditional) logics.

Finally, let us adapt the topological representation of \S~\ref{TopRep} to lattices with preimplications. Given a bounded lattice $L$ and a preimplication $\to$, define $\mathsf{FI}(L,\to)=(X,\comp)$ as follows: $X$ is the set of all pairs $(F,I)$ such that $F$ is a filter in $L$, $I$ is an ideal in $L$, and for all $a,b\in L$:
\[\mbox{if $a\in F$ and $b\in I$, then $a\to b\in I$.}\]
Then define  $(F,I)\comp (F',I')$ iff $I\cap F'=\varnothing$. When dealing with protoimplications, one can impose the additional condition on $X$ that $F\cap I=\varnothing$ (recall \S~\ref{TopRep}), thereby making $\comp$ reflexive.  Finally, given $a\in L$, let $\widehat{a}=\{(F,I)\in X\mid a\in F\}$, and let $\mathsf{S}(L)$ be $\mathsf{FI}(L,\to)$ endowed with the topology generated by $\{\widehat{a}\mid a\in L\}$.

\begin{theorem}\label{EmbedThmImp} For any bounded lattice $L$ and preimplication $\to$ on $L$, the map $a\mapsto\widehat{a}$ is 
\begin{enumerate}
\item\label{EmbedThmImp1} an embedding  of $(L,\to)$ into $(\lat(\mathsf{FI}(L,\to)),\to_\comp)$ and 
\item\label{EmbedThmImp2} an isomorphism from $L$ to the subalgebra of $(\lat(\mathsf{FI}(L,\to )),\to_\comp)$ consisting of $c_\comp$-fixpoints that are compact open in the space $\mathsf{S}(L)$.
\end{enumerate}
\end{theorem}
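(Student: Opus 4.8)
The plan is to mirror the proof of Theorem~\ref{EmbedThm}, since the lattice-theoretic content is identical and only the treatment of the implication is new. First I would check that each $\widehat{a}$ is a $c_\comp$-fixpoint: given $(F,I)\notin\widehat{a}$, so $a\notin F$ and hence $a\neq 1$, the pair $(\mathord{\uparrow}1,\mathord{\downarrow}a)$ is open to $(F,I)$ and no state open to it lies in $\widehat{a}$, exactly as before; the only thing to re-verify is that $(\mathord{\uparrow}1,\mathord{\downarrow}a)$ meets the new defining condition of $X$, which follows from $1\to y=y$ (Definition~\ref{MatAlg}.\ref{MatAlg1}). Injectivity, together with $\widehat{1}=X$ and meet preservation $\widehat{a\wedge b}=\widehat{a}\cap\widehat{b}$ (from $F$ being a filter), goes through verbatim, using the witness $(\mathord{\uparrow}a,\mathord{\downarrow}(a\to 0))$ in place of $(\mathord{\uparrow}a,\mathord{\downarrow}\neg a)$; that this pair lies in $X$ uses Definition~\ref{MatAlg}.\ref{MatAlg4}, \ref{MatAlg}.\ref{MatAlg5}, and \ref{MatAlg}.\ref{MatAlg3}. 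Join preservation $\widehat{a\vee b}\subseteq\widehat{a}\vee\widehat{b}$ is proved as in Theorem~\ref{EmbedThm}, the converse following from meet preservation. For preservation of $0$ I would show that $\widehat{0}$ equals the set of absurd states, hence the bottom $c_\comp(\varnothing)$ of $\lat(\mathsf{FI}(L,\to))$: if $0\in F$ then $0\in I'\cap F$ for every candidate $(F',I')$, so $(F,I)$ is absurd, while if $0\notin F$ then $(\mathord{\uparrow}1,\mathord{\downarrow}0)\in X$ is open to $(F,I)$.

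The heart of the argument is the identity $\widehat{a\to b}=\widehat{a}\to_\comp\widehat{b}$. For the inclusion $\subseteq$, suppose $a\to b\in F$ and let $(F',I')\comp(F,I)$ with $a\in F'$; then $I'\cap F=\varnothing$ forces $a\to b\notin I'$, so if we had $b\in I'$ the defining condition of the state $(F',I')$ (applied to $a\in F'$ and $b\in I'$) would give $a\to b\in I'$, a contradiction. Hence $b\notin I'$, so $(\mathord{\uparrow}b,\mathord{\downarrow}(b\to 0))$ is open to $(F',I')$ and lies in $\widehat{b}$, witnessing membership in $\widehat{a}\to_\comp\widehat{b}$. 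For the inclusion $\supseteq$, I would argue contrapositively: if $a\to b\notin F$, take the witness $(F',I')=(\mathord{\uparrow}a,\mathord{\downarrow}(a\to b))$, which lies in $X$ by Definition~\ref{MatAlg}.\ref{MatAlg4}, \ref{MatAlg}.\ref{MatAlg5}, \ref{MatAlg}.\ref{MatAlg3}, is open to $(F,I)$ (since $a\to b\notin F$), and contains $a$. Then for any $(F'',I'')$ open to $(F',I')$ we have $\mathord{\downarrow}(a\to b)\cap F''=\varnothing$; since $b\leq a\to b$ (which follows from $b=1\to b\leq a\to b$ via Definition~\ref{MatAlg}.\ref{MatAlg1} and \ref{MatAlg}.\ref{MatAlg4}), this yields $b\notin F''$, so no state open to $(F',I')$ lies in $\widehat{b}$ and thus $(F,I)\notin\widehat{a}\to_\comp\widehat{b}$.

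Part~\ref{EmbedThmImp2} then follows exactly as in Theorem~\ref{EmbedThm}. Each $\widehat{a}$ is compact open because $(\mathord{\uparrow}a,\mathord{\downarrow}(a\to 0))\in\widehat{a}$ must land in some basic $\widehat{b_k}$ of any cover, forcing $a\leq b_k$ and hence $\widehat{a}\subseteq\widehat{b_k}$. Conversely, a compact open $c_\comp$-fixpoint $U$ is a finite union $\widehat{a_1}\cup\cdots\cup\widehat{a_n}$, and the identity $c_\comp(c_\comp(A)\cup B)=c_\comp(A\cup B)$ together with join preservation gives $\widehat{a_1\vee\cdots\vee a_n}=c_\comp(U)=U$, so the map is onto. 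I expect the main obstacle to be the identity $\widehat{a\to b}=\widehat{a}\to_\comp\widehat{b}$: it is the only place where the preimplication axioms interact nontrivially with the defining condition of states in $\mathsf{FI}(L,\to)$, and securing both directions requires choosing the right filter--ideal witnesses and invoking exactly the right axioms (in particular $b\leq a\to b$ for $\supseteq$ and the state condition for $\subseteq$). Everything else is a routine transcription of the protocomplementation case.
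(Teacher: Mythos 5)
Your proposal is correct and follows essentially the same route as the paper's own proof: the same witnesses $(\mathord{\uparrow}a,\mathord{\downarrow}(a\to b))$, $(\mathord{\uparrow}1,\mathord{\downarrow}a)$, and $(\mathord{\uparrow}b,\mathord{\downarrow}(b\to 0))$, the same appeal to Definition~\ref{MatAlg}.\ref{MatAlg4}, \ref{MatAlg}.\ref{MatAlg5}, and \ref{MatAlg}.\ref{MatAlg3} to place them in $X$, and the same key inequality $b=1\to b\leq a\to b$ for the $\supseteq$ direction of $\widehat{a\to b}=\widehat{a}\to_\comp\widehat{b}$, with part~\ref{EmbedThmImp2} deferred to Theorem~\ref{EmbedThm} exactly as in the paper. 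The only quibble is terminological: in a couple of places you say a witness is ``open to'' $(F',I')$ when, under the paper's reading of $\comp$, what you actually verify (and need) is that $(F',I')$ is open to the witness, i.e., $I'\cap F''=\varnothing$ --- but the disjointness conditions you check are the right ones.
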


\begin{proof} First, we claim that for any $a,b\in L$, $(\mathord{\uparrow}a,\mathord{\downarrow}a\to b)\in X$. For suppose $c\in\mathord{\uparrow}a$ and $d\in \mathord{\downarrow}a\to b$, so $a\leq c$ and $d\leq a\to b$. Then by Definition \ref{MatAlg}.\ref{MatAlg4},  \ref{MatAlg}.\ref{MatAlg5}, and \ref{MatAlg}.\ref{MatAlg3}, we have
\[c\to d\leq a\to d\leq a\to (a\to b)\leq a\to b,\]
so $c\to d\in \mathord{\downarrow}a\to b$. Since by Definition \ref{MatAlg}.\ref{MatAlg1},  $a=1\to a$, it follows that $(\mathord{\uparrow}1,\mathord{\downarrow}a)\in X$ as well.

Now the proof that $\widehat{a}$ is a $c_\comp$-fixpoint and that $a\mapsto\widehat{a}$ is injective and preserves $\wedge$ and $\vee$ is the same as in the proof of Theorem \ref{EmbedThm}.  Obviously $\widehat{1}=X$ and $\widehat{0}$ is the set of all $(F,I)\in X$ such that $F$ is an improper filter, which is the set of absurd states (Definition \ref{AbsurdDef}); clearly if $F$ is improper, then $(F,I)$ is absurd, and conversely, if there is some element $a$ of $L$ not in $F$, so $a\neq 1$, then $(\mathord{\uparrow}1, \mathord{\downarrow}a)\comp (F,I)$, so $(F,I)$ is not absurd. 

Next we show that $\widehat{a\to b}=\widehat{a}\to_\comp \widehat{b}$. First suppose $(F,I)\in \widehat{a\to b}$, $(F',I')\comp (F,I)$, and $(F',I')\in\widehat{a}$, so $a\in F'$. Since $(F,I)\in \widehat{ a\to b}$, we have $a\to b\in F$, which with $(F',I')\comp (F,I)$ implies $ a\to b\not\in I'$, which with $a\in F'$ and the definition of $X$ implies $b\not\in I'$. Now let $F''=\mathord{\uparrow}b$ and $I''=\mathord{\downarrow} b\to 0$. Then $(F'',I'')\in X$, $(F',I')\comp (F'',I'')$, and $(F'',I'')\in \widehat{b}$. Thus, $(F,I)\in \widehat{a}\to_\comp\widehat{b}$. Conversely, if $(F,I)\not\in \widehat{a\to b}$, so $a\to b\not\in F$, then setting $(F',I')=(\mathord{\uparrow}a,\mathord{\downarrow}a\to b)$, we have $(F',I')\in X$ and $(F',I')\comp (F,I)$. Now consider any $(F'',I'')$ such that $(F',I')\comp (F'',I'')$, so $a\to b\not\in F''$. Then since $b=1\to b\leq a\to b$ by Definition \ref{MatAlg}.\ref{MatAlg1} and \ref{MatAlg}.\ref{MatAlg4}, we have $b\not\in F''$, so $(F'',I'')\not\in\widehat{b}$. Thus, $(F,I)\not\in \widehat{a}\to_\comp\widehat{b}$.

The proof of part \ref{EmbedThmImp2} is the same as the proof of Theorem \ref{EmbedThm}.\ref{EmbedThm2}.\end{proof}

Under the assumption that $\to$ satisfies stronger axioms as in Definition \ref{ImpDefs}, one can prove that $\mathsf{FI}(X,\to)$ satisfies corresponding properties in Lemma \ref{ImpCorr} (cf.~Proposition \ref{FIprop}).

\bibliographystyle{plainnat}
\bibliography{fundamental}

\end{document}